\documentclass[12pt]{amsart}
\usepackage{amssymb, a4wide, mathdots, url, graphicx}
\usepackage[usenames]{color}
\usepackage{enumerate}
\usepackage{mathtools}
\usepackage{xcolor}
\usepackage[hyperfootnotes=false, colorlinks, linkcolor={blue}, citecolor={magenta}, filecolor={blue}, urlcolor={blue}, plainpages=false, pdfpagelabels]{hyperref}
\usepackage{amsmath,amscd}
\usepackage{tikz-cd}

\newcommand{\PGL}{\operatorname{PGL}}

\newcommand{\tr}{\operatorname{tr}}

\renewcommand{\Re}{\operatorname{Re}}

\newcommand{\A}{\mathbb{A}}

\newcommand{\C}{\mathbb{C}}

\newcommand{\Hom}{\operatorname{Hom}}
\newcommand{\Res}{\operatorname{Res}}

\newcommand{\ind}{\operatorname{ind}}

\newcommand{\wt}{\widetilde}

\newcommand{\Span}{\operatorname{Span}}

\newcommand{\JL}{\mathrm{JL}}
\newcommand{\BC}{\mathrm{BC}}

\newcommand{\TD}{\mathcal{TD}}

   \newcommand{\CE}{{\mathcal{E}}}
   
    \newcommand{\RG}{{\mathrm {G}}}

\newtheorem{theorem}{Theorem}[section]
\newtheorem{lemma}[theorem]{Lemma}

\newtheorem{proposition}[theorem]{Proposition}
\newtheorem{corollary}[theorem]{Corollary}
\newtheorem{conjecture}[theorem]{Conjecture}
\theoremstyle{remark}
\newtheorem{remark}[theorem]{Remark}

\makeatletter
\@namedef{subjclassname@2020}{%
  \textup{2020} Mathematics Subject Classification}
\makeatother

\newcommand{\pr}{\operatorname{pr}}

\DeclareMathOperator{\diag}{diag}

\DeclareMathOperator{\Ind}{Ind}
\DeclareMathOperator{\GL}{GL}

\DeclareMathOperator{\SO}{SO}

\DeclareMathOperator{\GSpin}{GSpin}
\DeclareMathOperator{\GPin}{GPin}

\DeclareMathOperator{\GSp}{GSp}
\DeclareMathOperator{\Orth}{O}

\newcommand{\bs}{\backslash}

\begin{document}

\title[Twisted automorphic descent to odd GSpin groups and applications]{Twisted automorphic descent to odd GSpin groups and applications}

\author{Pan Yan}
\address{Department of Mathematics, The University of Arizona, Tucson, AZ 85721, USA}
\email{panyan@arizona.edu}

\date{\today}

\subjclass[2020]{11F70, 11F30, 11F66, 22E50, 22E55}
\keywords{Twisted automorphic descent, global Gan--Gross--Prasad conjecture, Bessel periods, Jacquet--Langlands correspondence}

\begin{abstract}
We establish twisted automorphic descent from $(\GL_{2n}, \GSpin_2^{\delta})$, where $\GSpin_2^{\delta}$ is anisotropic, to the odd $\GSpin$ group $\GSpin^{\delta,\alpha}_{2n+1}$, which may be split or non-split, generalizing a construction of Jiang--Liu--Xu--Zhang \cite{JiangLiuXuZhang2016}. 
As an application, we give a new explicit construction of the global Jacquet--Langlands correspondence between $\GL_2$ and $D^\times$, where $D$ is a quaternion division algebra over a number field. As another application, we prove a new case of the global Gan--Gross--Prasad conjecture for $(\GSpin_{2n+1}, \GSpin^{\delta}_2)$. 
\end{abstract}

\maketitle

\goodbreak 

\tableofcontents

\goodbreak

\section{Introduction}

The automorphic descent method, developed by Ginzburg, Rallis, and Soudry in a series of works
(see, for example, \cite{GinzburgRallisSoudry1997IMRN, GinzburgRallisSoudry1999Annals, GinzburgRallisSoudry1999JAMS, GinzburgRallisSoudry1999Duke, GinzburgRallisSoudry2001IMRN}) and culminating in their book \cite{GinzburgRallisSoudry2011}, 
provides an explicit construction of the inverse to the Langlands functorial transfer
(see, for example, \cite{CogdellKimPSShahidi2001, CogdellKimPSShahidi2004, CogdellPSShahidi2011, CaiFriedbergKaplan2024})
for automorphic representations of quasi-split classical groups. 
In its original form, one starts with a generic isobaric sum automorphic representation of a general linear group satisfying suitable symmetry conditions, forms a residual Siegel Eisenstein series, and then takes appropriate Fourier coefficients of the Eisenstein series. 
This construction produces globally generic cuspidal automorphic representations of quasi-split classical groups. 
The analogous descent construction for quasi-split $\GSpin$ groups was established by Hundley and Sayag in \cite{HundleySayag2016}.

The automorphic descent method has played an important role in the study of automorphic representations.
It gives an explicit construction of automorphic representations of quasi-split groups belonging to global Arthur packets (see \cite{Arthur2013, Mok2015, KalethaMinguezShinWhite}) and, in particular, of their generic members. It is also closely related to Rankin--Selberg integrals; see, for example, \cite{GinzburgRallisSoudry1998, GinzburgRallisSoudry2011, AsgariCogdellShahidi2024}.
Moreover, it can be applied to obtain precise formulas for Whittaker--Fourier coefficients of globally generic cuspidal automorphic forms in the spirit of the Ichino--Ikeda conjecture \cite{IchinoIkeda2010}; see \cite{LapidMao2017}.
We refer the reader to \cite{Soudry2005, Soudry2006ICM} for surveys of the method.

A twisted version of automorphic descent was developed in the works \cite{JiangLiuXuZhang2016, JiangZhang2017, JiangLiuXu2020, JiangZhang2020Annals, LiuXu2023}, with earlier precursors in  \cite{GinzburgJiangRallis2004, GinzburgJiangRallis2005, GinzburgJiangRallis2009}.
In the twisted setting, the input data consist not only of an automorphic representation $\tau$ of a general linear group, but also of an auxiliary representation $\sigma$ of a smaller classical group.
The construction is then realized by taking suitable Bessel type or Fourier--Jacobi type Fourier coefficients of certain residual Eisenstein series attached to the pair $(\tau,\sigma)$; unlike in the original descent construction, these Eisenstein series are not of Siegel type.

Twisted automorphic descent provides a systematic method for explicitly constructing additional members, including non-generic ones, in global Arthur packets of classical groups.
It can also be used to construct automorphic representations on pure inner forms, and is therefore closely related to global Vogan packets.
Moreover, twisted automorphic descent is closely connected with more general Rankin--Selberg integrals (see \cite{JiangZhang2014, JiangZhang2020Annals}).
After unfolding these global integrals, Bessel and Fourier--Jacobi periods naturally arise, leading to a natural connection with the global Gan--Gross--Prasad conjecture (global GGP conjecture for short)  \cite{GanGrossPrasad2012}, which relates such periods to central values of $L$-functions.
Using this approach, various non-vanishing results for central $L$-values and several cases of the global GGP conjecture have been obtained; see \cite{GinzburgJiangRallis2004, GinzburgJiangRallis2005, GinzburgJiangRallis2009, JiangZhang2020Annals, JiangZhang2020JEMS, Yan2025GGP}. 
We refer the reader to \cite{JiangZhang2017, JiangZhang2021BesselDescent} for further discussion of twisted automorphic descent as well as its applications.

The purpose of this paper is to establish twisted automorphic descent from $(\GL_{2n}, \GSpin_2^\delta)$ to the odd $\GSpin$ group $\GSpin_{2n+1}^{\delta, \alpha}$. Here, $\GSpin_2^\delta$ is associated to an anisotropic two-dimensional quadratic space, and $\GSpin_{2n+1}^{\delta, \alpha}$ is associated to a quadratic space of dimension $2n+1$ which can be split or non-split over a number field $F$. This construction generalizes the twisted automorphic descent of Jiang, Liu, Xu and Zhang \cite{JiangLiuXuZhang2016} from special orthogonal groups to the corresponding $\GSpin$ setting.  We refer the reader to Section~\ref{subsection-Intro-twisted-descent} for the statement of the main result on twisted automorphic descent. As an application, by specializing to $n=1$, we obtain a new proof of the global Jacquet--Langlands correspondence for $\GL_2$; see Section~\ref{subsection-Intro-application-JacquetLanglands}. As another application, we prove a new case of the global GGP conjecture for $(\GSpin_{2n+1}, \GSpin_2^\delta)$; see Section~\ref{subsection-Intro-application-GGP}.  

\subsection{Twisted automorphic descent to odd $\GSpin$ groups}
\label{subsection-Intro-twisted-descent}
To state our results, we introduce some notation. Let $F$ be a number field and let $\A=\A_F$ denote its ring of adeles. Let $V$ be a quadratic space over $F$ of dimension $4n+2$ with Witt decomposition $V=V^+\oplus V_0\oplus V^-$, where 
$V^+=\Span\{e_1, \cdots, e_{2n}\}$, $V^-=\Span\{e_{-2n}, \cdots, e_{-1}\}$
 are maximal totally isotropic subspaces of dimension $2n$, in duality with each other, and $V_0$ is an anisotropic subspace of dimension 2. We assume the quadratic form on $V$ is associated with the matrix $\left( \begin{smallmatrix}
   &&w_{2n}\\
   &J_\delta&\\
   w_{2n}&&
  \end{smallmatrix}\right)$, where $w_{2n}=\left( \begin{smallmatrix} & & 1\\ &\iddots & \\ 1&& \end{smallmatrix}\right)$, $J_\delta=\left(\begin{smallmatrix} 1 &0\\ 0 &\delta\end{smallmatrix}\right)$, and $-\delta\not\in {F^\times}^2$. Let $P=M\ltimes U$ be the standard parabolic subgroup of $H^\delta=\GSpin_{4n+2}^\delta=\GSpin(V)$ stabilizing $V^+$. Its Levi subgroup $M$ is isomorphic to $\GL_{2n}\times \GSpin(V_0)$. Moreover, $\GSpin(V_0)=\GSpin^\delta_2=\Res_{E/F}\GL_1$, where $E=F(\sqrt{-\delta})$ is a quadratic extension of $F$.
Let $\tau=\tau_1\boxplus \tau_2\boxplus \cdots \boxplus \tau_r$ be an irreducible unitary generic isobaric sum automorphic representation of $\GL_{2n}(\A)$ associated to distinct cuspidal automorphic representations $\tau_1, \cdots, \tau_r$, such that $L(s, \tau_i, \wedge^2\otimes\omega^{-1})$ has a pole at $s=1$ for each $1\le i\le r$, where $\omega$ is a Hecke character.  
Let $\sigma$ be an automorphic character of $\GSpin^\delta_2(\A)=\Res_{E/F}\GL_1(\A)$ and denote its restriction to $\mathbb{A}^\times$ by
\begin{equation*}
\omega_{\sigma}:=\sigma|_{\A^\times}.	
\end{equation*}
Assume that $\omega_{\sigma}=\omega$ and 
 $L(\frac{1}{2}, \sigma\times \tau\otimes \omega_\sigma^{-1})\neq 0$. Here, 
 \begin{equation*}
 	L(s,\sigma\times\tau\otimes \omega_{\sigma}^{-1})= L(s, \sigma\times \BC(\tau\otimes \omega_{\sigma}^{-1})),
 \end{equation*}
 where $\BC(\tau\otimes \omega_{\sigma}^{-1})$ is the base change of $\tau\otimes \omega_{\sigma}^{-1}$ to $\Res_{E/F}\GL_{2n}(\A)$. Then the Eisenstein series $E(g, s, \phi_{\tau\otimes\sigma})$ associated to $\phi_{\tau\otimes\sigma}\in \mathcal{A}(M(F)U(\A)\bs H^\delta(\A))_{\tau\otimes \sigma}$, defined in \eqref{eq-EisenSeries} has a pole of order $r$ at $s=\frac{1}{2}$ (see Proposition~\ref{prop-Eisenstein-pole}), and we denote by $\CE_{\tau\otimes \sigma}$ the automorphic representation of $\GSpin_{4n+2}^\delta(\A)$ generated by the $r$-th iterated residues of the Eisenstein series at $s=\frac{1}{2}$.

For an integer $\ell$ with $1\le \ell\le 2n$, let $N_{\ell}$ be the unipotent radical of the parabolic subgroup $Q_\ell=L_{{\ell}}\ltimes N_{{\ell}}$ of $\GSpin_{4n+2}^{\delta}$ stabilizing the flag
\begin{equation*}
0\subset V_1^+ \subset V_2^+ \subset \cdots \subset 	V_{\ell}^+
\end{equation*}
where $V_i^+=\Span\{e_1, \cdots, e_i\}$. 
Let $\psi_{\ell,\alpha}$ be the character of $N_{\ell}(\A)$ which is trivial on $N_{\ell}(F)$, defined in \eqref{eq-psi-ell-w_0-global}, depending on an anisotropic vector $w_0$. If $1\le \ell <2n$, we take $w_0=e_{2n}-\frac{\alpha}{2}e_{-2n}$, where $\alpha\in F^\times$. If $\ell=2n$, we take any anisotropic vector $w_0\in V_0$. The subgroup $\GSpin_{4n+2-2\ell}^{\delta}(\A)$ normalizes $N_{\ell}(\A)$ and acts on the set of characters of $N_{\ell}(\A)$. The stabilizer of $\psi_{\ell,\alpha}$ in $\GSpin_{4n+2-2\ell}^{\delta}(\A)$ is an odd $\GSpin$ group $\GSpin_{4n+1-2\ell}^{\delta,\alpha}(\A)$, associated with a quadratic space of dimension $4n+1-2\ell$ whose symmetric bilinear form is associated with the matrix in \eqref{eq-matrix-quad-form-on-4n-2l+1}. The group $\GSpin_{4n+1-2\ell}^{\delta,\alpha}$ may be split or non-split over $F$, depending on the Hilbert symbol $(\delta,\alpha)$.

For an automorphic form $f\in \CE_{\tau\otimes \sigma}$, we define the Bessel coefficient (or Gelfand-Graev coefficient) of $f$, with respect to $ \psi_{\ell,\alpha}$, 
by
\begin{equation*}
 f^{\psi_{\ell, \alpha}}(g):=\int_{N_\ell(F)\backslash  N_{\ell}(\A)} f(ug)\psi_{\ell,\alpha}^{-1}(u)\ \mathrm{d}u, \quad g\in \GSpin_{4n+2}^{\delta}(\A).
\end{equation*}
Then $f^{\psi_{\ell, \alpha}}$ is invariant under left multiplication by $\GSpin_{4n+1-2\ell}^{\delta,\alpha}(F)$. Following \cite[\S 3.1]{GinzburgRallisSoudry2011} and \cite[\S 1.2]{JiangLiuXuZhang2016}, we consider the space 
$$
\TD_{\psi_{\ell,\alpha}}(\CE_{\tau\otimes \sigma})=\GSpin_{4n+1-2\ell}^{\delta,\alpha}(\A)-\Span \left\{f^{\psi_{\ell, \alpha}}|_{\GSpin_{4n+1-2\ell}^{\delta,\alpha}(\A)}\ | \ f\in \CE_{\tau\otimes \sigma} \right\},
$$
which is a representation of $\GSpin_{4n+1-2\ell}^{\delta,\alpha}(\A)$ under the right translation action. 

The first main result of the paper is the following theorem. 

\begin{theorem}\label{Intro-thm-Main}(Theorem~\ref{thm-Main})
 Let $\tau=\tau_1\boxplus \tau_2\boxplus \cdots \boxplus \tau_r$ be an irreducible unitary generic isobaric sum automorphic representation of $\GL_{2n}(\A)$ associated to distinct cuspidal automorphic representations $\tau_1, \cdots, \tau_r$, such that $L(s, \tau_i, \wedge^2\otimes\omega^{-1})$ has a pole at $s=1$ for each $1\le i\le r$, where $\omega$ is a Hecke character. 
 If there is an automorphic character $\sigma$ of $\GSpin^\delta_2(\A)$ such that $\omega_{\sigma}=\omega$ and 
 $L(\frac{1}{2}, \sigma\times \tau\otimes \omega_\sigma^{-1})\neq 0$, then the following assertions hold.
\begin{enumerate}
\item The representation $\TD_{\psi_{\ell,\alpha}}(\CE_{\tau\otimes\sigma})$ of $\GSpin^{\delta,\alpha}_{4n+1-2\ell}(\A)$ is zero
for all $n<\ell\leq 2n$.
\item For any square class $\alpha$ in $F^\times$, the representation $\TD_{\psi_{n,\alpha}}(\CE_{\tau\otimes\sigma})$ of $\GSpin^{\delta,\alpha}_{2n+1}(\A)$ is cuspidal automorphic.
\item There exists a square class $\alpha$ in $F^\times$ such that the representation $\TD_{\psi_{n,\alpha}}(\CE_{\tau\otimes\sigma})$ of $\GSpin^{\delta,\alpha}_{2n+1}(\A)$ is non-zero, and in this case
    $$
    \TD_{\psi_{n,\alpha}}(\CE_{\tau\otimes\sigma})=\pi_1\oplus\pi_2\oplus\cdots\oplus\pi_k\oplus\cdots,
    $$
    where the $\pi_i$ are irreducible cuspidal automorphic representations of $\GSpin^{\delta,\alpha}_{2n+1}(\A)$, which are nearly equivalent, but
    are not globally equivalent, i.e. the decomposition is multiplicity-free.
\item When $\TD_{\psi_{n,\alpha}}(\CE_{\tau\otimes\sigma})$ is non-zero, any direct summand $\pi$ of $\TD_{\psi_{n,\alpha}}(\CE_{\tau\otimes\sigma})$ has a weak Langlands functorial transfer to $\tau$ in the sense that the Satake parameter of the local unramified component $\tau_v$ of $\tau$ is the local functorial transfer of that of the local unramified component $\pi_v$ of $\pi$ for almost all unramified local places $v$ of $F$.
\item When $\TD_{\psi_{n,\alpha}}(\CE_{\tau\otimes\sigma})$ is non-zero, every irreducible direct summand of $\TD_{\psi_{n,\alpha}}(\CE_{\tau\otimes\sigma})$ has a non-zero Fourier coefficient attached to the partition $[2n-1,1^2]$.  
\item The residual representation $\CE_{\tau\otimes\sigma}$ has a non-zero Fourier coefficient attached to the partition $[2n+1,2n-1,1^2]$.  
\end{enumerate}
\end{theorem}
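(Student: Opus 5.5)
We follow the strategy of Jiang--Liu--Xu--Zhang \cite{JiangLiuXuZhang2016}, adapted from $\SO$ to $\GSpin$, and argue in the order (1), (2), (4), (3), (5), (6). The basic inputs are Proposition~\ref{prop-Eisenstein-pole}, which gives the pole of order $r$ at $s=\frac12$, and the local unramified structure of $\CE_{\tau\otimes\sigma}$. At almost all places $v$, $\CE_{\tau\otimes\sigma,v}$ is the unramified constituent of $\Ind_{P}^{H^\delta}(\tau_v|\det|^{1/2}\otimes\sigma_v)$. Since $\tau_v$ is the transfer of a $\GSpin_{2n+1}$ parameter twisted by $\omega$, this constituent is a Langlands quotient. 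Computing the local Jacquet modules and twisted Jacquet modules along the unipotent orbits of $H^\delta$ then shows that $\CE_{\tau\otimes\sigma,v}$ has no nonzero twisted Jacquet module for orbits larger than $[2n+1,2n-1,1^2]$. We expect the wave-front set of $\CE_{\tau\otimes\sigma}$ to be bounded by $[2n+1,2n-1,1^2]$, as for $\SO_{4n+2}$. The $\GSpin$ modification only changes the similitude bookkeeping, and the condition $\omega_\sigma=\omega$ is exactly what makes the central characters match.

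For (1), observe that for $\ell>n$ the coefficient $\psi_{\ell,\alpha}$ is attached to the partition $[2\ell+1,1^{4n+1-2\ell}]$, which is not bounded by $[2n+1,2n-1,1^2]$. We use the standard root-exchange and induction argument of \cite[\S 3]{GinzburgRallisSoudry2011} to show that a nonzero $f^{\psi_{\ell,\alpha}}$ would produce a nonzero coefficient for a strictly larger orbit. Such a coefficient would force the vanishing of a local twisted Jacquet module at an unramified place, which gives a contradiction.

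For (2), we compute the constant terms of $f^{\psi_{n,\alpha}}$ along each standard maximal parabolic of $\GSpin^{\delta,\alpha}_{2n+1}$, using the double-coset and root-exchange computation of \cite{JiangLiuXuZhang2016}. Each such constant term is expressed through constant terms of $\CE_{\tau\otimes\sigma}$ along parabolics with $\GL_k$ factors, composed with descents $\psi_{n+k,\alpha}$ of residual representations on smaller groups. These vanish by (1), by cuspidality of the $\tau_i$ (the $\GL$-part constant terms are forced into the cuspidal support $\tau$), and by the exact form of the residue. Square-integrability then gives automorphy and cuspidality.

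For (4), we compute the unramified components via the local descent: at unramified $v$, the local twisted Jacquet module of $\CE_{\tau\otimes\sigma,v}$ along $(N_n,\psi_{n,\alpha})$ is shown to be unramified with Satake parameter transferring to $\tau_v$. For (3), we prove nonvanishing by relating the period $\int f^{\psi_{n,\alpha}}\,\overline{\varphi}$ to a Rankin--Selberg/Bessel-type identity. Concretely, we show that the $\psi_{n-1}$-type coefficient, with $\ell=n-1$ and summed over $\alpha$, of the residue does not vanish, using the nonvanishing of $L(\frac12,\sigma\times\tau\otimes\omega_\sigma^{-1})$. This nonvanishing enters through the unfolding of the residue of the Eisenstein series paired against a Bessel period on $\GSpin_{2}^\delta$; some square class $\alpha$ must then carry a nonzero descent. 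Multiplicity-freeness comes from local uniqueness of Bessel models for $(\GSpin_{4n+2}^\delta,\GSpin_{2n+1}^{\delta,\alpha})$, transferred from the $\SO$ case via the isogeny together with the center. Combined with (4), this gives near equivalence of all summands. For (5), we observe that a nonzero coefficient $[2n-1,1^2]$ on $\pi$ composes with $\psi_{n,\alpha}$ to a coefficient $[2n+1,2n-1,1^2]$ on $\CE$. Conversely, the orbit bound forces the maximal coefficient of each $\pi$ to be exactly this one, following the argument of \cite{JiangLiuXuZhang2016}. Finally, (6) follows from (3) and (5) by composing coefficients.

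We expect the main obstacle to be the nonvanishing step (3), in particular getting the identity linking the descent period to $L(\frac12,\sigma\times\tau\otimes\omega_\sigma^{-1})$ with the correct $\GSpin$ similitude characters. We would first try to reduce this to the $\SO_{4n+2}$ case of \cite{JiangLiuXuZhang2016}, restricting along $\SO\to\GSpin/\text{center}$ and tracking central characters.
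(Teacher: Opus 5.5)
Your plans for (1), (2) and (4) follow the paper in outline: local vanishing of twisted Jacquet modules at unramified places, the root-exchange formula for the constant terms of $f^{\psi_{n,\alpha}}$, and the unramified computation at $\ell=n$. However, the decisive local input is only asserted. The paper realizes the unramified constituent inside $\Ind_{P_{2n}}^{H}(\tau'\otimes\sigma)$ with $\tau'=\Ind(\mu_1\circ\det_{\GL_2}\otimes\cdots\otimes\mu_n\circ\det_{\GL_2})$, and applies the Leibniz rule of Theorem~\ref{thm-JacquetModule} using ${\tau'}^{(\ell)}=\tau'_{(\ell)}=0$ for $\ell>n$. Neither a global bound by $[2n+1,2n-1,1^2]$ nor a detour through larger orbits is needed: a nonzero $f^{\psi_{\ell,\alpha}}$ directly gives a nonzero $J_{\psi_{\ell,\alpha}}$ at such a place. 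The genuine gaps are in (3) and (5).

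For (3), the pairing $\int f^{\psi_{n,\alpha}}\overline{\varphi}$ presupposes a cusp form $\varphi$ on some $\GSpin^{\delta,\alpha}_{2n+1}$ for which the Rankin--Selberg residue is nonzero. After unfolding, this means a cuspidal $\pi$ with transfer $\tau$ and nonzero Bessel period against $\sigma^{-1}$. That is exactly what Section~\ref{section-application-GGP} \emph{deduces from} (3), so the argument is circular. Your alternative, a $\psi_{n-1}$-type coefficient expanded ``summed over $\alpha$'', leaves the degenerate (trivial-character) term of that expansion untouched, and killing that term is the entire difficulty. You also never specify an integral of the residue that unfolds to $L(\frac12,\sigma\times\tau\otimes\omega_\sigma^{-1})$.

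In the paper, the $L$-value is used only to get $\CE_{\tau\otimes\sigma}\neq0$, and non-vanishing is then proved by contradiction:
\begin{itemize}
\item the constant term along $U$ composed with a Whittaker coefficient of $\tau$ gives a nonzero $[(2n)^2,1^2]$ coefficient (via \cite{GomezGourevitchSahi2017});
\item root exchanges and a Fourier expansion along $E_{n,3n+2}$ split it into $\psi_{n,\alpha}$-coefficients, assumed zero, plus a term that becomes an integral of $\varphi^U$;
\item that term transforms under $D(t)=(tI_{2n},1)$ by $\omega_\tau(t)|t|$, and, after conjugating by the Weyl element $\omega$ with $\omega D(t)\omega^{-1}=D(t^{-1})$, also by $\omega_\tau(t)^{-1}|t|^{-1}$, which is impossible for unitary $\tau$.
\end{itemize}
Your fallback of reducing to the $\SO_{4n+2}$ case fails too. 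When $\omega_\sigma\neq1$ the forms transform nontrivially under $\ker(\pr)(\A)$, and \cite{JiangLiuXuZhang2016} assumes $\tau$ cuspidal.

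For (5), composing a $[2n-1,1^2]$ coefficient of $\pi$ with $\psi_{n,\alpha}$ is the implication (5)$\Rightarrow$(6), not a proof of (5). Moreover, an upper bound on wave-front sets can never force a specific coefficient of every irreducible summand to be nonzero. The paper gets the needed lower bound from the Rankin--Selberg integral:
\begin{itemize}
\item $\pi\subset\TD_{\psi_{n,\alpha}}(\CE_{\tau\otimes\sigma})$ gives $\langle\varphi_\pi,\xi^{\psi_{n,\alpha}}\rangle\neq0$;
\item replacing the residue by $E(\cdot,s,\phi_{\tau\otimes\sigma})$ gives a nonzero global integral;
\item its unfolding \cite[Corollary 3.16]{Yan2025GGP} involves $\varphi_\pi^{\psi_{n-1,-\alpha}}$, which must therefore be nonzero.
\end{itemize}
This is where your Rankin--Selberg identity belongs. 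Once (5) is established, deriving (6) from (3) and (5) by composing coefficients is a legitimate alternative to the paper's route through Proposition~\ref{prop-nonvanishing-toporbit} and \cite{JiangLiuSavin2016}.
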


We refer the reader to Section~\ref{section-twisted-auto-descent} for the unexplained notations. 

Following \cite{JiangLiuXuZhang2016} and \cite{JiangZhang2020Annals}, we call the space $\TD_{\psi_{\ell,\alpha}}(\CE_{\tau\otimes \sigma})$ the \textit{twisted automorphic descent} of $(\tau, \sigma)$ to $\GSpin_{4n+1-2\ell}^{\delta,\alpha}$. Part (1) of Theorem~\ref{Intro-thm-Main}  describes the vanishing property of the ``deeper" twisted automorphic descent (i.e., when $\ell>n$). Parts (2), (3) and (4) of Theorem~\ref{Intro-thm-Main} concern, respectively, the cuspidality property, non-vanishing property, and Langlands functorial property, of the twisted automorphic descent to $\GSpin_{2n+1}^{\delta,\alpha}$ (i.e., when $\ell=n$). Part (5) of Theorem~\ref{Intro-thm-Main} establishes the non-vanishing of a certain Fourier coefficient of the twisted automorphic descent to $\GSpin_{2n+1}^{\delta,\alpha}$. Finally, part (6) of Theorem~\ref{Intro-thm-Main} concerns the non-vanishing of certain Fourier coefficient of the residual representation $\CE_{\tau\otimes\sigma}$. 

\begin{remark}
We remark that our twisted automorphic descent construction makes essential use of the non-vanishing condition $L(\frac{1}{2},\sigma\times\tau\otimes \omega_{\sigma}^{-1})\not=0$, which is one of the main new inputs in our construction.
\end{remark}

\begin{remark}
Let $\tau=\tau_1\boxplus \tau_2\boxplus \cdots \boxplus \tau_r$ be an irreducible unitary generic isobaric sum automorphic representation of $\GL_{2n}(\A)$ associated to distinct $\tau_1, \cdots, \tau_r$, such that $L(s, \tau_i, \wedge^2\otimes\omega)$ has a pole at $s=1$ for each $1\le i\le r$, for a Hecke character $\omega$ of $\GL_1(\A)$. Then the automorphic descent construction of Hundley and Sayag \cite{HundleySayag2016} produces an irreducible generic cuspidal automorphic representation $\pi_0$ of $\GSpin_{2n+1}(\A)$, where $\GSpin_{2n+1}$ is $F$-split, such that $\pi_0$ has a weak Langlands functorial transfer to $\tau$ and has central character $\omega_{\pi_0}=\omega$. 
Now suppose that there is an auxiliary automorphic representation $\sigma$ of $\GSpin_2^{\delta}(\A)$ with $\omega_{\sigma}=\omega$ satisfying the non-vanishing condition $L(\frac{1}{2},\sigma\times\tau\otimes \omega_{\sigma}^{-1})\not=0$. Then Theorem~\ref{Intro-thm-Main} produces additional irreducible cuspidal automorphic representations $\pi$ of the $F$-split group $\GSpin_{2n+1}(\A)$. These arise when $\alpha\in F^\times$ is chosen so that $\GSpin_{2n+1}^{\delta,\alpha}$ is $F$-split. In this case, every such representation $\pi$ obtained via twisted automorphic descent is nearly equivalent to $\pi_0$ by part (4) of Theorem~\ref{Intro-thm-Main}. Moreover, for those $\alpha$ for which $\GSpin_{2n+1}^{\delta,\alpha}$ is non-split over $F$, the twisted automorphic descent $\TD_{\psi_{n,\alpha}}(\CE_{\tau\otimes\sigma})$  produces irreducible cuspidal automorphic representations of $\GSpin_{2n+1}^{\delta,\alpha}(\A)$ that are still nearly equivalent to $\pi_0$, and are therefore expected to lie in the same global Vogan packet as $\pi_0$.
\end{remark}

\begin{remark}
When $\sigma$ satisfies the additional property that $\omega_\sigma=1$, it may be regarded as a character of $\SO_2^{\delta}(\A)$, and the representation $\TD_{\psi_{\ell,\alpha}}(\CE_{\tau\otimes\sigma})$ may be viewed as a representation of $\SO^{\delta,\alpha}_{4n+1-2\ell}(\A)$. If, in addition, $\tau$ is assumed to be cuspidal, then Theorem~\ref{Intro-thm-Main} recovers \cite[Theorem 1.2]{JiangLiuXuZhang2016}. Thus, our result generalizes \cite[Theorem 1.2]{JiangLiuXuZhang2016} in that $\omega_{\sigma}$ is allowed to be nontrivial and $\tau$ is allowed to be an isobaric sum. 
\end{remark}

\subsection{Application to global Jacquet--Langlands correspondence between $\GL_2$ and $D^\times$}
\label{subsection-Intro-application-JacquetLanglands}
The global Jacquet--Langlands correspondence between automorphic representations on $\GL_2$ and $D^\times$, where $D$ is a quaternion division algebra over a number field $F$, is one of the earliest and most fundamental examples of Langlands functoriality. The existence of this correspondence was first established by Jacquet and Langlands in 1970 in \cite{JacquetLanglands1970} by means of the trace formula method. Shortly afterwards, Shimizu provided an explicit realization of the correspondence via theta series in \cite{Shimizu1972}. This was later incorporated into the general framework of theta correspondence for reductive dual pairs; see \cite{Howe1979}. More recently, Jiang, Liu, Xu and Zhang gave a new explicit construction of the global Jacquet--Langlands correspondence for $\PGL_2$ in \cite{JiangLiuXuZhang2016}, as an application of twisted automorphic descent for odd special orthogonal groups.

In this paper, following the method in \cite{JiangLiuXuZhang2016}, we give a new proof of the global Jacquet--Langlands correspondence between $\GL_2$ and $D^\times$. Our proof is based on Theorem~\ref{Intro-thm-Main} in the special case $n=1$, and may be viewed as a $\GSpin$ generalization of \cite{JiangLiuXuZhang2016}.

When $n=1$, if the quadratic form associated with $J_{\delta,\alpha}$ in \eqref{eq-J-delta-alpha} is non-split over $F$, then $\GSpin_{3}^{\delta,\alpha}\cong D_{\delta,\alpha}^\times$, where $D_{\delta,\alpha}$ is the quaternion algebra $\left( \frac{-\delta,-\alpha}{F}\right)$. 
The quaternion algebra $D_{\delta,\alpha}$ is uniquely determined by the class of $\alpha$ in the quotient group $F^\times/ \mathrm{Nm}_{E/F} (E^\times)$, where $E=F(\sqrt{-\delta})$. Let $\eta_{E/F}$ denote the quadratic character of $F^\times\backslash \A^\times$ associated to the quadratic extension $E/F$ via the global class field theory.

\begin{theorem}(Theorem~\ref{thm-JL})
\label{Intro-thm-JL}
Let $\delta\in F^\times$ be such that $-\delta\not\in (F^\times)^2$. Let $\sigma$ be an automorphic character of $\GSpin_2^{\delta}(\A)=\Res_{E/F} \GL_1(\A)$, and $\tau$ an irreducible unitary cuspidal automorphic representation of $\GL_{2}(\A)$. 
If the residual representation $\CE_{\tau\otimes\sigma}$ is not identically zero, then the following hold. 
\begin{enumerate}
\item  The set of $\alpha\in F^\times$ such that the $\psi_{1,\alpha}$-Fourier coefficient $\CE^{\psi_{1,\alpha}}_{\tau\otimes\sigma}$ is not identically zero is a single coset $\alpha_0\cdot \mathrm{Nm}_{E/F} (E^\times)$.
\item For any $\alpha$ in the coset $\alpha_0\cdot \mathrm{Nm}_{E/F} (E^\times)$, the twisted automorphic descent $\TD_{\psi_{1,\alpha}} (\CE_{\tau\otimes\sigma})$ is irreducible, and has the property that 
\begin{equation*}
\JL(\TD_{\psi_{1,\alpha}} (\CE_{\tau\otimes\sigma}) ) \cong \tau. 
\end{equation*}
Here, $\JL$ denotes the Jacquet--Langlands transfer map. 

\item The norm class $\alpha_0$ is determined by the property that  $D_{\delta, \alpha_0}$ is ramified at a place $v$ of $F$ if and only if $\epsilon(\frac{1}{2}, \BC( \tau_v)\times \sigma_v, \psi_v) \sigma_v(-1) \eta_{E/F,v}(-1)=-1$. Here, $\BC$ denotes the base change map.
\end{enumerate}
\end{theorem}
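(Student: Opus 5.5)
Specializing Theorem~\ref{Intro-thm-Main} to $n=1$ and $r=1$ does most of the work. The condition that $\CE_{\tau\otimes\sigma}$ is nonzero is equivalent, via Proposition~\ref{prop-Eisenstein-pole}, to two facts together: $\omega_\tau=\omega_\sigma$, which is exactly the pole of $L(s,\tau,\wedge^2\otimes\omega^{-1})$ when $n=1$, and $L(\frac12,\sigma\times\tau\otimes\omega_\sigma^{-1})\neq0$. So all hypotheses of Theorem~\ref{Intro-thm-Main} hold.

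\emph{Part (1).} Part (3) of Theorem~\ref{Intro-thm-Main} gives some $\alpha_0$ with $\CE^{\psi_{1,\alpha_0}}_{\tau\otimes\sigma}\neq0$. I next show that the coefficient depends only on $\alpha$ modulo $\mathrm{Nm}_{E/F}(E^\times)$. For $t\in E^\times=\GSpin_2^\delta(F)$, an element of the Levi $\GL_1\times\GSpin_4^\delta$ (a torus element together with $t$) conjugates $\psi_{1,\alpha}$ to $\psi_{1,\alpha\mathrm{Nm}(t)}$. Conversely, if $\alpha'\notin\alpha\,\mathrm{Nm}(E^\times)$, the quaternion algebras differ, so for some place $v$ exactly one of $D_{\delta,\alpha},D_{\delta,\alpha'}$ is ramified at $v$. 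I then use the local uniqueness of the Bessel model in the $n=1$ case, namely local GGP for $(\GSpin_3,\GSpin_2)$, which is the Tunnell--Saito dichotomy. Through the unramified computation and the local descent, $\pi_v$ is tied to $\tau_v$, and Tunnell--Saito allows only one of the two inner forms to carry the $\sigma_v$-period. Hence the nonvanishing set is a single coset.

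\emph{Part (2).} By Theorem~\ref{Intro-thm-Main}(2)--(4), $\TD_{\psi_{1,\alpha}}$ is a multiplicity-free sum of cuspidal representations $\pi_i$ of $D^\times_{\delta,\alpha}(\A)$, or of $\GL_2(\A)$ in the split case, each nearly equivalent to $\tau$. Transferring by the already established $\JL$ and applying strong multiplicity one for $\GL_2$, I get $\JL(\pi_i)\cong\tau$ for every $i$. Since $\JL$ is injective and multiplicity one holds for $D^\times$, all the $\pi_i$ are isomorphic. Multiplicity-freeness then forces a single summand, which gives irreducibility. If one prefers not to invoke the known JL correspondence, near-equivalence plus local-global compatibility can be obtained at the ramified places from the local Bessel functionals. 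I would follow \cite{JiangLiuXuZhang2016} and cite strong multiplicity one together with the injectivity of $\JL$.

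\emph{Part (3), the main obstacle.} The remaining task is to pin down the ramification set of $D_{\delta,\alpha_0}$. The plan is to show that at each $v$, the component $\pi_v$ is the member of the local Vogan packet $\{\tau_v,\JL_v^{-1}(\tau_v)\}$ that admits a nonzero $(\sigma_v\omega_\sigma^{-1}\text{-twisted})$ Bessel functional. This follows from the nonvanishing of the global coefficient, by factoring it locally as in the unfolding argument. The key input is then Tunnell--Saito in its $\GL_2$ form with central character twist. It says that $\Hom_{E_v^\times}(\pi_v,\sigma_v)\neq0$ on $D_v^\times$ exactly when the sign $\epsilon(\frac12,\BC(\tau_v)\times\sigma_v,\psi_v)\,\sigma_v(-1)\eta_{E/F,v}(-1)$ equals $-1$. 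The delicate point is normalizing this sign correctly, since $\sigma$ here is a character of $E^\times$ whose restriction to $F^\times$ is $\omega$, rather than the trivial central-character normalization. I would check it against the twist $\tau\otimes\omega_\sigma^{-1}$ using the identity $\epsilon(\frac12,\BC(\tau\otimes\chi)\times\sigma)=\epsilon(\frac12,\BC(\tau)\times\sigma\chi_E)$.
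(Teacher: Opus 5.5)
The genuine gap is in Part (1), in the step claiming that the nonvanishing set is stable under $\alpha\mapsto\alpha\,\mathrm{Nm}_{E/F}(t)$. No element of the Levi does this.

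The Levi $L_1=\GL_1\times\GSpin(W_1)$ acts on the abelian group $N_1\cong W_1$ through its image $\GL_1\times\SO(W_1)$ in $\SO(V)$. So it sends $\psi_{1,w_0}$ to $\psi_{1,w_0'}$ with $w_0'=a^{\pm1}\,g\,w_0$, where $g\in\SO(W_1)$. Hence $q_V(w_0',w_0')\in q_V(w_0,w_0)\,(F^\times)^2$.

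In particular, $t\in E^\times=\GSpin(V_0)(F)$ acts on $V_0$ by a norm-one element and fixes $y_\alpha$. Its Clifford norm $N(t)=\mathrm{Nm}_{E/F}(t)$ is not a similitude factor for the action on $V$.

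Since $Q_1=L_1N_1$ is the normalizer of $N_1$, the coefficients for $\alpha$ and $\alpha'$ are formally related only when $\alpha'/\alpha\in(F^\times)^2$. But $\mathrm{Nm}_{E/F}(E^\times)/(F^\times)^2$ is infinite. So invariance under the norm group is an arithmetic output of the theorem, not a symmetry you can use as input.

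What is missing is a sufficiency argument. The paper proceeds as follows.

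\begin{enumerate}
\item It expands the cusp form $\CE^{\psi_{1,\alpha}}_{\tau\otimes\sigma}(\phi)$ spectrally.
\item It identifies $\langle\CE^{\psi_{1,\alpha}}_{\tau\otimes\sigma}(\phi),\varphi_\pi\rangle$ with $\Res_{s=\frac12}\mathcal{Z}(s,\phi_{\tau\otimes\sigma},\bar\varphi_\pi,\psi_{1,\alpha})$.
\item It unfolds this to the Bessel period and uses the Euler product \eqref{eq-JL-eq4}.
\end{enumerate}

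This gives an if-and-only-if criterion: the residue is nonzero for suitable data exactly when the following three conditions hold.
\begin{enumerate}
\item $\JL(\pi)\cong\tau$. Then $L^S(s+\frac12,\overline{\pi}\times\tau)$ has a simple pole at $s=\frac12$, the denominator $L$-functions are holomorphic and nonzero there, and the ramified local integrals can be made nonzero.
\item $L(\frac12,\BC(\tau)\otimes\sigma^{-1})\neq0$. This is automatic from $\CE_{\tau\otimes\sigma}\neq0$ via \eqref{eq-JL-L-identity}.
\item The Tunnell--Saito ramification condition holds.
\end{enumerate}

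These conditions depend only on the isomorphism class of $D_{\delta,\alpha}$ together with its embedded $E$, which is well defined by Skolem--Noether. So the $\pi$ obtained at $\alpha_0$ transports to every $\alpha\in\alpha_0\,\mathrm{Nm}_{E/F}(E^\times)$ and forces $\CE^{\psi_{1,\alpha}}_{\tau\otimes\sigma}\neq0$ there.

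Without this step, Part (2) is also incomplete. It asserts $\JL(\TD_{\psi_{1,\alpha}}(\CE_{\tau\otimes\sigma}))\cong\tau$, and in particular nonvanishing, for every $\alpha$ in the coset.

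The rest of your plan is fine once made precise.
\begin{itemize}
\item The necessity direction runs: a summand $\pi$ gives, via the unfolding, a nonzero $\mathcal{P}(\varphi_\pi,\sigma^{-1})$; this gives nonzero local $\Hom$ spaces at every $v$; then Tunnell--Saito fixes the ramification of $D_{\delta,\alpha}$, hence $\alpha$ modulo norms. To know $\JL(\pi_v)\cong\tau_v$ at ramified places, route this through global $\JL$ plus strong multiplicity one. ``The local descent'' is only computed at unramified places and will not give it.
\item Your irreducibility argument, via injectivity of $\JL$ and multiplicity-freeness, matches the paper's in substance.
\end{itemize}
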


By choosing $\delta$ and $\sigma$ appropriately, one can recover all infinitely-dimensional cuspidal automorphic representations of $D^\times(\A)$, as follows. 

\begin{theorem}(Theorem~\ref{thm-JL-2})
\label{Intro-thm-JL-2}	
Let $\delta\in F^\times$ be such that $-\delta\not\in (F^\times)^2$. Let $D$ be a quaternion division algebra containing the quadratic extension $E=F(\sqrt{-\delta})$ of $F$. For any given infinite-dimensional irreducible cuspidal automorphic representation $\pi$ of $D^\times(\A)$, there exists a character $\sigma$ of $\GSpin_2^{\delta}(\A)$ such that the residual representation $\CE_{\JL(\pi)\otimes \sigma}$ is non-zero, and an element $\alpha\in F^\times$ such that $D_{\delta,\alpha}\cong D$ and 
\begin{equation*}
	\TD_{\psi_{1,\alpha}} (\CE_{\JL(\pi)\otimes\sigma})\cong \pi.
\end{equation*}
Moreover, any such $\sigma$ satisfies the period condition 
\begin{equation*}
	\mathcal{P}(\varphi_\pi, \sigma^{-1}) \not=0, \quad \text{ for some }\varphi_\pi \in V_\pi,
\end{equation*}
which is equivalent to the conditions that
\begin{equation*}
L(\frac{1}{2}, \BC(\JL(\pi))\otimes\sigma^{-1})\not=0 \text{ and } \Hom_{\GSpin_2^{\delta}(F_v)} (\pi_v,\sigma_v)\not=0 \text{ for all }v.	
\end{equation*}
Here, $\mathcal{P}(\varphi_\pi, \sigma^{-1})$ denotes the Bessel period for $(\varphi_\pi,\sigma^{-1})$ defined in \eqref{eq-JL-eqBesselPeriod}. 
\end{theorem}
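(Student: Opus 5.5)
The idea is to run Theorem~\ref{Intro-thm-JL} (the $n=1$ case of Theorem~\ref{Intro-thm-Main}) in reverse, using $\sigma$ as the bridge between $\pi$ and its transfer.

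\textbf{Step 1: choose $\sigma$.} Start from $\pi$ on $D^\times(\A)$ and set $\tau=\JL(\pi)$. This is an irreducible unitary cuspidal representation of $\GL_2(\A)$, since $\pi$ is infinite-dimensional. Since $E\subset D$, we have $E^\times\subset D^\times$ as a maximal torus, which we identify with $\GSpin_2^\delta$. We need a Hecke character $\sigma$ of $E^\times\bs\A_E^\times$ with $\sigma|_{\A^\times}=\omega_\pi$ such that the toric (Bessel) period $\mathcal{P}(\varphi_\pi,\sigma^{-1})=\int_{\A^\times E^\times\bs\A_E^\times}\varphi_\pi(t)\sigma(t)\,dt$ is nonzero for some $\varphi_\pi\in V_\pi$. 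Existence follows by Fourier expansion along the compact abelian group $\A^\times E^\times\bs\A_E^\times$. The restriction of a nonzero $\varphi_\pi$ to $\A_E^\times$ is a nonzero continuous function transforming by $\omega_\pi$ under $\A^\times$. Hence it has a nonzero Fourier coefficient against some character $\sigma^{-1}$ extending $\omega_\pi^{-1}$. If the restriction vanishes, translate $\varphi_\pi$ by some $g$ first.

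\textbf{Step 2: Waldspurger.} Apply Waldspurger's period formula (in its $\GL_2$/$\GSpin$ form, with central character $\omega_\sigma=\omega_\pi$). It shows that $\mathcal{P}\neq 0$ is equivalent to $L(\tfrac12,\BC(\JL(\pi))\otimes\sigma^{-1})\neq0$ together with $\Hom_{E_v^\times}(\pi_v,\sigma_v)\neq0$ for all $v$. The local conditions are those of Tunnell--Saito, which are expressed through $\epsilon(\tfrac12,\BC(\tau_v)\times\sigma_v,\psi_v)\sigma_v(-1)\eta_{E/F,v}(-1)$. This gives the final equivalence claimed in the theorem. The same equivalence holds for any $\sigma$ with nonvanishing period, which gives the ``moreover'' clause.

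\textbf{Step 3: nonvanishing of the residue.} Since $\tau$ is a dihedral-free or general cuspidal representation of $\GL_2$, we have $\wedge^2\tau=\omega_\tau$. So $L(s,\tau,\wedge^2\otimes\omega^{-1})$ has a pole at $s=1$ with $\omega=\omega_\tau=\omega_\pi=\omega_\sigma$. Together with $L(\tfrac12,\sigma\times\tau\otimes\omega_\sigma^{-1})\neq0$, Proposition~\ref{prop-Eisenstein-pole} then shows that $\CE_{\tau\otimes\sigma}\neq0$. Here one checks that $L(s,\BC(\tau)\otimes\sigma^{-1})$ agrees with $L(s,\sigma\times\tau\otimes\omega_\sigma^{-1})$, using $\tau^\vee\cong\tau\otimes\omega_\tau^{-1}$ and the relation between $\sigma^{-1}$ and $\sigma$ under $\mathrm{Gal}(E/F)$ twisted by $\omega$.

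\textbf{Step 4: match the coset and identify the descent.} Theorem~\ref{Intro-thm-JL}(1),(2) now give a coset $\alpha_0\mathrm{Nm}_{E/F}(E^\times)$ for which $\TD_{\psi_{1,\alpha}}(\CE_{\tau\otimes\sigma})$ is irreducible with $\JL$-transfer $\tau$. Part (3) identifies the ramification of $D_{\delta,\alpha_0}$ by the local epsilon sign. By Step 2, that sign equals $-1$ exactly where $\pi_v$ lives on the nonsplit quaternion algebra, namely at the places where $D$ ramifies. Since a quaternion algebra is determined by its ramification set, $D_{\delta,\alpha_0}\cong D$. Finally, the descent and $\pi$ are cuspidal representations of the same $D^\times(\A)$ with the same $\JL$-image $\tau$. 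By strong multiplicity one for $D^\times$ (injectivity of $\JL$), they are isomorphic.

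The main obstacle is the local bookkeeping in Step 2 and Step 4. One has to verify that the Tunnell--Saito criterion, written for $(\pi_v,\sigma_v)$ with the paper's normalization of $\sigma$ versus $\sigma^{-1}$, agrees exactly with the sign $\epsilon\cdot\sigma_v(-1)\eta_{E/F,v}(-1)$ in Theorem~\ref{Intro-thm-JL}(3).
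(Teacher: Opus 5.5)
Your Steps 1--4 follow the paper's proof of the existence part essentially step by step. You pick $\sigma$ from the spectral decomposition of $\varphi_\pi$ restricted to $\GSpin_2^\delta(\A)$ and pass to $L$-values and local $\Hom$ spaces via Waldspurger/Yuan--Zhang--Zhang. You get $\CE_{\JL(\pi)\otimes\sigma}\neq0$ from $\wedge^2\tau=\omega_\tau$ and the $L$-value identity. You then compare ramification sets through Theorem~\ref{thm-JL}(3) and Tunnell--Saito, and finish with injectivity of $\JL$. There is one slip in Step 1. With $\sigma|_{\A^\times}=\omega_\pi$, your integrand $\varphi_\pi(t)\sigma(t)$ is not $\A^\times$-invariant. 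The period in \eqref{eq-JL-eqBesselPeriod} is $\int\varphi_\pi(t)\sigma^{-1}(t)\,dt$, so $\sigma$ should be a character \emph{occurring} in $\varphi_\pi|_{\A_E^\times}$.

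The genuine gap is the ``moreover'' clause, which you have misread as a restatement of the Waldspurger equivalence. It is a necessity statement. Take \emph{any} $\sigma$ with $\CE_{\JL(\pi)\otimes\sigma}\neq0$ and $\TD_{\psi_{1,\alpha}}(\CE_{\JL(\pi)\otimes\sigma})\cong\pi$ for some $\alpha$ with $D_{\delta,\alpha}\cong D$; one must show $\mathcal{P}(\varphi_\pi,\sigma^{-1})\neq0$. Such a $\sigma$ need not come from your Step 1, so nothing you wrote applies to it.

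The paper gets this from the proof of Theorem~\ref{thm-JL}. If $\TD_{\psi_{1,\alpha}}\cong\pi$, there is a non-zero pairing $\langle\CE^{\psi_{1,\alpha}}_{\JL(\pi)\otimes\sigma}(\phi),\varphi_\pi\rangle=\Res_{s=\frac12}\mathcal{Z}(s,\phi_{\JL(\pi)\otimes\sigma},\bar\varphi_\pi,\psi_{1,\alpha})$. The unfolding \eqref{eq-JL-eq2} (equivalently $C_0=1$ in \eqref{eq-JL-eq4}) makes this impossible if the Bessel period vanishes identically.

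Alternatively, you can run your own Steps 3--4 backwards:
\begin{enumerate}
\item $\CE_{\JL(\pi)\otimes\sigma}\neq0$ gives $L(\frac12,\BC(\JL(\pi))\otimes\sigma^{-1})\neq0$, by Proposition~\ref{prop-Eisenstein-pole} and the $L$-function identity.
\item $\TD_{\psi_{1,\alpha}}\neq0$ puts $\alpha$ in the coset of Theorem~\ref{thm-JL}(1).
\item Part (3) of that theorem, together with $D_{\delta,\alpha}\cong D$, forces the sign $\epsilon(\frac12,\BC(\tau_v)\times\sigma_v,\psi_v)\sigma_v(-1)\eta_{E/F,v}(-1)$ to be $-1$ exactly at the places where $D$ ramifies.
\item By Tunnell--Saito, every local $\Hom$ space is then non-zero, and Waldspurger gives the non-vanishing of the period.
\end{enumerate}
Either way, this direction needs an argument your proposal does not supply.
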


\subsection{Application to the global Gan--Gross--Prasad conjecture for $(\GSpin_{2n+1}, \GSpin_2^\delta)$}
\label{subsection-Intro-application-GGP}
We present another application of Theorem~\ref{Intro-thm-Main}, namely to the global GGP conjecture for $\GSpin$ groups. This conjecture was first proposed by Gross and Prasad in \cite{GrossPrasad1992, GrossPrasad1994} for special orthogonal groups, and was subsequently reformulated by Gan, Gross, and Prasad in \cite{GanGrossPrasad2012} in full generality for all classical groups and metaplectic groups. The global GGP conjecture has been extensively studied through various different techniques. The approach using automorphic descent originates from \cite{GinzburgJiangRallis2004}, and more recent attack along this line include \cite{JiangZhang2020Annals, JiangLiuXu2020, LiuXu2023}.

The $\GSpin$ analogue of the global GGP conjecture can be formulated in a manner similar to  the special orthogonal group case in \cite{GanGrossPrasad2012}, after taking central characters into consideration.
We first recall the conjecture for $\GSpin$ groups, following \cite{GanGrossPrasad2012}. 
Let $W$ be a non-degenerate quadratic space over $F$ and let $W_0\subset W$ be a non-degenerate subspace such that the orthogonal complement $W_0^\perp$ is split of dimension  $2r+1$. Let $N_r$ be the unipotent radical of the standard parabolic subgroup of $\GSpin(W)$ stabilizing a complete flag of isotropic subspaces determined by $W_0^\perp$. Let $\psi_{r,w_0}$ be a character of $N_r(\A)$ as in \eqref{eq-psi-ell-w_0-global}, depending on an anisotropic vector $w_0$. Then $\GSpin(W_0)(\A)$ stabilizes the character $\psi_{r,w_0}$ via the adjoint action, and the subgroup $\GSpin(W_0) \ltimes N_r$ is a Bessel subgroup of $\GSpin(W)$.

Let $\pi$ be an automorphic representation on $\GSpin(W)(\A)$, and $\pi^\prime$ a cuspidal automorphic representation on $\GSpin(W_0)(\A)$, such that $\omega_{\pi}\omega_{\pi^\prime}=1$. Here, $\omega_{\pi}$ and $\omega_{\pi^\prime}$ are the restrictions of the central characters of $\pi$ and $\pi^\prime$ to $\ker(\pr)(\A)=\GL_1(\A)$ respectively, where  $\pr: \GSpin(V)\to \SO(V)$ is the projection map, and $\ker(\pr)=\GL_1$ lies in both the centers of $\GSpin(W)$ and $\GSpin(W_0)$.
Let $\varphi\in V_{\pi}$ and $\varphi^\prime\in V_{\pi^\prime}$. 
 The Bessel period of $(\varphi,\varphi^\prime)$, with respect to the character $\psi_{r,w_0}$, is
\begin{equation*}
\mathcal{P}(\varphi, \varphi^\prime):=\int_{\ker(\pr)(\A) \GSpin(W_0)(F)\backslash \GSpin(W_0)(\A)} \varphi^{N_r, \psi_{r, w_0}}(g) \varphi^\prime(g) dg,	
\end{equation*}
where
\begin{equation*}
	\varphi^{N_r, \psi_{r, w_0}}(g)=\int_{N_r(F)\backslash N_r(\A)} \varphi(ug) \psi_{r,w_0}^{-1}(u)du.
\end{equation*}
Since $\omega_\pi \omega_{\pi^\prime}=1$, the integrand $\varphi^{N_r, \psi_{r, w_0}}(g) \varphi^\prime(g)$ in $\mathcal{P}(\varphi, \varphi^\prime)$ is invariant under $\ker(\pr)(\A)$. Moreover, the integral defining $\mathcal{P}(\varphi, \varphi^\prime)$ is absolutely  convergent since $\varphi^\prime$ is a cusp form. 

The $\GSpin$ analogue of \cite[Conjecture 24.1]{GanGrossPrasad2012} is the following.
 
\begin{conjecture}(Global GGP conjecture, \cite{GanGrossPrasad2012})
\label{GGP-Conjecture}
Let $\pi$, $\pi^\prime$ be irreducible tempered cuspidal automorphic representations of $\GSpin(W)(\A)$ and $\GSpin(W_0)(\A)$ respectively, such that $\omega_{\pi}\omega_{\pi^\prime}=1$, which occur with multiplicity one in the discrete spectrum. The following are equivalent.
\begin{enumerate}
\item The Bessel period for $(\pi, \pi^\prime)$ is non-zero, i.e.,  $\mathcal{P}(\varphi, \varphi^\prime)\not=0$ for some $\varphi\in V_\pi$ and $\varphi^\prime\in V_{\pi^\prime}$.
\item The central $L$-value $L(\frac{1}{2}, \pi\times \pi^\prime)\not=0$.
\end{enumerate}
\end{conjecture}

\begin{remark}
Some low rank cases of Conjecture~\ref{GGP-Conjecture} are considered in \cite{PrasadTakloo-Bighash2011} (in view of the isomorphism $\GSp_4\cong \GSpin_5$) and \cite{Emory2020}.
\end{remark}

\begin{remark}
As explained in \cite[\S 26]{GanGrossPrasad2012}, one can reformulate Conjecture~\ref{GGP-Conjecture} in terms of the (conjectural) endoscopic classification of representations for $\GSpin$ groups and Vogan $L$-packets,, in a form analogous to \cite[Conjecture 26.1]{GanGrossPrasad2012}. In this case, the condition (1) in Conjecture~\ref{GGP-Conjecture} can be replaced by the following condition:
\begin{enumerate}
\item[(1')] There exists a pair $(\Pi, \Pi^\prime)$ in the global Vogan packet associated to $(\pi, \pi^\prime)$ such that the Bessel period for $(\Pi, \Pi^\prime)$ is non-zero.
\end{enumerate}
\end{remark}

In our previous work \cite{Yan2025GGP}, we obtained a certain case of Conjecture~\ref{GGP-Conjecture} for $\GSpin$ groups, using the Rankin--Selberg method. We recall the following result from \cite{Yan2025GGP}, specialized to the pair $(\GSpin_{2n+1}, \GSpin_2^\delta)$. Here, $\GSpin_{2n+1}$ is $F$-split.

\begin{theorem}\cite{Yan2025GGP}
\label{Intro-thm-GGP-forward}
Let $\delta\in F^\times$ be such that $-\delta\not\in (F^\times)^2$. 
Let $\sigma$ be an automorphic character of $\GSpin_2^{\delta}(\A)$.
Let $\pi$ be an irreducible cuspidal automorphic representation of $\GSpin_{2n+1}(\A)$ with central character $\omega_{\sigma}^{-1}$ that admits a weak functorial transfer to an isobaric sum automorphic representation $\tau$ of $\GL_{2n}(\A)$, and assume that $\tau$ is also a weak functorial transfer of an irreducible generic cuspidal representation of $\GSpin_{2n+1}(\A)$. If the Bessel period of $(\pi, \sigma)$ is non-zero, then the central $L$-value $L(\frac{1}{2}, \pi\times\sigma)$ is non-zero.
\end{theorem}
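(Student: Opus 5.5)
This result is quoted from \cite{Yan2025GGP}, so I only sketch how I would prove it: by the Rankin--Selberg method, unfolding a global zeta integral to the Bessel period. Take the generic cuspidal $\pi_0$ of $\GSpin_{2n+1}(\A)$ whose weak transfer is $\tau$. The first idea is to embed $\GSpin_{2n+1}\times \GSpin_2^\delta$ into a larger $\GSpin$ group and integrate $\varphi_\pi$ against an Eisenstein series induced from $\sigma$ twisted by $|\det|^{s}$. This fails, because it produces $L(s,\pi\times\sigma)$ only after unfolding to a Whittaker-type model, and $\pi$ need not be generic. So I would reverse the roles, in the spirit of \cite{GinzburgJiangRallis2004}, and use the residual representation $\CE_{\tau\otimes\sigma^{-1}}$ (or $\CE_{\tau\otimes\sigma}$) of $H^\delta=\GSpin_{4n+2}^\delta$ constructed above. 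Its existence uses that $\tau$ comes from the generic $\pi_0$, so each $L(s,\tau_i,\wedge^2\otimes\omega^{-1})$ has a pole at $s=1$. I would then consider the Bessel-type period
\begin{equation*}
\mathcal{Z}(\varphi_\pi,f)=\int \varphi_\pi(g)\, f^{\psi_{n,\alpha}}(g)\, dg, \qquad f\in \CE_{\tau\otimes\sigma^{\pm1}},
\end{equation*}
over $\ker(\pr)(\A)\GSpin_{2n+1}(F)\bs \GSpin_{2n+1}(\A)$. This integral converges because $\varphi_\pi$ is cuspidal.

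The argument proceeds in three steps, in this order.

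\textbf{Step 1.} Replace $f$ by the Eisenstein series $E(g,s,\phi_{\tau\otimes\sigma})$ and unfold. Following the double-coset analysis of \cite{JiangLiuXuZhang2016} adapted to $\GSpin$, the only surviving orbit should give an integral involving the Bessel coefficient of $\varphi_\pi$ along $\GSpin_2^\delta\ltimes N_{n-1}$. This coefficient is paired with a Fourier--Jacobi/Bessel functional of the section $\phi_{\tau\otimes\sigma}$ along the $\GL_{2n}\times\GSpin_2^\delta$ Levi.

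\textbf{Step 2.} Apply uniqueness of local Bessel models (multiplicity one for $\GSpin$ pairs) to factor the unfolded integral as
\begin{equation*}
\mathcal{P}(\varphi_\pi,\sigma)\cdot \frac{L^S(s+\tfrac12,\pi\times\sigma)}{\text{normalizing factor of }E}\cdot \prod_{v\in S}Z_v(s).
\end{equation*}
Here the unramified computation comes from the Casselman--Shalika or Bessel-function formula. The central-character condition $\omega_\pi=\omega_\sigma^{-1}$ is exactly what makes $\ker(\pr)$ act trivially.

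\textbf{Step 3.} Take iterated residues at $s=\frac12$ of order $r$. The left side becomes $\mathcal{Z}(\varphi_\pi,f)$ with $f\in\CE$. On the right, the normalizing factor contains $\prod_i L(2s,\tau_i,\wedge^2\otimes\omega^{-1})$, whose poles are matched by the poles of the Eisenstein series (Proposition~\ref{prop-Eisenstein-pole}), leaving $L(\frac12,\pi\times\sigma)$ times a residue of the normalizing quotient. Choosing the local data so that each $Z_v(\frac12)\neq0$ and the period is nonzero makes $\mathcal{Z}\neq0$. Independently, the residue only exists when $L(\frac12,\sigma\times\tau\otimes\omega_\sigma^{-1})$ is finite and nonzero, and this equals $L(\frac12,\pi\times\sigma)$ since $\pi$ transfers to $\tau$. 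Together these give $L(\frac12,\pi\times\sigma)\ne0$.

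I expect the main obstacle to be Step 1 combined with the residue in Step 3. The orbit analysis in the $\GSpin$ setting must show that all non-open orbits contribute zero, using cuspidality and the vanishing of deeper descents (Theorem~\ref{Intro-thm-Main}(1)). One must also justify exchanging the residue with the unfolding, which requires controlling the local zeta integrals at ramified and archimedean places so that they are holomorphic and can be made nonzero at $s=\frac12$.
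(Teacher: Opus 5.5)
There is a genuine gap. Your Steps 1--2 use the right tools: unfolding to $\int \mathcal{J}(s,\phi)(\epsilon h)\,\mathcal{P}(\varphi_\pi,\cdot)(h)\,dh$ as in \eqref{eq-JL-eq2}, then factoring via uniqueness of Bessel models. But you have misidentified the Euler product, and this makes Step 3 circular.

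The global integral does not produce $L^S(s+\frac12,\pi\times\sigma)$. As in \eqref{eq-JL-eq4}, up to the period and the ramified local integrals, it equals
\[
\frac{L^S(s+\frac12,\pi\times\tau)}{L^S(s+1,\sigma\times\tau\otimes\omega_\pi)\,L^S(2s+1,\tau,\wedge^2\otimes\omega_\pi)}.
\]
The central value $L(\frac12,\pi\times\sigma)$ does not occur here at all. The denominator is holomorphic and non-zero at $s=\frac12$ (it is evaluated at $\frac32$ and $2$), so no cancellation of $\wedge^2$-poles happens there. The factor $L(2s,\tau,\wedge^2\otimes\omega^{-1})$ you mention belongs to the constant term of $E$, not to the zeta integral. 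Even with your formula, $s=\frac12$ would give $L^S(1,\pi\times\sigma)$, not the central value.

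Step 3 then starts from $f\in\CE_{\tau\otimes\sigma^{\pm1}}$. By Proposition~\ref{prop-Eisenstein-pole}, the non-vanishing of that residual representation is equivalent to the conclusion you want. Likewise, your claim that suitable local data make the residue non-zero needs the leftover global factor $L(\frac12,\pi\times\sigma)$ to be non-zero, which is again the conclusion. Invoking Theorem~\ref{Intro-thm-Main}(1) in Step 1 is circular for the same reason, since that theorem assumes the central value is non-zero. It is also unnecessary: unfolding the Eisenstein series for $\Re(s)\gg0$ uses only cuspidality of $\varphi_\pi$ and the double coset analysis.

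The missing idea is that the order-$r$ pole comes from the numerator. Since $\pi$ weakly transfers to $\tau=\tau_1\boxplus\cdots\boxplus\tau_r$, with the $\tau_i$ distinct and self-dual up to twist, $L^S(s+\frac12,\pi\times\tau)$ has a pole of order exactly $r$ at $s=\frac12$. This requires the appropriate choice of $\sigma^{\pm1}$ and of $\varphi_\pi$ versus $\bar\varphi_\pi$, as in Section~\ref{section-application-JacquetLanglands}. The argument then runs as follows.
\begin{enumerate}
\item Assume the Bessel period is non-zero, and choose the data at $S$ so that $\prod_{v\in S}\mathcal{Z}_v$ is holomorphic and non-zero at $\frac12$. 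Then the integral $\mathcal{Z}(s)$ built from the Eisenstein series itself, not its residue, has a pole of order $r$ at $s=\frac12$.
\item Since $\varphi_\pi$ is rapidly decreasing, $E(\cdot,s,\phi)$ must have a pole of order at least $r$ there.
\item The constant-term computation behind Proposition~\ref{prop-Eisenstein-pole} bounds this order by $r$, with equality if and only if $L(\frac12,\sigma^{\pm1}\times\tau\otimes\omega_\sigma^{\mp1})\neq0$.
\item This value equals $L(\frac12,\pi\times\sigma)$ via the transfer.
\end{enumerate}
So the transfer hypothesis is needed twice: to create the pole of $L^S(s+\frac12,\pi\times\tau)$, and to identify the central values. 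Your sketch uses it only for the second.
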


\begin{remark}
The existence of weak functorial transfer for irreducible cuspidal automorphic representations of split $\GSpin$ groups is known, due to the work of Cai, Friedberg, and Kaplan in \cite{CaiFriedbergKaplan2024}. 
\end{remark}

\begin{remark}
Shahidi's conjecture \cite{Shahidi1990}  asserts that each tempered $L$-packet contains a generic member (see also \cite{Shahidi2011}).
Thus, the condition that $\pi$ is tempered is conjecturally equivalent to the condition that the weak functorial transfer of $\pi$ is also a weak functorial transfer of an irreducible cuspidal automorphic representation of $\GSpin_{2n+1}(\A)$.
\end{remark}

\begin{remark}
Theorem~\ref{Intro-thm-GGP-forward} confirms the $(1)\Rightarrow (2)$ direction of the global GGP conjecture for $(\GSpin_{2n+1}, \GSpin_2^\delta)$.
\end{remark}

In this paper, we prove the following theorem, as another application of Theorem~\ref{Intro-thm-Main}.

\begin{theorem}(Theorem~\ref{thm-GGP-converse})
\label{Intro-thm-GGP-converse}
Let $\delta\in F^\times$ be such that $-\delta\not\in (F^\times)^2$. 
Let $\sigma$ be an automorphic character of $\GSpin_2^{\delta}(\A)$.
Let $\pi$ be an irreducible cuspidal automorphic representation of $\GSpin_{2n+1}(\A)$ with central character $\omega_{\sigma}^{-1}$ that admits a weak functorial transfer to an isobaric sum automorphic representation $\tau$ of $\GL_{2n}(\A)$, and assume that $\tau$ is also a weak functorial transfer of an irreducible generic cuspidal representation of $\GSpin_{2n+1}(\A)$.
If
\begin{equation*}
L(\frac{1}{2}, \pi\times\sigma )\not=0,	
\end{equation*}
then the following holds.
\begin{enumerate}
\item There exist some $\alpha\in F^\times$ such that the twisted automorphic descent $\TD_{\psi_{n,\alpha}}(\CE_{\tau\otimes\sigma^{-1}})$ to $\GSpin^{\delta,\alpha}_{2n+1}(\A)$ is non-zero.
\item The twisted automorphic descent has a multiplicity-free direct sum decomposition
\begin{equation*}
\TD_{\psi_{n,\alpha}}(\CE_{\tau\otimes\sigma^{-1}})= \bigoplus_{i} \pi_\alpha^{(i)},	
\end{equation*}
where each $\pi_\alpha^{(i)}$ is an irreducible cuspidal automorphic representation of $\GSpin^{\delta,\alpha}_{2n+1}(\A)$ with central character $\omega_{\sigma}^{-1}$, and each $\pi_\alpha^{(i)}$ is nearly equivalent to $\pi$.
\item 	The Bessel period for $(\pi_{\alpha}^{(i)}, \sigma)$ is non-zero for each $\pi_{\alpha}^{(i)}$.
\end{enumerate} 
\end{theorem}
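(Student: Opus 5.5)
The plan is to reduce Theorem~\ref{Intro-thm-GGP-converse} to Theorem~\ref{Intro-thm-Main}, applied to the pair $(\tau,\sigma^{-1})$, and then to compute the Bessel period of the descent through a period identity.

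\textbf{Checking the hypotheses of Theorem~\ref{Intro-thm-Main}.} By assumption, $\tau$ is the weak transfer of a generic cuspidal representation $\pi_0$ of split $\GSpin_{2n+1}(\A)$ whose central character equals that of $\pi$, namely $\omega_\sigma^{-1}$. By the Hundley--Sayag / Cai--Friedberg--Kaplan description of the image of the generic transfer, we may write $\tau=\tau_1\boxplus\cdots\boxplus\tau_r$ with $\tau_i$ distinct unitary cuspidal representations such that $L(s,\tau_i,\wedge^2\otimes\omega^{-1})$ has a pole at $s=1$, where $\omega=\omega_{\sigma^{-1}}=\omega_\sigma^{-1}$. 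The twist convention here must be matched with the one used in the paper; in particular $\tau^\vee\cong\tau\otimes\omega^{-1}$.

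Next we translate the nonvanishing hypothesis. Using the unramified identification of Satake parameters, we get
\begin{equation*}
L(s,\pi\times\sigma)=L(s,\sigma\times\BC(\tau))=L(s,\sigma^{-1}\times\BC(\tau\otimes\omega_{\sigma^{-1}}^{-1}))
\end{equation*}
up to a contragredient. This uses $\tau\otimes\omega_\sigma\cong\tau^\vee$ together with the functional equation. Here one must be careful with the finitely many ramified places: either work with complete $L$-functions defined via $\tau$, or note that the partial and complete values agree in nonvanishing at $s=\tfrac12$, since the local factors have no zeros. With this in hand, Theorem~\ref{Intro-thm-Main}(3) gives some $\alpha$ with $\TD_{\psi_{n,\alpha}}(\CE_{\tau\otimes\sigma^{-1}})\neq0$, which proves (1).

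\textbf{Part (2).} Part (2) then follows from Theorem~\ref{Intro-thm-Main}(2)--(4). The descent is cuspidal and decomposes multiplicity-freely, and each summand $\pi^{(i)}_\alpha$ weakly transfers to $\tau$, hence is nearly equivalent to $\pi$. For the central character, the center $\ker(\pr)$ of $\GSpin_{4n+2}^\delta$ acts on $\CE_{\tau\otimes\sigma^{-1}}$ by the restriction of $\sigma^{-1}$, that is, by $\omega_\sigma^{-1}$. Fourier coefficients commute with the center, so every summand has central character $\omega_\sigma^{-1}$.

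\textbf{Part (3): the main obstacle.} For the Bessel period, I would compute, for $f\in\CE_{\tau\otimes\sigma^{-1}}$ and $\varphi=f^{\psi_{n,\alpha}}$, the period
\begin{equation*}
\mathcal P(\varphi,\sigma)=\int \varphi^{N,\psi}(g)\sigma(g)\,dg
\end{equation*}
over $\GSpin_2^\delta$ inside $\GSpin_{2n+1}^{\delta,\alpha}$. The inner coefficient $\varphi^{N,\psi}$ combined with the outer $\psi_{n,\alpha}$ is just the $\psi_{2n}$-Fourier coefficient of $f$ on $\GSpin_{4n+2}^\delta$ attached to $[4n-1,1^3]$-type data, with stabilizer $\GSpin_2^\delta$. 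So the quantity is a ``$\sigma$-period'' of the residue. Unfolding the residue of the Eisenstein series, as in \cite{JiangLiuXuZhang2016} and the Rankin--Selberg computation of \cite{Yan2025GGP}, should express this, for factorizable data, as the residue of
\begin{equation*}
\frac{L(s+\tfrac12,\sigma\times\tau\otimes\omega^{-1})}{\text{normalizing }L}\times(\text{local integrals}).
\end{equation*}
Its value at $s=\tfrac12$ is nonzero precisely when $L(\tfrac12,\ldots)\neq0$ and the local integrals can be made nonzero. This unfolding is the step I expect to be hardest. It requires exchanging residue and integral, controlling the convergence of the unfolded integral, and handling the Weyl-element orbit decomposition in the $\GSpin$ setting.

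A cleaner alternative route is to use the nonvanishing of the $[2n-1,1^2]$ coefficient from Theorem~\ref{Intro-thm-Main}(5)--(6). This nonvanishing is itself proved via that period, since $N_{2n}$ with $w_0\in V_0$ has stabilizer $\GSpin_1$-type. Decomposing along $\GSpin_2^\delta(F)\bs\GSpin_2^\delta(\A)$ by characters, the $\sigma$-isotypic part of $f^{\psi_{2n}}$ is nonzero; the residue computation pins down that exactly the character $\sigma$ contributes. Finally, since the descent is a multiplicity-free direct sum, the period being nonzero on the whole space does not immediately give nonvanishing on each summand. To get it for every $\pi^{(i)}_\alpha$, I would argue as in \cite{JiangLiuXuZhang2016}: the $\sigma$-Bessel functional on each summand, paired via the Petersson inner product with the descent, recovers the nonzero global integral, so no summand can have vanishing period.
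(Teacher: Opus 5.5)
Your parts (1) and (2) agree with the paper: the image-of-transfer description (which the paper takes from Asgari--Shahidi), the identity $L(\frac12,\sigma^{-1}\times\tau\otimes\omega_\sigma)=L(\frac12,\pi\times\sigma)$, and then Theorem~\ref{thm-Main} applied to $(\tau,\sigma^{-1})$. Part (3), however, has a genuine gap.

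\textbf{Why your route for (3) fails.} Your main route tries to compute the $\sigma$-period of the descended form $f^{\psi_{n,\alpha}}$ by unfolding a Fourier coefficient of the residue, and the coefficient you name is the wrong one. The composite of $\psi_{n,\alpha}$ with the inner $[2n-1,1^2]$-coefficient is attached to $[2n+1,2n-1,1^2]$ (cf.\ Theorem~\ref{thm-Main}(6)). The $\psi_{2n}$-coefficient you invoke, and for $n\ge2$ the $[4n-1,1^3]$-coefficient, are $\psi_\ell$-coefficients with $\ell>n$. These vanish identically on $\CE_{\tau\otimes\sigma^{-1}}$ by Corollary~\ref{cor-MainThm-Part1}, so your identification would produce zero. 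The same objection disposes of your ``cleaner alternative'', which decomposes $f^{\psi_{2n,w_0}}$ with $w_0\in V_0$. Beyond this, three further problems remain. No unfolding of such a period of a residue is available. The claim that ``exactly $\sigma$ contributes'' is the whole content of (3) and is only asserted. And, as you note yourself, nonvanishing on the full descent would not give nonvanishing on each $\pi^{(i)}_\alpha$.

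\textbf{The missing idea.} You never need a period of the residue. Fix a summand $\pi^{(i)}_\alpha$. Because it is a summand of the descent, there exist $\varphi\in V_{\pi^{(i)}_\alpha}$ and $\phi=\phi_{\tau\otimes\sigma^{-1}}$ such that $\int\varphi(g)\,\Res_{s=\frac12}E^{\psi_{n,\alpha}}(g,s,\phi)\,dg\neq0$. Here the integral is over $\ker(\pr)(\A)\GSpin^{\delta,\alpha}_{2n+1}(F)\bs\GSpin^{\delta,\alpha}_{2n+1}(\A)$. Hence the Rankin--Selberg integral \eqref{eq-application-GGP-eq1}, which is built from the Eisenstein series itself rather than its residue, is not identically zero in $s$. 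The unfolding in \cite{Yan2025GGP} rewrites this integral, for $\Re(s)\gg0$, as an adelic integral in which the Bessel period $\mathcal{P}(\varphi,\sigma)$ of the cusp form $\varphi$ on the small group appears as an inner integral. For $n=1$ this is \eqref{eq-JL-eq2} with $\sigma$ replaced by $\sigma^{-1}$. So if $\mathcal{P}(\cdot,\sigma)$ vanished on $V_{\pi^{(i)}_\alpha}$, the zeta integral would vanish identically, a contradiction.

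This argument is soft and works summand by summand. It needs no $L$-value analysis at $s=\frac12$ and no interchange of residues with unfolded integrals. Your final sentence points in this direction, but until you place $\mathcal{P}(\varphi,\sigma)$ inside the unfolding of this Eisenstein-series integral, it is not yet a proof.
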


As an immediate consequence of Theorem~\ref{Intro-thm-GGP-converse}, we obtain the following result on the  global GGP conjecture for $(\GSpin_{2n+1}, \GSpin_2^\delta)$.

\begin{theorem}
\label{Intro-thm-GGP-converse-concise}
Let $\delta\in F^\times$ be such that $-\delta\not\in (F^\times)^2$. 
Let $\sigma$ be an automorphic character of $\GSpin_2^{\delta}(\A)$.
Let $\pi$ be an irreducible cuspidal automorphic representation of $\GSpin_{2n+1}(\A)$ with central character $\omega_{\sigma}^{-1}$ that admits a weak functorial transfer to an isobaric sum automorphic representation $\tau$ of $\GL_{2n}(\A)$, and assume that $\tau$ is also a weak functorial transfer of an irreducible generic cuspidal representation of $\GSpin_{2n+1}(\A)$.
If
\begin{equation*}
L(\frac{1}{2}, \pi\times\sigma )\not=0,	
\end{equation*}
then there exist some $\alpha\in F^\times$ and a cuspidal representation $\pi^\prime$ of $\GSpin_{2n+1}^{\delta, \alpha}(\A)$ with central character $\omega_{\sigma}^{-1}$ such that $\pi^\prime$ is nearly equivalent to $\pi$, and 
	the Bessel period of $(\pi^\prime, \sigma)$ is non-zero.
\end{theorem}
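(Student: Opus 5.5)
This statement is Theorem~\ref{Intro-thm-GGP-converse} with parts (2) and (3) dropped, so the plan is to deduce it directly from that theorem. The existence of a suitable $\alpha$ and of the descended representations does all the work.

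\textbf{Step 1: existence of $\alpha$.} The hypotheses on $\delta$, $\sigma$, $\pi$ and $\tau$, including the non-vanishing $L(\frac12,\pi\times\sigma)\neq0$, are exactly those of Theorem~\ref{Intro-thm-GGP-converse}. Part (1) of that theorem therefore gives some $\alpha\in F^\times$ for which $\TD_{\psi_{n,\alpha}}(\CE_{\tau\otimes\sigma^{-1}})$ is non-zero.

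\textbf{Step 2: choice of $\pi'$.} By part (2) of Theorem~\ref{Intro-thm-GGP-converse}, this descent decomposes as a multiplicity-free direct sum of irreducible cuspidal automorphic representations $\pi_\alpha^{(i)}$ of $\GSpin^{\delta,\alpha}_{2n+1}(\A)$. Each summand has central character $\omega_\sigma^{-1}$ and is nearly equivalent to $\pi$. Since the descent is non-zero, at least one summand exists, and we take $\pi'=\pi_\alpha^{(1)}$.

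\textbf{Step 3: the Bessel period.} Part (3) of Theorem~\ref{Intro-thm-GGP-converse} says the Bessel period of $(\pi',\sigma)$ is non-zero, which completes the proof.

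The only point needing a brief check is compatibility of central characters. We need $\omega_{\pi'}\omega_\sigma=1$ for the Bessel period of $(\pi',\sigma)$ to be defined as in Conjecture~\ref{GGP-Conjecture}. This holds because $\omega_{\pi'}=\omega_\sigma^{-1}$ and the restriction of $\sigma$ to $\ker(\pr)(\A)=\A^\times$ is $\omega_\sigma$. There is no real obstacle here; all substantive difficulty lies in Theorem~\ref{Intro-thm-GGP-converse} itself.
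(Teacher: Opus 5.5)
Your proposal is correct and matches the paper, which presents this result as an immediate consequence of Theorem~\ref{Intro-thm-GGP-converse}: take as $\pi'$ any irreducible summand $\pi_\alpha^{(i)}$ of the non-zero descent $\TD_{\psi_{n,\alpha}}(\CE_{\tau\otimes\sigma^{-1}})$. Your check that $\omega_{\pi'}\omega_\sigma=1$, so that the Bessel period is defined, is a harmless extra verification.
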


\begin{remark}
Theorem~\ref{Intro-thm-GGP-converse-concise} confirms the $(2)\Rightarrow (1^\prime)$ direction of the global GGP conjecture for $(\GSpin_{2n+1}, \GSpin_2^\delta)$. 
\end{remark}

\begin{remark}
The same method as in the proof of Theorem~\ref{Intro-thm-GGP-converse} applies to other cases of the global GGP conjecture for $\GSpin$ groups, which we will address in future work.	
\end{remark}

This paper is organized as follows. 
In Section~\ref{section-GSpin-groups}, we discuss the structure of $\GSpin$ groups.
In Section~\ref{section-twisted-auto-descent}, we discuss the construction of the twisted automorphic descent, including the residual representations and Bessel coefficients, and we restate the main theorem on twisted automorphic descent in Section~\ref{subsection-twisted-auto-descent}. In Section~\ref{section-vanishing-and-cuspidality}, we prove the vanishing property of the ``deeper" twisted automorphic descent, i.e., part (1) of Theorem~\ref{Intro-thm-Main}. We also prove the cuspidality, i.e., part (2) of Theorem~\ref{Intro-thm-Main}, using a formula for the constant term of the Bessel coefficients of the residual representation. This formula is proved in Section~\ref{section-certain-fourier-coef}. In Section~\ref{section-non-vanishing}, we prove parts (3), (5), (6) of Theorem~\ref{Intro-thm-Main}. Part (4) of Theorem~\ref{Intro-thm-Main} is proved in Section~\ref{section-functorial}. In Section~\ref{section-application-JacquetLanglands}, we prove Theorem~\ref{Intro-thm-JL} and Theorem~\ref{Intro-thm-JL-2}. Finally, in Section~\ref{section-application-GGP}, we prove Theorem~\ref{Intro-thm-GGP-converse}.

\subsection*{Acknowledgements}
I would like to thank Dihua Jiang and Hang Xue for insightful discussions related to this project, and Jim Cogdell for his support over the years.  
The author has been partially supported by a summer grant from the Department of Mathematics at the University of Arizona in 2026.

\section{The $\GSpin$ groups}
\label{section-GSpin-groups}
In this section we recall how the $\GSpin$ group is realized via the Clifford algebra. Our main reference is \cite{Shimura2004}, which defines the $\GSpin$ group over any field of characteristic not equal to 2. 
\subsection{The $\GSpin$ groups}
\label{subsection-GSpin-groups}

Let $V$ be a finite-dimensional quadratic space over a field $F$, which is either a number field or a local field of characteristic zero, equipped with a non-degenerate quadratic form $q_V$. We also use $q_V(\cdot, \cdot)$ to denote the induced symmetric bilinear form. The quadratic form $q_V$  gives rise to the orthogonal group and special orthogonal group:
\begin{equation*}
\begin{split}
\Orth(V) &=\{g\in \GL_F(V): q_V(gv)=q_V(v) \text{ for every $v\in V$}\}, \\
\SO(V) &=\{g\in \Orth(V):\det(g)=1\}.
\end{split}
\end{equation*}
The Clifford algebra $C(V)$ associated to $(V, q_V)$ is the quotient of the tensor algebra $T(V)=\bigoplus_{d=0}^{\infty} V^{\otimes d}$
by the two sided ideal generated by all $v\otimes v - q_V(v)$, for $v\in V$. In this paper, to ease notation, we also write $v_1v_2\cdots v_k$ for $v_1\otimes v_2\otimes \cdots \otimes v_k$. 

The inclusion map $V\to T(V)$ induces a canonical injection $\iota: V\to C(V)$. 
Let \( C^\pm(V) \) be the even and odd part of \( C(V) \), so that $C(V)=C^+(V)\oplus C^-(V)$. 
The Clifford algebra $C(V)$ has a canonical involution $*:C(V)\to C(V)$, which is given by reversing the order of products in the tensor algebra. 
We also define
\begin{equation*}
\alpha:C(V)\to C(V), \quad \alpha(v_{+}+v_{-})=v_{+}-v_{-}, 	
\end{equation*}
where $v_{+}\in C^{+}(V)$, and $v_{-}\in C^{-}(V)$. 
Moreover, for all $x\in C(V)$ we define the Clifford involution
\begin{equation*}
\overline{x}=(\alpha(x))^*=\alpha(x^*)
\end{equation*}
and the Clifford norm 
\begin{equation*}
N: C(V)\to C(V), \quad x\mapsto x \overline{x}.	
\end{equation*}
Note that $N(\lambda x)=\lambda^2 N(x)$ for all $\lambda\in F$.

The $\GPin$ group and $\GSpin$ group of $(V, q_V)$ are defined by
\begin{equation*}
\begin{split}
\GPin(V) &=\{g\in C(V)^\times: \alpha(g) V g^{-1}=V\},	\\
\GSpin(V) &=\GPin(V)\cap C^{+}(V).
\end{split}
\end{equation*}
Because $\alpha$ acts as identity on $C^+(V)$, we have the inclusion $\GSpin(V)\subset \GPin(V)$. In fact, we have $[\GPin(V):\GSpin(V)]=2$, and $\GSpin(V)$ is the identity component of $\GPin(V)$.

There is a natural projection map 
\begin{equation*}
\pr:\GPin(V)\to \Orth(V)	
\end{equation*} 
sending $g\in \GPin(V)$ to the map $v\mapsto \alpha(g)vg^{-1}$. 
 The kernel of this homomorphism consists of scalars, thus we get an exact sequence
\begin{equation*}
1\to F^\times \to \GPin(V)\to \Orth(V)\to 1.	
\end{equation*} 
Moreover, $\pr^{-1}(\SO(V))=\GSpin(V)$, and thus we have an exact sequence
\begin{equation*}
1\to F^\times \to \GSpin(V)\to \SO(V)\to 1,	
\end{equation*} 
and
a commutative diagram
\[
\begin{tikzcd}
1 \ar[r] & F^\times \ar[r] \ar[d, equals] & \mathrm{GSpin}(V) \ar[r] \ar[d, "\subseteq"] & \mathrm{SO}(V) \ar[r] \ar[d, "\subseteq"] & 1 \\
1 \ar[r] & F^\times \ar[r] & \mathrm{GPin}(V) \ar[r] & \mathrm{O}(V) \ar[r] & 1
\end{tikzcd}
\]
where the rows are exact.

Note that the restriction of the Clifford norm $N$ to $\GPin(V)$ has its image in $F^\times$, thus we have a homomorphism
\begin{equation*}
N:\GPin(V)\to F^\times , \quad g\mapsto g\overline{g}.	
\end{equation*}
For any $z\in \ker(\pr)=F^\times$, we have $N(z)=z^2$. Also, for any $g\in \GPin(V)$, we have $g^{-1}=\frac{1}{N(g)}\overline{g}$. In particular, for any $g\in \GSpin(V)$, we have $g^{-1}=\frac{1}{N(g)}\overline{g}=\frac{1}{g g^*}g^*$.

In the next lemma, we describe the centers of $\GPin(V)$ and $\GSpin(V)$. 
Let $\{b_1, \cdots, b_n\}$ be an orthogonal basis of $V$, and denote $\zeta=b_1\cdots b_n$. 
It is clear that $\zeta\in \GSpin(V)$ if $\dim(V)$ is even, and $\zeta\in \GPin(V)\backslash  \GSpin(V)$ if $\dim(V)$ is odd.

\begin{lemma}\cite[Theorem 3.6]{Shimura2004}
\label{lemma-center}
Let $Z_{\GPin(V)}$ and $Z_{\GSpin(V)}$ denote the centers of $\GPin(V)$ and $\GSpin(V)$ respectively. 	
\begin{enumerate}
\item If $\dim(V)$ is even and $\dim(V)>2$, then 
\begin{equation*}
Z_{\GPin(V)}=F^\times, \quad Z_{\GSpin(V)}=F^\times \cup F^\times \zeta.	
\end{equation*}
If $\dim(V)=2$, then 
\begin{equation*}
	Z_{\GPin(V)}=F^\times, \quad Z_{\GSpin(V)}=\GSpin(V).
\end{equation*}

\item If $\dim(V)$ is odd, then 
\begin{equation*}
Z_{\GPin(V)}=F^\times \cup F^\times \zeta, \quad Z_{\GSpin(V)}=F^\times. 
\end{equation*}
\end{enumerate}
In particular, $\ker(\pr)$ is the connected component of the center of $\GPin(V)$ as well as $\GSpin(V)$. 
\end{lemma}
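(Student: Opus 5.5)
The plan is to compute inside the Clifford algebra using the orthogonal basis $\{b_1,\dots,b_n\}$. First, the $F$-basis of $C(V)$ consists of the monomials $b_I=b_{i_1}\cdots b_{i_k}$ for $I=\{i_1<\dots<i_k\}\subseteq\{1,\dots,n\}$. These satisfy $b_ib_j=-b_jb_i$ for $i\neq j$ and $b_i^2=q_V(b_i)\in F^\times$. From this one reads off the commutation rule
\[
b_jb_Ib_j^{-1}=(-1)^{|I|}b_I \quad (j\notin I),\qquad b_jb_Ib_j^{-1}=(-1)^{|I|-1}b_I\quad (j\in I).
\]
So $b_I$ is central in $C(V)$ iff $I=\emptyset$, or $I=\{1,\dots,n\}$ with $n$ odd. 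Likewise, $b_I$ is central in $C^+(V)$ iff $I=\emptyset$ or $I=\{1,\dots,n\}$: test against $b_jb_k$ and note that $|I\cap\{j,k\}|$ must be even for all $j,k$.

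The key step is to pass from the algebra to the group. The group $\GSpin(V)$ contains all products $b_jb_k$ with $j\ne k$, since each $b_j\in\GPin(V)$ acts on $V$ by a reflection, up to sign. These products span $C^+(V)$ as an algebra. Similarly, $\GPin(V)$ contains all $b_j$, which generate $C(V)$. Hence an element commuting with all of $\GSpin(V)$ (respectively $\GPin(V)$) commutes with all of $C^+(V)$ (respectively $C(V)$). The centers are therefore
\[
Z_{\GSpin(V)}=\GSpin(V)\cap Z(C^+(V)),\qquad Z_{\GPin(V)}=\GPin(V)\cap Z(C(V)).
\]

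Next comes the case analysis on parity.

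If $n$ is even, then $Z(C(V))=F$ and $Z(C^+(V))=F+F\zeta$. Write $x=a+b\zeta\in\GSpin(V)$, and note that $\zeta$ anticommutes with every $v\in V$. Then $xvx^{-1}\in V$ for all $v$ forces $ab=0$: conjugating by $a+b\zeta$ sends $v$ to a combination of $v$ and $\zeta v$, which lies outside $V$ once $n>2$. This gives $F^\times\cup F^\times\zeta$. When $n=2$, $C^+(V)=F+Fb_1b_2$ is commutative, so $\GSpin(V)$ is abelian, and the claims follow. For $\GPin$, $Z(C(V))=F$ gives $F^\times$.

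If $n$ is odd, then $Z(C^+(V))=F$, so $Z_{\GSpin(V)}=F^\times$. For $\GPin$, the center of $C(V)$ is $F+F\zeta$. Here $\zeta$ is odd and central, so $\alpha(\zeta)v\zeta^{-1}=-v$ and $\zeta\in\GPin(V)$. A mixed element $a+b\zeta$ with $ab\neq0$ is neither even nor odd, hence not in $\GPin(V)$, since $\GPin(V)$ consists of homogeneous elements. That homogeneity follows from $\GPin(V)$ being generated by $F^\times$ and anisotropic vectors, via Cartan–Dieudonné and surjectivity onto $\Orth(V)$. Concluding, $\ker(\pr)=F^\times$ is the identity component in every case, since $\zeta$ contributes only a finite extra component.

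I expect the main obstacle to be excluding the mixed elements $a+b\zeta$. Both the even case with $n>2$ and the homogeneity claim in the odd case need a careful use of the exact sequence $1\to F^\times\to\GPin(V)\to\Orth(V)\to1$. A cleaner alternative is to argue through $\pr$: the image of a central element is central in $\Orth(V)$ (resp. $\SO(V)$), hence equal to $\pm1$. One then identifies the lifts of $-1$ as $F^\times\zeta$.
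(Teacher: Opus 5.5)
Your computation of the four centers is correct and self-contained. That is more than the paper offers: it gives no argument and simply cites Shimura's Theorem 3.6. The following steps all work:
\begin{itemize}
\item the reduction $Z_{\GSpin(V)}=\GSpin(V)\cap Z(C^+(V))$ and $Z_{\GPin(V)}=\GPin(V)\cap Z(C(V))$, valid because the $b_j$ (resp.\ the $b_jb_k$) lie in the groups and generate the algebras;
\item the sign bookkeeping on the monomial basis;
\item the exclusion of mixed elements $a+b\zeta$.
\end{itemize}
In the odd case you do not need Cartan--Dieudonn\'e or homogeneity. Since $\zeta$ is central and odd, $\alpha(a+b\zeta)\,v\,(a+b\zeta)^{-1}=v(a-b\zeta)^2/(a^2-b^2\zeta^2)$. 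Its $v\zeta$-component is $-2ab/(a^2-b^2\zeta^2)$, and $v\zeta$ is a nonzero combination of monomials of degree $n-1\neq 1$. So $ab=0$ by exactly the computation you already use in the even case.

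The step that fails is your last sentence. For $\dim V=2$ you yourself found $Z_{\GSpin(V)}=\GSpin(V)$. This is a connected two-dimensional torus, $\Res_{E/F}\GL_1$ or $\GL_1\times\GL_1$. Its identity component is therefore all of $\GSpin(V)$, not $\ker(\pr)\cong\GL_1$. Your reasoning that ``$\zeta$ contributes only a finite extra component'' applies only when the center is $F^\times$ or $F^\times\cup F^\times\zeta$.

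So the ``in particular'' clause holds for $\GPin(V)$ in every dimension, but for $\GSpin(V)$ only when $\dim V\neq 2$. The statement as written is inaccurate here, and your proof should record the exception rather than assert the clause ``in every case''. Likewise, your suggested alternative through $\pr$ (central elements map to $\pm 1$) needs $\dim V\ge 3$, since $\SO(V)$ is abelian when $\dim V=2$.

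Finally, ``connected component'' is a statement about algebraic groups, not about the abstract group of $F$-points. Your argument only uses characteristic $\neq 2$, so you can run it over $\overline{F}$ to get the center of the algebraic group.
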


Note that if $W\subset V$ is a non-degenerate subspace, then we have the inclusions of Clifford algebra $C(W)\subset C(V)$ and of even Clifford algebra $C^+(W)\subset C^+(V)$, which map $F^\times$ onto itself. These
induce the inclusions
 $\GPin(W)\subset \GPin(V)$ and $\GSpin(W)\subset \GSpin(V)$.

\begin{remark}
When $V$ is quasi-split, the $\GSpin$ group $\GSpin(V)$ can also be defined using based root datum. See, for example, \cite{AsgariShahidi2006, HundleySayag2016, AsgariCogdellShahidi2024}, for more details. 
\end{remark}

\begin{remark}
When $\dim_F V$ is small, we have the following description of $\GSpin(V)$:
\begin{itemize}
\item When $\dim_F V=1$, $\GSpin(V)(F)=F^\times$.
\item When $\dim_F V=2$ and $V$ is isotropic, $\GSpin(V)(F)=\GL_1(F)\times \GL_1(F)$.
\item When $\dim_F V=2$ and $V$ is anisotropic, by choosing a suitable basis and scaling the quadratic form if necessary, we may assume the quadratic form of $V$ is associated with $\left(\begin{smallmatrix} 1 & 0\\0& \delta\end{smallmatrix}\right)$ where $-\delta\not\in {F^\times}^2$, then $\GSpin(V)(F)=E^\times=\Res_{E/F}\GL_1(F)$ where $E=F(\sqrt{-\delta})$. 
\item When $\dim_F V=3$, $\GSpin(V)(F)=D^\times$ where $D$ is a quaternion algebra over $F$. In particular, if $D$ is split then $\GSpin(V)(F)\cong \GL_2(F)$. 
\end{itemize}
\end{remark}

\subsection{Parabolic subgroups}
\label{subsection-parabolic-subgroups}

In this section, we discuss the parabolic subgroups of $\GSpin(V)$ in terms of Clifford algebra.
Let $X$ be an isotropic subspace of $V$, and let $P_X$ be the parabolic subgroup of $\GSpin(V)$ stabilizing $X$, i.e., 
\begin{equation*}
 P_X=\{ g\in \GSpin(V): g X g^{-1}=X\}. 	
\end{equation*}
The image of $P_X$ under $\pr$, $\pr(P_X)$, is the corresponding parabolic subgroup of $\SO(V)$.

Associated to $X$, we can define an increasing filtration $\mathcal{W}^{X}_{\bullet}$ on $C(V)$ as follows. First, we define an increasing filtration on $V$ by
\begin{equation*}
\mathcal{W}_{k}^{X}(V):=\begin{cases}
0 & \text{ if }k<-1,\\
X & \text{ if }k=-1,\\
X^\perp & \text{ if }k=0,\\
V &\text{ if }k\ge 1.
\end{cases}
\end{equation*}
Then we define
\begin{equation*}
\mathcal{W}_{k}^{X}(C(V)):=\{x\in C(V): x=\sum x_1 x_2 \cdots x_m, x_i\in \mathcal{W}_{\alpha_i}^{X}(V), \alpha_1+\alpha_2+\cdots \alpha_m\le k \}.
\end{equation*}

\begin{lemma}
\cite[Lemma 2.3]{Pollack2018}
The parabolic subgroup $P_X$ and its unipotent radical $N_X$ are given by
\begin{equation*}
P_X = 	\mathcal{W}_{0}^{X}(C(V))\cap \GSpin(V), \quad N_X=(1+ \mathcal{W}_{-1}^{X}(C(V))) \cap \GSpin(V).
\end{equation*}
\label{lemma-parabolic-via-Clifford}
\end{lemma}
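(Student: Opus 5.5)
The plan is to reduce to the corresponding statement for $\SO(V)$ via the projection $\pr$, and then use the explicit Clifford multiplication rules to lift. Choose a complementary isotropic subspace $X^\vee$ in duality with $X$, and write $V = X \oplus V_1 \oplus X^\vee$ with $V_1 = (X\oplus X^\vee)^\perp$. Then the filtration takes the following form.
\begin{itemize}
\item $\mathcal{W}^X_{-1}(C(V))$ is the two-sided span of products containing at least one more factor from $X$ than from $X^\vee$.
\item $\mathcal{W}^X_0(C(V))$ is spanned by monomials with ``weight'' at most $0$, where an element of $X$ has weight $-1$, of $V_1$ weight $0$, and of $X^\vee$ weight $+1$.
\end{itemize}
Using the relations $xy+yx=2q_V(x,y)$, one first checks that $\mathcal{W}^X_\bullet$ is an algebra filtration whose associated graded is compatible with this weight grading. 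In particular, $\mathcal{W}^X_0(C(V))$ is a subalgebra, and $\mathcal{W}^X_{-1}(C(V))$ is a two-sided ideal in it.

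\textbf{Inclusion $\supseteq$ for $P_X$.} Let $g\in \mathcal{W}^X_0(C(V))\cap\GSpin(V)$. Then $g^{-1}=N(g)^{-1}\overline{g}$, and the Clifford involution preserves the filtration, since it reverses products and changes signs only. So $g^{-1}\in\mathcal{W}^X_0$ as well. For $x\in X$, the element $gxg^{-1}$ lies in $V\cap\mathcal{W}^X_{-1}(C(V))$. One then checks that $V\cap\mathcal{W}^X_{-1}(C(V))=X$, by comparing weights in a PBW-type basis adapted to the decomposition $V = X \oplus V_1 \oplus X^\vee$. Hence $g$ stabilizes $X$.

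\textbf{Inclusion $\subseteq$ for $P_X$.} Here I will use a Levi decomposition. The Levi subgroup is $\GL(X)\times\GSpin(V_1)$ lifted into $\GSpin(V)$. It is generated by $\GSpin(V_1)$, which sits in $C^+(V_1)$ of weight $0$, and by elements lifting $\GL(X)$. The latter can be taken to be products of pairs $(x_i+x_i^\vee)(x_i+tx_i^\vee)$ and of similar weight-$0$ expressions, together with $F^\times$. The unipotent radical $\pr(N_X)$ is generated by root subgroups. These lift to elements $1+xy$ with $x\in X$ and $y\in X^\perp$ satisfying $q(x,y)=0$; such a lift is the exponential of a weight $\le -1$ element, and a direct computation shows it lies in $\GSpin(V)$ and projects to the Eichler transformation. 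Since $\ker\pr=F^\times$ has weight $0$, every element of $P_X$ is a scalar times a product of these generators, and therefore lies in $\mathcal{W}^X_0$.

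\textbf{The statement for $N_X$.} The generators of $N_X$ lie in $1+\mathcal{W}^X_{-1}$. This set is closed under products because $\mathcal{W}^X_{-1}$ is an ideal in $\mathcal{W}^X_0$, which gives $\subseteq$. Conversely, suppose $g\in(1+\mathcal{W}^X_{-1})\cap\GSpin(V)$. Then $g$ acts trivially on the graded pieces $X$, $X^\perp/X$ and $V/X^\perp$, because the terms of $gvg^{-1}-v$ drop in weight. So $\pr(g)$ lies in the unipotent radical of $\pr(P_X)$. Then $g=zn$ with $z\in F^\times$ and $n\in N_X$, and comparing weight-$0$ components forces $z=1$.

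The main obstacle I expect is the rigorous identification $V\cap\mathcal{W}^X_{k}(C(V))=\mathcal{W}^X_k(V)$, and more generally showing that the filtration behaves like a grading. This is where the associated graded of $C(V)$ must be identified with $C$ of the graded quadratic space. I would handle it by choosing a basis $x_i$, $v_j$, $x_i^\vee$ and using the ordered-monomial basis of $C(V)$ to read off weights directly.
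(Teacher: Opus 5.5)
The paper gives no proof of this lemma; it quotes it from Pollack, so the only comparison available is with a citation. Your proposal is a correct, self-contained proof along the natural route. The inclusion $\mathcal{W}^X_0(C(V))\cap\GSpin(V)\subseteq P_X$ follows from $g^{-1}=N(g)^{-1}\overline{g}$ and $V\cap\mathcal{W}^X_{-1}(C(V))=X$. The reverse inclusion follows from the Levi decomposition of $\pr(P_X)$ together with the lifts $1+xy$ of Eichler transformations. The description of $N_X$ comes from reading off the action on $X$, $X^\perp/X$ and $V/X^\perp$. What this buys over the citation is transparency: it shows that $\mathcal{W}^X_\bullet$ is exactly the filtration cut out by the cocharacter defining $P_X$.

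None of the following is a real gap, but each should be made explicit when you write it up.

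\emph{The grading.} Your ``main obstacle'' disappears once you observe that, after fixing $V=X\oplus V_1\oplus X^\vee$, the weight is an honest $\mathbb{Z}$-grading $C(V)=\bigoplus_j C(V)_j$. The reason is that $q_V$ only pairs vectors whose weights sum to $0$, so the relations $u^2=q_V(u)$ and $uv+vu=2q_V(u,v)$ are homogeneous. Equivalently, $C(V)_j$ is the $t^j$-eigenspace of the automorphism of $C(V)$ induced by $\lambda(t)=(t^{-1}|_X,1|_{V_1},t|_{X^\vee})\in\SO(V)$. Hence $\mathcal{W}^X_k(C(V))=\bigoplus_{j\le k}C(V)_j$, and the ordered-monomial basis immediately gives $V\cap\mathcal{W}^X_k(C(V))=\mathcal{W}^X_k(V)$ and $F\cap\mathcal{W}^X_{-1}(C(V))=0$.

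\emph{The norm in the converse for $N_X$.} The claim that ``the terms drop in weight'' needs $g^{-1}\in 1+\mathcal{W}^X_{-1}(C(V))$. This holds because $N(g)=g\overline{g}\in F\cap(1+\mathcal{W}^X_{-1}(C(V)))=\{1\}$, so $g^{-1}=\overline{g}$.

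\emph{The Levi.} The ``similar weight-$0$ expressions'' covering the non-toral part of $\GL(X)$ should be spelled out: elementary matrices lift to $1+\gamma x_ix_j^\vee$ with $i\neq j$ (cf.\ Lemma~\ref{lemma-Levi-Unipotent-lifting}), which has weight $0$. Alternatively, you can avoid generators entirely. If $\pr(m)$ commutes with $\lambda(t)$, the automorphism induced by $\lambda(t)$ maps $m$ into $F^\times m$, so $m$ is homogeneous of some weight $k$. Then $N(m)=m\overline{m}\in F^\times\cap C(V)_{2k}$ forces $k=0$.

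\emph{Unipotent lifts.} The element $1+xy$ is the element of $N_X$ lying over the Eichler transformation, and not merely some lift, because it is unipotent: $(xy)^2=-q_V(x)q_V(y)=0$ (cf.\ Remark~\ref{remark-unipotent-variety-isomorphism}).
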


Now suppose that $Y$ is an isotropic subspace of $V$ which is dual to $X$, and we take an orthogonal sum decomposition $V=X\oplus W\oplus Y$. 
Let $M_X$ be the Levi subgroup of the parabolic subgroup $P_X$ of $\GSpin(V)$. 
Then the image in $\SO(V)$ of $M_X$ under the projection map $\pr$ is identified with $\GL_F(X)\times \SO(W)$. 
To describe the lifting of the $\GL_F(X)$ factor to $\GSpin(V)$ explicitly, we assume  $X=\Span\{e_1, \cdots, e_m\}$ and $Y=\Span\{e_{-m}, \cdots, e_{-1}\}$ such that $q_V( e_i, e_{-j})=\delta_{i,j}$ for $1\le i, j\le m$. Let $\{f_1, \cdots,f_{\dim W}\}$ be a basis of $W$. Then we have the following basis of $V$:
\begin{equation}
\label{eq-V-basis}
	\{e_1, \cdots, e_m, f_1, \cdots, f_{\dim W}, e_{-m}, \cdots, e_{-1}\}.
\end{equation}
Recall that $w_m$ is the $m\times m$ anti-diagonal matrix with entries 1 on the anti-diagonal and 0 everywhere else. 
\begin{lemma}
\cite[Proposition 4.8]{Shimura2004}
\label{lemma-Levi-lifting}
There exists an injective algebraic group homomorphism 
\begin{equation*}
	\ell_m:\GL_F(X)=\GL_m(F)\to P_X
\end{equation*}
with the following properties:
\begin{enumerate}
\item For any $g\in \GL_m(F)$, we have $\pr(\ell_m(g))=\begin{pmatrix} g & & \\ &I_W &\\ & & w_m {}^tg^{-1} w_m \end{pmatrix}\in \SO(V)$, where the matrix is written with respect to the ordered basis \eqref{eq-V-basis}.
\item For any $g\in \GL_m(F)$, we have $N(\ell_m(g))=\det(g)$. 
\item For any diagonal element $\diag(a_1, \cdots, a_m)\in \GL_m(F)$, we have $\ell_m(\diag(a_1, \cdots, a_m))=\prod_{i=1}^m (a_i e_ie_{-i}+e_{-i} e_i)$. 
\item If $1\le m^\prime <m$ and $g_0\in \GL_F(\Span\{e_1, \cdots, e_{m^\prime} \} )=\GL_{m^\prime}(F)$, let $g\in \GL_m(F)$ be the element such that $g$ acts as $g_0$ on $\Span\{e_1, \cdots, e_{m^\prime} \}$ and acts trivially on $\Span\{e_{m^\prime+1}, \cdots, e_{m} \}$. Then $\ell_m(g)=\ell_{m^\prime}(g_0)$.
\item The image $\ell_m(\GL_m(F))$ is an algebraic subgroup of $P_X$.
\end{enumerate}
Moreover, $\ell_m$ is uniquely determined, as an algebraic group homomorphism, by properties (1)-(3).
\end{lemma}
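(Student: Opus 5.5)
We construct $\ell_m$ by hand from the Clifford algebra, then derive the listed properties from the construction and prove uniqueness at the end.

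\textbf{Construction on the torus and the root subgroups.} For each $i$ and $a\in F^\times$ set $t_i(a)=a e_ie_{-i}+e_{-i}e_i$. Since $e_i^2=e_{-i}^2=0$ and $e_ie_{-i}+e_{-i}e_i=1$, the elements $u_i=e_ie_{-i}$ and $1-u_i=e_{-i}e_i$ are orthogonal idempotents. Hence $t_i(a)t_i(b)=t_i(ab)$ and $t_i(a)^{-1}=t_i(a^{-1})$, and the $t_i$ commute with each other and with $C(W)$. Conjugation gives $t_i(a)e_it_i(a)^{-1}=ae_i$ and $t_i(a)e_{-i}t_i(a)^{-1}=a^{-1}e_{-i}$, and $t_i(a)$ fixes everything orthogonal to $e_{\pm i}$. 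So $t_i(a)\in\GSpin(V)$ projects to the diagonal element required in (1), and $N(t_i(a))=t_i(a)\overline{t_i(a)}=a$.

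For the root subgroups, take $i\neq j$ and $x\in F$, and set $x_{ij}(x)=1+x\,e_ie_{-j}$. Because $(e_ie_{-j})^2=0$ when $i\neq j$, this element lies in $1+\mathcal W_{-1}^{X}$-type elements of $C^+(V)$. It acts on $V$ by the transvection $e_j\mapsto e_j+xe_i$, $e_{-i}\mapsto e_{-i}-xe_{-j}$, which is exactly the image of the elementary matrix under $g\mapsto \diag(g,I_W,w_m{}^tg^{-1}w_m)$. Its Clifford norm is $1$.

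\textbf{Extension to a homomorphism.} Let $\bar\ell:\GL_m\to\SO(V)$ be the algebraic embedding from (1). Then $\pr^{-1}(\bar\ell(\GL_m))$ is a central extension of $\GL_m$ by $\GL_1$. We want a splitting $s$ satisfying $N\circ s=\det$. Consider the map $(\pr,N):\pr^{-1}(\bar\ell(\GL_m))\to \bar\ell(\GL_m)\times\GL_1$. On the kernel $\GL_1$ it is $z\mapsto(1,z^2)$, which is injective. Let $H$ be the preimage of the graph $\{(\bar\ell(g),\det g)\}$. Then $H\cap\ker\pr=\mu_2$ and $H\to\GL_m$ is surjective, so $H$ is a $\mu_2$-extension of $\GL_m$.

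The main obstacle is showing that this extension splits algebraically. I would verify the Steinberg-type relations for the elements above: the $x_{ij}$ satisfy the unipotent commutator relations, and $t_i$ normalizes them correctly. These are direct Clifford computations using $e_ie_{-j}e_ke_{-l}$ with the anticommutation rules. Since $\GL_m$ is generated by its torus and root subgroups with these relations, and both $\mathrm{SL}_m$ (simply connected) and the torus lift, this gives a homomorphism $\ell_m$ from $\mathrm{SL}_m\rtimes\{\diag(a,1,\dots,1)\}=\GL_m$.

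Alternatively, one can argue more conceptually. The identity component of $H$ is a connected reductive group isogenous to $\GL_m$ by a map with kernel contained in $\mu_2$. The lifts above show that the cocharacter lattice contains the lifted one-parameter subgroups $a\mapsto t_i(a)$, which project isomorphically. Hence $H^\circ\to\GL_m$ is an isomorphism, and we invert it to get $\ell_m$.

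\textbf{Properties and uniqueness.} Property (3) holds by construction on the torus, and (2) holds because $H$ lies in the graph of $\det$. For (4), the construction restricted to $\GL_{m'}$ uses only $t_i$ and $x_{ij}$ with $i,j\le m'$, so it is literally $\ell_{m'}$. For (5), the image of an algebraic homomorphism is closed, and it lies in $P_X$ by Lemma~\ref{lemma-parabolic-via-Clifford}, since each generator preserves $X$. Injectivity follows from (1).

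For uniqueness, suppose $\ell'$ also satisfies (1)–(3). Then $\ell'(g)\ell_m(g)^{-1}\in\ker\pr=\GL_1$, so $g\mapsto \ell'(g)\ell_m(g)^{-1}$ is a central character $\chi$ of $\GL_m$. By (2), $\chi^2=1$, so $\chi$ is trivial on the connected group $\GL_m$, giving $\ell'=\ell_m$. (Property (3) is consistent with this but is not even needed here.)
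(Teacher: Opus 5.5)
The paper does not prove this lemma; it is quoted from \cite{Shimura2004}. Your argument is a correct, self-contained proof, provided the splitting rests on your second, structural argument.

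\textbf{The structural splitting.} The $t_i(a)$ generate an $m$-dimensional torus $T'\subset H^\circ$ that maps isomorphically onto the diagonal torus of $\GL_m$, so $T'$ is a maximal torus of $H^\circ$. The kernel of $H^\circ\to\GL_m$ lies in the scalars $\mu_2$, so it is central. Hence it is contained in $C_{H^\circ}(T')=T'$, and it is therefore trivial.

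\textbf{Points to fix.}
\begin{itemize}
\item The Steinberg-relation variant is not complete as written. The unipotent commutator relations only produce a map out of the Steinberg group. You would still need multiplicativity of the lifted $h_{ij}(a)$, and compatibility of conjugation by $t_1(a)$ with the $\mathrm{SL}_m$-section. Drop it in favour of the structural argument.
\item In the structural version, deduce (4) from your uniqueness argument, not from ``uses only generators with indices $\le m'$''. Both $\ell_m|_{\GL_{m'}}$ and $\ell_{m'}$ satisfy (1)--(2) for the decomposition with $X'=\Span\{e_1,\dots,e_{m'}\}$, so they coincide.
\item $1+xe_ie_{-j}$ lies in $\mathcal W^X_0$, not in $1+\mathcal W^X_{-1}$. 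It is a Levi element, not an element of $N_X$. This does not affect the proof.
\item Say explicitly that the normalization $e_ie_{-i}+e_{-i}e_i=1$ is forced by (3); otherwise $\ell_m(I_m)\neq 1$.
\end{itemize}

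\textbf{Comparison with an explicit construction.} A more direct route avoids the extension problem altogether. Identify $C(X\oplus Y)\cong\mathrm{End}(\bigwedge^\bullet X)$, with $e_i$ acting by $e_i\wedge(\cdot)$ and $e_{-i}$ by contraction, and set $\ell_m(g)=\bigwedge^\bullet g$. Then:
\begin{itemize}
\item $\ell_m(g)$ preserves parity, so it is even and commutes with $W$.
\item Conjugation by $\ell_m(g)$ realizes $\mathrm{diag}(g,I_W,w_m{}^tg^{-1}w_m)$.
\item $\ell_m$ is a homomorphism by construction.
\item Property (3), property (4) and the formula $\ell_m(u_{i,j}(\gamma))=1+\gamma e_ie_{-j}$ of Lemma~\ref{lemma-Levi-Unipotent-lifting} are immediate, since $e_ie_{-j}$ is the derivation extending $E_{i,j}$ and squares to zero.
\item $N\circ\ell_m$ is a character that equals $\det$ on the torus.
\end{itemize}
Your route trades this explicit model for general structure theory of reductive groups. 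In exchange it delivers the uniqueness statement, which the explicit model does not give by itself, with almost no computation.
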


Let $Z_m\subset \GL_m(F)$ be the unipotent radical of the Borel subgroup $B_m=T_m Z_m$ of $\GL_m(F)$ stabilizing the flag
\begin{equation*}
\Span\{e_1\} \subset \Span\{e_1, e_2\} \subset \cdots \subset \Span\{e_1, e_2, \cdots, e_m\}.	
\end{equation*}
In the next lemma, we describe the lifting of the unipotent group $Z_m$.

\begin{lemma}
\label{lemma-Levi-Unipotent-lifting}
For $1\le i <j\le m$, let $E_{i,j}$ denote the $m\times m$ matrix with one in the $(i,j)$-th entry and zeros everywhere else, and let $u_{i,j}(\gamma)=I_m + \gamma  E_{i,j}$ be the unipotent matrix in $\GL_m(F)$ where $\gamma \in F$. Then $\ell_m(u_{i,j}(\gamma))=1+\gamma  e_i e_{-j}$. 
\end{lemma}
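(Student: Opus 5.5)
The plan is to verify directly that $x:=1+\gamma e_ie_{-j}$ lies in $\GSpin(V)$ and that its projection is $\pr(x)=\mathrm{diag}(u_{i,j}(\gamma), I_W, w_m{}^tu_{i,j}(\gamma)^{-1}w_m)$. We then invoke the uniqueness clause of Lemma~\ref{lemma-Levi-lifting}: the images of the root subgroups are determined by the algebraic homomorphism $\ell_m$ up to the central $\ker(\pr)$.

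\textbf{Step 1: $x$ is a unit of the expected form.} Since $i\neq j$ and $i,j\ge 1$, both $e_i$ and $e_{-j}$ are isotropic and $q_V(e_i,e_{-j})=0$. With the convention $v^2=q_V(v)$, so that $vw+wv=2q_V(v,w)$ or $q_V(v,w)$ depending on normalization, the two vectors anticommute. Hence
\[
(e_ie_{-j})^2=-e_i^2e_{-j}^2=0 .
\]
Therefore $x\in C^+(V)$ is a unit with inverse $1-\gamma e_ie_{-j}$. Moreover $x^*=1+\gamma e_{-j}e_i=1-\gamma e_ie_{-j}$, so $N(x)=xx^*=1$.

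\textbf{Step 2: conjugation action.} We compute $xvx^{-1}$ for each basis vector $v$. For $v$ orthogonal to both $e_i$ and $e_{-j}$, $v$ anticommutes with each of them and so commutes with $e_ie_{-j}$; thus $v$ is fixed. This covers $v=e_k$ for all $k$ and all $v\in W$, as well as $e_{-k}$ for $k\ne i$. For the remaining vector $v=e_{-i}$, use $e_{-i}e_i=q_V(e_i,e_{-i})-e_ie_{-i}$ (normalized). A short computation gives
\[
xe_{-i}x^{-1}=e_{-i}\pm\gamma e_{-j},
\]
with the sign fixed by the normalization. This shows that $x\in\GSpin(V)$, since it is even and preserves $V$, and that $\pr(x)$ is the unipotent element sending $e_{-i}\mapsto e_{-i}-\gamma e_{-j}$ and fixing everything else. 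This is not yet in the form we want.

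\textbf{Step 3: reconcile with the convention $\pr(g)v=\alpha(g)vg^{-1}$.} For even $g$ we have $\alpha(g)=g$, so the action is $v\mapsto xvx^{-1}$. The matrix we need, $u_{i,j}(\gamma)$ on $X$, sends $e_j\mapsto e_j+\gamma e_i$, and its dual on $Y$ sends $e_{-i}\mapsto e_{-i}-\gamma e_{-j}$. We should therefore recheck Step 2 on $e_j$: $e_j$ is \emph{not} orthogonal to $e_{-j}$, so it is not fixed. The correct list of moved vectors is $e_j$ and $e_{-i}$. Using the relation $e_{-j}e_j+e_je_{-j}=1$ (with the paper's normalization $q_V(e_i,e_{-j})=\delta_{ij}$), we get
\[
xe_jx^{-1}=e_j+\gamma e_i ,
\]
and similarly for $e_{-i}$. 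This yields exactly $\pr(x)=\pr(\ell_m(u_{i,j}(\gamma)))$.

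\textbf{Step 4: pin down the central ambiguity.} From Step 3 it follows that $\ell_m(u_{i,j}(\gamma))=c(\gamma)\,x$ with $c(\gamma)\in F^\times=\ker(\pr)$. The map $\gamma\mapsto c(\gamma)$ is an algebraic homomorphism $\mathbb{G}_a\to\mathbb{G}_m$, because both $\gamma\mapsto\ell_m(u_{i,j}(\gamma))$ and $\gamma\mapsto x(\gamma)$ are homomorphisms; the latter holds since $(e_ie_{-j})^2=0$. Any such homomorphism is trivial, so $c\equiv1$. As a consistency check, one can also compare norms: Lemma~\ref{lemma-Levi-lifting}(2) gives $N=\det u=1=N(x)$, which forces $c^2=1$.

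The main obstacle is the sign and normalization bookkeeping in Step 3. We must fix whether the relation is $vw+wv=q_V(v,w)$ or $2q_V(v,w)$ and match it with Lemma~\ref{lemma-Levi-lifting}(3), whose formula $a e_ie_{-i}+e_{-i}e_i$ suggests $e_ie_{-i}+e_{-i}e_i=1$. I would fix that convention first and then carry out the computation with it.
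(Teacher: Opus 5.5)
Your proof is correct. The paper gives no argument of its own for this lemma; it only cites \cite[Lemma 2.6]{Yan2025}. Your direct verification therefore serves as a self-contained proof, and it has three steps:
\begin{enumerate}
\item check that $x=1+\gamma e_ie_{-j}$ is an even unit with $(e_ie_{-j})^2=0$ and $xVx^{-1}=V$, so $x\in\GSpin(V)$;
\item compute $\pr(x)$ and match it with Lemma~\ref{lemma-Levi-lifting}(1);
\item remove the central ambiguity $c(\gamma)\in\ker(\pr)$, using that $c$ is an algebraic homomorphism $\mathbb{G}_a\to\mathbb{G}_m$ and hence trivial.
\end{enumerate}

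Some points to tidy up when you write it out.
\begin{itemize}
\item Delete the claim in Step 2 that every $e_k$ is fixed. A basis vector commutes with $e_ie_{-j}$ exactly when it is orthogonal to both $e_i$ and $e_{-j}$. So the moved vectors are precisely $e_j$ and $e_{-i}$, with $xe_jx^{-1}=e_j+\gamma e_i$ and $xe_{-i}x^{-1}=e_{-i}-\gamma e_{-j}$. Both hold using $e_ke_{-k}+e_{-k}e_k=1$, and they give $\pr(x)=\pr(\ell_m(u_{i,j}(\gamma)))$.
\item The normalization is forced, not just suggested. Since $\ell_m(I_m)=1$, property (3) with all $a_k=1$ gives $\prod_k(e_ke_{-k}+e_{-k}e_k)=1$. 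The convention $vw+wv=2q_V(v,w)$ would instead give $\ell_m(I_m)=2^m$ and $\pr(1+\gamma e_ie_{-j})=u_{i,j}(2\gamma)$.
\item You do not actually use the uniqueness clause of Lemma~\ref{lemma-Levi-lifting}. Property (1) together with the triviality of $\Hom(\mathbb{G}_a,\mathbb{G}_m)$ is all that is needed.
\item Algebraicity is essential in Step 4. An abstract homomorphism $F\to F^\times$ need not be trivial; for example $\exp:\mathbb{R}\to\mathbb{R}^\times$ is a nontrivial homomorphism, relevant at a real completion. You were right to phrase the argument for algebraic homomorphisms. An alternative finish: your norm computation gives $c(\gamma)^2=1$, and since $\mathbb{G}_a$ is connected and $c(0)=1$, $c$ is identically $1$.
\end{itemize}
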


\begin{proof}
See \cite[Lemma 2.6]{Yan2025}.	
\end{proof}

\begin{remark}
\label{remark-unipotent-variety-isomorphism}
Note that the projection map $\pr:\GSpin(V)\to \SO(V)$ induces an isomorphism of unipotent varieties, so we may simply use unipotent elements of subgroups of $\SO(V)$ to represent their lifts to $\GSpin(V)$.
\end{remark}

\section{Twisted automorphic descent}
\label{section-twisted-auto-descent}

\subsection{The residual representations}
Let $F$ be a number field and let $\A=\A_F$ be the ring of adeles of $F$.
Let $V$ be a quadratic space over $F$ of dimension $4n+2$ equipped with a non-degenerate quadratic form $q_V$. 
We assume that the Witt index of $V$ is $2n$. 
Let $V^+$ be a maximal totally isotropic subspace of $V$ and $V^-$ to be its dual, so that $V$ has the following decomposition
\begin{equation*}
V=V^+\oplus V_0 \oplus V^-,	
\end{equation*}
where $V_0=(V^+\oplus V^-)^\perp$ denotes the anisotropic subspace of dimension 2. 
We choose a basis $\{e_1, e_2, \cdots, e_{2n}\}$ of $V^+$ and a basis $\{e_{-1}, e_{-2}, \cdots, e_{-2n}\}$ of $V^-$ so that $q_V(e_i, e_{-j})=\delta_{i,j}$ for all $1\le i, j\le 2n$. For each $1\le i \le 2n$, denote $V_i^+=\Span\{e_1, \cdots, e_i \}$, and $V_i^{-}=\Span\{e_{-1}, \cdots, e_{-i}\}$.

We may take the quadratic form on $V_0$ to be associated with $J_\delta=\left(\begin{smallmatrix} 1 &0\\ 0 &\delta\end{smallmatrix}\right)$, where $-\delta\not\in {F^\times}^2$, and the quadratic form on $V$ is taken to be associated with the matrix 
\begin{equation}
\label{eq-matrix-quad-form-on-V}
	\begin{pmatrix}
   &&w_{2n}\\
   &J_\delta&\\
   w_{2n}&&
  \end{pmatrix}.
\end{equation}
Recall that $w_{r}$ is the matrix defined inductively by $w_{r}=\left(\begin{smallmatrix} 0 & w_{r-1}\\ 1&0 \end{smallmatrix}\right)$ for $r\ge 2$ and $w_1=1$.
Let $\SO_{4n+2}^\delta=\SO(V)$ be the $F$-quasisplit special even orthogonal group associated to the quadratic space $(V, q_V)$, and let $H^\delta=\GSpin_{4n+2}^\delta=\GSpin(V)$ be the  $\GSpin$ cover of $\SO_{4n+2}^\delta$. Then we have a projection map
\begin{equation}
\label{eq-pr}
\pr:\GSpin_{4n+2}^\delta \to \SO_{4n+2}^\delta	
\end{equation} 
and $\ker(\pr)\cong \GL_1$.

Let $P=M\ltimes U$ be the standard parabolic subgroup of $\GSpin_{4n+2}^\delta$ stabilizing $V^+$. Then $M\cong \GL_{2n}\times \GSpin_2^\delta$, where $\GSpin_2^\delta=\GSpin(V_0)$ is the $\GSpin$ cover of $\SO(V_0)$. 
Let $\sigma$ be an automorphic character of $\GSpin^\delta_2(\A)$. Let $\tau=\tau_1\boxplus \tau_2\boxplus \cdots \boxplus \tau_r$ be an irreducible unitary generic isobaric sum automorphic representation of $\GL_{2n}(\A)$ associated to distinct $\tau_1, \cdots, \tau_r$, such that $L(s, \tau_i, \wedge^2\otimes\omega_{\sigma}^{-1})$ has a pole at $s=1$ for each $1\le i\le r$.   
Following \cite[\S I.1.4]{MoglinWaldspurger1995}, we denote $X_M $ to be the group of continuous homomorphisms of $M(\A)/M^1$ into $\mathbb{C}^\times$, where $M^1:=\cap_{\chi\in \Hom(M, \mathbb{G}_m)}\ker |\chi|$. Since the parabolic subgroup $P$ is maximal, the $\mathbb{C}$-vector space $X_M$ is one-dimensional. For any $s\in \mathbb{C}$, we define $\lambda_s(m,h):=|\det(m)|^s$ for $(m,h)\in \GL_{2n}(\A)\times \GSpin_2^\delta(\A)$.  Then  $\lambda_s\in X_M$. Using the Iwasawa decomposition, we trivially extend $\lambda_s$ to a function on $H^\delta(\A)$. 

For any function 
\begin{equation}
\label{eq-phi-section}
\phi=\phi_{\tau\otimes\sigma}\in \mathcal{A}(M(F)U(\A)\bs H^\delta(\A))_{\tau\otimes \sigma},
\end{equation}
we set $\phi_{s}:=\lambda_s\cdot \phi$ and form the associated Eisenstein series
\begin{equation}
\label{eq-EisenSeries}
E(g, s, \phi_{\tau\otimes\sigma})=\sum_{\gamma\in P(F)\backslash H^\delta(F)} \phi_s(\gamma g), \quad g\in H^\delta(\A),
\end{equation}
which converges absolutely for $\mathrm{Re}(s)\gg 0$ and has meromorphic continuation to the entire complex plane \cite[\S IV]{MoglinWaldspurger1995}.
 
\begin{remark}
\label{remark-Eisenstein-series-central-char}
Note that for any $z\in \ker(\pr)(\A)$ and $g\in H^{\delta}(\A)$, we have 
\begin{equation*}
	E(zg, s, \phi_{\tau\otimes\sigma})=\omega_{\sigma}(z) E(g, s, \phi_{\tau\otimes\sigma}).
\end{equation*}
\end{remark}

\begin{proposition}[\cite{Yan2025GGP}]\label{prop-Eisenstein-pole}
Let $\sigma$ be an automorphic character of $\GSpin^\delta_2(\A)$. Let $\tau=\tau_1\boxplus \tau_2\boxplus \cdots \boxplus \tau_r$ be an irreducible unitary generic isobaric sum automorphic representation of $\GL_{2n}(\A)$ associated to distinct cuspidal automorphic representations $\tau_1, \cdots, \tau_r$, such that $L(s, \tau_i, \wedge^2\otimes\omega_{\sigma}^{-1})$ has a pole at $s=1$ for each $1\le i\le r$.  Then the Eisenstein series $E(g, s, \phi_{\tau\otimes\sigma})$ has a pole at $s=\frac{1}{2}$ of order $r$ if and only if
   $L(\frac{1}{2}, \sigma\times \tau\otimes \omega_\sigma^{-1})\neq 0$.
\end{proposition}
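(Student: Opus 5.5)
The plan is the Langlands–Shahidi constant term analysis. Since $P$ is maximal and $\tau$ is an isobaric sum of cuspidal representations, the poles of $E(g,s,\phi_{\tau\otimes\sigma})$ in $\Re(s)>0$ coincide with the poles of its constant terms along standard parabolic subgroups. These constant terms are controlled by intertwining operators indexed by Weyl group elements $w$ with $w(M)$ a standard Levi.

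\textbf{Step 1: the normalizing factors.} Write $\tau=\boxplus_i\tau_i$, so $\phi$ is a residue-type section built from the inducing data $\tau_1|\cdot|^{s}\otimes\cdots\otimes\tau_r|\cdot|^{s}\otimes\sigma$ on the Levi $\GL_{n_1}\times\cdots\times\GL_{n_r}\times\GSpin_2^\delta$. Since $\tau$ is generic and isobaric, $\phi$ can equivalently be regarded as a section of the full induced representation from this smaller parabolic, with the Eisenstein series obtained by iterated induction. The relevant Weyl elements act on each block $\GL_{n_i}$ either trivially or by the "inverse-contragredient" twist. Each such element contributes, by the Gindikin–Karpelevich/Shahidi computation for $\GSpin$ groups, a product of ratios of the following shapes:
\[
\frac{L(s,\sigma\times\tau_i\otimes\omega_\sigma^{-1})\,L(2s,\tau_i,\wedge^2\otimes\omega_\sigma^{-1})}{L(s+1,\sigma\times\tau_i\otimes\omega_\sigma^{-1})\,L(2s+1,\tau_i,\wedge^2\otimes\omega_\sigma^{-1})},
\qquad
\frac{L(2s,\tau_i\times\tau_j^\vee)\,L(2s,\tau_i\times\tau_j\otimes\omega_\sigma^{-1})}{L(2s+1,\tau_i\times\tau_j^\vee)\,L(2s+1,\tau_i\times\tau_j\otimes\omega_\sigma^{-1})}\quad (i<j),
\]
together with normalized operators. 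The twist $\omega_\sigma^{-1}$ appears because the adjoint action on $U$ involves the similitude character, as Remark~\ref{remark-Eisenstein-series-central-char} suggests.

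\textbf{Step 2: locating the order-$r$ pole.} Take the longest relevant element $w_0$, which sends $M$ to itself with $\tau\mapsto\tau^\vee\otimes\omega_\sigma$. At $s=\tfrac12$ each factor $L(2s,\tau_i,\wedge^2\otimes\omega_\sigma^{-1})$ has a simple pole, giving $r$ poles in total. The cross terms $L(2s,\tau_i\times\tau_j^\vee)$ are holomorphic and nonvanishing at $s=\tfrac12$ because $\tau_i\not\cong\tau_j$. The denominators at $s=\tfrac12$ evaluate at $\Re=1,2$ and are nonzero. Hence the total normalizing factor has a pole of order exactly $r$ if and only if $\prod_i L(\tfrac12,\sigma\times\tau_i\otimes\omega_\sigma^{-1})=L(\tfrac12,\sigma\times\tau\otimes\omega_\sigma^{-1})\neq0$. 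Otherwise the zero cancels a pole and the order drops. We also check that $L(s,\sigma\times\tau_i\otimes\omega_\sigma^{-1})$ has no pole at $\tfrac12$, which holds since $\sigma$ is a character of the anisotropic $E^\times$ and $\BC(\tau_i)$ is unitary. Contributions from other $w$ give strictly smaller pole order, since they involve only a subset of the exterior square factors.

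\textbf{Step 3 (the main obstacle): the normalized operators.} We must show the normalized local intertwining operators are holomorphic and nonvanishing at $s=\tfrac12$ at all places, including ramified and archimedean ones. At unramified places this is immediate. Elsewhere we would first invoke the standard results available for $\GSpin$: Asgari–Shahidi and Kim's analysis of normalized operators for generic unitary representations, which yields holomorphy for $\Re s\ge\tfrac12$. Nonvanishing can then be obtained by choosing sections appropriately. We must also rule out cancellation among different Weyl terms in the sum. For that we would use the standard argument that the leading term for $w_0$ has an exponent distinct from all others, so it cannot cancel.
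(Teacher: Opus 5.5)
The paper gives no proof of this proposition; it only quotes it from \cite{Yan2025GGP}. Your plan is the standard route to statements of this kind: a constant-term analysis with Langlands--Shahidi normalizing factors, plus holomorphy of the normalized local operators. The overall structure is sound, but two steps are wrong as written. Both are easy to repair.

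\textbf{The inter-block factors are misidentified.} Take the Weyl element that inverts every block of $\GL_{n_1}\times\cdots\times\GL_{n_r}\times\GSpin_2^\delta$ while keeping their order, with variables $s_1,\dots,s_r$.
\begin{itemize}
\item The roots $e_a+e_b$ between blocks $i<j$ give $L(s_i+s_j,\tau_i\times\tau_j\otimes\omega_\sigma^{-1})/L(s_i+s_j+1,\tau_i\times\tau_j\otimes\omega_\sigma^{-1})$.
\item The roots $e_a-e_b$ give $L(s_i-s_j,\tau_i\times\tau_j^\vee)/L(s_i-s_j+1,\tau_i\times\tau_j^\vee)$. On the diagonal $s_1=\cdots=s_r=s$ this is the constant $L(0,\tau_i\times\tau_j^\vee)/L(1,\tau_i\times\tau_j^\vee)$, not a function of $2s$.
\end{itemize}
In fact your two displayed cross factors coincide. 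The pole of $L(s,\tau_j,\wedge^2\otimes\omega_\sigma^{-1})$ at $s=1$ forces $\tau_j\otimes\omega_\sigma^{-1}\cong\tau_j^\vee$.

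The pole count is unaffected. The constant above is finite and nonzero for $\tau_i\not\cong\tau_j$. Distinctness genuinely matters for $L(s_i+s_j,\tau_i\times\tau_j\otimes\omega_\sigma^{-1})=L(s_i+s_j,\tau_i\times\tau_j^\vee)$ at $s_i=s_j=\tfrac12$, which has a pole exactly when $\tau_i\cong\tau_j$.

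\textbf{The non-cancellation argument fails.} You claim that ``the leading term for $w_0$ has an exponent distinct from all others''; this is false.
\begin{itemize}
\item Terms with fewer than $r$ exterior-square factors cannot affect the coefficient of $(s-\tfrac12)^{-r}$ at all.
\item The terms that do reach order $r$ come from every Weyl element inverting all blocks. This includes those that also permute blocks of equal size.
\item On the diagonal, all of these have the same exponent $(-s,\dots,-s)$, so exponents cannot separate them.
\end{itemize}
What separates them is cuspidal support. Since $\tau_i^\vee\otimes\omega_\sigma\cong\tau_i$, the term attached to a block permutation lies in the isotypic component of the correspondingly permuted $\tau_1\otimes\cdots\otimes\tau_r$. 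Here one takes the constant term along the parabolic with Levi $\GL_{n_1}\times\cdots\times\GL_{n_r}\times\GSpin_2^\delta$. These components are pairwise distinct precisely because the $\tau_i$ are distinct, so the order-$r$ terms cannot cancel.

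\textbf{A smaller point in Step 3.} Nonvanishing of the normalized local operators at the point does not come from ``choosing sections appropriately.'' It should be taken from the same local results you invoke for holomorphy, which are stated as holomorphy and nonvanishing for $\Re s\ge\tfrac12$.

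With these corrections the argument goes through.
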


\begin{remark}
\label{remark-tau-in-generic-spectrum}
By the works of Asgari and Shahidi \cite{AsgariShahidi2006, AsgariShahidi2014}, the assumption that 	$\tau=\tau_1\boxplus \tau_2\boxplus \cdots \boxplus \tau_r$ is an irreducible unitary generic isobaric sum automorphic representation of $\GL_{2n}(\A)$ associated to distinct $\tau_1, \cdots, \tau_r$, such that $L(s, \tau_i, \wedge^2\otimes\omega_{\sigma}^{-1})$ has a pole at $s=1$ for each $1\le i\le r$, implies that $\tau$ is a functorial transfer of an irreducible globally generic cuspidal automorphic representation of   $\GSpin_{2n+1}(\A)$ where $\GSpin_{2n+1}$ is $F$-split.
\end{remark}

From now on, we assume that  $L(\frac{1}{2}, \sigma\times \tau\otimes \omega_\sigma^{-1})\neq 0$.
We denote by $\CE_{\tau\otimes \sigma}$ the automorphic representation of $H^\delta(\A)$ generated by the $r$-th iterated  residues at $s=\frac{1}{2}$ of $E(g, s, \phi_{\tau\otimes\sigma})$ for all $\phi_{\tau\otimes\sigma}\in \mathcal{A}(M(F)U(\A)\bs H^\delta(\A))_{\tau\otimes \sigma}$.
By Proposition~\ref{prop-Eisenstein-pole}, the residual representation $\CE_{\tau\otimes \sigma}$ is non-zero.

If $\CE_{\tau\otimes \sigma}$ has trivial central character, we can think of $\CE_{\tau\otimes \sigma}$ as a residual representation of $\SO(V)(\A)$. In this case, the square-integrability of $\CE_{\tau\otimes \sigma}$ follows from the $L^2$-criterion in \cite[Lemma I.4.11]{MoglinWaldspurger1995} (see also \cite[Theorem 6.1]{JiangLiuZhang2013}), and the irreducibility of $\CE_{\tau\otimes \sigma}$ follows from \cite[Theorem A]{Moeglin2011}. In general, regardless of whether the central character of $\CE_{\tau\otimes \sigma}$ is trivial or not, the residual representation $\CE_{\tau\otimes \sigma}$ is square-integrable and irreducible.

\subsection{Bessel coefficients}
\label{subsection-Bessel-coefficient}
Let $\ell$ be an integer such that $1\le \ell \le 2n$. Let $Q_\ell=L_{{\ell}}\ltimes N_{{\ell}}$ be the standard parabolic subgroup of $H^\delta$ stabilizing the following flag of isotropic subspaces:
\begin{equation*}
0\subset V_1^+ \subset V_2^+ \subset \cdots \subset 	V_{\ell}^+.
\end{equation*}
Then the Levi subgroup $L_{\ell}$ is isomorphic to $(\GL_1)^\ell \times \GSpin(W_\ell)$, where
\begin{equation}
\label{eq-W_ell}
W_{\ell}=(V_\ell^+ \oplus 	V_\ell^-)^\perp.
\end{equation}
The unipotent radical $N_{\ell}$ is given by 
\begin{equation*}
N_{\ell} = 	\left\{ u=\begin{pmatrix} z & y &x\\ &I_{4n+2-2\ell} &y^\prime\\ &&z^*\end{pmatrix}: z\in Z_{\ell} \right\}
\end{equation*}
where $Z_{\ell}$ is the standard maximal upper-triangular unipotent subgroup of $\GL_{\ell}=\GL(V_{\ell}^+)$. Throughout this paper, we use unipotent elements of subgroups in $\SO(V)$ to represent their lifts to $\GSpin(V)$ (see Remark~\ref{remark-unipotent-variety-isomorphism}).

Let $\psi:F\backslash \A\to \C^\times$ be a fixed non-trivial additive character. Let $w_0\in W_\ell$ be an anisotropic vector  with $q_V(w_0,w_0)$ in a given square class of $F^\times$, and let $\chi_{\ell,w_0}:N_{\ell}\to \mathbb{G}_a$ be the homomorphism defined by
\begin{equation}\label{eq-chi-ell-w_0-global}
\chi_{\ell,w_0}(u)=	\sum_{i=1}^{\ell-1}q_V(u\cdot e_{i+1}, e_{-i})+q_V(u\cdot w_0, e_{-\ell}).
\end{equation}
We now define a character  $\psi_{\ell, w_0}$ of $N_{\ell}(\A)$ by
\begin{equation}\label{eq-psi-ell-w_0-global}
\psi_{\ell, w_0}(u)=\psi\circ \chi_{\ell, w_0}(u).	
\end{equation}
Here we view $\chi_{\ell,w_0}$ also as a homomorphism from $N_{\ell}(\A)$ to $\A$. 
Note that $\psi_{\ell, w_0}$ is trivial on $N_{\ell}(F)$. The adjoint action of $L_{\ell}$ on $N_{\ell}$ induces an action of $\GSpin(W_{\ell})$ on the set of all such characters $\psi_{\ell, w_0}$. The stabilizer $H_{\ell,w_0}$ of $\psi_{\ell,w_0}$ in $\GSpin(W_{\ell})$ is 
\begin{equation*}
	H_{\ell,w_0}=\GSpin(w_0^\perp \cap W_{\ell}).
\end{equation*}

Let $\Pi$ be an automorphic representation of $H^\delta(\A)$ occurring in the discrete spectrum $L^2_{\text{disc}}(H^{\delta}(F)\backslash H^{\delta}(\A))$. 
For $f\in V_\Pi$, we define the Bessel coefficient (or Gelfand-Graev coefficient) of $f$, with respect to $ \psi_{\ell,w_0}$, 
by
\begin{equation}
\label{eq-Bessel-coef}
 f^{\psi_{\ell, w_0}}(g):=\int_{[N_\ell]} f(ug)\psi_{\ell,w_0}^{-1}(u)\ \mathrm{d}u, \quad g\in H^\delta(\A).
\end{equation}
Here, for an algebraic group $R$ defined over $F$, we use $[R]$ to denote the adelic quotient $R(F)\bs R(\A)$. 
This is a Fourier coefficient of $f$ attached to partition $[2\ell+1, 1^{4n+1-2\ell}]$. 
Note that for any $\gamma\in H_{\ell,w_0}(F)$ and $g\in H^\delta(\A)$, we have
\begin{equation*}
	 f^{\psi_{\ell, w_0}}(\gamma g)= f^{\psi_{\ell, w_0}}(g).
\end{equation*}
Following \cite[\S 3.1]{GinzburgRallisSoudry2011} and \cite[\S 1.2]{JiangLiuXuZhang2016}, we define the space 
$$
\TD_{\psi_{\ell,w_0}}(\Pi)=H_{\ell,w_0}(\A)-\Span \left\{f^{\psi_{\ell, w_0}}|_{H_{\ell,w_0}(\A)}\ | \ f\in V_\Pi\right\},
$$
which is a representation of $H_{\ell,w_0}(\A)$ under the right translation action.

Given an automorphic form $f$ of $H^{\delta}(\A)$ and a cusp form $\varphi^\prime$ of $H_{\ell,w_0}(\A)$ such that $f(z)\varphi^\prime(z)=1$ for all $z\in \ker(\pr)(\A)$, we define the Bessel period of $(f, \varphi^\prime)$, with respect to the character $\psi_{\ell, w_0}$, as
\begin{equation*}
\mathcal{P}(f, \varphi^\prime):=\int_{ \ker(\pr)(\A) H_{\ell,w_0}(F)\backslash H_{\ell,w_0}(\A)} f^{\psi_{\ell, w_0}}(g) \varphi^\prime(g)dg.	
\end{equation*}

When $1\le \ell<2n$, we make a more precise choice of the anisotropic vector $w_0$, given by 
\begin{equation}
\label{eq-w_0-global}
w_0=y_{\alpha}=e_{2n}-\frac{\alpha}{2}e_{-2n}, 	
\end{equation}
for $\alpha\in F^\times$, so that $q_V(w_0, w_0)=-\alpha$. 
For such an $\alpha$, we consider the three-dimensional quadratic form associated with 
\begin{equation}
\label{eq-J-delta-alpha}
J_{\delta, \alpha}=\left(\begin{smallmatrix} 1&&\\&\delta&\\&&\alpha\end{smallmatrix}\right), 
\end{equation}
which may be split or non-split over $F$, depending on the Hilbert symbol $(\delta, \alpha)$. 
In this case, we have $H_{\ell,w_0}=H_{\ell,\alpha}=\GSpin_{4n+1-2\ell}^{\delta, \alpha}$ which is an odd $\GSpin$ group associated to the quadratic space 
$
w_0^\perp \cap W_{\ell}
$
with quadratic form defined by
\begin{equation}
\label{eq-matrix-quad-form-on-4n-2l+1}
J=\left( \begin{smallmatrix}
   &&w_{2n-\ell-1}\\&J_{\delta,\alpha}&\\w_{2n-\ell-1}&&
  \end{smallmatrix}\right).
\end{equation}
To simplify notation, we write $H_{\ell, \alpha}=H_{\ell, y_\alpha}$, $\psi_{\ell, \alpha}=\psi_{\ell, y_\alpha}$, and $\TD_{\psi_{\ell, \alpha}}(\Pi)=\TD_{\psi_{\ell,\alpha}}(\Pi)$.
When $\ell=2n$, we take any non-zero vector $w_0\in V_0$.

When $\ell=0$, we take $N_{\ell}=N_{0}$ to be the trivial subgroup, $\psi_{\ell,w_0}$ the trivial character, and $H_{0,w_0}=\GSpin(w_0^\perp)$, with $w_0=e_{2n}-\frac{\alpha}{2}e_{-2n}$ chosen above. Then $H_{0, w_0}$ is a $\GSpin$ group associated to a quadratic space of dimension $4n+1$.

\subsection{The twisted automorphic descent}
\label{subsection-twisted-auto-descent}
In this section we restate our main theorem as below. 
\begin{theorem}\label{thm-Main}
 Let $\tau=\tau_1\boxplus \tau_2\boxplus \cdots \boxplus \tau_r$ be an irreducible unitary generic isobaric sum automorphic representation of $\GL_{2n}(\A)$ associated to distinct cuspidal automorphic representations $\tau_1, \cdots, \tau_r$, such that $L(s, \tau_i, \wedge^2\otimes\omega^{-1})$ has a pole at $s=1$ for each $1\le i\le r$, where $\omega$ is a Hecke character. 
 If there is an automorphic character $\sigma$ of $\GSpin^\delta_2(\A)$ such that $\omega_{\sigma}=\omega$ and 
 $L(\frac{1}{2}, \sigma\times \tau\otimes \omega_\sigma^{-1})\neq 0$, then the following assertions hold.
\begin{enumerate}
\item The representation $\TD_{\psi_{\ell,\alpha}}(\CE_{\tau\otimes\sigma})$ of $\GSpin^{\delta,\alpha}_{4n+1-2\ell}(\A)$ is zero
for all $n<\ell\leq 2n$.
\item For any square class $\alpha$ in $F^\times$, the representation $\TD_{\psi_{n,\alpha}}(\CE_{\tau\otimes\sigma})$ of $\GSpin^{\delta,\alpha}_{2n+1}(\A)$ is cuspidal automorphic.
\item There exists a square class $\alpha$ in $F^\times$ such that the representation $\TD_{\psi_{n,\alpha}}(\CE_{\tau\otimes\sigma})$ of $\GSpin^{\delta,\alpha}_{2n+1}(\A)$ is non-zero, and in this case
    $$
    \TD_{\psi_{n,\alpha}}(\CE_{\tau\otimes\sigma})=\pi_1\oplus\pi_2\oplus\cdots\oplus\pi_k\oplus\cdots,
    $$
    where $\pi_i$ are irreducible cuspidal automorphic representations of $\GSpin^{\delta,\alpha}_{2n+1}(\A)$, which are nearly equivalent, but
    are not globally equivalent, i.e. the decomposition is multiplicity-free.
\item When $\TD_{\psi_{n,\alpha}}(\CE_{\tau\otimes\sigma})$ is non-zero, any direct summand $\pi$ of $\TD_{\psi_{n,\alpha}}(\CE_{\tau\otimes\sigma})$ has a weak Langlands functorial transfer to $\tau$ in the sense that the Satake parameter of the local unramified component $\tau_v$ of $\tau$ is the local functorial transfer of that of the local unramified component $\pi_v$ of $\pi$ for almost all unramified local places $v$ of $F$.
\item When $\TD_{\psi_{n,\alpha}}(\CE_{\tau\otimes\sigma})$ is non-zero, every irreducible direct summand of $\TD_{\psi_{n,\alpha}}(\CE_{\tau\otimes\sigma})$ has a non-zero Fourier coefficient attached to the partition $[2n-1,1^2]$.  
\item The residual representation $\CE_{\tau\otimes\sigma}$ has a non-zero Fourier coefficient attached to the partition $[2n+1,2n-1,1^2]$.  
\end{enumerate}
\end{theorem}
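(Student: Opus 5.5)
The plan is to follow the template of \cite{JiangLiuXuZhang2016} and \cite{GinzburgRallisSoudry2011}, adapting each step to the $\GSpin$ setting with the Clifford-algebra descriptions of Section~\ref{section-GSpin-groups}. The first input is a local unramified computation. At any finite place $v$ where $\tau_v,\sigma_v$ are unramified, the Satake parameter of $\CE_{\tau\otimes\sigma,v}$ is that of $\Ind(\tau_v|\det|^{1/2}\otimes\sigma_v)$. From this I will bound the wave front set of $\CE_{\tau\otimes\sigma}$. The local components are unramified quotients of induced representations whose Jacquet modules are controlled by the fact that $\tau_v$ is the lift from $\GSpin_{2n+1}$. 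Using the vanishing of the twisted Jacquet modules for orbits larger than $[2n+1,2n-1,1^2]$ (as in \cite[\S 4]{JiangLiuXuZhang2016}), I conclude that every Fourier coefficient of $\CE_{\tau\otimes\sigma}$ attached to a partition $[(2\ell+1)\cdots]$ with $\ell>n$ vanishes. Since $f^{\psi_{\ell,\alpha}}$ is a coefficient attached to $[2\ell+1,1^{4n+1-2\ell}]$, this gives part (1). The central character of $\CE_{\tau\otimes\sigma}$ (Remark~\ref{remark-Eisenstein-series-central-char}) plays no role, since all characters live on unipotent groups and Remark~\ref{remark-unipotent-variety-isomorphism} lets me compute inside $\SO(V)$.

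For cuspidality (part (2)), I will compute the constant term of $f^{\psi_{n,\alpha}}$ along each maximal parabolic of $\GSpin^{\delta,\alpha}_{2n+1}$ stabilizing an isotropic line or subspace of dimension $j$. I intend to do this by root exchange and a double-coset decomposition, in the style of \cite[Ch.~5]{GinzburgRallisSoudry2011}. This expresses the constant term as a sum of terms, each built from the constant term of $f$ along $P$ or along parabolics with Levi $\GL_j\times\GSpin_{4n+2-2j}^\delta$, followed by a Bessel coefficient of depth $n+j$ (or $n+j-1$) on a smaller residual representation. The residual-representation constant terms are given by the residues of the Eisenstein constant term. They involve $\tau$ only through the residues of intertwining operators, so each term reduces to a descent of depth exceeding half the rank, and these vanish by the argument of part (1) applied to the smaller residual representations. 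This constant-term formula is the technical heart of the proof, and I would treat it in a separate section.

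Non-vanishing (parts (3), (5), (6)) comes next. I will show that some coefficient attached to $[2n+1,2n-1,1^2]$ of $\CE_{\tau\otimes\sigma}$ is nonzero. The plan is to start from the Bessel-type coefficient attached to $[2n+1,1^{2n+1}]$ with $\ell=n$ and then take a further Fourier–Jacobi/Whittaker type coefficient, namely the $[2n-1,1^2]$ coefficient on $\GSpin_{2n+1}^{\delta,\alpha}$. By root exchange, this composite relates to a coefficient of $\CE$ that unfolds, through the residue of the Eisenstein series, into a period of $\tau$ against the constant term. That period is nonzero because $\tau$ is generic and because $L(\tfrac12,\sigma\times\tau\otimes\omega_\sigma^{-1})\ne0$. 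The $L$-value enters as the leading factor of the residue; the alternative is to use the nonvanishing residue from Proposition~\ref{prop-Eisenstein-pole}. Summing over the square classes $\alpha$, I expect the orbit of $[2n+1,2n-1,1^2]$ under the Levi to split into the $\psi_{n,\alpha}$; so some $\alpha$ gives a nonzero descent, and both (5) and (6) follow. This step is the main obstacle: the nonvanishing must be extracted from the anisotropic $V_0$ and the $L$-value. I would first try to mimic \cite[\S 5]{JiangLiuXuZhang2016} directly, carrying the $\omega_\sigma$-twist along.

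The remaining parts use $L^2$ arguments. Once the descent is cuspidal, it lies in the cuspidal spectrum and hence decomposes as a direct sum of irreducibles. For the weak transfer (part (4)), I will show that for each irreducible summand $\pi$ the Bessel period $\mathcal P(f,\varphi)$ is nonzero for suitable $f\in\CE_{\tau\otimes\sigma}$ and $\varphi\in\pi^\vee$. I then apply the local uniqueness of Bessel models at unramified places together with the unramified computation in \cite{JiangLiuXuZhang2016}: an unramified $\Ind(\tau_v|\cdot|^{1/2}\otimes\sigma_v)$ has a nonzero Bessel functional against $\pi_v$ only if $\pi_v$ is the unramified constituent whose transfer is $\tau_v$. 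Near equivalence follows. Multiplicity-freeness follows from the local multiplicity-one theorem for Bessel models on $(\GSpin_{4n+2},\GSpin_{2n+1})$, which reduces to the $\SO$ case via $\pr$ by fixing the central character. Two isomorphic summands would then give two independent nonzero Bessel functionals on the irreducible $\CE_{\tau\otimes\sigma}$, which is a contradiction.
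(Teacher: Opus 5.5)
Your outline of (1), (2) and (4) broadly follows the paper: unramified vanishing of twisted Jacquet modules, a root-exchange formula for the constant terms of $f^{\psi_{n,\alpha}}$ whose terms are deeper Bessel coefficients, and the unramified computation of $J_{\psi_{n,\alpha}}$. The non-vanishing step, which you flag as the main obstacle, does not go through as proposed. The coefficient of $\CE_{\tau\otimes\sigma}$ whose non-vanishing can be read off directly from the residue and the genericity of $\tau$ is the constant term along $U$ followed by a Whittaker coefficient of $\tau$. That is a degenerate Whittaker coefficient attached to $[(2n)^2,1^2]$, which the paper upgrades to a generalized one by \cite[Theorem C]{GomezGourevitchSahi2017}. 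Root exchange keeps the nilpotent element defining the character fixed, and hence the orbit. So it cannot relate your composite $[2n+1,2n-1,1^2]$-coefficient to this one, and an orbit-raising argument is missing.

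The paper supplies it by contradiction. Suppose $\TD_{\psi_{n,\alpha}}(\CE_{\tau\otimes\sigma})=0$ for every $\alpha$, so that all $[2n+1,1^{2n+1}]$-coefficients vanish. After root exchanges one Fourier expands along $E_{n,3n+2}$; the non-trivial modes are such coefficients and drop out. Further root exchanges turn the $[(2n)^2,1^2]$-coefficient $f_\varphi$ into an integral of $\varphi^U$ against the Whittaker character of $N'_{2n}=M\cap N_{2n}$. Computed directly, $D(t)=(tI_{2n},1)$ acts on $f_\varphi$ by $\omega_\tau(t)|t|$. On the other hand, $f_\varphi(\omega g)=f_\varphi(g)$ for the Weyl element $\omega$ exchanging $V^+$ and $V^-$, and $\omega D(t)\omega^{-1}=D(t^{-1})$, so $D(t)$ also acts by $\omega_\tau(t)^{-1}|t|^{-1}$. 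Since $\tau$ is unitary this forces $f_\varphi=0$, a contradiction. The hypothesis $L(\frac{1}{2},\sigma\times\tau\otimes\omega_\sigma^{-1})\neq0$ enters only through $\CE_{\tau\otimes\sigma}\neq0$.

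Second, even a non-zero composite coefficient would not give (5). It only shows that some vector of $\TD_{\psi_{n,\alpha}}(\CE_{\tau\otimes\sigma})$, hence some summand, has a non-zero $[2n-1,1^2]$-coefficient, whereas (5) concerns every summand. The paper argues summand by summand. For each $\pi$, the pairing of some $\varphi_\pi\in\pi$ with some $\xi^{\psi_{n,\alpha}}$ is non-zero, hence so is the Rankin--Selberg integral against $E(\cdot,s,\phi_{\tau\otimes\sigma})^{\psi_{n,\alpha}}$. The unfolding of that integral in \cite{Yan2025GGP} vanishes identically unless $\varphi_\pi^{\psi_{n-1,-\alpha}}\neq0$. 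Once (3) and (5) are established, composing the $\psi_{n,\alpha}$-coefficient with a non-zero $[2n-1,1^2]$-coefficient of a summand does give (6), as you intend.

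Three smaller points. In (1), what is needed is the vanishing of $J_{\psi_{\ell,\alpha}}$ itself at unramified places for $\ell>n$. An appeal to vanishing ``above'' $[2n+1,2n-1,1^2]$ does not suffice, since for $n<\ell<2n-1$ the orbit $[2\ell+1,1^{4n+1-2\ell}]$ is not larger than $[2n+1,2n-1,1^2]$. In (4), the restriction on $\pi_v$ holds only for the unramified constituent of $\Ind(\tau_v|\cdot|^{1/2}\otimes\sigma_v)$, not for the full induced representation. The latter's twisted Jacquet module involves $\tau_v^{(n)}$, which has many unramified constituents. The paper isolates the right one by realizing the constituent inside $\Ind(\tau'_v\otimes\sigma_v)$, with $\tau'_v$ induced from $\mu_1\circ\det\otimes\cdots\otimes\mu_n\circ\det$ on $\GL_2\times\cdots\times\GL_2$. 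Finally, multiplicity one for $\GSpin$ Bessel models does not reduce to $\SO$ via $\pr$. Representations with non-trivial central character on $\ker(\pr)$ do not factor through $\pr$, and twisting by $\chi\circ N$ only changes that character by $\chi^2$. The result has to be quoted for $\GSpin$ directly, as the paper does via \cite{Yan2025}.
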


\section{Vanishing property and cuspidality}
\label{section-vanishing-and-cuspidality}

\subsection{Twisted Jacquet modules}

Let $v$ be a finite place of $F$, and let $k=F_v$. For an algebraic group $\mathbf{G}$ we denote its group of $k$-points by $G(k)$ or simply by $G$. Let $(V, q_V)$ be a non-degenerate quadratic space of dimension $4n+2$ over
$k$, with a polar decomposition
\begin{equation*}V=V^+\oplus V_0 \oplus V^-,\end{equation*}
where $V^{\pm}$ are maximal isotropic subspaces of $V$ in duality with respect to $q_V$, and $V_0$ is anisotropic. We will consider two cases:
\begin{itemize}
 \item Case (1): $\dim V_0=0$, if $\GSpin(V)$ is split;
 \item Case (2):  $\dim V_0=2$, if $\GSpin(V)$ is quasi-split, non-split.
\end{itemize}
We denote the Witt index of $V$ by $\wt m$, so that
\begin{equation*}
\wt m =	\begin{cases}
	2n+1 &\text{ Case (1)};\\
	2n	&\text{ Case (2)}.
	\end{cases}
\end{equation*}
We fix a basis $\{e_1, \cdots, e_{\wt m}\}$ of $V^+$ and a basis $\{e_{-1}, e_{-2}, \cdots, e_{-\wt m}\}$ of $V^-$ so that $q_V(e_i, e_{-j})=\delta_{i,j}$ for all $1\le i, j\le \wt m$. In Case (2), we take a basis $\{e_0^{(1)}, e_0^{(2)}\}$ for $V_0$ such that $q_V(e_0^{(1)}, e_0^{(1)})=1$ and $q_V(e_0^{(2)}, e_0^{(2)})=\delta$ for some $\delta\in k^\times$.  
We put the basis in the following order:
\begin{equation*}
e_1, e_2, \cdots, e_{\tilde{m}}, e_0^{(1)}, e_0^{(2)},  	e_{-\tilde{m}}, \cdots, e_{-2}, e_{-1}, \quad \textrm{ if } \dim_F V_0=2, 
\end{equation*}
\begin{equation*}
e_1, e_2, \cdots, e_{\tilde{m}},	e_{-\tilde{m}}, \cdots, e_{-2}, e_{-1}, \quad \textrm{ if } \dim_F V_0=0.
\end{equation*}

Let $H=\GSpin(V)$ be the $\GSpin$ group associated to $V$ over
$k$. As in \S\ref{subsection-Bessel-coefficient}, for an integer $\ell$ such that $1\le \ell \le 2n$, we let $Q_\ell=L_{{\ell}}\ltimes N_{{\ell}}$ be the standard parabolic subgroup of $H$ stabilizing the following flag of isotropic subspaces:
\begin{equation}\label{eq-parobolic-Ql-flag}
0\subset V_1^+ \subset V_2^+ \subset \cdots \subset 	V_{\ell}^+.
\end{equation}
Then the Levi subgroup $L_{\ell}$ is isomorphic to $(\GL_1)^\ell \times \GSpin(W_\ell)$, where $W_{\ell}$ is given in \eqref{eq-W_ell}. 
The unipotent radical $N_{\ell}$ is given by 
\begin{equation*}
N_{\ell} = 	\left\{ u=\begin{pmatrix} z & y &x\\ &I_{4n+2-2\ell} &y^\prime\\ &&z^*\end{pmatrix}: z\in Z_{\ell} \right\}
\end{equation*}
where $Z_{\ell}$ is the standard maximal upper-triangular unipotent subgroup of $\GL_{\ell}=\GL(V_{\ell}^+)$. 
For $\alpha\in k^\times$, we take the following anisotropic vector $w_0\in V$:
\begin{equation}\label{eq-w_0-local}
w_0=\begin{cases}
     y_\alpha=e_{2n}-\frac{\alpha}{2}e_{-2n}, & \text{if $\ell < 2n$};\\
     &\\
     \text{$\alpha e_0^{(1)}$ or $\alpha e_0^{(2)}$ in $V_0$}, & \text{if $\ell=2n$ and $V_0\neq 0$}.
    \end{cases}
\end{equation}

\begin{remark}\label{remark-w_0-local}
Note that by a change of basis if necessary, every anisotropic vector in $V$ is of the form $c_1 e_i+c_2 e_{-i}$ or $c e_0^{(j)}$ (this happens only in Case (2)) for some $c_1, c_2, c\in k^\times$. Thus the above choice of $w_0$ essentially represents any anisotropic vector in $V$.  	
\end{remark}

Let $\psi$ be a fixed non-trivial additive character of $k$. Similar to \eqref{eq-chi-ell-w_0-global}, we define a character $\psi_{\ell, w_0}=\psi\circ \chi_{\ell, w_0}$ on $N_{\ell}(k)$. Let $H_{\ell,w_0}=\GSpin(w_0^\perp\cap W_{\ell})$ be the stabilizer of $\psi_{\ell, w_0}$ in $\GSpin(W_{\ell})$. Denote
\begin{equation}
\label{eq-R_l,w0}
R_{\ell, w_0}=H_{\ell, w_0}\ltimes N_{\ell}.	
\end{equation}
This is a Bessel subgroup of $H$.

Let $(\Pi,V_\Pi)$ be a smooth representation of $H(k)$. Let $J_{\psi_{\ell,w_0}}(\Pi)$ be the twisted Jacquet module of $\Pi$, with respect to $N_{\ell}(k)$ and the character $\psi_{\ell, w_0}$. The space of $J_{\psi_{\ell,w_0}}(\Pi)$ is given by
\begin{equation*}
	J_{\psi_{\ell,w_0}}(V_\Pi)=V_\Pi/\Span\{\Pi(u)\xi-\psi_{\ell, w_0}(u)\xi\ |\ u\in N_\ell(k),\  \xi\in V_\Pi\}.
\end{equation*}
This is a module over $H_{\ell, w_0}(k)$. To ease notation, we will write $\chi_{\ell,\alpha}=\chi_{\ell,y_\alpha}$, $\psi_{\ell,\alpha}=\psi_{\ell,y_\alpha}$, $H_{\ell,\alpha}=H_{\ell,y_\alpha}$, $J_{\psi_{\ell,\alpha}}=J_{\psi_{\ell,y_\alpha}}$ if $w_0=y_\alpha$. Note that for $0\le \ell< 2n$, the quadratic form associated to the group $H_{\ell,w_0}=\GSpin(w_0^\perp\cap W_{\ell})$ is given by
$$J=\left(\begin{smallmatrix}
   &&w_{2n-\ell-1}\\&J_{\delta,\alpha}&\\w_{2n-\ell-1}&&
  \end{smallmatrix}\right),
$$
and thus we have the following two cases:
\begin{itemize}
\item If $J_{\delta, \alpha}$ is non-split over $k$, then the group 	$H_{\ell,w_0}=\GSpin(w_0^\perp\cap W_{\ell})$ is non-split, and  $\mathrm{Witt}(k\cdot y_{-\alpha}+V_0)=0$.
\item If $J_{\delta, \alpha}$ is split over $k$, then the group 	$H_{\ell,w_0}=\GSpin(w_0^\perp\cap W_{\ell})$ is split over $k$. In this case, $\mathrm{Witt}(k\cdot y_{-\alpha}+V_0)=1$ if $V_0\neq 0$ (i.e. $-\delta\notin {k^\times}^2$), and
 $\mathrm{Witt}(k\cdot y_{-\alpha}+V_0)=0$ if $V_0=0$.
\end{itemize}
If $\ell=2n$, then $H_{2n,w_0}=\GSpin(w_0^\perp \cap V_0)$ is either $\GSpin(k\cdot e_0^{(2)}$) or $\GSpin(k\cdot e_0^{(1)}$).

For each $1\le j \le 2n$, denote $V_j^+=\Span\{e_1, \cdots, e_j \}$. Let $P_j=M_{j}\ltimes U_{j}$ be the standard maximal parabolic subgroup of $H$ which stabilizes $V_{j}^+$. Then the Levi subgroup $M_{j}$ is isomorphic to $\GL(V_j^+) \times \GSpin(W_j)$. 

We now describe the double coset $P_j(k)\backslash H(k)/P_\ell(k)$.

First, we consider the case when $\widetilde{m}=2n$, so that $\dim V_0=2$ (Case (2)). In this case, define
\begin{equation*}
\omega_{b,\Orth(V)}= \left(\begin{smallmatrix}I_{2n} & & &\\&1&&\\&&-1&\\&&&I_{2n}  \end{smallmatrix} \right) \in \Orth(V)\setminus \SO(V),	
\end{equation*}
and 
\begin{equation*}
\omega_{b,\Orth(V_0)}= \left(\begin{smallmatrix}1 & \\ & -1  \end{smallmatrix} \right) \in \Orth(V_0)\setminus \SO(V_0),	
\end{equation*}
Choose an element $\omega_b\in \GPin(V)$ and an element $\omega_{b}^0\in \GPin(V_0)$ such that 
\begin{equation*}
\pr(\omega_b)=\omega_{b,\Orth(V)}, \qquad \pr(\omega_b^0)=\omega_{b,\Orth(V_0)}.	
\end{equation*}
For later reference, for any $j\le \widetilde{m}$, we also denote
\begin{equation*}
\omega_{b,j,\Orth(W_j)}=	\left(\begin{smallmatrix}I_{2n-j} & & &\\&1&&\\&&-1&\\&&&I_{2n-j}  \end{smallmatrix} \right)\in \Orth(W_j)\setminus \SO(W_j),
\end{equation*}
and choose $\omega_{b,j}\in \GPin(W_j)$ such that 
\begin{equation}
\label{eq-omega_b,j-1}
\pr(\omega_{b,j})=\omega_{b,j,\Orth(W_j)}.	
\end{equation}

Next, we consider the case when $\widetilde{m}=2n+1$, so that $\dim V_0=0$ (Case (1)). In this case, set
\begin{equation*}
\omega_{b,\Orth(V)}= \left(\begin{smallmatrix}I_{2n} & & &\\&&1&\\&1&&\\&&&I_{2n}  \end{smallmatrix} \right)\in \Orth(V)\setminus \SO(V),
\end{equation*}
and choose $\omega_b\in \GPin(V)\setminus \GSpin(V)$ such that 
\begin{equation}
\pr(\omega_b)=\omega_{b,\Orth(V)}.	
\end{equation}
For any $j\le \widetilde{m}$, set 
\begin{equation*}
\omega_{b,j,\Orth(W_j)}=	 \left(\begin{smallmatrix}I_{2n-j} & & &\\&&1&\\&1&&\\&&&I_{2n-j}  \end{smallmatrix} \right)\in \Orth(W_j)\setminus \SO(W_j),
\end{equation*}
and choose $\omega_{b,j}\in \GPin(W_j)$ such that 
\begin{equation}
\label{eq-omega_b,j-2}
\pr(\omega_{b,j})=\omega_{b,j,\Orth(W_j)}.	
\end{equation}

We have the following description on the double cosets $P_j(k) \backslash H(k)/P_{\ell}(k)$. 

\begin{lemma}
\label{lemma-double-coset-decomposition}
\begin{enumerate}
\item[(a)] Suppose that $\wt m=2n$. The double cosets representatives for $P_j(k)\backslash H(k)/P_\ell(k)$ are indexed by the pairs of non-negative integers $(\alpha, \beta)$ in
\begin{equation}
\label{eq-double-coset-index-set-Ekl}
\mathcal{E}_{j,\ell}=\{ (\alpha, \beta): 0\le \alpha \le \beta \le j \text{ and } j\le \ell +\beta-\alpha \le \tilde{m} \}.	
\end{equation}
For each $(\alpha, \beta)\in \mathcal{E}_{j,\ell}$, choose an element $\epsilon_{\alpha,\beta}\in H(k)$ such that
\begin{equation}
\label{eq-double-coset-epsilon-alpha-beta}
	 \pr(\epsilon_{\alpha,\beta}) 	 =  \omega_{b,\Orth(V)}^{j-\beta}   a_{\alpha, \beta, \Orth(V)} 
\end{equation}
where   
\begin{equation}
\label{eq-a-alpha-beta}
 a_{\alpha, \beta, \Orth(V)} = 
\begin{pmatrix}
I_{\alpha} &&&&&&&\\
&0&0&I_{\beta-\alpha} &0&0&0&0&\\
&0&0&0&0&0&I_{j-\beta}&0&\\
&I_{\ell+\beta-\alpha-j} &0&0&0&0&0&0&\\
&0&0&0&I_{4n+2-2(\ell+\beta-\alpha)} &0&0&0&\\
&0&0&0&0&0&0&I_{\ell+\beta-\alpha-j}&\\
&0&I_{j-\beta}&0&0&0 &0&0&\\
&0&0 &0&0&I_{\beta-\alpha}&0&0&\\
&&&&&&&&I_\alpha 
\end{pmatrix}.
\end{equation}
Then $\{\epsilon_{\alpha, \beta}: (\alpha, \beta)\in \mathcal{E}_{j,\ell} \}$ is a set of representatives for $P_j(k)\backslash H(k)/P_\ell(k)$.

\item[(b)] Suppose that $\wt m =2n+1$. The double cosets in $P_j(k)\backslash H(k)/P_\ell(k)$ are described as follows.
\begin{itemize}
\item[(1)] For each $(\alpha,\beta)\in \mathcal{E}_{j,\ell}$ with $\ell+\beta-\alpha<\wt m$, there is one double coset representative $\epsilon_{\alpha, \beta}$ given by\eqref{eq-double-coset-epsilon-alpha-beta}.
\item[(2)] For each $(\alpha,\beta)\in \mathcal{E}_{j,\ell}$ with $\ell+\beta-\alpha=\wt m$, there are two double coset representatives $\epsilon_{\alpha, \beta}$ and $\tilde{\epsilon}_{\alpha, \beta}$, where $ \tilde{\epsilon}_{\alpha, \beta} =\omega_b  {\epsilon}_{\alpha, \beta}  \omega_b^{-1}$.
\end{itemize}
\end{enumerate}
\end{lemma}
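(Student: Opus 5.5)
The plan is to reduce the statement to the corresponding, well-known description for the special orthogonal group $\SO(V)$. Three facts about the projection $\pr:\GSpin(V)\to\SO(V)$ drive this.
\begin{itemize}
\item It is surjective on $k$-points; here we use Hilbert 90 for the central $\GL_1$ in $1\to k^\times\to\GSpin(V)\to\SO(V)\to 1$.
\item Its kernel $\ker(\pr)=k^\times$ is central.
\item By Lemma~\ref{lemma-parabolic-via-Clifford}, $P_j=\pr^{-1}(\pr(P_j))$ and $P_\ell=\pr^{-1}(\pr(P_\ell))$, since these parabolics are defined as stabilizers of isotropic subspaces under the twisted adjoint action.
\end{itemize}
Together these show that $\pr$ induces a bijection
\begin{equation*}
P_j(k)\backslash H(k)/P_\ell(k)\;\longrightarrow\; \pr(P_j)(k)\backslash \SO(V)(k)/\pr(P_\ell)(k).
\end{equation*}
Indeed, two elements $g,g'$ with $\pr(g')=\pr(p)\pr(g)\pr(q)$ satisfy $g'=z\,pgq$ for some $z\in k^\times\subset P_j(k)$, so they lie in the same double coset. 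It therefore suffices to classify the double cosets for $\SO(V)$ with respect to the stabilizers of $V_j^+$ and $V_\ell^+$. Any lift $\epsilon_{\alpha,\beta}\in H(k)$ of the stated $\SO(V)$-element can then be used as a representative. Such a lift exists because $\omega_{b,\Orth(V)}^{j-\beta}a_{\alpha,\beta,\Orth(V)}$ has determinant $1$: the sign of $a_{\alpha,\beta,\Orth(V)}$ is compensated by the power of $\omega_{b,\Orth(V)}$. I would check this sign with a small computation.

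For $\SO(V)$ I would first classify the $\Orth(V)$-double cosets, following the standard argument in \cite{GinzburgRallisSoudry2011} and \cite{JiangLiuXuZhang2016}. By Witt's theorem, an orbit of $\pr(P_j)\times\pr(P_\ell)$ on $\Orth(V)$ corresponds to the relative position of the pair of isotropic subspaces $(V_j^+, gV_\ell^+)$, which is determined by two invariants:
\begin{equation*}
\alpha=\dim(V_j^+\cap gV_\ell^+),\qquad \beta=\dim\bigl(V_j^+\cap (gV_\ell^+)^\perp\bigr).
\end{equation*}
The constraint $j\le \ell+\beta-\alpha\le\wt m$ expresses the fact that $V_j^+ + gV_\ell^+$ modulo its radical must fit inside a maximal isotropic subspace. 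Writing down adapted bases realizes each admissible pair $(\alpha,\beta)$ by the permutation-type matrix $a_{\alpha,\beta,\Orth(V)}$, and this gives the index set $\mathcal E_{j,\ell}$.

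The main obstacle is passing from $\Orth(V)$ to $\SO(V)$. An $\Orth(V)$-double coset splits into two $\SO(V)$-double cosets exactly when the stabilizer of the configuration lies in $\SO(V)$; otherwise it stays a single coset.
\begin{itemize}
\item In Case (2), the anisotropic kernel $V_0$ is always contained in the part of $V$ that the configuration leaves free. Reflections $\omega_{b,\Orth(V_0)}$ therefore lie in the stabilizer, so nothing splits. Multiplying by $\omega_{b,\Orth(V)}^{j-\beta}$ only fixes the determinant.
\item In Case (1), the quotient space complementary to the configuration is split of dimension $2(\wt m-(\ell+\beta-\alpha))$. It admits a determinant $-1$ element exactly when this dimension is positive. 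When $\ell+\beta-\alpha=\wt m$ the orbit splits into two, represented by $\epsilon_{\alpha,\beta}$ and its conjugate $\omega_b\epsilon_{\alpha,\beta}\omega_b^{-1}$, the usual phenomenon of the two classes of maximal isotropic subspaces.
\end{itemize}
I would verify these stabilizer claims explicitly with the block form of $a_{\alpha,\beta,\Orth(V)}$. Lifting the representatives back to $H(k)$ via the bijection above then completes the proof.
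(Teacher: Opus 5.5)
Your proposal is correct. The paper gives no argument of its own here; it only cites \cite[Lemmas 3.5, 3.6]{Yan2025GGP}. Your route has three steps. You reduce via $\pr$ to $\SO(V)$, using that the kernel is central, lies in both parabolics, and that $\pr$ is surjective on $k$-points. You then classify $\Orth(V)$-orbits of pairs $(V_j^+,gV_\ell^+)$ by $(\alpha,\beta)$ via Witt's theorem. Finally you decide splitting by whether the stabilizer meets $\Orth(V)\setminus\SO(V)$. This is the standard special-orthogonal argument (as in \cite{GinzburgRallisSoudry2011}) that such a citation rests on, made self-contained, and it shows where the factor $\omega_{b,\Orth(V)}^{j-\beta}$ (a determinant correction) and the extra representative $\omega_b\epsilon_{\alpha,\beta}\omega_b^{-1}$ come from.

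Three points should be made explicit when you write it out.

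\begin{enumerate}
\item In Case (1) with $\ell+\beta-\alpha=\wt m$, a zero free part is not enough for splitting: you need the whole stabilizer to have determinant $1$. This holds because the stabilizer preserves the isotropic subspace $gV_\ell^+ + (V_j^+\cap(gV_\ell^+)^\perp)$. That subspace has dimension $\ell+\beta-\alpha=\wt m$, so it is a Lagrangian, and the $\Orth(V)$-stabilizer of a Lagrangian in a split even-dimensional space lies in $\SO(V)$.
\item Your splitting criterion, and the fact that $\omega_b$ stabilizes $V_j^+$ and $V_\ell^+$ (so $\omega_{b,\Orth(V)}^{j-\beta}a_{\alpha,\beta,\Orth(V)}$ and $\omega_b\epsilon_{\alpha,\beta}\omega_b^{-1}$ stay in the same $\Orth(V)$-double coset), use $j,\ell\le 2n$ in Case (1). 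That hypothesis is what makes $\SO(V)$ transitive on isotropic subspaces of those dimensions.
\item The lower bound $j\le \ell+\beta-\alpha$ does not come from the radical of $V_j^++gV_\ell^+$. It comes from computing the rank $j-\beta$ of the pairing $V_j^+\times gV_\ell^+\to k$ from the other side: $j-\beta=\ell-\dim(gV_\ell^+\cap (V_j^+)^\perp)\le \ell-\alpha$. The upper bound is just the dimension of the isotropic subspace in the first point.
\end{enumerate}
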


\begin{proof}
This is \cite[Lemma 3.5]{Yan2025GGP} and \cite[Lemma 3.6]{Yan2025GGP}.
\end{proof}

To simplify notation in this section, we will simply write $\GL_j=\GL_j(k)$, $H=H(k)$, etc. 
Let $\tau$ and $\sigma$ be smooth representations of $\GL_j$ and $\GSpin(W_j)$ respectively. 
We denote temporarily 
\begin{equation*}
\pi=	\Ind_{P_j}^H \tau\otimes \sigma,
\end{equation*}
and we  would like to study the Jacquet module $J_{\psi_{\ell, w_0}}(\pi)$.

Since $J_{\psi_{\ell, w_0}}(\pi)=J_{\psi_{\ell, w_0}}(\Res_{P_\ell}\pi)$, we consider the restriction of $\pi$ to $P_\ell$ first. 
By Bruhat theory, this representation admits a finite $P_{\ell}$-filtration, whose subquotients are parametrized by elements of $P_j(k) \backslash H(k)/P_{\ell}(k)$. We choose the representatives $w$ for $P_j(k) \backslash H(k)/P_{\ell}(k)$ as in Lemma~\ref{lemma-double-coset-decomposition}. 
If $\ell=0$, we will skip the following discussion and just consider $P_k(k)\backslash H(k) / H_{0,w_0}(k)$ where $H_{0,w_0}(k)$ is the subgroup of $H(k)$ consisting of elements which fix $w_0$.  For a general $1\le \ell\le 2n$, the subquotient corresponding to the representative $w$ is
\begin{equation*}
\rho_{w}=	\ind_{P_{\ell, j}^{(w)}}^{P_\ell} \pi_{\ell, j}^{(w)}(\tau, \sigma),
\end{equation*}
where 
``$\ind$" denotes compact induction, $P_{\ell, j}^{(w)}=P_{\ell}\cap  w^{-1}P_jw$, and for $g\in P_{\ell, j}^{(w)}$, we have
\begin{equation*}
	\pi_{\ell, j}^{(w)}(\tau, \sigma)(g)=\delta_{P_{\ell}}^{\frac{1}{2}}(g)\delta_{P_{\ell, j}^{(w)}}^{-\frac{1}{2}}(g)\delta_{P_j}^{\frac{1}{2}}(wgw^{-1}) (\tau\otimes\sigma)(wgw^{-1}),
\end{equation*}
where $\delta_{P_{\ell}}, \delta_{P_{\ell, j}^{(w)}}, \delta_{P_j}$ are the modular functions of $P_{\ell}, P_{\ell, j}^{(w)}, P_j$ respectively.  

Now our goal is to analyze $J_{\psi_{\ell, w_0}}(\rho_w)$ for each representative $w=\epsilon_{\alpha, \beta}$ (or $w= \tilde{\epsilon}_{\alpha,\beta}$), as an $H_{\ell, w_0}$-module.
Recall that $R_{\ell, w_0}$ is the Bessel subgroup of $H$ given in \eqref{eq-R_l,w0}.  
Since  $J_{\psi_{\ell, w_0}}(\rho_w)=J_{\psi_{\ell, w_0}}(\Res_{R_{\ell, w_0}}\rho_w)$, we consider $\Res_{R_{\ell, w_0}}\rho_w$. Again, by Bruhat theory, we need to consider the double coset space $P_{\ell, j}^{(w)}(k)\backslash P_{\ell}(k)/ R_{\ell,w_0}(k)$. By \cite[Lemma 3.8]{Yan2025GGP}, the representatives for this double coset space can be given by $\{\eta_{\epsilon, \gamma}\}$ with
\begin{equation}
\label{eq-eta}
\pr(\eta_{\epsilon,\gamma}) = \begin{pmatrix}
 \epsilon & & \\ & \gamma &\\ && \epsilon^*	
 \end{pmatrix} ,
 \end{equation}
where $\epsilon$ runs through a set of representatives for the quotient of the Weyl groups 
\begin{equation*}
	W_{\GL_{\alpha}}\times W_{\GL_{\ell+\beta-\alpha-j}}\times W_{\GL_{j-\beta}}\backslash W_{\GL_{\ell}},
\end{equation*}
and $\gamma$ runs through a set of representatives for $P_{w,\SO_{4n+2}}^\prime\backslash \SO(W_{\ell}) / \mathrm{Stab}_{L_{\ell,\SO_{4n+2}}}(\psi_{\ell, w_0})$, where 
\begin{equation*}
P_{w, \SO_{4n+2}}^\prime:= \SO(W_{\ell})\cap \pr(w^{-1})P_{j,\SO_{4n+2}} \pr(w)  ,
\end{equation*}
${L_{\ell,\SO_{4n+2}}}\cong (\GL_1)^{\ell}\times \SO_{4n+2-2\ell}$  is the Levi subgroup of the standard parabolic subgroup of $\SO_{4n+2}$ stabilizing \eqref{eq-parobolic-Ql-flag}, $P_{j,\SO_{4n+2}}$ is the standard parabolic subgroup of $\SO_{4n+2}$ stabilizing $V_j^+$. 

\begin{remark}
\label{remark-parabolic-subgroup}
When $\wt m=2n$, $P_{w, \SO_{4n+2}}^\prime$ is a parabolic subgroup of $\SO(W_{\ell})$ preserving the  $(\beta-\alpha)$-dimensional totally isotropic subspace $V_{\ell, \beta-\alpha}^+$ of $W_{\ell}$, where
\begin{equation}
\label{eq-subspace-V-lt}
V_{\ell, t}^{\pm}=\Span\{e_{\pm (\ell+ 1)}, \cdots, e_{\pm (\ell+t)} \}
\end{equation}
for $1\le t\le \wt{m}-\ell$. Now consider $\wt m=2n+1$. When $\ell+\beta-\alpha<\wt m$, $P_{w, \SO_{4n+2}}^\prime$ is the parabolic subgroup of $\SO(W_{\ell})$ preserving the $(\beta-\alpha)$-dimensional totally isotropic subspace $V_{\ell, \beta-\alpha}^+$ of $W_{\ell}$. When $\ell+\beta-\alpha=\wt m$ and $w=\epsilon_{\alpha,\beta}$, $P_{w, \SO_{4n+2}}^\prime$ is the parabolic subgroup of $\SO(W_{\ell})$ preserving the $(\wt m-\ell)$-dimensional totally isotropic subspace $V_{\ell, \wt m-\ell}^+$ of $W_{\ell}$. When $\ell+\beta-\alpha=\wt m$ and $w=\tilde{\epsilon}_{\alpha,\beta}$, $P_{w, \SO_{4n+2}}^\prime$ is the parabolic subgroup of $\SO(W_{\ell})$ preserving the $(\wt m-\ell)$-dimensional subspace $\omega_b V_{\ell,\wt m-\ell}^+$ of $W_{\ell}$.
\end{remark}

\begin{proposition}
\label{prop-JacquetModuleVanishing-alpha>0}
Let $w$ be any representative for $P_j(k)\backslash H(k)/P_\ell(k)$ corresponding to $(\alpha, \beta)$ given in Lemma~\ref{lemma-double-coset-decomposition} with $\alpha>0$. Then 
\begin{equation*}
J_{\psi_{\ell, w_0}}(\rho_w)=0.	
\end{equation*}
\end{proposition}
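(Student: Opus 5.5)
We follow the standard argument of \cite{GinzburgRallisSoudry2011} and \cite{JiangLiuXuZhang2016}, adapted to $\GSpin$; the $\GSpin$ structure plays no real role because unipotent elements lift isomorphically (Remark~\ref{remark-unipotent-variety-isomorphism}).

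Since $J_{\psi_{\ell,w_0}}(\rho_w)=J_{\psi_{\ell,w_0}}(\Res_{R_{\ell,w_0}}\rho_w)$, Bruhat theory applied to $P_{\ell,j}^{(w)}\backslash P_\ell/R_{\ell,w_0}$ with the representatives $\eta=\eta_{\epsilon,\gamma}$ of \eqref{eq-eta} reduces the claim to the vanishing of each subquotient
\[
J_{\psi_{\ell,w_0}}\Bigl(\ind^{R_{\ell,w_0}}_{R_{\ell,w_0}\cap \eta^{-1}P_{\ell,j}^{(w)}\eta}\bigl(\pi^{(w)}_{\ell,j}(\tau,\sigma)\bigr)^{\eta}\Bigr).
\]
By the standard (Bernstein--Zelevinsky) criterion, such a twisted Jacquet module vanishes as soon as there is a unipotent subgroup $N'\subset N_\ell\cap \eta^{-1}w^{-1}U_j w\eta$ on which $\psi_{\ell,w_0}$ is non-trivial. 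Indeed, conjugating $N'$ by $w\eta$ lands it in $U_j$, where $\tau\otimes\sigma$ (inflated) acts trivially, while the twisting character on $N'$ is non-trivial. We may therefore argue coset by coset, with a fixed $\eta$.

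The core step is to use $\alpha>0$ to produce such an $N'$. By \eqref{eq-a-alpha-beta}, $w$ fixes $e_1,\dots,e_\alpha$, so the first $\alpha$ vectors of $V_\ell^+$ are carried by $w$ into $V_j^+$. For each $\eta$, we choose the index $i$ such that $\epsilon$ sends the block $\GL_\alpha$ position to positions $\{i_1<\dots<i_\alpha\}$ in $\{1,\dots,\ell\}$. We then look at the root subgroup $\mathcal{X}_{i,i+1}$ (coordinate $q_V(u e_{i+1},e_{-i})$ in \eqref{eq-chi-ell-w_0-global}), or, when $i=\ell$, at the root subgroup pairing $e_\ell$ with $w_0$. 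We take $i$ to be the largest index in the $\GL_\alpha$-image for which the next position $i+1$ (or $w_0$) is not in that image; such an index exists whenever $\alpha\ge1$.

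For this root, the relevant vector $e_{i+1}$ (respectively $w_0$, after applying $\gamma$) is mapped by $w\eta$ into a position ``to the right'' of $V_j^+$, while $e_{i}$ is mapped into $V_j^+$. Hence the conjugated root subgroup lies in $U_j$ (or in $\ker$ of $\tau\otimes\sigma$ on $M_j$), and $\psi_{\ell,w_0}$ restricts to $x\mapsto\psi(x)$ on it, which is non-trivial. The case $i=\ell$ uses that $w_0=y_\alpha$ has non-zero $e_{2n}$ and $e_{-2n}$ components, so that $\gamma w_0\notin V_{\ell,\beta-\alpha}^{+}$ in general. Here we invoke Remark~\ref{remark-parabolic-subgroup} to control $\gamma\in P'_w\backslash \SO(W_\ell)/\mathrm{Stab}$.

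The main obstacle is this last case when the chosen index is $\ell$. There the character lives on the pairing of $e_\ell$ with $w_0$, and one must check, for every $\gamma$ in the double coset $P'_w\backslash\SO(W_\ell)/\mathrm{Stab}(\psi_{\ell,w_0})$, that the corresponding one-parameter subgroup conjugates into $U_j$. We will handle it by splitting according to whether $\gamma^{-1}V^+_{\ell,\beta-\alpha}$ is orthogonal to $w_0$. If it is orthogonal, we replace $\gamma$ within its coset to assume $\gamma$ fixes $w_0$, and then the root $e_\ell\leftrightarrow w_0$ maps into $U_j$. If it is not orthogonal, we use instead a root subgroup from $e_\ell$ to $\gamma^{-1}V^+_{\ell,\beta-\alpha}$, on which $\psi_{\ell,w_0}$ is again non-trivial. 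Case (b) (split, $\wt m=2n+1$) with the extra representatives $\tilde\epsilon_{\alpha,\beta}=\omega_b\epsilon_{\alpha,\beta}\omega_b^{-1}$ will be treated identically, after conjugating by $\omega_b$, which fixes $V_\ell^+$ pointwise and preserves $\psi_{\ell,w_0}$ up to replacing $w_0$ by $\omega_b w_0$.
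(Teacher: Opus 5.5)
Your proof is correct and has the same skeleton as the paper's. Both filter $\Res_{R_{\ell,w_0}}\rho_w$ over the representatives $\eta$ of \eqref{eq-eta}, and kill each subquotient with a unipotent subgroup of $N_\ell\cap\eta^{-1}w^{-1}U_jw\eta$ on which $\psi_{\ell,w_0}$ is non-trivial.

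The division of labour is reversed, though. The paper writes out the compact-support argument (with $\Omega_1$, $\Omega_2$, $H^0$) that reduces vanishing to the existence of such a subgroup. It then imports that existence from \cite[Proposition 5.1]{GinzburgRallisSoudry2011} via the isomorphism of unipotent varieties. You instead treat the reduction as the standard criterion, which is legitimate since $H_{\ell,w_0}$ normalizes $N_\ell$ and fixes $\psi_{\ell,w_0}$, and you build the subgroup by hand.

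Your construction checks out. Write $\eta^{-1}w^{-1}V_j^+=\Span\{e_i\}_{i\in I_1}\oplus Y\oplus\Span\{e_{-i}\}_{i\in I_3}$, with $|I_1|=\alpha>0$ and $Y=\gamma^{-1}V^+_{\ell,\beta-\alpha}\subset W_\ell$ totally isotropic. Then the root subgroup $e_{i+1}\mapsto e_{i+1}+te_i$ ($i<\ell$) lies in the conjugated radical as soon as $i\in I_1$ and $i+1\notin I_1$. If no such $i$ exists, then $\ell\in I_1$, and there are two subcases:
\begin{itemize}
\item if $Y\perp w_0$, the one-parameter subgroup of $N_\ell$ attached to $(e_\ell,w_0)$ works;
\item otherwise, the one attached to $(e_\ell,y)$ works, for any $y\in Y$ with $q_V(y,w_0)\neq 0$.
\end{itemize}
In both subcases $\psi_{\ell,w_0}$ is non-trivial on the subgroup because $q_V(w_0,w_0)\neq0$, resp.\ $q_V(y,w_0)\neq0$.

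Your route gives a self-contained argument showing the $\GSpin$ case needs nothing beyond this linear algebra. The paper's route gives a complete proof of the reduction step but leaves the key construction to \cite{GinzburgRallisSoudry2011}.

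Two small corrections. First, in the orthogonal case you need not move $\gamma$ within its double coset to fix $w_0$, and you give no justification for doing so; $Y\perp w_0$ alone already puts the $(e_\ell,w_0)$ subgroup in the conjugated radical. Second, your rule for choosing $i$ returns $i=\ell$ whenever $\ell\in I_1$. That is harmless only because you handle that case separately.
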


\begin{proof}
The representation $\Res_{R_{\ell, w_0}}\rho_w$ has a finite $R_{\ell, w_0}$-filtration, whose subquotients are given by
\begin{equation}
\label{eq-rho_w,eta}
\rho_{w,\eta}=\ind_{R_{\ell,w_0}^{\eta}}^{R_{\ell,w_0}} \pi_{\ell,j}^{(w,\eta)}(\tau,\sigma),	
\end{equation}
where $\eta$ runs over the representatives given in \eqref{eq-eta}, the subgroup $R_{\ell,w_0}^{\eta}$ is given by
 \begin{equation*}
 	R_{\ell,w_0}^{\eta}:=R_{\ell,w_0}\cap \eta^{-1} P_{\ell,j}^{(w)} \eta,
 \end{equation*}
and the representation $\pi_{\ell,j}^{(w,\eta)}(\tau,\sigma)$ takes $g\in R_{\ell,w_0}^{\eta}$ to 
\begin{equation}
\label{eq-ind-action-R^eta}
\delta_{R_{\ell,w_0}}^\frac{1}{2}(g) \delta_{R_{\ell,w_0}^{\eta}}^{-\frac{1}{2}}(g) \delta_{P_{\ell,j}^{(w)}}^{\frac{1}{2}}(\eta g \eta^{-1})	\pi_{\ell, j}^{(w)}(\tau, \sigma)(\eta g \eta^{-1})
\end{equation}
where $\delta_{R_{\ell,w_0}}, \delta_{R_{\ell,w_0}^{\eta}}, \delta_{P_{\ell,j}^{(w)}}$ are modular functions of $R_{\ell,w_0}, R_{\ell,w_0}^{\eta}, P_{\ell,j}^{(w)}$.
Since $R_{\ell, w_0}=H_{\ell,w_0}\ltimes N_{\ell}$, we have
\begin{equation*}
	R_{\ell,w_0}^{\eta}=(H_{\ell,w_0}\cap \eta^{-1} M_{\ell,j}^{(w)} \eta) \cdot (N_{\ell}\cap \eta^{-1} P_{\ell,j}^{(w)} \eta) 
\end{equation*}
where $P_{\ell,j}^{(w)}=M_{\ell,j}^{(w)}U_{\ell,j}^{(w)}$, $M_{\ell, j}^{(w)}=M_{\ell}\cap  w^{-1}P_jw$, $U_{\ell, j}^{(w)}=U_{\ell}\cap  w^{-1}P_jw$. Note that $H_{\ell,w_0}\cap \eta^{-1} M_{\ell,j}^{(w)} \eta$ normalizes $N_{\ell}\cap \eta^{-1} P_{\ell,j}^{(w)} \eta$. A function $f$ in the space of $\rho_{w,\eta}$ is supported in a set of the form 
$(H_{\ell,w_0}\cap \eta^{-1} M_{\ell,j}^{(w)} \eta)N_{\ell} \Omega_2$, where $\Omega_2$ is a compact subset of $R_{\ell,w_0}$, and $f$ is determined by its restriction to the set $N_{\ell} \Omega_2$. The restriction of $f$ to the set $N_{\ell} \Omega_2$ is supported in a set of the form $(N_{\ell}\cap \eta^{-1} P_{\ell,j}^{(w)}\eta)\Omega_1 \Omega_2$, where $\Omega_1$ is a compact subset of $N_{\ell}$, and this restriction of $f$ is determined by its value on $\Omega_1 \Omega_2$. 

We consider the following integral
\begin{equation}
\label{eq-prop-JacquetModuleVanishing-alpha>0-eq1}
\int_{N_{\ell}^{(i)}} \psi_{\ell,w_0}^{-1}(u)\rho(u)f du,
\end{equation}
where $\rho(u)$ denotes the right-translation action by $u$, and $\{N_{\ell}^{(i)}\}_{i=0}^\infty$ is an ascending sequence of compact open subgroups in $N_{\ell}$ whose union is equal to $N_{\ell}$. We need to prove that the integral \eqref{eq-prop-JacquetModuleVanishing-alpha>0-eq1} vanishes for $i$ large enough. It suffices to prove that, for $i$ large enough, we have
\begin{equation}
\label{eq-prop-JacquetModuleVanishing-alpha>0-eq2}
\int_{N_{\ell}^{(i)}} \psi_{\ell,w_0}^{-1}(u)f( n x u) du=0, \quad \forall n\in N_{\ell}, x\in \Omega_2. 
\end{equation}
We can find a compact open subgroup $H^0$ of $H_{\ell,w_0}$ containing the identity element, such that $x N_{\ell}^{(i)}x^{-1}=N_{\ell}^{(i)}$, for all $x\in H^0$ and $i$ large enough. Since $\Omega_2\subset \Omega_2 H^0$, we may replace $\Omega_2$ by $\Omega_2 H^0$.  Thus we may write $\Omega_2$ as a finite union of cosets $x_j H^0$, for $1\le j \le N$, where $x_j\in \Omega_2$. The number of these cosets depends on $f$ only. Moreover, we can make a change of variable, $u\mapsto a^{-1}u a$ for $a\in H^0\subset H_{\ell, w_0}$, and since $H_{\ell, w_0}$ stabilizes the character $\psi_{\ell, w_0}$, we conclude that it suffices to prove that for $i^\prime$ large enough and $1\le j \le N$, we have
\begin{equation*}
\int_{N_{\ell}^{(i^\prime)}} \psi_{\ell,w_0}^{-1}(u)f( n u x_j  a) du=0, \quad \forall n\in N_{\ell}, a\in H^0. 
\end{equation*}
We may assume that, given $i^\prime$ large enough, there is $i$, large enough, such that $x_j^{-1} N_{\ell}^{(i^\prime)} x_j \supset N_{\ell}^{(i)}$. We write $x_j N_{\ell}^{(i^\prime)} x_j^{-1}$ as a finite union of cosets $y N_{\ell}^{(i)}$,  with representatives $y\in N_{\ell}$. Thus, it is enough to show that, for $i$ large enough and $1\le j \le N$, we have 
\begin{equation*}
\int_{N_{\ell}^{(i)}} \psi_{\ell,w_0}^{-1}(u)f( n u x_j a) du=0, \quad \forall n\in N_{\ell}, a\in H^0. 
\end{equation*}
Since $x_j H^0\subset \Omega_2$, it is enough to prove that for $i$ large enough, 
\begin{equation}
\label{eq-prop-JacquetModuleVanishing-alpha>0-eq3}
\int_{N_{\ell}^{(i)}} \psi_{\ell,w_0}^{-1}(u)f( n u x) du=0, \quad \forall n\in N_{\ell}, x\in \Omega_2. 
\end{equation}
From the above discussion, it suffices to consider $f$ whose restriction to the set $N_{\ell}\Omega_2$ is supported in $(N_{\ell}\cap \eta^{-1} P_{\ell,j}^{(w)}\eta)\Omega_1 \Omega_2$. Thus, it is enough to take, in \eqref{eq-prop-JacquetModuleVanishing-alpha>0-eq3}, $nu\in (N_{\ell}\cap \eta^{-1} P_{\ell,j}^{(w)}\eta)\Omega_1$. We may take $i$ to be large enough, so that $\Omega_1 N_{\ell}^{(i)}=N_{\ell}^{(i)}$. Thus, it is enough to show
\begin{equation*}
\int_{N_{\ell}^{(i)}} \psi_{\ell,w_0}^{-1}(u)f( n u x) du=0, \quad \forall n\in N_{\ell}^{(i)}, x\in \Omega_2. 
\end{equation*}
By a change of variable in $u$, it is enough to prove that 
\begin{equation}
\label{eq-prop-JacquetModuleVanishing-alpha>0-eq4}
\int_{N_{\ell}^{(i)}} \psi_{\ell,w_0}^{-1}(u)f(  u x) du=0, \quad \forall  x\in \Omega_2. 
\end{equation}
It is enough to find a subgroup $J\subset N_{\ell}^{(i)}\cap \eta^{-1}P_{\ell, j}^{(w)}\eta$, such that, for $i$ large enough, we have
\begin{equation}
\label{eq-prop-JacquetModuleVanishing-alpha>0-eq5}
\psi_{\ell,w_0}|_J\not=1, \quad \text{ and }  	\pi_{\ell,j}^{(w,\eta)}(\tau,\sigma)|_J=1.
\end{equation}
Since $\alpha>0$, in the special orthogonal group case, by \cite[Proposition 5.1]{GinzburgRallisSoudry2011}, there exists a unipotent subgroup $S_0$ belong to $\pr(\eta^{-1} U_{\ell, j}^{(w)} \eta) $ satisfying the analogous properties in \eqref{eq-prop-JacquetModuleVanishing-alpha>0-eq5}. Since unipotent subgroups for special orthogonal groups and $\GSpin$ groups are isomorphic, the existence of a unipotent subgroup $S\subset \eta^{-1} U_{\ell, j}^{(w)} \eta$ satisfying the properties in \eqref{eq-prop-JacquetModuleVanishing-alpha>0-eq5} remains true. Now we take $J=S\cap N_{\ell}^{(i)}$. Then $J\subset N_{\ell}^{(i)}\cap \eta^{-1}P_{\ell, j}^{(w)}\eta$ and the properties in \eqref{eq-prop-JacquetModuleVanishing-alpha>0-eq4} hold. We have completed the proof. 
\end{proof}

Next, we consider $J_{\psi_{\ell, w_0}}(\rho_w)$ for $w$ corresponding to $(\alpha, \beta)$ with $\alpha=0$. Thus, we consider
\begin{equation}
\label{eq-local-Jacquet-module-inequalities}
0\le \beta\le j, \quad j\le \ell+\beta\le \wt m.	
\end{equation}
Let $t=j-\beta$. 
 Let $\eta=\eta_{\epsilon, \gamma}$ be as in \eqref{eq-eta}. By \cite[Lemma 5.1]{GinzburgRallisSoudry2011}, we may take 
\begin{equation}
\label{eq-epsilon}
\epsilon=\begin{pmatrix}
& I_{\ell-t}\\
I_{t} &	
\end{pmatrix},
\end{equation}
and with this choice of $\epsilon$ we re-denote $\eta_{t,\gamma} =\eta_{\epsilon, \gamma}$ such that
\begin{equation}
\label{eq-eta-2}
\pr(\eta_{t,\gamma}) =\pr(\eta_{\epsilon, \gamma}) =  \left( \begin{smallmatrix}
 & I_{\ell-t} & & &\\ I_t &&&&\\ & & \gamma &&\\ && && I_t \\ &&&I_{\ell-t} &	
 \end{smallmatrix} \right).
 \end{equation}
Note that $\gamma$ runs through the double coset $P_{w,\SO_{4n+2}}^\prime\backslash \SO(W_{\ell}) / \mathrm{Stab}_{L_{\ell,\SO_{4n+2}}}(\psi_{\ell, w_0})$, whose cardinality is given below. Note that we are only interested in the situation where $\alpha=0$.

\begin{lemma}
\label{lemma-double-coset2}
\begin{enumerate}
\item Suppose $\ell+\beta<\tilde{m}$. Then 
\begin{equation*}
	| P_{w,\SO_{4n+2}}^\prime\backslash \SO(W_{\ell}) / \mathrm{Stab}_{L_{\ell,\SO_{4n+2}}}(\psi_{\ell, w_0}) | =2.
\end{equation*}

\item Suppose $\ell+\beta=\tilde{m}$. Then we have the following.

\begin{enumerate}

\item 	If $\SO(W_{\ell})$ is a $k$-quasi-split non-split even special orthogonal group (with $\dim V_0=2$), then
\begin{equation*}
	| P_{w,\SO_{4n+2}}^\prime\backslash \SO(W_{\ell}) / \mathrm{Stab}_{L_{\ell,\SO_{4n+2}}}(\psi_{\ell, w_0}) | = \begin{cases}
2  & \text{ if } \mathrm{Witt}(w_0^\perp \cap W_{\ell})=\tilde{m}-\ell,  \\
 1	 & \text{ if } \mathrm{Witt}(w_0^\perp \cap W_{\ell})=\tilde{m}-\ell-1.
 \end{cases}
\end{equation*}

\item If $\SO(W_{\ell})$ is a $k$-split even special orthogonal group, then 
\begin{equation*}
	| P_{w,\SO_{4n+2}}^\prime\backslash \SO(W_{\ell}) / \mathrm{Stab}_{L_{\ell,\SO_{4n+2}}}(\psi_{\ell, w_0}) | =1.
\end{equation*}
\end{enumerate}
\end{enumerate}
\end{lemma}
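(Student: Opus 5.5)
The plan is to reinterpret the double coset space geometrically and then count orbits using Witt's theorem. By Remark~\ref{remark-parabolic-subgroup}, $P_{w,\SO_{4n+2}}^\prime$ is the stabilizer in $\SO(W_\ell)$ of a $\beta$-dimensional totally isotropic subspace of $W_\ell$ (recall $\alpha=0$). Hence $P_{w,\SO_{4n+2}}^\prime\backslash\SO(W_\ell)$ is identified with the $\SO(W_\ell)$-orbit $\mathcal{X}_\beta$ of that subspace inside the variety of $\beta$-dimensional isotropic subspaces. If $\ell+\beta=\wt m$ and $W_\ell$ is split, $\mathcal{X}_\beta$ is one of the two families of maximal isotropic subspaces; otherwise it is all of them.

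The group $\mathrm{Stab}_{L_{\ell,\SO_{4n+2}}}(\psi_{\ell,w_0})\cap\SO(W_\ell)$ is $\SO(w_0^\perp\cap W_\ell)$, the stabilizer of $w_0$. So we must count $\SO(W_\ell)$-orbits of pairs $(X,w_0)$ with $X\in\mathcal{X}_\beta$ and $w_0$ fixed. I will assume $\beta\ge1$ throughout: for $\beta=0$ the group $P'$ is all of $\SO(W_\ell)$ and there is a single coset, so the count $2$ in (1) should be read with $\beta\ge1$.

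The invariant of a pair is whether $w_0\perp X$. First suppose $w_0\not\perp X$. Then $X'=X\cap w_0^\perp$ has dimension $\beta-1$, and we may pick $x\in X$ with $q_V(x,w_0)=1$. The space $X+kw_0$ is then determined up to isometry: it is $X'$ plus the nondegenerate plane $kx+kw_0$, with $x$ isotropic and $q_V(x,w_0)=1$. Next suppose $w_0\perp X$. Then $X+kw_0=X\oplus kw_0$ has radical $X$ and is again unique up to isometry. By Witt's extension theorem, each case is a single $\Orth(W_\ell)$-orbit, provided it is nonempty.

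To pass from $\Orth(W_\ell)$-orbits to $\SO(W_\ell)$-orbits, I will exhibit an element of determinant $-1$ fixing $w_0$ and preserving $X$. In the non-maximal case, take a reflection in an anisotropic vector of $(X+kw_0)^\perp$, which is nondegenerate modulo the radical and nonzero. In the maximal case with $W_\ell$ split this fails, but there $\mathcal{X}_\beta$ is a single $\SO$-family and $P'$ is the corresponding parabolic, so orbits are counted inside that family directly.

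Nonemptiness then gives the lemma.
\begin{enumerate}
\item For $\ell+\beta<\wt m$: the case $w_0\not\perp X$ needs a $(\beta-1)$-dimensional isotropic subspace in the orthogonal complement of the plane $kx+kw_0$, which holds since $\beta-1\le\mathrm{Witt}(W_\ell)-1$. The case $w_0\perp X$ needs $\beta\le\mathrm{Witt}(w_0^\perp\cap W_\ell)$. This holds because $\mathrm{Witt}(w_0^\perp\cap W_\ell)\ge\mathrm{Witt}(W_\ell)-1=\wt m-\ell-1\ge\beta$. So there are exactly two orbits.
\item In case (2a), $\beta=\wt m-\ell=\mathrm{Witt}(W_\ell)$. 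The non-orthogonal orbit always exists. The orthogonal one exists if and only if $\mathrm{Witt}(w_0^\perp\cap W_\ell)=\wt m-\ell$, giving $2$ or $1$ as stated.
\item In case (2b), $X$ is Lagrangian, so $X^\perp=X$. Then $w_0\perp X$ would force $w_0\in X$, contradicting anisotropy of $w_0$. Only the non-orthogonal orbit remains, so the count is $1$.
\end{enumerate}

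The main obstacle is the $\SO$ versus $\Orth$ issue in case (2b), together with the determinant bookkeeping. There I must check that $\SO(w_0^\perp\cap W_\ell)$ acts transitively on the Lagrangians of the given family that are not orthogonal to $w_0$. I would first try to argue via the odd-dimensional space $w_0^\perp\cap W_\ell$. Its orthogonal group is $\SO\times\{\pm1\}$, and $-1$ on $w_0^\perp$ extended by $1$ on $w_0$ has determinant $-1$ in $W_\ell$ and swaps the two families. Hence the $\Orth(W_\ell)_{w_0}$-orbit splits evenly between the two families. Then each family carries exactly one $\SO(w_0^\perp\cap W_\ell)$-orbit, which gives the count $1$.
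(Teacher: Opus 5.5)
Your argument is correct, but it does not follow the paper's route. The paper gives no argument of its own: it simply invokes \cite[Proposition 4.4]{GinzburgRallisSoudry2011}. That is legitimate because the double coset space $P_{w,\SO_{4n+2}}^\prime\backslash \SO(W_\ell)/\mathrm{Stab}_{L_{\ell,\SO_{4n+2}}}(\psi_{\ell,w_0})$ lives entirely in the special orthogonal group, so the $\GSpin$ structure plays no role.

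You replace the citation with a self-contained orbit count:
\begin{enumerate}
\item You identify the cosets with $\SO(W_\ell)_{w_0}$-orbits on the relevant $\beta$-dimensional isotropic subspaces $X$.
\item You apply Witt's extension theorem to the possibly degenerate subspace $X+kw_0$, with $w_0$ fixed. This shows the only invariant is whether $w_0\perp X$.
\item You settle existence of each orbit type by comparing Witt indices of $W_\ell$ and $w_0^\perp\cap W_\ell$.
\item You descend from $\Orth$-orbits to $\SO$-orbits.
\end{enumerate}
What your version buys is transparency. The dichotomy $w_0\perp X$ versus $w_0\not\perp X$ is what separates the two choices of $\gamma$ used later, namely $I_{4n+2-2\ell}$ versus $\gamma_\beta$ (resp.\ $\gamma_\alpha$). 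Your argument also shows that the count $2$ in part (1) requires $\beta\ge 1$: for $\beta=0$ one has $P_{w,\SO_{4n+2}}^\prime=\SO(W_\ell)$ and a single coset. This agrees with the paper's later remark that $\gamma\neq I_{4n+2-2\ell}$ forces $\beta>0$.

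Two minor remarks.
\begin{itemize}
\item You state the reflection trick for passing from $\Orth$- to $\SO$-orbits only in the ``non-maximal case''. It is also needed, and works, in case (2)(a). There $X$ is maximal isotropic in the non-split $W_\ell$, but $(X+kw_0)^\perp$ modulo its radical still has dimension $2$ or $1$, hence contains an anisotropic vector.
\item In case (2)(b) you can shorten the ``splits evenly'' reasoning. Any $g\in\Orth(W_\ell)$ that fixes $w_0$ and carries a Lagrangian to one in the same family automatically has determinant $1$. So transitivity of $\Orth(W_\ell)_{w_0}$ on all Lagrangians directly gives transitivity of $\SO(W_\ell)_{w_0}$ on each family.
\end{itemize}
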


\begin{proof}
This follows from \cite[Proposition 4.4]{GinzburgRallisSoudry2011}.	
\end{proof}

We first compute $J_{\psi_{\ell, w_0}}(\rho_{w,\eta_{t,\gamma}})$ when $\gamma=I_{4n+2-2\ell}$. To simplify notation, we may re-denote
\begin{equation*}
\eta_{t}=\eta_{t,I_{4n+2-2\ell}}.	
\end{equation*}

\begin{proposition}
\label{prop-JacquetModuleVanishing-alpha=0}
Let $w$ be a representative for $P_j(k)\backslash H(k)/P_\ell(k)$ corresponding to $(\alpha, \beta)$ given in Lemma~\ref{lemma-double-coset-decomposition} with $\alpha=0$, $j<\ell+\beta$. Let $\eta_{t, I_{4n+2-2\ell}}$ be given in \eqref{eq-eta-2} with $\gamma=I_{4n+2-2\ell},$ such that $\gamma w_0=w_0$ is not orthogonal to $V_{\ell,\beta}^-$. Then 
\begin{equation*}
J_{\psi_{\ell, w_0}}(\rho_{w,\eta_{t,I_{4n+2-2\ell}}})=0.	
\end{equation*}
In particular, if $w_0=y_\alpha=e_{2n}-\frac{\alpha}{2}e_{-2n}$ for $\alpha\in k^\times$ (hence $\ell<2n$), $\ell+\beta=\wt m$ and $j<\wt m$, then 
\begin{equation*}
J_{\psi_{\ell, w_0}}(\rho_{w,\eta_{t,I_{4n+2-2\ell}}})=0.	
\end{equation*}
\end{proposition}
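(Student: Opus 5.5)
The plan follows the same reduction as in the proof of Proposition~\ref{prop-JacquetModuleVanishing-alpha>0}. The subquotient $\rho_{w,\eta}$ with $\eta=\eta_{t}=\eta_{t,I_{4n+2-2\ell}}$ is the compactly induced representation \eqref{eq-rho_w,eta}. The support argument used there reduces the vanishing of $J_{\psi_{\ell,w_0}}(\rho_{w,\eta_t})$ to one task: exhibit a unipotent subgroup $J\subset N_\ell\cap \eta_t^{-1}P^{(w)}_{\ell,j}\eta_t$ satisfying \eqref{eq-prop-JacquetModuleVanishing-alpha>0-eq5}. That is, $\psi_{\ell,w_0}|_J\neq 1$, while the inducing representation $\pi^{(w,\eta_t)}_{\ell,j}(\tau,\sigma)$ is trivial on $J$. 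This reduction uses only that $H_{\ell,w_0}$ stabilizes $\psi_{\ell,w_0}$, together with the compact-support analysis. Both carry over verbatim.

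To construct $J$, I would work in $\SO(V)$ and lift via Remark~\ref{remark-unipotent-variety-isomorphism}. This is legitimate for two reasons. First, unipotent subgroups of $\SO(V)$ and $\GSpin(V)$ are isomorphic. Second, the character $\pi^{(w,\eta_t)}_{\ell,j}$ restricted to a unipotent subgroup is trivial exactly when $\eta_t J\eta_t^{-1}$ is sent by $w$ into the unipotent radical $U_j$ of $P_j$. The modular characters are trivial on unipotent elements, and $\tau\otimes\sigma$ is inflated from $M_j$.

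With $\alpha=0$ and $\beta>j-\ell$, the subspace $V^+_{\ell,\beta}$ of $W_\ell$ is a nonzero isotropic subspace. By Remark~\ref{remark-parabolic-subgroup}, it is the one preserved by $P'_{w,\SO_{4n+2}}$, and $\pr(w)$ carries it into $V_j^+$, into the columns of the block $I_{\beta-\alpha}$ of \eqref{eq-a-alpha-beta}. Take $J$ to be the one-parameter family of elements of $N_\ell$ whose only nonzero entries are in the $y$-block. Specifically, these are the elements $u$ with $u\cdot v=v+c\,q_V(v,x_0)e_\ell$ for $v\in W_\ell$, where $x_0\in V^+_{\ell,\beta}$ is chosen with $q_V(w_0,x_0)\neq 0$. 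Such an $x_0$ exists because $w_0$ is not orthogonal to $V^-_{\ell,\beta}$, so $w_0$ pairs nontrivially with $V^+_{\ell,\beta}$.

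By \eqref{eq-chi-ell-w_0-global}, $\chi_{\ell,w_0}(u)=q_V(u\cdot w_0,e_{-\ell})$, which equals $c\,q_V(w_0,x_0)$ up to sign. Hence $\psi_{\ell,w_0}|_J$ is nontrivial. On the other side, $\eta_t$ moves $e_\ell$ to position $\ell-t$ among the first $\ell$ basis vectors, and $w$ then sends this into $V_j^+$ or into the $e_{\ell+\beta-j}$-block. The main obstacle is checking that the conjugate $w\eta_t J\eta_t^{-1}w^{-1}$ lies in $U_j$ rather than merely in $P_j$. This means verifying, using the explicit matrix \eqref{eq-a-alpha-beta} and $\epsilon$ from \eqref{eq-epsilon}, that both the source vector $x_0$ and the image direction are sent to flags compatible with the radical. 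I would do this entry-by-entry, as in the orthogonal case of \cite[Proposition 5.2]{GinzburgRallisSoudry2011}, and cite that result for the $\SO$ computation. Transferring it to $\GSpin$ then only requires noting that the relevant unipotent groups correspond under $\pr$.

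For the ``in particular'' statement, take $w_0=y_\alpha=e_{2n}-\frac{\alpha}{2}e_{-2n}$ with $\ell+\beta=\wt m$. Then $V^-_{\ell,\beta}=\Span\{e_{-(\ell+1)},\dots,e_{-\wt m}\}$ contains $e_{-2n}$, since $\ell<2n\le\wt m$. Also $q_V(y_\alpha,e_{-2n})=1\neq0$. The hypothesis $j<\wt m=\ell+\beta$ gives $j<\ell+\beta$. So the general case applies.
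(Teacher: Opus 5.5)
Your route is the paper's. The support argument from Proposition~\ref{prop-JacquetModuleVanishing-alpha>0} reduces everything to a unipotent $J$ satisfying \eqref{eq-prop-JacquetModuleVanishing-alpha>0-eq5}; the paper then imports the existence of such a $J$ from the proof of \cite[Proposition 5.2]{GinzburgRallisSoudry2011} and lifts it through $\pr$.

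Your explicit choice is a good one: $J=\{1+cX\}$ with $X(v)=q_V(v,x_0)e_\ell-q_V(v,e_\ell)x_0$ and $x_0\in V^+_{\ell,\beta}$. The step you flag as the main obstacle is easy once the vectors are tracked correctly. For $w=\epsilon_{0,\beta}$ one has $w\eta_t e_\ell=e_{\ell+\beta}$ (up to the factor $\omega_b$). This vector lies in $W_j$, not in $V_j^+$ as you suggest, while $w\eta_t x_0\in V^+_\beta\subset V_j^+$. Hence the conjugate is $1+cX'$, where $X'$ kills $V_j^+$, sends $W_j$ into $V_j^+$, and sends $V$ into $(V_j^+)^\perp$. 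So it lies in $U_j$, and $\pi^{(w,\eta_t)}_{\ell,j}(\tau,\sigma)$ is trivial on $J$.

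The genuine error is in producing $x_0$. You argue that since $w_0$ is not orthogonal to $V^-_{\ell,\beta}$, it pairs nontrivially with $V^+_{\ell,\beta}$. That implication is false.
\begin{itemize}
\item Non-orthogonality to $V^-_{\ell,\beta}$ means $w_0$ has a nonzero component along $V^+_{\ell,\beta}$.
\item $q_V(w_0,V^+_{\ell,\beta})\neq 0$ instead requires a nonzero component along $V^-_{\ell,\beta}$.
\item In Case (2), $w_0=e_{\ell+1}+e_0^{(1)}$ is anisotropic and satisfies the hypothesis as you invoked it, yet it is orthogonal to $V^+_{\ell,\beta}$.
\end{itemize}
For such $w_0$, no subgroup of $N_\ell\cap(w\eta_t)^{-1}U_j(w\eta_t)$ can detect $\psi_{\ell,w_0}$. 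Indeed $(w\eta_t)^{-1}V_j^+=V^+_{\ell,\beta}\oplus\Span\{e_{-1},\dots,e_{-t}\}$, and one checks that both terms of $\chi_{\ell,w_0}$ vanish identically on that group.

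So your argument proves the vanishing under the hypothesis $q_V(w_0,V^+_{\ell,\beta})\neq 0$, i.e.\ $V^+_{\ell,\beta}\not\subset w_0^\perp$. That is the hypothesis you should state and use.

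For the ``in particular'' case, which is the one used later, this is harmless, but argue it with the right pairing. Since $\ell<2n$, $e_{2n}\in V^+_{\ell,\beta}$, and $q_V(y_\alpha,e_{2n})=-\alpha/2\neq 0$, so $x_0=e_{2n}$ works. Your check $q_V(y_\alpha,e_{-2n})=1$ is the wrong pairing for your construction.
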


\begin{proof}
As in the proof of Proposition~\ref{prop-JacquetModuleVanishing-alpha>0}, it is enough to find a unipotent subgroup $S\subset \eta_{t, I_{4n+2-2\ell}}^{-1} U_{\ell, j}^{(w)} \eta_{t, I_{4n+2-2\ell}}$ such that the properties in  \eqref{eq-prop-JacquetModuleVanishing-alpha>0-eq5} are satisfied. For special orthogonal groups, the existence of such unipotent subgroup $S$ is given in the proof of \cite[Proposition 5.2]{GinzburgRallisSoudry2011}. Hence we can also find such a unipotent subgroup $S$ in the $\GSpin$ case. 
\end{proof}

Let $\tau^{(t)}$ denote the $t$-th Bernstein-Zelevinsky derivative \cite{BernsteinZelevinsky1976} of $\tau$, along
the subgroup
$$
Z_t'=\left\{\begin{pmatrix}
               I_{\beta}&y\\0&z
                          \end{pmatrix}
\in \GL_j(k)\ \big|\ z\in Z_t(k)\right\}
$$
and associated to the character
$$
\psi_t'\left(
\begin{pmatrix}
I_\beta &y\\0&z
\end{pmatrix}
\right)=\psi^{-1}(z_{1,2}+\cdots+z_{t-1,t}).
$$
Then $\tau^{(t)}$ is a representation of $\GL_\beta(k)$, acting on the Jacquet module $J_{Z_t',\psi_t'}(V_\tau)$ via
the embedding $d\mapsto \diag(d,I_t)$ of $\GL_\beta(k)$ in $\GL_j(k)$. 
For $a\in k^\times$, we consider the character 
$\psi_{t,a}^{''}$ on $Z_t'$, defined by
$$
\psi_{t,a}^{''}\left(
\begin{pmatrix}
 I_\beta &y\\0&z
\end{pmatrix}\right)=\psi^{-1}(z_{1,2}+\cdots+z_{t-1,t}+ay_{\beta,1}).
$$
Let $\tau_{(t),a}$ denote the corresponding Jacquet module $J_{Z_t',\psi_{t,a}^{''}}(V_\tau)$, which is a
representation of the mirabolic subgroup $P_{\beta-1}^1(k)=\left\{ \left(\begin{smallmatrix} g&*\\&1\end{smallmatrix}\right)| g\in \GL_{\beta-1}(k) \right\}$ of $\GL_\beta(k)$. 
By \cite[Lemma 5.2]{GinzburgRallisSoudry2011}, for any $a, a^\prime\in k^\times$, $\tau_{(t),a}$ and $\tau_{(t),a^\prime}$ are isomorphic as representations of $P_{\beta-1}^1(k)$. Thus, we may simply write $\tau_{(t)}$ for any of such representations $\tau_{(t),a}$.

Let $N_{\ell-t}^\prime$ be the unipotent radical of the standard parabolic subgroup of $\GSpin(W_{j})$, preserving the flag
\begin{equation*}
0\subset \Span\{e_{j+1}\} \subset \Span\{e_{j+1}, e_{j+2}\} \subset \cdots \Span\{e_{j+1}, e_{j+2}, \cdots, e_{j+\ell-t}\}.	
\end{equation*}
Note that $N^\prime_{\ell-t}\subset \GSpin(W_j)$ is a subgroup similar to $N_{\ell}\subset H$. 
If $j=\wt m$, then $\ell=t$ and $N_{\ell-t}^\prime$ is the identity subgroup. 
Let $\psi_{\ell-t, \alpha}^\prime$ be the character of $N_{\ell-t}^\prime$ defined similarly as $\psi_{\ell,\alpha}$.
Let
\begin{equation*}
P_{\beta,t,\gamma}^\prime=H_{\ell,w_0}\cap \eta	_{t,\gamma}^{-1} P_{\ell,j}^{(w)} \eta_{t,\gamma}.
\end{equation*}
When $\gamma=I_{4n+2-2\ell}$, we re-denote
\begin{equation}
\label{eq-P_beta_t^prime}
P_{\beta,t}^\prime=P_{\beta,t,I_{4n+2-2\ell}}^\prime.
\end{equation}
When $w_0\in W_{\ell+\beta}$, or when $H$ is split, $P_{\beta,t}^\prime$ is the maximal parabolic subgroup of $H_{\ell,w_0}$ preserving the isotropic subspace $\omega_b^{t}\cdot V_{\ell,\beta}^+\cap w_0^\perp$. Otherwise, $P_{\beta,t}^\prime$ is a proper subgroup of this maximal parabolic subgroup.

We continue with the computation of $J_{\psi_{\ell, w_0}}(\rho_{w,\eta_{t,I_{4n+2-2\ell}}})$. By Proposition~\ref{prop-JacquetModuleVanishing-alpha=0}, it remains to consider the following cases:
\begin{itemize}
\item $\ell<\wt m$, $w_0=y_\alpha$, and $\ell+\beta<\wt m$;
\item $\ell<\wt m$, $w_0=y_\alpha$, and $j=\ell+\beta=\wt m$;
\item $\ell=\wt m$, $w_0\in V_0$ (and $\beta=0$, $t=j$, $1\le j\le \ell=\wt m=2n$).
\end{itemize}

First, we assume $\ell<\wt m$, $w_0=y_\alpha$, and $\ell+\beta<\wt m$. Recall that   
\begin{equation*}
\rho_{w,\eta_t}=\rho_{w,\eta_{t,I_{4n+2-2\ell}}}=\ind_{R_{\ell,w_0}^{\eta_t}}^{R_{\ell,w_0}} \pi_{\ell,j}^{(w,\eta_t)}(\tau,\sigma).
\end{equation*}
We consider the following map on $\rho_{w,\eta_t}$ given by
\begin{equation}
\label{eq-iso-T-1}
T(f)(x):=\int_{N_{\ell}\cap \eta_t^{-1} P_{\ell,j}^{(w)}\eta_t \backslash N_{\ell} } \left( J_{Z_t^\prime,\psi_t^\prime}\otimes J_{N_{\ell-t}^\prime, \psi_{\ell-t,\alpha} ^\prime} \right) (f(nx)) \psi_{\ell,\alpha}^{-1}(n)dn.
\end{equation}
Here, $f$ is in the space of $\rho_{w,\eta_t}$, $x\in H_{\ell,w_0}$, and $N_{\ell}\cap \eta_t^{-1} P_{\ell,j}^{(w)}\eta_t$ consists of elements of the form
\begin{equation}
\label{eq-N-ell-sub1}
	  \begin{pmatrix}
c & 0 & 0&0&y_6 &0 &0\\	
&b&0&y_4&y_5 &z_4&0\\ 
&&I_{\beta} &0 &0 &y_5^\prime &y_6^\prime\\
&&&I_{4n+2-2(\ell+\beta)} & 0 &0&0\\
&&&&I_\beta &0&0\\
&&&&&b^* &0\\
&&&&&&c^* 
\end{pmatrix}^{\omega_b^{t^\prime}}  
\end{equation}
where $c\in Z_t, b\in Z_{\ell-t}$. 
Here $t^\prime=0$, except in the following cases, where $t^\prime=1$: $\GSpin(V)$ is split, and either $j=2n+1$ and $t$ is odd, or, $j, \ell<2n+1$, $\ell+\beta=2n+1$, and $w=\widetilde \epsilon_{0,\beta}=\omega_b  {\epsilon}_{0, \beta}  \omega_b^{-1}$ as in Lemma~\ref{lemma-double-coset-decomposition}(b). 
The functor $J_{N_{\ell-t}^\prime, \psi_{\ell-t,\alpha} ^\prime}$ is applied to $\sigma^{\omega_{b,j}^t}$, where we recall $\omega_{b,j}$ is given by \eqref{eq-omega_b,j-1} and \eqref{eq-omega_b,j-2}.
To check that the integral on the right-hand side of \eqref{eq-iso-T-1} is well-defined, we take $u\in N_{\ell}\cap \eta_t^{-1} P_{\ell,j}^{(w)}\eta_t$ as in \eqref{eq-N-ell-sub1}, where $c\in Z_t, b\in Z_{\ell-t}$. Then 
\begin{equation*}
\begin{split}
&	\left( J_{Z_t^\prime,\psi_t^\prime}\otimes J_{N_{\ell-t}^\prime, \psi_{\ell-t,\alpha} ^\prime} \right) (f(ux))   \\
=&  \left( J_{Z_t^\prime,\psi_t^\prime}\otimes J_{N_{\ell-t}^\prime, \psi_{\ell-t,\alpha} ^\prime} \right) \left( \left(\tau\begin{pmatrix}I_\beta & y_6^\prime \\ & c^*\end{pmatrix}\otimes \sigma^{\omega_{b,j}^t}\left[\begin{pmatrix} b &y_4&z_4\\ &I_{4n+2-2(\ell+\beta)} & y_4^\prime\\ &&b^*\end{pmatrix}^{\omega_{b,j}^{t^\prime}} \right] \right)(f(x))  \right)\\
=& \psi_t^\prime \begin{pmatrix}I_\beta & y_6^\prime \\ & c^*\end{pmatrix}  \psi_{\ell-t,\alpha} ^\prime \left[\begin{pmatrix} b &y_4&z_4\\ &I_{4n+2-2(\ell+\beta)} & y_4^\prime\\ &&b^*\end{pmatrix}^{\omega_{b,j}^{t^\prime}} \right] \left( J_{Z_t^\prime,\psi_t^\prime}\otimes J_{N_{\ell-t}^\prime, \psi_{\ell-t,\alpha} ^\prime} \right) (f(ux)) \\
=& \psi_{\ell,\alpha}(u) \left( J_{Z_t^\prime,\psi_t^\prime}\otimes J_{N_{\ell-t}^\prime, \psi_{\ell-t,\alpha} ^\prime} \right) (f(ux)).
\end{split}
\end{equation*}
Thus, the integral is well-defined. Moreover, this integral is absolutely convergent, since the function $u\mapsto f(ux)$ on $N_{\ell}$ has compact support, modulo $N_{\ell}\cap \eta_t^{-1} P_{\ell,j}^{(w)}\eta_t$.

Set $|\cdot|:=|\det(\cdot)|$.

\begin{lemma}
\label{lemma-JacquetModule-lemma1}
Assume that $\ell+\beta<\wt m$ and $w_0=y_\alpha$. 
The map $T$ in \eqref{eq-iso-T-1} induces an $H_{\ell,\alpha}$-isomorphism
\begin{equation}
\label{eq-isom-Tprime-1}
T^\prime: J_{\psi_{\ell,\alpha}}(	\rho_{w,\eta_{t}} )  \cong  \ind_{P^\prime_{\beta,t}}^{H_{\ell,\alpha}} |\cdot|^{\frac{1-t}{2}} \tau^{(t)} \otimes J_{N_{\ell-t}^\prime, \psi_{\ell-t,\alpha} ^\prime}(\sigma^{\omega_{b,j}^t}).
\end{equation}
The subgroup $P^\prime_{\beta,t}$ is a parabolic subgroup of $H_{\ell,\alpha}$, and the right-hand side of \eqref{eq-isom-Tprime-1} is a parabolic induction.
\end{lemma}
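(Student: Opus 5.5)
The plan is to follow the proof of the analogous statement for special orthogonal groups in \cite[Proposition 5.3, Theorem 5.4]{GinzburgRallisSoudry2011}, adapted to $\GSpin$ as in Propositions~\ref{prop-JacquetModuleVanishing-alpha>0} and \ref{prop-JacquetModuleVanishing-alpha=0}.

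\textbf{Step 1: $T$ factors through the twisted Jacquet module.} For $n_0\in N_\ell$ we have
\[
T(\rho(n_0)f)(x)=\psi_{\ell,\alpha}(x n_0 x^{-1})\,T(f)(x)=\psi_{\ell,\alpha}(n_0)\,T(f)(x).
\]
This follows from the change of variables $n\mapsto n x n_0^{-1}x^{-1}$ and the fact that $H_{\ell,\alpha}$ stabilizes $\psi_{\ell,\alpha}$. Hence $T$ descends to a map $T'$ on $J_{\psi_{\ell,\alpha}}(\rho_{w,\eta_t})$.

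\textbf{Step 2: the image lies in the induced representation.} For $p\in P'_{\beta,t}$, I would compute $T(f)(px)$. The element $p=\eta_t^{-1}m\eta_t$ normalizes $N_\ell$ and the subgroup $N_\ell\cap\eta_t^{-1}P^{(w)}_{\ell,j}\eta_t$ of \eqref{eq-N-ell-sub1}. Changing variables $n\mapsto pnp^{-1}$ then gives
\[
T(f)(px)=\mu(p)\,\bigl(|\cdot|^{?}\tau^{(t)}\otimes J_{N'_{\ell-t},\psi'_{\ell-t,\alpha}}(\sigma^{\omega_{b,j}^t})\bigr)(p)\,T(f)(x),
\]
where $\mu(p)$ is a ratio of Jacobians. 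The exponent $\frac{1-t}{2}$ comes from combining three quantities:
\begin{itemize}
\item the modular characters appearing in $\pi^{(w)}_{\ell,j}$ and in \eqref{eq-ind-action-R^eta};
\item the Jacobian of $N_\ell\cap\eta_t^{-1}P^{(w)}_{\ell,j}\eta_t\backslash N_\ell$;
\item the normalization of the Bernstein--Zelevinsky derivative, for which $\tau^{(t)}$ is unnormalized on $J_{Z'_t,\psi'_t}$.
\end{itemize}
The $\GSpin$ determinant twists are harmless here. The reason is that the extra similitude factor $\ker(\pr)$ acts on both sides through the same central character, and Lemma~\ref{lemma-Levi-lifting} identifies the lift of the $\GL_\beta$ block. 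Smoothness and compact support modulo $P'_{\beta,t}$ follow from the support description of $f$ used in the proof of Proposition~\ref{prop-JacquetModuleVanishing-alpha>0}.

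\textbf{Step 3: $P'_{\beta,t}$ is parabolic.} I would show directly that $P'_{\beta,t}$ is a parabolic subgroup of $H_{\ell,\alpha}$. Since $\ell+\beta<\wt m$ and $w_0=y_\alpha=e_{2n}-\frac{\alpha}{2}e_{-2n}$, the vector $w_0$ lies in $W_{\ell+\beta}$. By the remark preceding the lemma, $P'_{\beta,t}$ is therefore the maximal parabolic subgroup stabilizing $\omega_b^{t}V_{\ell,\beta}^+$. Its Levi factor is $\GL_\beta\times\GSpin(w_0^\perp\cap W_{\ell+\beta})$, and it acts on $\tau^{(t)}\otimes J(\sigma)$. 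This is what makes the right-hand side a parabolic induction.

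\textbf{Step 4: bijectivity of $T'$.} This is the main obstacle. For surjectivity, I would realize $\rho_{w,\eta_t}$ as compactly supported functions on $N_\ell\Omega$ valued in $\tau\otimes\sigma$. Choosing $f$ supported near the identity coset with prescribed values then shows that every section of the induced representation arises, using the exactness of the Jacquet functors $J_{Z'_t,\psi'_t}$ and $J_{N'_{\ell-t},\psi'}$.

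For injectivity, I would show that if $T(f)=0$ then $f$ lies in the span of the elements $\rho(u)f'-\psi_{\ell,\alpha}(u)f'$. This uses Jacquet's criterion: the vanishing of an integral over a large compact open subgroup of $N_\ell$, combined with the exactness of the Jacquet functor on the fiber. Here I would transcribe the argument of \cite[Theorem 5.4]{GinzburgRallisSoudry2011} almost verbatim. The only new input is Remark~\ref{remark-unipotent-variety-isomorphism}, which lets all unipotent computations transfer unchanged from $\SO$ to $\GSpin$.
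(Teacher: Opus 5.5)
Your proposal follows the same route as the paper's proof. $T$ factors through $J_{\psi_{\ell,\alpha}}$ by a change of variables in $N_\ell$, and a modular-character computation places the image in the induced representation. Injectivity then comes from the Jacquet--Langlands kernel criterion combined with the support decomposition $(N_\ell\cap\eta_t^{-1}P^{(w)}_{\ell,j}\eta_t)\Omega_1\Omega_2$, and surjectivity comes from sections with small support in the $N_\ell$-direction and prescribed fiber values.

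One small caveat on Step 3. The inclusion $y_\alpha\in W_{\ell+\beta}$ follows from $\ell+\beta<\wt m$ when $\wt m=2n$. In the split case $\wt m=2n+1$ with $\ell+\beta=2n$, however, $W_{2n}=\Span\{e_{2n+1},e_{-(2n+1)}\}$ does not contain $e_{2n}-\frac{\alpha}{2}e_{-2n}$. There the claim needs the convention that $y_\alpha$ occupies the last isotropic slot, which the paper tacitly uses (following GRS) rather than its literal coordinates.
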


\begin{proof}
It is clear that $T$ factors through the Jacquet module on the left-hand side of  \eqref{eq-isom-Tprime-1}. 
We denote by $T^\prime$ the induced map on $J_{\psi_{\ell,\alpha}}(	\rho_{w,\eta_{t}} )$. 
Given $p\in P_{\beta,t}^\prime$, note that $\eta_t p \eta_t^{-1}=p$, and we may write
\begin{equation*}
w \eta_t p \eta_t^{-1} w^{-1} = w p w^{-1}= (g_0, h_0)\cdot u	
\end{equation*}
where $(g_0,h_0)\in \GL_j\times \GSpin(W_j)\cong M_j$, and $u\in U_j$. Then we have
\begin{equation*}
\delta_{P_j}^{\frac{1}{2}}( w p w^{-1})=\delta_{P_{\beta,t}^\prime}^{\frac{1}{2}}(p) \cdot  |\det(g_0)|^\ell \cdot  |\det(g_0)|^{\frac{1-t}{2}}.	
\end{equation*}
Thus $T^\prime(f)$ (and also $T(f)$) belongs to the space of
\begin{equation*}
	\ind_{P^\prime_{\beta,t}}^{H_{\ell,\alpha}} |\cdot|^{\frac{1-t}{2}} \tau^{(t)} \otimes J_{N_{\ell-t}^\prime, \psi_{\ell-t,\alpha} ^\prime}(\sigma^{\omega_{b,j}^t}).
\end{equation*}
Note that if $t=\ell$, then $J_{N_{\ell-t}^\prime, \psi_{\ell-t,\alpha} ^\prime}(\sigma^{\omega_{b,j}^t})=\Res_{H_{\ell,\alpha}\cap \GSpin(W_j)}(\sigma^{\omega_{b,j}^t})$.

We first prove that $T^\prime$ is injective. Assume that $T(f)=0$. We will use the same notation as in the proof of Proposition~\ref{prop-JacquetModuleVanishing-alpha>0}. Then this assumption means that $T(f)(x)=0$ for all $x\in \Omega_2$. For $x\in \Omega_2$, the domain of integration in \eqref{eq-iso-T-1} can be replaced by $\Omega_1$. Hence, for all $x\in \Omega_2$, we have
\begin{equation*}
	\left( J_{Z_t^\prime,\psi_t^\prime}\otimes J_{N_{\ell-t}^\prime, \psi_{\ell-t,\alpha} ^\prime} \right) \left( \int_{\Omega_1} \psi_{\ell,\alpha}^{-1}(u)f(nx)dn \right)=0.
\end{equation*}
This means that there is $i_0$, such that, for all $i\ge i_0$ and all $x\in \Omega_2$, we have 
\begin{equation*}
\int_{N_{\ell}^{(i)}\cap \eta_t^{-1} P_{\ell,j}^{(w)}\eta_t } \psi_{\ell,\alpha}^{-1}(u) (\tau\otimes \sigma^{\omega_{b,j}^t})(u) 	\left( \int_{\Omega_1} \psi_{\ell,\alpha}^{-1}(n)f(nx)dn \right) du=0.
\end{equation*}
Hence, for all $i\ge i_0$ and all $x\in \Omega_2$, we have
\begin{equation}
\label{eq-lemma-JacquetModule-lemma1-eq1}
	\int_{N_{\ell}^{(i)}\cap \eta_t^{-1} P_{\ell,j}^{(w)}\eta_t } \int_{\Omega_1}  \psi_{\ell,\alpha}^{-1}(un)f(unx)dudn=0.
\end{equation}
To prove that $T^\prime$ is injective, it suffices to show that $f$ satisfies the property
\begin{equation}
\label{eq-lemma-JacquetModule-lemma1-eq2}
\int_{N_{\ell}^{(i)}} \psi_{\ell,\alpha}^{-1}(n)\rho(n)fdn=0	
\end{equation}
for $i$ large enough, 
where $\rho$ denotes the right translation action on the space of $\ind_{R_{\ell,w_0}^{\eta_t}}^{R_{\ell,w_0}} \pi_{\ell,j}^{(w,\eta_t)}(\tau,\sigma)$.
It is enough to show that, there is $i_0^\prime$, such that for all $n_0\in N_{\ell}$ and $x\in \Omega_2$, we have
\begin{equation*}
\int_{N_{\ell}^{(i^\prime)}} \psi_{\ell,\alpha}^{-1}(n)f(n_0 x n)dn=0	
\end{equation*}
for all $i^\prime\ge i_0^\prime$. 
We can find a compact open subgroup $H^0$ of $H_{\ell,w_0}$ containing the identity, such that $x N_{\ell}^{(i)} x^{-1}= N_{\ell}^{(i)}$, for all $x\in H^0$ and $i$ large enough. We may replace $\Omega_2$ by $\Omega_2 H^0$, and write $\Omega_2$ as a finite union of cosets $x_j H^0$, for $1\le j \le N$, where $x_j\in \Omega_2$. We can make a change of variable $n\mapsto a^{-1}n a$ for $a\in H^0\subset H_{\ell,w_0}$. Since $H_{\ell,w_0}$ stabilizes the character $\psi_{\ell,w_0}$, it suffices to prove that, for $i^\prime$ large enough and $1\le j \le N$, we have
\begin{equation*}
\int_{N_{\ell}^{(i^\prime)}} \psi_{\ell,w_0}^{-1}(n)f( n_0   x_j n a) dn=0, \quad \forall n_0\in N_{\ell}, a\in H^0. 
\end{equation*}
We may assume that, given $i^\prime$ large enough, there is $i$, large enough, such that $x_j^{-1} N_{\ell}^{(i^\prime)} x_j \supset N_{\ell}^{(i)}$. We write $x_j N_{\ell}^{(i^\prime)} x_j^{-1}$ as a finite union of cosets $y N_{\ell}^{(i)}$,  with representatives $y\in N_{\ell}$. Thus, it is enough to show that for $i$ sufficiently large and $1\le j \le N$, we have
\begin{equation*}
\int_{N_{\ell}^{(i)}} \psi_{\ell,w_0}^{-1}(n)f( n_0  n x_j  a) dn=0, \quad \forall n_0\in N_{\ell}, a\in H^0. 
\end{equation*}
Since $x_jH^0\subset \Omega_2$, it is enough to show that for $i$ sufficiently large, we have
\begin{equation}
\label{eq-lemma-JacquetModule-lemma1-eq3}
\int_{N_{\ell}^{(i)}} \psi_{\ell,w_0}^{-1}(n)f( n_0  n x) dn=0, \quad \forall n_0\in N_{\ell}, x\in \Omega_2. 
\end{equation}
Recall that the restriction of $f$ to the set $N_{\ell}\Omega_2$ is supported in $(N_{\ell}\cap \eta_t^{-1} P_{\ell,j}^{(w)}\eta_t)\Omega_1 \Omega_2$. Thus, it is enough to take, in \eqref{eq-lemma-JacquetModule-lemma1-eq3}, $n_0 n\in (N_{\ell}\cap \eta_t^{-1} P_{\ell,j}^{(w)}\eta_t)\Omega_1$. We may take $i$ to be large enough, so that $\Omega_1 N_{\ell}^{(i)}=N_{\ell}^{(i)}$. Thus, it is enough to show that for $i$ large enough, we have
\begin{equation}
\label{eq-lemma-JacquetModule-lemma1-eq4}
\int_{N_{\ell}^{(i)}} \psi_{\ell,w_0}^{-1}(n)f( n_0  n x) dn=0, \quad \forall n_0\in N_{\ell}^{(i)}, x\in \Omega_2. 
\end{equation}
By a change of variable in $n$, it suffices to prove \eqref{eq-lemma-JacquetModule-lemma1-eq4} when $n_0=1$. Thus, it suffices to prove that for $i$ large enough, we have
\begin{equation}
\label{eq-lemma-JacquetModule-lemma1-eq5}
\int_{N_{\ell}^{(i)}} \psi_{\ell,w_0}^{-1}(n)f(    n x) dn=0, \quad \forall   x\in \Omega_2. 
\end{equation}
Again, using the fact that the restriction of $f$ to the set $N_{\ell}\Omega_2$ is supported in $(N_{\ell}\cap \eta_t^{-1} P_{\ell,j}^{(w)}\eta_t)\Omega_1 \Omega_2$, we may replace the domain of integration in \eqref{eq-lemma-JacquetModule-lemma1-eq5} by $(N_{\ell}^{(i)}\cap \eta_t^{-1} P_{\ell,j}^{(w)}\eta_t)\Omega_1$. This is \eqref{eq-lemma-JacquetModule-lemma1-eq1}. Thus, we have proved that $T^\prime$ is injective. 

Now we prove that $T^\prime$ is surjective. Let $\phi$ be an element in the space of $\ind_{P^\prime_{\beta,t}}^{H_{\ell,\alpha}} |\cdot|^{\frac{1-t}{2}} \tau^{(t)} \otimes J_{N_{\ell-t}^\prime, \psi_{\ell-t,\alpha} ^\prime}(\sigma^{\omega_{b,j}^t})$. We assume that the support of $\phi$ is $P^\prime_{\beta,t}\Omega_2$, where $\Omega_2$ is compact and open. Since the map $J_{Z_t^\prime,\psi_t^\prime}\otimes J_{N_{\ell-t}^\prime, \psi_{\ell-t,\alpha} ^\prime} $ from the space of  $\pi_{\ell,j}^{(w,\eta_t)}(\tau,\sigma)$ to the space of $\tau^{(t)}\otimes J_{N_{\ell-t}^\prime, \psi_{\ell-t,\alpha} ^\prime} (\sigma^{\omega_{b,j}^t} )$ is surjective, it is clear that we can find a smooth function $f$ in the space of $\ind_{P^\prime_{\beta,t}}^{H_{\ell,w_0}} \pi_{\ell,j}^{(w,\eta_t)}(\tau,\sigma)$, such that, 
\begin{equation*}
	\left( J_{Z_t^\prime,\psi_t^\prime}\otimes J_{N_{\ell-t}^\prime, \psi_{\ell-t,\alpha} ^\prime} \right)(f(x))=\phi(x), \quad \forall x\in \Omega_2. 
\end{equation*}
We now extend $f$ to be a function in the space of $\ind_{R_{\ell,w_0}^{\eta_t}}^{R_{\ell,w_0}} \pi_{\ell,j}^{(w,\eta_t)}(\tau,\sigma)$, such that, for all $x\in \Omega_2$, the following function on $N_{\ell}$, 
\begin{equation*}
n\mapsto f(nx),	
\end{equation*}
is supported inside $(N_{\ell}\cap \eta_t^{-1} P_{\ell,j}^{(w)} \eta_t)\Omega_1^0$, where $\Omega_1^0$ is a sufficiently small compact open subgroup of $N_{\ell}$, and is such that $f$ is right invariant under $x^{-1}\Omega_1^0 x$ for all $x\in \Omega_2$ (which is possible because $\Omega_2$ is compact and open). Then we have
\begin{equation*}
T(f)(x)=c_0 \phi(x), \quad \forall x\in \Omega_2,	
\end{equation*}
where $c_0$ is the measure of $\Omega_1^0$, modulo $N_{\ell}\cap \eta_t^{-1}P_{\ell,j}^{(w)} \eta_t$. This proves that $T^\prime$ is surjective. 
\end{proof}

The Jacquet module in the case $\ell<\wt m$, $w_0=y_\alpha$, and $j=\ell+\beta=\wt m$ is given below. 
\begin{lemma}
\label{lemma-JacquetModule-lemma2}
Assume that $\ell<\wt m$, $w_0=y_\alpha$, and $j=\ell+\beta=\wt m$. Then we have the following isomorphism of $H_{\ell,\alpha}$-modules:
\begin{equation*}
	J_{\psi_{\ell,\alpha}}(	\rho_{w,\eta_{t}} ) \cong  \ind_{P^\prime_{\beta,t}}^{H_{\ell,\alpha}} |\cdot|^{\frac{-\ell}{2}} \tau_{(\ell)} \otimes  \sigma^{\omega_{b,j}^t}.
\end{equation*}
The subgroup $P^\prime_{\beta,t}$ is a parabolic subgroup of $H_{\ell,\alpha}$ only when $\wt m=2n+1$.
\end{lemma}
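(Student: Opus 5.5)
The plan is to follow the proof of Lemma~\ref{lemma-JacquetModule-lemma1} essentially verbatim, changing only the Jacquet functor applied to the inducing data. Here $j=\wt m$ forces $W_j=V_0$, so $N^\prime_{\ell-t}$ is trivial and no Jacquet functor is applied to $\sigma^{\omega_{b,j}^t}$. Also $\beta=\wt m-\ell$ and $t=j-\beta=\ell$. This makes the permutation $\epsilon$ in \eqref{eq-epsilon} the identity block structure with $t=\ell$, so $\eta_t$ is essentially trivial on the $\GL_\ell$ part.

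First I would write down $N_\ell\cap\eta_t^{-1}P^{(w)}_{\ell,j}\eta_t$ explicitly, as in \eqref{eq-N-ell-sub1} with $b$ absent. The key difference from the case $\ell+\beta<\wt m$ is where the vector $w_0=y_\alpha=e_{2n}-\frac{\alpha}{2}e_{-2n}$ lands. Since $\ell+\beta=\wt m$, the index $2n$ lies in the block of size $\beta$ that $w$ moves into the $\GL_j$ factor. In Case (2) this is because $\wt m=2n$; in Case (1) one coordinate remains and is handled through $\omega_b$. Consequently the last term $q_V(u\cdot w_0,e_{-\ell})$ of $\chi_{\ell,\alpha}$ becomes, after conjugation by $w$, a linear form on the $y$-block of $Z^\prime_\ell$ with coefficient $\pm\alpha/2$ or $1$ at the entry $y_{\beta,1}$. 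So $\psi_{\ell,\alpha}$ transported to $\GL_j$ is precisely $\psi''_{\ell,a}$ for a suitable $a\in k^\times$, not $\psi'_\ell$. I would then define
\[
T(f)(x)=\int_{N_\ell\cap\eta_t^{-1}P^{(w)}_{\ell,j}\eta_t\backslash N_\ell} J_{Z^\prime_\ell,\psi''_{\ell,a}}\bigl(f(nx)\bigr)\,\psi_{\ell,\alpha}^{-1}(n)\,dn ,
\]
and check the equivariance computation displayed before Lemma~\ref{lemma-JacquetModule-lemma1}. By \cite[Lemma 5.2]{GinzburgRallisSoudry2011}, the target $\tau_{(\ell),a}\cong\tau_{(\ell)}$ carries an action only of the mirabolic $P^1_{\beta-1}$, not of $\GL_\beta$. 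This is consistent with the fact that the stabilizer of $w_0$ inside the Levi $\GL_\beta$ part of $P'_{\beta,t}$ is a mirabolic-type group.

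Next I would compute the modular characters. For $p\in P'_{\beta,t}$, writing $wpw^{-1}=(g_0,h_0)u$, I need
\[
\delta_{P_j}^{1/2}(wpw^{-1})\,\delta_{P_\ell}^{1/2}\delta_{P^{(w)}_{\ell,j}}^{-1/2}\cdots=\delta_{P'_{\beta,t}}^{1/2}(p)\,|\det g_0|^{-\ell/2}
\]
in place of the previous exponent $\ell+\frac{1-t}{2}$. Since $t=\ell$, the normalization of the Bernstein--Zelevinsky-type functor gives $\frac{1-\ell}{2}$, and the extra $-\frac12$ comes from the missing last coordinate now absorbed in the character. This is a routine exponent bookkeeping, which I would do in the $\SO$ picture, since modular characters factor through $\pr$. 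I would also identify $P'_{\beta,t}=H_{\ell,\alpha}\cap\eta_t^{-1}P^{(w)}_{\ell,j}\eta_t$ as the stabilizer in $H_{\ell,\alpha}$ of $\omega_b^tV^+_{\ell,\beta}\cap w_0^\perp$. When $\wt m=2n+1$ the whole space is split, and this intersection is a maximal isotropic subspace of $w_0^\perp\cap W_\ell$, so $P'_{\beta,t}$ is parabolic. When $\wt m=2n$, $w_0\notin W_{\ell+\beta}$, so it is only a proper subgroup, as remarked before \eqref{eq-P_beta_t^prime}. This gives the final assertion.

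Injectivity and surjectivity of the induced map $T'$ would be proved word for word as in Lemma~\ref{lemma-JacquetModule-lemma1}, using the $\Omega_1,\Omega_2,H^0$ exhaustion argument and the surjectivity of $J_{Z'_\ell,\psi''_{\ell,a}}$. The main obstacle I expect is the first step: verifying precisely that conjugating $\psi_{\ell,\alpha}$ by $w\eta_t$ yields $\psi''_{\ell,a}$ with $a\neq0$ on $Z'_\ell$ and is trivial on the part mapping into $\GSpin(V_0)$. This verification must be done separately in Case (1), with $\omega_b$ twisting the $e_{2n+1}$ coordinate, and in Case (2). I would carry it out by tracking the images of $e_{2n}$ and $e_{-2n}$ under $\pr(w)$ from \eqref{eq-a-alpha-beta}.
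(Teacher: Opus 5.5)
Your proposal is correct and follows the paper's own argument: the paper defines exactly the map $\wt T$ in \eqref{eq-iso-T-2}, built from $J_{Z'_t,\psi''_{t,a}}\otimes\Res_{H'_{\ell,\alpha}}$ with $t=\ell$ and $a=(-\frac{\alpha}{2})^{1-t'}$, and then says that the proof proceeds as in Lemma~\ref{lemma-JacquetModule-lemma1}. Your explanation of why $\psi''$ replaces $\psi'$ (the entry of $y_\alpha$ surviving in $N_\ell\cap\eta_t^{-1}P^{(w)}_{\ell,j}\eta_t$ lands at $y_{\beta,1}$) and your justification of the parabolicity claim through the remark preceding \eqref{eq-P_beta_t^prime} fill in details that the paper leaves implicit.
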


\begin{proof}
The isomorphism is induced by the following map on $\rho_{w,\eta_t}$ given by
\begin{equation}
\label{eq-iso-T-2}
\wt T(f)(x):=\int_{N_{\ell}\cap \eta_t^{-1} P_{\ell,j}^{(w)}\eta_t \backslash N_{\ell} } \left( J_{Z_t^\prime,\psi_{t,a}^{\prime\prime}}\otimes \Res_{H_{\ell,\alpha}^\prime } \right) (f(ux)) \psi_{\ell,\alpha}^{-1}(u)du
\end{equation}  
where $H_{\ell,\alpha}^\prime=H_{\ell,\alpha}\cap \GSpin(W_{j})$ and $a=(-\frac{\alpha}{2})^{1-t^\prime}$.
The proof now proceeds in a similar way as in Lemma~\ref{lemma-JacquetModule-lemma1}. 
\end{proof}

Now we consider the case $\ell=\wt m$, $w_0\in V_0$, and $V_0\not=0$. Note that this implies that $\beta=0$, $t=j$, $1\le j\le \ell=\wt m=2n$.

\begin{lemma}
\label{lemma-JacquetModule-lemma3}
Assume that $\ell=\wt m$, $w_0\in V_0$, $\beta=0$, $t=j$, and $1\le j\le \ell=\wt m=2n$. Then we have  
\begin{equation*}
	J_{\psi_{\ell,\alpha}}(	\rho_{w,\eta_{t}} ) \cong   d_{\tau} \cdot J_{\psi_{\ell-t,w_0}^\prime}( \sigma^{\omega_{b,j}^t})
\end{equation*}
where $d_\tau=\dim \tau^{(j)}$, and $\psi_{\ell-t,w_0}^\prime$ is the character defined similarly as $\psi_{\ell,w_0}$, but on a unipotent subgroup of $\GSpin(W_j)$.
\end{lemma}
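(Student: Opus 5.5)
We follow the template of Lemma~\ref{lemma-JacquetModule-lemma1}, now in the degenerate situation where the whole flag $V_\ell^+=V^+$ is used. Since $\beta=0$ and $t=j$, the element $\eta_t$ of \eqref{eq-eta-2} with $\gamma=I_{4n+2-2\ell}$ reduces (as $W_\ell=V_0$) to the Weyl element moving $V_j^+$ to the last $j$ positions of $V_\ell^+$. The subgroup $H_{\ell,w_0}=\GSpin(w_0^\perp\cap V_0)$ is a one-dimensional $\GSpin$ group, hence $\GL_1=\ker(\pr)$, which is central. In particular $P'_{0,j}=H_{\ell,w_0}$, so the induction in the analogue of \eqref{eq-isom-Tprime-1} is trivial. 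We therefore expect $J_{\psi_{\ell,w_0}}(\rho_{w,\eta_t})$ to be a plain tensor product: the top derivative of $\tau$ tensored with a twisted Jacquet module of $\sigma^{\omega_{b,j}^t}$.

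The first step is to describe $N_\ell\cap\eta_t^{-1}P_{\ell,j}^{(w)}\eta_t$ as in \eqref{eq-N-ell-sub1}, now with $\beta=0$, $b\in Z_{\ell-j}$ and $c\in Z_j$. The corresponding $\GL_j$-part is $Z_j$ with the character $\psi_j'$, and the $\GSpin(W_j)$-part is $N'_{\ell-j}$. The key check is that $\psi_{\ell,w_0}$ restricted to this intersection equals $\psi'_j(c)\,\psi'_{\ell-j,w_0}(\cdot)$. Because $w_0\in V_0\subset W_j$, the last term $q_V(u w_0,e_{-\ell})$ of \eqref{eq-chi-ell-w_0-global} falls entirely into the $N'_{\ell-j}$-component. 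This identification uses the same computation as in the display preceding Lemma~\ref{lemma-JacquetModule-lemma1}.

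Next, define $T(f)(x)$ by the integral \eqref{eq-iso-T-1}, replacing the functor there with $J_{Z_j,\psi_j'}\otimes J_{N'_{\ell-j},\psi'_{\ell-j,w_0}}$, for $x\in H_{\ell,w_0}$. Since $\tau$ is a representation of $\GL_j$ and $\beta=0$, the derivative $\tau^{(j)}=J_{Z_j,\psi_j'}(\tau)$ is a representation of the trivial group, namely the space of Whittaker functionals. It is finite-dimensional, with dimension $d_\tau$; for generic irreducible $\tau$ this dimension is $1$. The modulus characters and the twist $|\cdot|^{(1-t)/2}$ disappear on $H_{\ell,w_0}$: it lies in $\ker(\pr)$, which acts trivially in $\GL_j$ under $w\eta_t$ up to the central character, and that central contribution is absorbed into $\sigma$. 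Hence $T$ maps into $\tau^{(j)}\otimes J_{\psi'_{\ell-j,w_0}}(\sigma^{\omega_{b,j}^t})\cong d_\tau\cdot J_{\psi'_{\ell-j,w_0}}(\sigma^{\omega_{b,j}^t})$.

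Injectivity and surjectivity of the induced map $T'$ then follow exactly as in the proof of Lemma~\ref{lemma-JacquetModule-lemma1}. We use the support decomposition $(N_\ell\cap\eta_t^{-1}P^{(w)}_{\ell,j}\eta_t)\Omega_1\Omega_2$ and the exhaustion of $N_\ell$ by the compact subgroups $N_\ell^{(i)}$. For surjectivity, we take $\Omega_1^0$ small and $\Omega_2$ a compact open subset of the abelian group $H_{\ell,w_0}$. The main obstacle I expect is the bookkeeping in the first step, which must handle two cases. One is the possible conjugation by $\omega_b^{t'}$. The other is whether $w_0$ is $\alpha e_0^{(1)}$ or $\alpha e_0^{(2)}$, since $\omega_{b,j}^t$ can interchange which vector of $V_0$ is fixed. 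Both must be tracked to confirm that the character on $N'_{\ell-j}$ is precisely $\psi'_{\ell-t,w_0}$ applied to $\sigma^{\omega_{b,j}^t}$, rather than one twisted by $\omega_b$.
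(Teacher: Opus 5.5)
Your proposal is correct and follows the paper's route. The paper also just invokes the argument of Lemma~\ref{lemma-JacquetModule-lemma1} to get $J_{\psi_{\ell,\alpha}}(\rho_{w,\eta_t})\cong\tau^{(t)}\otimes J_{\psi'_{\ell-t,w_0}}(\sigma^{\omega_{b,j}^t})$, using the trivial induction from $H_{\ell,w_0}=\ker(\pr)$, and then reads off $d_\tau$ copies from the Whittaker-type Jacquet module $\tau^{(j)}$. The bookkeeping you expect is lighter than you fear: $\wt m=2n$ puts you in Case (2), so $t'=0$, and $\omega_{b,j}$ only negates $e_0^{(2)}$ rather than swapping $e_0^{(1)}$ and $e_0^{(2)}$, which is exactly what the twist $\sigma^{\omega_{b,j}^t}$ records.
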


\begin{proof}
By the same proof as in Lemma~\ref{lemma-JacquetModule-lemma1}, we can show that $J_{\psi_{\ell,\alpha}}(	\rho_{w,\eta_{t}} )$ is isomorphic to $\tau^{(t)} \otimes J_{\psi_{\ell-t,w_0}^\prime}( \sigma^{\omega_{b,j}^t})$. Here, $\tau^{(t)}$ is the Jacquet module of $\tau$ with respect to the Whittaker character $\psi^{-1}$. Thus, if $\tau$ has no Whittaker functionals, then $J_{\psi_{\ell,\alpha}}(	\rho_{w,\eta_{t}} )=0$, and if $\tau$ has Whittaker functionals, then $J_{\psi_{\ell,\alpha}}(	\rho_{w,\eta_{t}} ) \cong   d_{\tau} \cdot J_{\psi_{\ell-t,w_0}^\prime}( \sigma^{\omega_{b,j}^t})$ where $d_\tau=\dim \tau^{(j)}$.
\end{proof}

We now move on to consider the Jacquet module $J_{\psi_{\ell, w_0}}(\rho_{w,\eta_{t,\gamma}})$ when $\gamma \not=I_{4n+2-2\ell}$. In this case, $P_{w, \SO_{4n+2}}^\prime$ is a proper parabolic subgroup of $\SO(W_{\ell})$ either stabilizing a $\beta$-dimensional isotropic subspace of $W_{\ell}$ or a $(\wt m-\ell)$-dimensional isotropic subspace of $W_{\ell}$ (see Remark~\ref{remark-parabolic-subgroup} and Lemma~\ref{lemma-double-coset2}). Thus we must have $\ell<\wt m$ and $\beta>0$. Then \eqref{eq-local-Jacquet-module-inequalities} becomes
\begin{equation}
\label{eq-local-Jacquet-module-inequalities-2}
0< \beta\le j \le \ell+\beta\le \wt m.	
\end{equation}
In particular, we may take $w_0=y_{\alpha}$. 

First, we assume that $\ell+\beta<\wt m$. By Lemma~\ref{lemma-double-coset2} (a), the double coset space $$P_{w,\SO_{4n+2}}^\prime\backslash \SO(W_{\ell}) / \mathrm{Stab}_{L_{\ell,\SO_{4n+2}}}(\psi_{\ell, w_0})$$ 
has two elements. Since we are interested in the case when $\gamma\not=I_{4n+2-2\ell}$, we may take
\begin{equation}
\label{eq-gamma}
\gamma=\gamma_\beta= \begin{pmatrix}
 0 & I_{\beta} & & &\\
 I_{\wt m -\ell-\beta} & 0 & & &\\
 &&I_{V_0} &&\\
 &&&0&I_{\wt m-\ell-\beta} \\
 &&&I_{\beta} & 0
 \end{pmatrix}\in \SO(W_{\ell}).
\end{equation}
With the choices of $\epsilon$ in \eqref{eq-epsilon} and $\gamma_\beta$ in \eqref{eq-gamma}, we re-denote $\eta=\eta_{\beta, t}$ such that 
\begin{equation*}\
\pr(\eta)=\pr(\eta_{\beta, t})=  \begin{pmatrix}
 \epsilon & & \\ & \gamma_\beta &\\ && \epsilon^*	
 \end{pmatrix}.	
\end{equation*}
Let
\begin{equation}
\label{eq-P-double-prime}
P_{\beta}^{\prime\prime} = H_{\ell,\alpha}\cap \eta_{\beta, t}^{-1} P_{\ell,j}^{(w)}\eta_{\beta,t} = \eta_{\beta, t}^{-1} (H_{\ell, \eta_{\beta, t}y_{\alpha}} \cap P_{\ell,j}^{(w)} ) \eta_{\beta, t}.
\end{equation}
This is a subgroup of the maximal parabolic subgroup of $H_{\ell,\alpha}$, which preserves the isotropic subspace $(\pr(\omega_{b,j})^{t^\prime} \eta_{\beta, t})^{-1} V_{\ell, \beta}^+\cap y_{\alpha}^\perp$. The computation of $J_{\psi_{\ell,\alpha}}(	\rho_{w,\eta_{\beta,t}} )$ when $\ell+\beta<\wt m$ is given in the following two lemmas. 

\begin{lemma}
\label{lemma-JacquetModule-gamma-lemma1}
Assume that $0< \beta\le j < \ell+\beta< \wt m$. 	Then we have
\begin{equation*}
	J_{\psi_{\ell,\alpha}}(	\rho_{w,\eta_{\beta,t}} ) =0.
\end{equation*}
\end{lemma}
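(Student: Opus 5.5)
The plan is to follow the pattern of Proposition~\ref{prop-JacquetModuleVanishing-alpha>0} and Proposition~\ref{prop-JacquetModuleVanishing-alpha=0}. The reduction carried out in the proof of Proposition~\ref{prop-JacquetModuleVanishing-alpha>0} shows that it suffices to produce a unipotent subgroup
\[
S\subset \eta_{\beta,t}^{-1}U_{\ell,j}^{(w)}\eta_{\beta,t}\cap N_\ell
\]
with two properties: $\psi_{\ell,\alpha}|_S\neq 1$, and $\pi_{\ell,j}^{(w,\eta_{\beta,t})}(\tau,\sigma)|_S=1$. Once such an $S$ exists, taking $J=S\cap N_\ell^{(i)}$ yields \eqref{eq-prop-JacquetModuleVanishing-alpha>0-eq5}, so the integrals \eqref{eq-prop-JacquetModuleVanishing-alpha>0-eq4} vanish for $i$ large and $J_{\psi_{\ell,\alpha}}(\rho_{w,\eta_{\beta,t}})=0$. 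Because $\pr$ identifies unipotent subgroups of $\GSpin$ with those of $\SO$ (Remark~\ref{remark-unipotent-variety-isomorphism}), the question reduces entirely to $\SO(V)$. There the corresponding vanishing, for the second double coset $\gamma_\beta$ when $\ell+\beta<\wt m$ and $j<\ell+\beta$, appears in the proof of \cite[Proposition 5.2]{GinzburgRallisSoudry2011} (or its companion for $\gamma\neq I$). I would cite that result and also indicate the subgroup explicitly.

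For the explicit construction, write $\eta=\eta_{\beta,t}$. Its $\gamma_\beta$-block exchanges $V_{\ell,\beta}^{\pm}$ with $\Span\{e_{\pm(\ell+\wt m-\ell-\beta+1)},\dots\}$ and fixes $e_{\pm 2n}$ up to the block structure. The key observation is that the condition $j<\ell+\beta$ means $t=j-\beta<\ell$, so the last simple root coordinate of $\chi_{\ell,\alpha}$ is genuinely present. This is the coordinate $q_V(u\cdot w_0,e_{-\ell})$ with $w_0=y_\alpha=e_{2n}-\frac{\alpha}{2}e_{-2n}$. The condition $\ell+\beta<\wt m$ then ensures that after conjugation by $\gamma_\beta$ the vector $e_{2n}$ (and likewise $e_{-2n}$) lies outside the image of $V_{\ell,\beta}^{\pm}$. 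Consequently a root subgroup of the form $x\mapsto 1+x(e_{\ell}e_{-2n}\text{-type entry})$, or the one pairing $e_\ell$ with $e_{2n}$, is mapped by $w\eta$ into the unipotent radical $U_j$ of $P_j$. Representations induced from $P_j$ are trivial on $U_j$, so $\pi^{(w,\eta)}_{\ell,j}$ is trivial on this root subgroup, while $\psi_{\ell,\alpha}$ restricted to it is $x\mapsto\psi(cx)$ with $c\neq0$ coming from the coefficient of $e_{2n}$ or of $-\frac{\alpha}{2}e_{-2n}$ in $y_\alpha$. I would take $S$ to be this one-parameter subgroup.

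The main obstacle is checking that the conjugate $w\eta S\eta^{-1}w^{-1}$ really lies in $U_j$ rather than in the Levi $M_j$, on which the character would not be trivial. This requires tracking the images of $e_\ell$ and $e_{\pm 2n}$ under $\pr(\epsilon_{0,\beta})\,\pr(\eta)$ using \eqref{eq-a-alpha-beta}, \eqref{eq-eta-2} and \eqref{eq-gamma}, including the possible twist by $\omega_b^{t'}$ in the split case. I would carry out this bookkeeping in $\SO(V)$, treating $t'=0$ and $t'=1$ separately, and then lift the result to $\GSpin$.
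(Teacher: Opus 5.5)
Your reduction is exactly the paper's one-line proof, and as a citation argument it is fine. You look for a unipotent $S\subset \eta_{\beta,t}^{-1}U^{(w)}_{\ell,j}\eta_{\beta,t}$ on which $\psi_{\ell,\alpha}$ is non-trivial and $\pi^{(w,\eta_{\beta,t})}_{\ell,j}(\tau,\sigma)$ is trivial, transport it from $\SO(V)$ via the unipotent isomorphism, and cite \cite{GinzburgRallisSoudry2011}.

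The explicit subgroup you sketch, however, rests on a reversed observation. Take $\wt m=2n$, so that $\omega_b$ only acts on $V_0$. Since $t=j-\beta<\ell$, \eqref{eq-epsilon} and \eqref{eq-a-alpha-beta} give $\eta_{\beta,t}e_\ell=e_{\ell-t}$ and $w e_{\ell-t}=e_{\ell+\beta}$. By \eqref{eq-gamma}, $\gamma_\beta e_{\pm 2n}=e_{\pm(\ell+\beta)}$, and then $w e_{\pm(\ell+\beta)}=e_{\pm\beta}$. So $\gamma_\beta$ does not keep $e_{\pm 2n}$ outside $V^\pm_{\ell,\beta}$; it moves $e_{\pm 2n}$ \emph{into} $V^\pm_{\ell,\beta}$. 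Moreover $V^+_{\ell,\beta}=w^{-1}V_j^+\cap W_\ell$ is precisely the part of $W_\ell$ that $w$ carries into $V_j^+$, and this is what makes the second double coset give zero.

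If $e_{\pm 2n}$ stayed outside $V^\pm_{\ell,\beta}$, as for $\gamma=I$, then $w\eta e_{2n}=e_{2n}$. The relevant root subgroup would then be conjugated to $e_{\ell+\beta}\pm e_{2n}$, which lies in the Levi factor $\GSpin(W_j)$ where $\sigma$ need not be trivial. That is the source of the non-zero factor $J_{N'_{\ell-t},\psi'_{\ell-t,\alpha}}(\sigma^{\omega_{b,j}^t})$ in Lemma~\ref{lemma-JacquetModule-lemma1}.

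Similarly, $j<\ell+\beta$ is not what makes the last coordinate of $\chi_{\ell,\alpha}$ ``present'' (it always is). It is needed so that $w\eta e_\ell=e_{\ell+\beta}$ lies on the positive side outside $V_j^+$. When $t=\ell$ one gets $w\eta e_\ell=e_{-(\beta+1)}$, the conjugate is the simple root $e_\beta-e_{\beta+1}$ of $\GL_j$, and you are in the non-vanishing Lemma~\ref{lemma-JacquetModule-gamma-lemma2}.

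As a consequence only one of your two candidates works, and choosing the right root matters more than the ``$U_j$ versus $M_j$'' check you flag.
\begin{itemize}
\item The one-parameter subgroup $e_{-2n}\mapsto e_{-2n}+xe_\ell$, $e_{-\ell}\mapsto e_{-\ell}-xe_{2n}$ (root $e_\ell+e_{2n}$) lies in $N_\ell$, and $\psi_{\ell,\alpha}$ restricts to it as $x\mapsto\psi(-\frac{\alpha}{2}x)$. Conjugation by $w\eta_{\beta,t}$ sends it to the root subgroup of $e_\beta+e_{\ell+\beta}$, which lies in $U_j$ because $\beta\le j$. This is a valid $S$.
\item The other candidate, $e_{2n}\mapsto e_{2n}+xe_\ell$ (root $e_\ell-e_{2n}$), is conjugated to the negative root $e_{\ell+\beta}-e_\beta$. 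That root moves $e_\beta\in V_j^+$ out of $V_j^+$, so this subgroup is not even contained in $\eta_{\beta,t}^{-1}P^{(w)}_{\ell,j}\eta_{\beta,t}$.
\end{itemize}
If you include an explicit $S$, it should be the first one, justified by $\eta_{\beta,t}e_{2n}\in V^+_{\ell,\beta}$.
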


\begin{proof}
As in the proof of Proposition~\ref{prop-JacquetModuleVanishing-alpha>0}, there exists a unipotent subgroup $S\subset \eta_{\beta, t}^{-1}U_{\ell,j}^{(w)}\eta_{\beta, t}$ which satisfies the properties in \eqref{eq-prop-JacquetModuleVanishing-alpha>0-eq5}. We now proceed in the same way as in the proof of Proposition~\ref{prop-JacquetModuleVanishing-alpha>0} to conclude that $J_{\psi_{\ell,\alpha}}(	\rho_{w,\eta_{\beta,t}} ) =0$. 
\end{proof}

\begin{lemma}
\label{lemma-JacquetModule-gamma-lemma2}
Assume that $0< \beta\le j = \ell+\beta< \wt m$. 	Then we have
\begin{equation*}
	J_{\psi_{\ell,\alpha}}(	\rho_{w,\eta_{\beta,t}} ) = \ind_{P_{\beta}^{\prime\prime}}^{H_{\ell,\alpha}} |\cdot|^{-\frac{\ell}{2}} \tau_{(\ell)}\otimes \sigma^{\omega_{b,j}^{\ell}}.
\end{equation*}
\end{lemma}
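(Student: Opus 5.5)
We prove the lemma by following the proof of Lemma~\ref{lemma-JacquetModule-lemma2}, with the Weyl element $\eta_{\beta,t}$ in place of $\eta_t$. Since $j=\ell+\beta$, we have $t=j-\beta=\ell$, so $\epsilon$ in \eqref{eq-epsilon} is trivial and $N_{\ell-t}^\prime$ is the identity subgroup. Therefore only the $\GL_j$-part of the inducing data carries a nontrivial twisted Jacquet functor, and $\sigma^{\omega_{b,j}^{\ell}}$ is simply restricted.

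The first step is to describe $N_\ell\cap \eta_{\beta,t}^{-1}P_{\ell,j}^{(w)}\eta_{\beta,t}$ explicitly. The analogue of \eqref{eq-N-ell-sub1} should now consist of elements whose image under $w\eta_{\beta,t}$ lands in $Z_\ell^\prime\subset\GL_j$, with $\gamma_\beta$ exchanging $V_{\ell,\beta}^{+}$ and the complementary block. We then check how $\psi_{\ell,\alpha}$ transports. The coordinates $z_{i,i+1}$ go to the $Z_t$-part of the character $\psi_t^\prime$. The last term $q_V(u\cdot y_\alpha,e_{-\ell})$, after conjugation by $\gamma_\beta$ (which moves $e_{2n}$ into the $V_{\ell,\beta}$ block when $\ell+\beta<\wt m$), becomes a multiple $a\,y_{\beta,1}$ of the corner entry of the $\GL_j$ unipotent. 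This is exactly why the Jacquet module is $\tau_{(\ell)}=\tau_{(\ell),a}$ rather than the derivative $\tau^{(\ell)}$. By \cite[Lemma 5.2]{GinzburgRallisSoudry2011}, the choice of $a\in k^\times$ does not matter.

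Next we define
\begin{equation*}
\wt T(f)(x)=\int_{N_\ell\cap\eta_{\beta,t}^{-1}P_{\ell,j}^{(w)}\eta_{\beta,t}\bs N_\ell}\left(J_{Z_\ell^\prime,\psi_{\ell,a}^{\prime\prime}}\otimes \Res_{H_{\ell,\alpha}\cap\GSpin(W_j)}\right)(f(ux))\,\psi_{\ell,\alpha}^{-1}(u)\,du,
\end{equation*}
for $x\in H_{\ell,\alpha}$. The transport computation above gives the equivariance needed for $\wt T$ to be well defined. Absolute convergence follows from compact support modulo the subgroup. We then verify that $\wt T(f)$ transforms on the left under $P_\beta^{\prime\prime}$ defined in \eqref{eq-P-double-prime} by $|\cdot|^{-\ell/2}\tau_{(\ell)}\otimes\sigma^{\omega_{b,j}^{\ell}}$. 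This is a modular character bookkeeping: compare $\delta_{P_j}^{1/2}(wpw^{-1})$ with $\delta_{P_\beta^{\prime\prime}}^{1/2}(p)$, as in the proof of Lemma~\ref{lemma-JacquetModule-lemma1}, now with $t=\ell$.

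Injectivity and surjectivity of the induced map $J_{\psi_{\ell,\alpha}}(\rho_{w,\eta_{\beta,t}})\to \ind_{P_\beta^{\prime\prime}}^{H_{\ell,\alpha}}(\cdots)$ then follow verbatim from the support arguments in Lemma~\ref{lemma-JacquetModule-lemma1}, using the compact sets $\Omega_1$, $\Omega_2$ and the exhaustion $N_\ell^{(i)}$. Those arguments used only the structure $R_{\ell,w_0}^\eta=(H_{\ell,w_0}\cap\eta^{-1}M\eta)(N_\ell\cap\eta^{-1}P\eta)$ and $H_{\ell,w_0}$-invariance of $\psi_{\ell,w_0}$, and both hold here. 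Unipotent computations may be done in $\SO(V)$ by Remark~\ref{remark-unipotent-variety-isomorphism}. The only $\GSpin$-specific points are the lift $\omega_{b,j}$ and the central $\GL_1$, which acts compatibly on both sides.

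The main obstacle is the matrix computation in the first step. We must show that after conjugating by $\eta_{\beta,t}$ the character $\psi_{\ell,\alpha}$ restricted to the intersection is nondegenerate precisely in the $\psi^{\prime\prime}_{\ell,a}$ pattern, with nonzero $a$ and with no component on the $\sigma$-side. We must also track whether $t^\prime=1$ in the split case with $\ell+\beta=j<\wt m$. For this we would follow the $\SO$ computation in \cite[Proposition 5.3]{GinzburgRallisSoudry2011} and transfer it to $\GSpin$.
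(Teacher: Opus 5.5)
Your proposal is correct and is essentially the paper's argument. The paper only notes that the hypothesis forces $t=j-\beta=\ell$ and says the proof of Lemma~\ref{lemma-JacquetModule-lemma1} carries over; your route through the map $\wt T$ of Lemma~\ref{lemma-JacquetModule-lemma2}, with $\eta_{\beta,t}$ in place of $\eta_t$, is that same argument made explicit.

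One small correction: in the modular-character comparison the relevant element is $w\eta_{\beta,t}\,p\,\eta_{\beta,t}^{-1}w^{-1}$, not $wpw^{-1}$. Unlike $\eta_t$ in Lemma~\ref{lemma-JacquetModule-lemma1}, the element $\eta_{\beta,t}$ involves $\gamma_\beta\in\SO(W_\ell)$ and so no longer centralizes $H_{\ell,\alpha}$.
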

\begin{proof}
Note that the assumption $0< \beta\le j = \ell+\beta< \wt m$ forces $\ell<j<\wt m$ and $t=j-\beta=\ell$. The proof is similar to the proof of Lemma~\ref{lemma-JacquetModule-lemma1}. 
\end{proof}

It remains to compute $J_{\psi_{\ell,\alpha}}(	\rho_{w,\eta_{\epsilon,\gamma}} )$ where $\epsilon$ is given in \eqref{eq-epsilon}, $\gamma\not=I_{4n+2-2\ell}$, and $0< \beta\le j \le \ell+\beta= \wt m$. This puts us in Lemma~\ref{lemma-double-coset2} (2). Since $\gamma\not=I_{4n+2-2\ell}$, we are in the case of Lemma~\ref{lemma-double-coset2} (2)(a), and we must have $V_0\not=0$ and
\begin{equation*}
\mathrm{Witt}( y_{\alpha}^\perp \cap W_{\ell} )=\wt m-\ell.
\end{equation*}
This is equivalent to the condition that
\begin{equation*}
\mathrm{Witt}(k y_{-\alpha}+V_0)=1.	
\end{equation*}
In this case, we may take
\begin{equation}
\label{eq-gamma-alpha}
\gamma=
\gamma_\alpha=
\begin{pmatrix}
I_{2n-\ell-1} &&&&\\
&1&&&\\
&-v_{\alpha} &I_{V_0} &&\\
&\frac{\alpha}{2} &v_{\alpha}^\prime &1&\\
&&&&I_{2n-\ell-1}
\end{pmatrix},
\end{equation}
where $v_\alpha\in V_0$ is an element such that $q_V(v_\alpha, v_\alpha)=-\alpha$. Denote 
\begin{equation}
\label{eq-P_alpha^prime}
P_{\alpha}^\prime=H_{\ell,\alpha}\cap \eta_{t,\gamma_\alpha}^{-1} P_{\ell,j}^{(w)} \eta_{t,\gamma_\alpha}= \eta_{t,\gamma_\alpha}^{-1} (H_{\ell, \eta_{t,\gamma_\alpha} \cdot y_\alpha }\cap P_{\ell,j}^{(w)}) \eta_{t,\gamma_\alpha}.
\end{equation}
This is a parabolic subgroup of $H_{\ell,\alpha}$. The Levi subgroup of $H_{\ell, \eta_{t,\gamma_\alpha} \cdot y_\alpha }\cap P_{\ell,j}^{(w)}$ is isomorphic to $\GL_\beta\times \GSpin(v_\alpha^\perp\cap V_0)$.

\begin{lemma}
\label{lemma-JacquetModule-gamma-lemma22}
Assume that $0< \beta\le j \le \ell+\beta= \wt m$, and $\mathrm{Witt}(k y_{-\alpha}+V_0)=1$.  Then we have
\begin{equation*}
	J_{\psi_{\ell,\alpha}}(	\rho_{w,\eta_{\beta,t}} ) = \ind_{P_{\alpha}^{\prime}}^{H_{\ell,\alpha}} |\cdot|^{\frac{1-t}{2}} \tau^{(t)}\otimes  J_{N_{\ell-t}^\prime, \psi_{\ell-t,v_\alpha} ^\prime}(\sigma^{\omega_{b,j}^t}).
\end{equation*}
Here, $\psi_{\ell-t,v_\alpha} ^\prime$ is a character defined similarly as $\psi_{\ell,y_{\alpha}}$, and
when $t=\ell$, we interpret $J_{N_{\ell-t}^\prime, \psi_{\ell-t,v_\alpha} ^\prime}(\sigma^{\omega_{b,j}^t})=\Res_{\GSpin(v_\alpha^\perp \cap V_0)}(\sigma^{\omega_{b,j}^t})$.
\end{lemma}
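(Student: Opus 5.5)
The plan is to follow the template of Lemma~\ref{lemma-JacquetModule-lemma1}, now with $\eta=\eta_{t,\gamma_\alpha}$. Define on the space of $\rho_{w,\eta_{t,\gamma_\alpha}}$ the map
\begin{equation*}
T_\alpha(f)(x):=\int_{N_{\ell}\cap \eta^{-1} P_{\ell,j}^{(w)}\eta \backslash N_{\ell}} \left( J_{Z_t^\prime,\psi_t^\prime}\otimes J_{N_{\ell-t}^\prime, \psi_{\ell-t,v_\alpha}^\prime}\right)(f(nx))\,\psi_{\ell,\alpha}^{-1}(n)\,dn, \qquad x\in H_{\ell,\alpha}.
\end{equation*}
We then show that it factors through $J_{\psi_{\ell,\alpha}}$ and induces an $H_{\ell,\alpha}$-isomorphism onto the stated induced representation.

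\emph{Step 1: the shape of $N_\ell\cap\eta^{-1}P_{\ell,j}^{(w)}\eta$.} We compute this group explicitly, with $\epsilon$ as in \eqref{eq-epsilon}. The key structural fact is the following. The matrix $\gamma_\alpha$ fixes $e_{\ell+1},\dots,e_{2n-1}$ and sends $e_{2n}\mapsto e_{2n}-v_\alpha+\frac{\alpha}{2}e_{-2n}$, so $\gamma_\alpha^{-1}$ carries $y_\alpha$ to a vector whose component along $V_0$ is, up to sign, $v_\alpha$. We need to check that $\gamma_\alpha\in \SO(W_\ell)$ and that the isotropic pieces cancel. Conjugating $N_\ell$ by $\eta$ therefore turns the last coordinate $q_V(u\cdot w_0,e_{-\ell})$ of $\chi_{\ell,\alpha}$ into a coordinate pairing with $v_\alpha$ inside the $\GSpin(W_j)$-block. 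As in the computation following \eqref{eq-N-ell-sub1}, the character $\psi_{\ell,\alpha}$ restricted to this intersection then splits as $\psi_t^\prime$ on the $\GL_j$-part (the block $c\in Z_t$ together with the $y_6'$ entries) times $\psi_{\ell-t,v_\alpha}^\prime$ on the $N_{\ell-t}^\prime\subset\GSpin(W_j)$ part. This gives well-definedness of $T_\alpha$, and convergence holds by compact support modulo the intersection. When $t=\ell$ the second factor is absent; the condition $\ell+\beta=\wt m$ leaves exactly $V_0$ in $W_j$, and the stabilizer of $v_\alpha$ there is $\GSpin(v_\alpha^\perp\cap V_0)$, which explains the restriction interpretation.

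\emph{Step 2: equivariance and modulus bookkeeping.} For $p\in P_\alpha^\prime$, write $w\eta p\eta^{-1}w^{-1}=(g_0,h_0)u$ with $(g_0,h_0)\in M_j$. By \eqref{eq-P_alpha^prime} and the description of the Levi as $\GL_\beta\times\GSpin(v_\alpha^\perp\cap V_0)$, $g_0$ lies in $\GL_\beta$ embedded as $\diag(g_0',I_t)$ and $h_0$ in $\GSpin(v_\alpha^\perp\cap V_0)$. The same modulus identity as in Lemma~\ref{lemma-JacquetModule-lemma1} should give $\delta_{P_j}^{1/2}(wpw^{-1})\,\delta_{P_\alpha'}^{-1/2}(p)$ times the derivative normalization, equal to $|\det g_0'|^{\frac{1-t}{2}}$. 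Hence $T_\alpha(f)$ lies in $\ind_{P_\alpha'}^{H_{\ell,\alpha}}|\cdot|^{\frac{1-t}{2}}\tau^{(t)}\otimes J_{N_{\ell-t}^\prime,\psi^\prime_{\ell-t,v_\alpha}}(\sigma^{\omega_{b,j}^t})$.

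\emph{Step 3: bijectivity.} Injectivity and surjectivity are copied verbatim from the proof of Lemma~\ref{lemma-JacquetModule-lemma1}, using the support sets $\Omega_1,\Omega_2$, the exhaustion $N_\ell^{(i)}$, and a small compact open $H^0\subset H_{\ell,\alpha}$ normalizing $N_\ell^{(i)}$. Nothing there used $\gamma=I$, only the product structure $R^\eta_{\ell,w_0}=(H_{\ell,w_0}\cap\eta^{-1}M^{(w)}_{\ell,j}\eta)(N_\ell\cap\eta^{-1}P^{(w)}_{\ell,j}\eta)$.

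The main obstacle is Step 1 together with the modulus computation in Step 2. One must verify carefully that after conjugation by $\eta_{t,\gamma_\alpha}$ (and the possible $\omega_{b,j}^{t'}$ twist) the character restricts exactly to $\psi_t^\prime\otimes\psi_{\ell-t,v_\alpha}^\prime$, with no leftover linear form in the $y$-variables. Such a leftover would instead force vanishing, as in Lemma~\ref{lemma-JacquetModule-gamma-lemma1}. To check this, I would first do the $\SO$ computation matching \cite[Proposition 5.3]{GinzburgRallisSoudry2011} and transfer it via Remark~\ref{remark-unipotent-variety-isomorphism}. I would then check the $\GL_1$-central part separately through Lemma~\ref{lemma-Levi-lifting}(2), to confirm that no extra character of $\ker(\pr)$ enters the exponent.
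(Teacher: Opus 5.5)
Your proposal is correct and is the argument the paper has in mind: the paper's proof only says that it is similar to Lemma~\ref{lemma-JacquetModule-lemma1}, meaning one uses the analogue of $T$ built from $J_{Z_t',\psi_t'}\otimes J_{N_{\ell-t}',\psi_{\ell-t,v_\alpha}'}$, checks how $\psi_{\ell,\alpha}$ restricts to $N_\ell\cap\eta^{-1}P_{\ell,j}^{(w)}\eta$, does the modulus bookkeeping, and repeats the injectivity/surjectivity argument verbatim. One slip is cosmetic: by \eqref{eq-P_alpha^prime} the relevant vector is $\eta\cdot y_\alpha=\gamma_\alpha y_\alpha=e_{2n}-v_\alpha$, not $\gamma_\alpha^{-1}y_\alpha=e_{2n}+v_\alpha$, but either vector lies in $V_{\ell,\beta}^+\oplus V_0$ with no $e_{-2n}$-component, which is exactly why the restricted character is $\psi_t'\otimes\psi'_{\ell-t,v_\alpha}$ with no leftover term.
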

\begin{proof}
 The proof is similar to the proof of Lemma~\ref{lemma-JacquetModule-lemma1}. 
\end{proof}

We now summarize the above computation in the following Leibniz rule. We let ``$\equiv$'' denote isomorphism of representations, up to semi-simplification.

\begin{theorem}
\label{thm-JacquetModule}
 We have the following. 
\begin{enumerate}
\item Assume that $0\leq \ell <\wt m$ and $1\leq j <\wt m$, then
$$
\begin{aligned} J_{\psi_{\ell, \alpha}}(\Ind_{P_j}^H \tau\otimes \sigma)\equiv
&\bigoplus_{\ell+j-\wt m<t\leq \ell,\ 0\leq t \leq j}
\ind_{P^\prime_{\beta,t}}^{H_{\ell,\alpha}} |\cdot|^{\frac{1-t}{2}} \tau^{(t)} \otimes J_{N_{\ell-t}^\prime, \psi_{\ell-t,\alpha} ^\prime}(\sigma^{\omega_{b,j}^t})\\
&\oplus \begin{cases}
            \ind_{P_{j-\ell}^{\prime\prime}}^{H_{\ell,\alpha}} |\cdot|^{-\frac{\ell}{2}} \tau_{(\ell)}\otimes \sigma^{\omega_{b,j}^{\ell}}, & \text{if $\ell<j$},\\
            0, & \text{otherwise};
           \end{cases}\\
&\oplus \delta_{\alpha}\cdot
\begin{cases}
 \ind_{P_{\alpha}^{\prime}}^{H_{\ell,\alpha}} |\cdot|^{\frac{1+\wt m-\ell-j}{2}} \tau^{(t)}\otimes  J_{N_{\ell-t}^\prime, \psi_{\ell-t,v_\alpha} ^\prime}(\sigma^{\omega_{b,j}^{\ell+j-\wt m}}), & \text{if $0<2n-\ell \leq j$},\\
 0, & \text{otherwise}.
\end{cases}
\end{aligned}
$$
Here, $\delta_\alpha=0$ unless $\mathrm{Witt}(k\cdot y_{-\alpha}+V_0)=1$, in which case, $\delta_\alpha=1$. The subgroups $P_{\beta,t}^\prime$, $P_{j-\ell}^{\prime\prime}$, and $P_{\alpha}^\prime$ are defined by  \eqref{eq-P_beta_t^prime}, \eqref{eq-P-double-prime}, \eqref{eq-P_alpha^prime} respectively.
\item Assume that $0\leq \ell \leq \wt m$ and $j=\wt m$, then
 $$
 J_{\psi_{\ell, \alpha}}(\Ind_{P_j}^H \tau\otimes \sigma) \equiv
\ind_{P^\prime_{\wt m-\ell ,t}}^{H_{\ell,\alpha}} |\cdot|^{\frac{-\ell}{2}} \tau_{(\ell)} \otimes  \sigma^{\omega_b^t}
\oplus
 \delta_\alpha\cdot \ind_{P_{\alpha}^{\prime}}^{H_{\ell,\alpha}} |\cdot|^{\frac{1-\ell}{2}} \tau^{(\ell)}\otimes  J_{N_{0}^\prime, \psi_{0,v_\alpha} ^\prime}(\sigma^{\omega_{b,j}^\ell}).
 $$

  \item Assume that $\ell=\wt m$ and $w_0\in V_0$, then
 $$J_{\psi_{\ell,w_0}}(\Ind_{P_j}^H \tau\otimes \sigma)\equiv d_\tau\cdot  J_{\psi_{\ell-j,w_0}^\prime}( \sigma^{\omega_{b,j}^j}).$$
 Here, $d_\tau=\dim \tau^{(j)}$.
\end{enumerate}
 \end{theorem}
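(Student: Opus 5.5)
The theorem is an assembly of the local computations above, obtained by running the two-step Bruhat (Mackey) filtration and keeping only the subquotients that survive. Since $J_{\psi_{\ell,\alpha}}$ is exact, $J_{\psi_{\ell,\alpha}}(\Ind_{P_j}^H\tau\otimes\sigma)$ has, up to semi-simplification, the direct sum of the $J_{\psi_{\ell,\alpha}}(\rho_{w,\eta})$ as composition factors. Here $w$ runs over the representatives $\epsilon_{\alpha,\beta}$ (and $\tilde\epsilon_{\alpha,\beta}$) of Lemma~\ref{lemma-double-coset-decomposition}, and $\eta=\eta_{\epsilon,\gamma}$ runs over the representatives \eqref{eq-eta} of $P_{\ell,j}^{(w)}\backslash P_\ell/R_{\ell,w_0}$. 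Both filtrations are finite, so the problem is pure bookkeeping: identify, for each $(w,\eta)$, which lemma computes the term.

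First discard all terms that vanish. By Proposition~\ref{prop-JacquetModuleVanishing-alpha>0}, every $w$ with $\alpha>0$ contributes zero, so only $w=\epsilon_{0,\beta}$ (or $\tilde\epsilon_{0,\beta}$) remains, subject to $0\le\beta\le j\le\ell+\beta\le\wt m$. Write $t=j-\beta$. Following the reduction via \cite[Lemma 5.1]{GinzburgRallisSoudry2011}, only the Weyl element $\epsilon$ of \eqref{eq-epsilon} gives a nonzero contribution; the other $\epsilon$'s are killed by a subgroup $S$ as in \eqref{eq-prop-JacquetModuleVanishing-alpha>0-eq5}, transported from the $\SO$ case through Remark~\ref{remark-unipotent-variety-isomorphism}. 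The remaining parameter $\gamma$ takes one or two values, by Lemma~\ref{lemma-double-coset2}.

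For (1), with $\ell,j<\wt m$, the cases are as follows.
\begin{itemize}
\item $\gamma=I$ and $\ell+\beta<\wt m$. Lemma~\ref{lemma-JacquetModule-lemma1} gives the first sum. The constraint $\ell+\beta<\wt m$ becomes $t>\ell+j-\wt m$, together with $0\le t\le\min(j,\ell)$.
\item $\gamma=I$ and $\ell+\beta=\wt m$. Since $j<\wt m$, Proposition~\ref{prop-JacquetModuleVanishing-alpha=0} kills this term.
\item $\gamma=\gamma_\beta$ and $\ell+\beta<\wt m$. The term vanishes when $j<\ell+\beta$, by Lemma~\ref{lemma-JacquetModule-gamma-lemma1}. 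When $j=\ell+\beta$ (so $\ell<j$ and $\beta=j-\ell$), Lemma~\ref{lemma-JacquetModule-gamma-lemma2} gives the middle term with $P''_{j-\ell}$.
\item $\gamma=\gamma_\alpha$ and $\ell+\beta=\wt m$. This occurs only if $\mathrm{Witt}(ky_{-\alpha}+V_0)=1$, hence the factor $\delta_\alpha$. Lemma~\ref{lemma-JacquetModule-gamma-lemma22} gives the last term with $t=\ell+j-\wt m$. The conditions $\beta>0$ and $t\ge0$ translate to $0<\wt m-\ell\le j$, where $\wt m=2n$ because $V_0\neq0$. The exponent $\frac{1-t}{2}$ becomes $\frac{1+\wt m-\ell-j}{2}$.
\end{itemize}

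For (2), with $j=\wt m$, the inequalities force $\beta=\wt m-\ell$ and $t=\ell$. The $\gamma=I$ term is Lemma~\ref{lemma-JacquetModule-lemma2}, and the $\gamma_\alpha$ term (present only when $\delta_\alpha=1$) is Lemma~\ref{lemma-JacquetModule-gamma-lemma22} with $t=\ell$, where $N'_0$ is trivial. No $\gamma_\beta$ term appears, since $\ell+\beta=\wt m$ means we are in Lemma~\ref{lemma-double-coset2}(2).

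For (3), with $\ell=\wt m$ and $w_0\in V_0$, we must have $\beta=0$ and $t=j$, and $\gamma$ is trivial because $W_\ell=V_0$. Lemma~\ref{lemma-JacquetModule-lemma3} then gives $d_\tau\cdot J_{\psi'_{\ell-j,w_0}}(\sigma^{\omega_{b,j}^j})$.

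The main obstacle is not analytic. It is making the index bookkeeping consistent. This means:
\begin{itemize}
\item checking that the $\tilde\epsilon_{0,\beta}$ representatives in the split case (b)(2) of Lemma~\ref{lemma-double-coset-decomposition} are absorbed correctly, via the twist $t'$ and the conjugation $\sigma^{\omega_{b,j}^t}$;
\item verifying that the modulus-character exponents in the transported lemmas agree with those stated.
\end{itemize}
I would handle this by tracking $\delta_{P_j}^{1/2}(wpw^{-1})$ against $\delta_{P'}^{1/2}$ as in the proof of Lemma~\ref{lemma-JacquetModule-lemma1}, using $N(\ell_m(g))=\det g$ from Lemma~\ref{lemma-Levi-lifting} for the $\GSpin$ Levi.
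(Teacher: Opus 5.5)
Your proposal is correct and follows the paper's own argument. The paper presents this theorem simply as a summary of the preceding computations, and your case analysis is that bookkeeping made explicit: you discard $\alpha>0$ by Proposition~\ref{prop-JacquetModuleVanishing-alpha>0}, sort the surviving $(\epsilon_{0,\beta},\eta_{t,\gamma})$ by $\gamma$ and by whether $\ell+\beta<\wt m$, and read off Proposition~\ref{prop-JacquetModuleVanishing-alpha=0} and Lemmas~\ref{lemma-JacquetModule-lemma1}--\ref{lemma-JacquetModule-gamma-lemma22}.

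One simplification: in part (1) the split-case representatives $\tilde\epsilon_{0,\beta}$ do not need to be ``absorbed'' anywhere. They only occur when $\ell+\beta=\wt m$, where Lemma~\ref{lemma-double-coset2}(2)(b) leaves only $\gamma=I$, and Proposition~\ref{prop-JacquetModuleVanishing-alpha=0} then kills the term because $j<\wt m$.
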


\begin{remark}
Theorem~\ref{thm-JacquetModule} is a generalization of \cite[Theorem 5.1]{GinzburgRallisSoudry2011} to the $\GSpin$ group case under consideration. 
\end{remark}

\subsection{Vanishing of twisted Jacquet module}
The goal of this section is to compute the twisted Jacquet module for any local unramified component of the residual representation corresponding to the $\psi_{\ell,w_0}$-Fourier coefficient for $\ell>n$. 
This will be used to prove part (1) of Theorem~\ref{thm-Main} on
the vanishing of the representation $\TD_{\psi_{\ell,\alpha}}(\CE_{\tau\otimes\sigma})$ of $\GSpin^{\delta,\alpha}_{4n+1-2\ell}(\A)$ 
for all $n<\ell\leq 2n$.

Let $\sigma$ be an irreducible unramified representation of $\GSpin(W_{2n})$. When $\GSpin(W_{2n})$ is $k$-split, we have $\GSpin(W_{2n})\cong \GL_1 \times \GL_1$, and the representation $\sigma=\sigma_1\otimes\sigma_2$ for a pair $(\sigma_1, \sigma_2)$ of unramified characters  of $k^\times$, with $\omega_\sigma=\sigma_1\sigma_2$. When $\GSpin(W_{2n})$ is $k$-quasi-split but not $k$-split, we have $\GSpin(W_{2n})\cong \Res_{K/k}(\GL_1)$ for a quadratic extension $K/k$, and the representation $\sigma$ is an unramified character of $K^\times$, with $\omega_{\sigma}=\sigma|_{k^\times}$. 

Let $\tau$ be an irreducible unramified generic representation of $\GL_{2n}(k)$ with central character $\omega_{\tau}$, such that $\wt{\tau}\cong \tau\otimes\omega_{\sigma}^{-1}$. Then $\omega_{\tau}=\omega_{\sigma}^n$, and we may write $\tau$ as 
\begin{equation*}
\tau=\Ind_{B_{\GL_{2n}}}^{\GL_{2n}} (\mu_1 \otimes   \cdots \otimes 	\mu_n \otimes \mu_n^{-1}  \omega_{\sigma} \otimes \cdots \mu_1^{-1}\omega_{\sigma})
\end{equation*}
where $B_{\GL_{2n}}$ is the standard Borel subgroup of $\GL_{2n}$. 

Let $\pi_{\tau\otimes\sigma}$ be the unramified constituent of $\Ind_{P_{2n}}^{H}(\tau\cdot |\det|^{\frac{1}{2}}\otimes \sigma)$.

\begin{proposition}
\label{prop-Jacquet-Module-Unramified-Component-vanishing}
Let $\sigma$ be an irreducible unramified representation of $\GSpin(W_{2n})$ and let $\tau$ be an irreducible unramified generic representation of $\GL_{2n}(k)$ with central character $\omega_{\tau}$, such that $\wt{\tau}\cong \tau\otimes\omega_{\sigma}^{-1}$. 
\begin{enumerate}
\item Assume that $J_{\delta}$ is non-split over $k$. 
\begin{enumerate}
\item If $w_0=y_{\alpha}$ for $\alpha\in k^\times$ such that   $J_{\delta,\alpha}$ is split over $k$, then we have 
 $J_{\psi_{\ell,\alpha}}(\pi_{\tau\otimes \sigma})=0$ for $\ell\geq n+1$.
  \item If $w_0\in V_0$, then $J_{\psi_{{2n},w_0}}(\pi_{\tau\otimes \sigma})=0$.
\end{enumerate}

\item Assume that $J_\delta$ splits over $k$.  For any choice of $\alpha\in k^\times$, we have $J_{\psi_{\ell,\alpha}}(\pi_{\tau\otimes \sigma})=0$ for $\ell\geq  n+1$.
\end{enumerate}
\end{proposition}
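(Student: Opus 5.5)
The plan follows the strategy of \cite[Proposition 5.3]{GinzburgRallisSoudry2011} and \cite[\S 3]{JiangLiuXuZhang2016}: realize $\pi_{\tau\otimes\sigma}$ inside a full induced representation from the Borel subgroup, then apply the Leibniz rule of Theorem~\ref{thm-JacquetModule} repeatedly. The key observation is that $\pi_{\tau\otimes\sigma}$ is the unramified constituent of $\Ind_{P_{2n}}^{H}(\tau|\det|^{\frac12}\otimes\sigma)$. Since $\tau$ is an unramified principal series with parameters $\mu_1,\dots,\mu_n,\mu_n^{-1}\omega_\sigma,\dots,\mu_1^{-1}\omega_\sigma$, the representation $\pi_{\tau\otimes\sigma}$ is also the unramified constituent of an induced representation of the form
\begin{equation*}
\Ind_{P_n}^{H}\bigl(\rho\otimes \pi_0\bigr), \qquad \rho=\Ind^{\GL_n}(\mu_1|\cdot|^{\frac12}\otimes\cdots\otimes\mu_n|\cdot|^{\frac12}),
\end{equation*}
where $\pi_0$ is the unramified constituent of $\Ind_{P'_n}^{\GSpin(W_n)}\bigl(\Ind^{\GL_n}(\mu_n\omega_\sigma^{-1}|\cdot|^{-\frac12}\cdots)^{\vee}\otimes\sigma\bigr)$, up to Weyl conjugation in $\GSpin(W_n)$. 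We then choose the ordering so that $\pi_0$ is the unramified constituent of the representation of $\GSpin(W_n)$ obtained from $\tau_0|\det|^{\frac12}\otimes\sigma$, with $\tau_0=\Ind(\mu_n^{-1}\omega_\sigma\otimes\cdots\otimes\mu_1^{-1}\omega_\sigma)$. Equivalently, $\pi_0$ is the unramified character $\chi(\det)\otimes\sigma$-twisted piece, i.e.\ a one-dimensional type representation of the Levi $\GL_n\times\GSpin(W_{2n})$ of $\GSpin(W_n)$, induced. By exactness of twisted Jacquet functors, it suffices to show that $J_{\psi_{\ell,\alpha}}$ of this induced representation has no contribution that could come from $\pi_{\tau\otimes\sigma}$. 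More cleanly, we may realize $\pi_{\tau\otimes\sigma}$ as a subquotient of $\Ind_{P_n}^H(\rho\otimes\Ind_{P'_n}(\chi\circ\det\otimes\sigma))$, where $\chi\circ\det$ is a character of $\GL_n$ whose exponents match the degenerate part.

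Next we apply Theorem~\ref{thm-JacquetModule}(1) with $j=n$ and $\ell\ge n+1$. Each term involves either a derivative $\rho^{(t)}$ with $t\le n$, followed by $J_{\psi'_{\ell-t}}$ on the $\GSpin(W_n)$-piece with $\ell-t\ge 1$, or (for $\ell\ge j$) no $\tau_{(\ell)}$-term at all. We then iterate on the piece $\Ind_{P'_n}(\chi\circ\det\otimes\sigma)$, now with depth $\ell-t\ge \ell-n\ge1$. Here $\chi\circ\det$ is a character of $\GL_n$, so its derivatives $(\chi\circ\det)^{(t')}$ vanish for $t'\ge2$. Only $t'\in\{0,1\}$ survives, and the depth drops by at most one. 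Continuing, we arrive at a Jacquet module $J_{\psi_{m,\cdot}}(\sigma')$ on $\GSpin(W_{2n})$ (dimension $2$) or on a small $\GSpin(W_{2n-1})$. For $\ell\ge n+1$ the required depth is positive there, so the term vanishes unless the exceptional $\delta_\alpha$-terms or the $\ell=\wt m$ terms survive.

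The case analysis is where the hypotheses enter. In Case (1)(a), $J_\delta$ is non-split but $J_{\delta,\alpha}$ is split, so $\mathrm{Witt}(k y_{-\alpha}+V_0)=1$ and $\delta_\alpha=1$. We must check that the $P'_\alpha$-term is again a Jacquet module $J_{\psi'_{\ell-t,v_\alpha}}$ of depth $\ge1$ on the anisotropic-type piece; since $\GSpin(W_{2n})$ is anisotropic, this kills it. In Case (1)(b), Theorem~\ref{thm-JacquetModule}(3) gives $d_\tau J_{\psi'_{2n-j,w_0}}(\sigma^{\omega_{b,j}^j})$. Applied at the level of the $\GSpin(W_n)$-piece, this requires a Whittaker-type derivative of order $2n$ of a representation that is degenerate (a character on $\GL_n$), so it vanishes. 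In Case (2), the group is split, $W_{2n}$ is hyperbolic, and the terms with $\ell=\wt m$ and $\tilde\epsilon$ representatives appear. Bookkeeping with the depth count again forces a Jacquet module of a character of $\GL_1$ along a nontrivial unipotent group with nontrivial character, which is zero.

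The main obstacle is the first step: identifying $\pi_{\tau\otimes\sigma}$ as a subquotient of an induced representation whose $\GL_n$-inducing data is degenerate (a character), with the correct exponents. This requires the condition $\wt\tau\cong\tau\otimes\omega_\sigma^{-1}$ and the $\GSpin$ Weyl action (which sends $\mu\mapsto\mu^{-1}\omega_\sigma$) to recombine $\mu_i|\cdot|^{\frac12}$ and $\mu_i^{-1}\omega_\sigma|\cdot|^{\frac12}$ into a segment $\mu_i|\cdot|^{\pm\frac12}$, i.e.\ into the character $\mu_i\circ\det$ of $\GL_2$. The first thing to try is to write $\pi_{\tau\otimes\sigma}$ as the unramified constituent of $\Ind_{P_{2,\dots,2}}^{H}\bigl(\mu_1\circ\det_{\GL_2}\otimes\cdots\otimes\mu_n\circ\det_{\GL_2}\otimes\sigma\bigr)$ and run the Leibniz rule block by block. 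Each $\GL_2$-character has derivatives only of order $0$ or $1$, so $n$ blocks can absorb depth at most $n$; this is exactly why $\ell\ge n+1$ forces vanishing.
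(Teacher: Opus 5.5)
Your final paragraph has the right idea, and it is exactly what the paper does. However, the argument you actually develop in the first three paragraphs fails, so as written the proposal has a genuine gap.

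\textbf{Why the $j=n$ route fails.} Your realization splits each pair $\mu_i|\cdot|^{\frac12}$, $\mu_i^{-1}\omega_\sigma|\cdot|^{\frac12}$ between $\rho$ and the $\GSpin(W_n)$-piece. Once the two halves of a pair are separated, nothing degenerates.
\begin{itemize}
\item Since $\mu_1,\dots,\mu_n$ are arbitrary, the $\GL_n$-data of the $\GSpin(W_n)$-piece is a principal series in the $\mu_i^{-1}\omega_\sigma|\cdot|^{\frac12}$ (or their Weyl conjugates $\mu_i|\cdot|^{-\frac12}$). For general $\mu_i$ this is generic, not a character $\chi\circ\det$.
\item With generic data on both sides, Theorem~\ref{thm-JacquetModule} produces terms such as $\rho^{(n)}\otimes J_{N'_{\ell-n},\psi'_{\ell-n,\alpha}}(\cdots)$ of a full generic induced representation of $\GSpin(W_n)$. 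These do not vanish, so this realization cannot yield the vanishing.
\item Even granting your claim, the count ($t\le n$ from $\rho$, $t'\le 1$ from the character) leaves depth $\ell-n-1$. This is $0$ at $\ell=n+1$, and depth-zero terms are restrictions of $\sigma$, which are nonzero.
\item Your Case (1)(a) reasoning is also off. In Theorem~\ref{thm-JacquetModule}(2) the $\delta_\alpha$-term is ${\tau'}^{(\ell)}\otimes J_{N'_0,\psi'_{0,v_\alpha}}(\cdots)$ with $N'_0$ trivial, i.e.\ a restriction of $\sigma$ to $\GSpin(v_\alpha^\perp\cap V_0)$. Anisotropy kills nothing there; the term vanishes only because the $\GL$-derivative ${\tau'}^{(\ell)}$ vanishes.
\end{itemize}

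\textbf{The fix (the paper's proof).} Keep each pair inside one $\GL_2$-block.
\begin{enumerate}
\item Conjugate the inducing character to $\mu_1|\cdot|^{\frac12}\otimes\mu_1|\cdot|^{-\frac12}\otimes\cdots\otimes\mu_n|\cdot|^{\frac12}\otimes\mu_n|\cdot|^{-\frac12}\otimes\sigma$. In Case (2) with $n$ odd, $\sigma_1\otimes\sigma_2$ becomes $\sigma_2\otimes\sigma_1$.
\item Induce in stages. Then $\pi_{\tau\otimes\sigma}$ is the unramified constituent of $\Ind_{P_{2n}}^H(\tau'\otimes\sigma)$ with $\tau'=\Ind_{P_{2,\dots,2}}^{\GL_{2n}}(\mu_1\circ{\det}_{\GL_2}\otimes\cdots\otimes\mu_n\circ{\det}_{\GL_2})$.
\item By the Bernstein--Zelevinsky Leibniz rule, ${\tau'}^{(k)}=0$ for $k>n$. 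Hence ${\tau'}^{(\ell)}={\tau'}_{(\ell)}=0$ for $\ell\ge n+1$, and $d_{\tau'}=0$.
\item Apply Theorem~\ref{thm-JacquetModule} once, with $j=2n$:
\begin{itemize}
\item Case (1)(a): use part (2).
\item Case (1)(b): use part (3).
\item Case (2): use part (1). Here $j=2n$ and $\wt m=2n+1$ force $t=\ell$ in the sum, and $\delta_\alpha=0$.
\end{itemize}
\item Exactness of the twisted Jacquet functor transfers the vanishing to $\pi_{\tau\otimes\sigma}$.
\end{enumerate}

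Running the Leibniz rule ``block by block'' with $j=2$, as you suggest, would create intermediate $\delta_\alpha$-terms and $w_0\in V_0$-type terms on the groups $\GSpin(W_{2k})$. Handling those would need a stronger inductive statement. Inducing in stages to $\GL_{2n}$ and applying the theorem once avoids this entirely.
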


\begin{proof}
 We first assume that $J_{\delta}$ is non-split over $k$, so $\wt m=2n$ and $V_0\not=0$. In this case,  $\pi_{\tau\otimes \sigma}$ is the unramified constituent of the representation of $H$ induced from the character 
\begin{equation}
\label{eq-Jacquet-Module-Unramified-Component-eq1}
 \mu_1|\cdot|^{\frac{1}{2}}\otimes\cdots \otimes\mu_n|\cdot|^{\frac{1}{2}}\otimes \mu_n^{-1}|\cdot|^{\frac{1}{2}}\omega_{\sigma} \otimes \cdots \otimes   \mu_1^{-1}|\cdot|^{\frac{1}{2}}\omega_{\sigma}\otimes \sigma.
 \end{equation}
We can find an element in the Weyl group which conjugates the above character \eqref{eq-Jacquet-Module-Unramified-Component-eq1} to the character
\begin{equation*}
 \mu_1|\cdot|^{\frac{1}{2}}\otimes   \mu_1|\cdot|^{-\frac{1}{2}}  \otimes  \cdots \otimes\mu_n|\cdot|^{\frac{1}{2}}\otimes  \mu_n|\cdot|^{\frac{1}{2}}  \otimes    \sigma.
\end{equation*}
Using induction by stages, we conclude that $\pi_{\tau\otimes \sigma}$ is the unramified constituent of the representation $\Ind_{P_{2n}}^{H}(\tau^\prime \otimes \sigma)$ of $H$, where
\begin{equation}
\label{eq-prop-Jacquet-Module-Unramified-Component-eq2}
	\tau^\prime= \Ind_{P_{2, \cdots, 2}}^{\GL_{2n}} \mu_1 ({\det}_{\GL_2}) \otimes \cdots \otimes \mu_n ({\det}_{\GL_2})  .
\end{equation}
To prove that the twisted Jacquet module of $\pi_{\tau\otimes \sigma}$ is zero, it suffices to prove that the twisted Jacquet module of $\Ind_{P_{2n}}^{H}(\tau^\prime \otimes \sigma)$ is zero, since the twisted Jacquet functor corresponding to the descent is exact.

If $w_0=y_{\alpha}$ for $\alpha\in k^\times$ such that  $J_{\delta,\alpha}$ is split over $k$, then we obtain $J_{\psi_{\ell,\alpha}}(\Ind_{P_{2n}}^{H}(\tau^\prime \otimes \sigma))=0$ for $\ell\ge n+1$ by applying Theorem~\ref{thm-JacquetModule} (2). Here, we have used the fact that $\tau^\prime_{(\ell)}=0$, $\delta_{\alpha}=1$, and 
${\tau^{\prime}}^{(\ell)}=0$.

If $W_0\in V_0$, then by Theorem~\ref{thm-JacquetModule} (3), we have 
\begin{equation*}
	J_{\psi_{{2n},w_0}}(\Ind_{P_{2n}}^{H}(\tau^\prime \otimes \sigma))\equiv d_{\tau^\prime}\cdot  J_{\psi_{0,w_0}}( \sigma^{\omega_{b,{2n}}^{2n}}),
\end{equation*}
where $d_{\tau^\prime}$ is the dimension of the $\psi$-Whittaker functionals of $\tau^\prime$. Since $\tau^\prime$ is given by \eqref{eq-prop-Jacquet-Module-Unramified-Component-eq2}, we conclude that $d_{\tau^\prime}=0$. Thus, $J_{\psi_{{2n},w_0}}(\Ind_{P_{2n}}^{H}(\tau^\prime \otimes \sigma))=0$. 

Now we assume that  $J_{\delta}$ is split over $k$, so $\wt m=2n+1$, $V_0=0$, and $\delta_{\alpha}=0$ for any choice of $\alpha\in k^\times$.  
In this case, $\pi_{\tau\otimes \sigma}$ is the unramified constituent of the representation of $H$ induced from the character 
\begin{equation}
\label{eq-Jacquet-Module-Unramified-Component-eq3}
 \mu_1|\cdot|^{\frac{1}{2}}\otimes\cdots \otimes\mu_n|\cdot|^{\frac{1}{2}}\otimes \mu_n^{-1}|\cdot|^{\frac{1}{2}}\omega_{\sigma} \otimes \cdots \otimes   \mu_1^{-1}|\cdot|^{\frac{1}{2}}\omega_{\sigma}\otimes (\sigma_1\otimes\sigma_2).
 \end{equation}
If $n$ is even,  $\pi_{\tau\otimes \sigma}$ is the unramified constituent of the representation $\Ind_{P_{2n}}^{H}(\tau^\prime \otimes (\sigma_1\otimes\sigma_2))$ of $H$, where
\begin{equation}
\label{eq-prop-Jacquet-Module-Unramified-Component-eq4}
	\tau^\prime= \Ind_{P_{2, \cdots, 2}}^{\GL_{2n}} \mu_1 ({\det}_{\GL_2}) \otimes \cdots \otimes \mu_n ({\det}_{\GL_2})  .
\end{equation}
If $n$ is odd, $\pi_{\tau\otimes \sigma}$ is the unramified constituent of the representation $\Ind_{P_{2n}}^{H}(\tau^\prime \otimes (\sigma_2\otimes\sigma_1))$ of $H$.
Then by Theorem~\ref{thm-JacquetModule} (1) with $j=2n$, we have $J_{\psi_{\ell,\alpha}}(\pi_{\tau\otimes \sigma})=0$ for $\ell\geq  n+1$. Here we have used the fact that ${\tau^\prime}^{(\ell)}=0$, $\tau^\prime_{(\ell)}=0$ and $\delta_{\alpha}=0$. This completes the proof of Proposition~\ref{prop-Jacquet-Module-Unramified-Component-vanishing}.
\end{proof}

For later use, we also compute the twisted Jacquet module $J_{\psi_{\ell,\alpha}}(\pi_{\tau\otimes \sigma})$ for unramified $\tau$ and $\sigma$ as above, in the case $\ell=n$ and the form $J_{\delta, \alpha}$ is split. Recall that $J_{\delta,\alpha}$ is given by \eqref{eq-J-delta-alpha}, and thus the form $J_{\delta, \alpha}$ being split means that the quadratic form $x^2+\delta y^2+\alpha z^2$ represents 0 over $k$, and the group $H_{n,w_0}=H_{n,\alpha}=\GSpin_{2n+1}$ is split over $k$. 

\begin{proposition}
\label{prop-Jacquet-Module-Unramified-l=n}
Let $\tau$ and $\sigma$ be unramified representations as above. Assume that the form $J_{\delta, \alpha}$ is split. Then we have
\begin{equation}
\label{eq-prop-Jacquet-Module-Unramified-l=n}
J_{\psi_{n,\alpha}}(\pi_{\tau\otimes \sigma})\prec \Ind_{B_{\GSpin(2n+1)}}^{\GSpin(2n+1)}  \mu_1\otimes \cdots \otimes \mu_n \otimes \omega_\sigma,
\end{equation}
where the central character of the representation on the right-hand side of \eqref{eq-prop-Jacquet-Module-Unramified-l=n} is $\omega_\sigma$. 
Here ``$\pi_1\prec\pi_2$'' denotes that $\pi_1$ is a subquotient of $\pi_2$. 
\end{proposition}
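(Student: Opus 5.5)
Following the proof of Proposition~\ref{prop-Jacquet-Module-Unramified-Component-vanishing}, I will first realize $\pi_{\tau\otimes\sigma}$ as the unramified constituent of $\Ind_{P_{2n}}^{H}(\tau'\otimes\sigma')$. Here $\tau'=\Ind_{P_{2,\dots,2}}^{\GL_{2n}}\mu_1(\det_{\GL_2})\otimes\cdots\otimes\mu_n(\det_{\GL_2})$, and $\sigma'$ is $\sigma$, or $\sigma$ with $\sigma_1,\sigma_2$ swapped when $J_\delta$ is split and $n$ is odd. The twisted Jacquet functor $J_{\psi_{n,\alpha}}$ is exact, so $J_{\psi_{n,\alpha}}(\pi_{\tau\otimes\sigma})$ is a subquotient of $J_{\psi_{n,\alpha}}(\Ind_{P_{2n}}^H\tau'\otimes\sigma')$. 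It therefore suffices to show that every piece of the semisimplified Leibniz rule, Theorem~\ref{thm-JacquetModule}, is a subquotient of $\Ind_{B}^{\GSpin_{2n+1}}\mu_1\otimes\cdots\otimes\mu_n\otimes\omega_\sigma$. If that fails, I will argue that only the unramified piece can receive the unramified constituent, and verify this piece-by-piece as below.

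I will split into the two cases. If $J_\delta$ is non-split, then $\wt m=2n=j$ and we are in Theorem~\ref{thm-JacquetModule}(2) with $\ell=n$. Since $J_{\delta,\alpha}$ is split, we have $\mathrm{Witt}(k y_{-\alpha}+V_0)=1$ and $\delta_\alpha=1$. The first summand involves $\tau'_{(n)}$, the twisted derivative with the extra $y_{\beta,1}$ character. For $\tau'$ induced from one-dimensional characters of $\GL_2$ blocks, I expect the highest nonzero derivative to have order $n$. I will check via the Bernstein--Zelevinsky Leibniz rule that $\tau'_{(n)}$ vanishes, because it requires a nondegenerate character on a block of size $n+1$. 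So only the $\delta_\alpha$-term survives: $\ind_{P'_\alpha}^{H_{n,\alpha}}|\cdot|^{\frac{1-n}{2}}{\tau'}^{(n)}\otimes\Res_{\GSpin(v_\alpha^\perp\cap V_0)}\sigma^{\omega_{b}^{n}}$. By the BZ Leibniz rule, ${\tau'}^{(n)}=\Ind_{\GL_n}(\mu_1|\cdot|^{1/2}\otimes\cdots\otimes\mu_n|\cdot|^{1/2})$, taking one derivative from each $\GL_2$ block, and this twists to $\mu_1\otimes\cdots\otimes\mu_n$ after the $|\cdot|^{\frac{1-n}{2}}$ normalization and the modulus bookkeeping. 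The group $\GSpin(v_\alpha^\perp\cap V_0)$ is $\GSpin_1=\GL_1=\ker(\pr)$, on which $\sigma$ restricts to $\omega_\sigma$. Induction in stages then gives exactly the right-hand side of \eqref{eq-prop-Jacquet-Module-Unramified-l=n}.

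If $J_\delta$ is split, then $\wt m=2n+1$, $j=2n<\wt m$ and $\delta_\alpha=0$, so we use Theorem~\ref{thm-JacquetModule}(1) with $\ell=n$. The summands are indexed by $0\le t\le n$ and involve ${\tau'}^{(t)}$ and $J_{N'_{n-t},\psi'}(\sigma^{\omega^t})$ on the split torus $\GSpin(W_{2n})=\GL_1\times\GL_1$. For $t<n$, I will note that $N'_{n-t}$ is trivial on the torus level, but the character condition forces the stabilizer parabolic to be non-generic. More concretely, I will show that the $t<n$ terms vanish because $W_{2n}$ is two-dimensional and a $\psi'_{n-t,\alpha}$ with $n-t\ge1$ does not fit. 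The $\tau'_{(n)}$ term vanishes as before, leaving $t=n$. This term is parabolically induced from $|\cdot|^{\frac{1-n}{2}}{\tau'}^{(n)}\otimes\Res_{H\cap\GSpin(W_{2n})}\sigma'$, where $H\cap\GSpin(W_{2n})$ is the $\GL_1=\ker(\pr)$ stabilizing $y_\alpha$ inside $W_{2n}$. Again $\sigma'$ restricts to $\sigma_1\sigma_2=\omega_\sigma$.

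The main obstacle I expect is the exponent bookkeeping: checking that $|\cdot|^{\frac{1-n}{2}}$ together with the shift inside ${\tau'}^{(n)}$ produces exactly $\mu_1\otimes\cdots\otimes\mu_n$ rather than a twist of it, using the $\GSpin$ normalization of the Levi $\GL_n\times\GSpin_1$. A secondary obstacle is the vanishing of the extraneous derivative terms $\tau'_{(n)}$ and ${\tau'}^{(t)}$ for $t>n$. The identification of the central character as $\omega_\sigma$ follows from Remark~\ref{remark-Eisenstein-series-central-char}.
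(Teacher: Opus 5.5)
Your outline is the paper's proof. You realize $\pi_{\tau\otimes\sigma}$ as the unramified constituent of $\Ind_{P_{2n}}^H(\tau'\otimes\sigma')$, use exactness, and apply Theorem~\ref{thm-JacquetModule}(2) (non-split $J_\delta$, $\delta_\alpha=1$) or Theorem~\ref{thm-JacquetModule}(1) (split $J_\delta$, $\delta_\alpha=0$) with $j=2n$, $\ell=n$. You then discard the $\tau'_{(n)}$ term, restrict $\sigma$ to the one-dimensional $\GSpin$ factor to get $\omega_\sigma$, and induce in stages. Two corrections are needed.

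\textbf{The derivative computation.} Your formula ${\tau'}^{(n)}=\Ind(\mu_1|\cdot|^{1/2}\otimes\cdots\otimes\mu_n|\cdot|^{1/2})$ is wrong in either normalization. After the twist by $|\cdot|^{\frac{1-n}{2}}$ it gives $\mu_i|\cdot|^{1-\frac{n}{2}}$, not $\mu_i$, for $n\neq 2$. No further modulus factor can repair this. In Theorem~\ref{thm-JacquetModule}, $\tau^{(t)}$ is the \emph{unnormalized} Jacquet module $J_{Z_t',\psi_t'}(V_\tau)$, and the factor $|\cdot|^{\frac{1-t}{2}}$ already carries the normalization.

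The correct computation runs as follows.
\begin{enumerate}
\item For Bernstein--Zelevinsky (normalized) derivatives, $(\mu_i\circ\det_{\GL_2})^{(1)}=\mu_i\nu^{-1/2}$ and $(\mu_i\circ\det_{\GL_2})^{(2)}=0$.
\item The Leibniz rule then gives the normalized $n$-th derivative $\mu_1\nu^{-1/2}\times\cdots\times\mu_n\nu^{-1/2}$.
\item Undoing the $n$ factors $\nu^{-1/2}$ multiplies by $|\det|^{n/2}$. Hence ${\tau'}^{(n)}=\Ind_{B_{\GL_n}}^{\GL_n}(\mu_1|\cdot|^{\frac{n-1}{2}}\otimes\cdots\otimes\mu_n|\cdot|^{\frac{n-1}{2}})$.
\item This is exactly what gives $|\cdot|^{\frac{1-n}{2}}{\tau'}^{(n)}=\Ind_{B_{\GL_n}}^{\GL_n}\mu_1\otimes\cdots\otimes\mu_n$.
\end{enumerate}
The same computation shows ${\tau'}^{(k)}=0$ for $k>n$. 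Since $\tau'_{(n)}$ is $(\Phi^-)^n$ applied to the mirabolic restriction of $\tau'$, it is glued from pieces built from the ${\tau'}^{(k)}$ with $k\ge n+1$. That is the precise reason $\tau'_{(n)}=0$, replacing your ``nondegenerate character on a block of size $n+1$'' heuristic.

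\textbf{The split case.} There are no terms with $t<n$ to kill. The argument you sketch, that $\psi'_{n-t,\alpha}$ ``does not fit'' in $W_{2n}$, is not the mechanism. The index range $\ell+j-\wt m<t\le\ell$ of Theorem~\ref{thm-JacquetModule}(1) is exactly $\{n\}$ for $\ell=n$, $j=2n$, $\wt m=2n+1$. The cosets with $\ell+\beta=\wt m$ and $j<\wt m$ were already eliminated by Proposition~\ref{prop-JacquetModuleVanishing-alpha=0}, and $\ell+\beta>\wt m$ does not occur. The $\tau'_{(n)}$-term vanishes as above, and the $\delta_\alpha$-term is absent.

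Finally, the central-character assertion is purely local. It comes from the $\GL_1=\ker(\pr)$ factor of the Levi $\GL_n\times\GL_1$, on which the inducing datum is $\omega_\sigma$. Remark~\ref{remark-Eisenstein-series-central-char} is a global statement about Eisenstein series and is not the relevant input.
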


\begin{proof}
We first assume that $J_{\delta}$ is non-split over $k$, so $\wt m=2n$ and $V_0\not=0$. As in the proof of Proposition~\ref{prop-Jacquet-Module-Unramified-Component-vanishing}, $\pi_{\tau\otimes \sigma}$ is the unramified constituent of the representation $\Ind_{P_{2n}}^{H}(\tau^\prime \otimes \sigma)$ of $H$, where
\begin{equation*}
	\tau^\prime= \Ind_{P_{2, \cdots, 2}}^{\GL_{2n}} \mu_1 ({\det}_{\GL_2}) \otimes \cdots \otimes \mu_n ({\det}_{\GL_2})  .
\end{equation*}
Since the form $J_{\delta, \alpha}$ is split and $V_0\not=0$, we have $\delta_{\alpha}=1$ for any choice of $\alpha\in k^\times$. By Theorem~\ref{thm-JacquetModule} (2) with $j=2n$ and $\ell=n$, we have 
\begin{equation*}
	J_{\psi_{n,\alpha}}(\Ind_{P_{2n}}^{H}(\tau^\prime \otimes \sigma))\equiv  \ind_{P_{\alpha}^{\prime}}^{H_{n,\alpha}} |\cdot|^{\frac{1-n}{2}} {\tau^\prime}^{(n)}\otimes  J_{N_{0}^\prime, \psi_{0,v_\alpha} ^\prime}(\sigma^{\omega_{b,2n}^n}),
\end{equation*}
noting that $\tau^\prime_{(n)}=0$. Here, $P_{\alpha}^\prime$ is a parabolic subgroup of $H_{n,\alpha}= \GSpin_{2n+1}$ given in \eqref{eq-P_alpha^prime} by
\begin{equation*}
P_{\alpha}^\prime=H_{n,\alpha} \cap \eta_{n,\gamma_\alpha}^{-1} P_{n,2n}^{(w)} \eta_{n,\gamma_\alpha}= \eta_{n,\gamma_\alpha}^{-1} (H_{n, \eta_{n,\gamma_\alpha} \cdot y_\alpha }\cap P_{n,2n}^{(w)}) \eta_{n,\gamma_\alpha}, 
\end{equation*}
with the Levi subgroup of $H_{n, \eta_{n,\gamma_\alpha} \cdot y_\alpha }\cap P_{n,2n}^{(w)}$ isomorphic to $\GL_n\times \GSpin(v_{\alpha}^\perp \cap V_0)$, where $\GSpin(v_\alpha^\perp \cap V_0)$ is isomorphic to $\GL_1$.
Note that 
\begin{equation}
\label{eq-prop-Jacquet-Module-Unramified-l=n-eq1}
|\cdot|^{\frac{1-n}{2}}{\tau'}^{(n)}=\Ind_{B_{\GL_{n}}}^{\GL_{n}}\mu_1\otimes \cdots \otimes \mu_{n}.
\end{equation}
 Also, the Jacquet module $J_{N_{0}^\prime, \psi_{0,v_\alpha} ^\prime}(\sigma^{\omega_{b,2n}^n})$ is the restriction of $\sigma^{\omega_{b,2n}^n}$ to $\GSpin(v_{\alpha}^\perp \cap V_0)\cong \GL_1$, which is just the character $\omega_\sigma$. By induction in stages, we obtain \eqref{eq-prop-Jacquet-Module-Unramified-l=n}.

Now we assume that  $J_{\delta}$ is split over $k$, so $\wt m=2n+1$, $V_0=0$, and $\delta_{\alpha}=0$ for any choice of $\alpha\in k^\times$.
Then $\pi_{\tau\otimes \sigma}$ is the unramified constituent of the representation of $H$ induced from the character 
\begin{equation*}
 \mu_1|\cdot|^{\frac{1}{2}}\otimes\cdots \otimes\mu_n|\cdot|^{\frac{1}{2}}\otimes \mu_n^{-1}|\cdot|^{\frac{1}{2}}\omega_{\sigma} \otimes \cdots \otimes   \mu_1^{-1}|\cdot|^{\frac{1}{2}}\omega_{\sigma}\otimes (\sigma_1\otimes\sigma_2).
 \end{equation*}
If $n$ is even,  $\pi_{\tau\otimes \sigma}$ is the unramified constituent of the representation $\Ind_{P_{2n}}^{H}(\tau^\prime \otimes (\sigma_1\otimes\sigma_2))$ of $H$, where
\begin{equation*}
	\tau^\prime= \Ind_{P_{2, \cdots, 2}}^{\GL_{2n}} \mu_1 ({\det}_{\GL_2}) \otimes \cdots \otimes \mu_n ({\det}_{\GL_2})  .
\end{equation*}
If $n$ is odd, $\pi_{\tau\otimes \sigma}$ is the unramified constituent of the representation $\Ind_{P_{2n}}^{H}(\tau^\prime \otimes (\sigma_2\otimes\sigma_1))$ of $H$.
In either case, $\pi_{\tau\otimes \sigma}$ is the unramified constituent of $\Ind_{P_{2n}}^{H}(\tau^\prime \otimes \sigma^{\omega_{b,2n}^n})$.
By Theorem~\ref{thm-JacquetModule} (1) with $j=2n$ and $\ell=n$, we have 
\begin{equation*}
	J_{\psi_{n,\alpha}} (\Ind_{P_{2n}}^{H}(\tau^\prime \otimes \sigma^{\omega_{b,2n}^n})) \equiv \ind_{P^\prime_{n,n}}^{H_{n,\alpha}} |\cdot|^{\frac{1-n}{2}} {\tau^\prime}^{(n)} \otimes J_{N_{0}^\prime, \psi_{0,\alpha} ^\prime}(\sigma). 
\end{equation*}
Here, $P_{n,n}^\prime$ is a parabolic subgroup of $H_{n,\alpha}$ whose Levi subgroup is isomorphic to $\GL_n\times \GL_1$. 
The Jacquet module $J_{N_{0}^\prime, \psi_{0,\alpha} ^\prime}(\sigma)$ is the restriction of $\sigma$ to the center of $H_{n,\alpha}$, namely $\omega_\sigma$. 
Now we use \eqref{eq-prop-Jacquet-Module-Unramified-l=n-eq1} and induction in stages again to obtain \eqref{eq-prop-Jacquet-Module-Unramified-l=n}.
\end{proof}

\subsection{Vanishing property and cuspidality of the twisted automorphic descent}

We now go back to the global setting. As an immediate corollary to Proposition~\ref{prop-Jacquet-Module-Unramified-Component-vanishing}, we obtain the vanishing of the ``deeper" twisted automorphic descent for $\ell\ge n+1$. 

\begin{corollary}[Theorem~\ref{thm-Main} (1)]
\label{cor-MainThm-Part1}
Let the notation be as in Theorem~\ref{thm-Main}. For all $\ell>n$, the $\psi_{\ell,w_0}$-Fourier coefficients of the residual representation $\CE_{\tau\otimes\sigma}$ are zero for all anisotropic vectors $w_0\in W_{\ell}$. 
\end{corollary}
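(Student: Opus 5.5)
We reduce the global vanishing to the local vanishing of Proposition~\ref{prop-Jacquet-Module-Unramified-Component-vanishing} at a single suitable place, following \cite[Theorem 3.1]{GinzburgRallisSoudry2011} and \cite{JiangLiuXuZhang2016}. Fix $\ell>n$ and an anisotropic $w_0\in W_\ell$, and suppose, to the contrary, that $f^{\psi_{\ell,w_0}}\neq 0$ for some $f\in\CE_{\tau\otimes\sigma}$. The map $f\mapsto f^{\psi_{\ell,w_0}}(1)$ is then a nonzero linear functional on $\CE_{\tau\otimes\sigma}$. It satisfies $\ell(\CE(u)f)=\psi_{\ell,w_0}(u)\ell(f)$ for $u\in N_\ell(\A)$.

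Since $\CE_{\tau\otimes\sigma}$ is irreducible, we have $\CE_{\tau\otimes\sigma}\cong\otimes_v'\CE_v$. Choosing pure tensors appropriately, we obtain, for every finite place $v$, a nonzero $(N_\ell(F_v),\psi_{\ell,w_0,v})$-equivariant functional on $\CE_v$. Equivalently, $J_{\psi_{\ell,w_0}}(\CE_v)\neq 0$ for every finite $v$. Concretely, fix a decomposable $f$ with nonzero coefficient, vary only the $v$-component, and note that the map factors through the local twisted Jacquet module.

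Next we identify the local components. For almost all finite $v$, the following data are all unramified: $\tau_v$, $\sigma_v$, $\psi_v$, and the vector $w_0$ (integral, with unit length). At such $v$, the residue at $s=\frac12$ generating $\CE$ shows that $\CE_v$ is the unramified constituent $\pi_{\tau_v\otimes\sigma_v}$ of $\Ind_{P_{2n}}^{H}(\tau_v|\det|^{1/2}\otimes\sigma_v)$. Moreover, the pole of $L(s,\tau_i,\wedge^2\otimes\omega^{-1})$ gives $\widetilde{\tau}_v\cong\tau_v\otimes\omega_{\sigma,v}^{-1}$, so $\tau_v$ has the shape required in Proposition~\ref{prop-Jacquet-Module-Unramified-Component-vanishing}.

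It remains to choose $v$ so that one of the cases of that proposition applies.

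1. Suppose $\ell<2n$. By Remark~\ref{remark-w_0-local}, locally $w_0$ may be taken to be $y_\alpha$.
   - If we choose $v$ split in $E=F(\sqrt{-\delta})$, then $J_\delta$ splits over $F_v$. Case (2) of the proposition then gives vanishing for every $\alpha$ with $\ell\ge n+1$.
   - Alternatively, we may choose $v$ inert in $E$ with $J_{\delta,\alpha}$ split at $v$; this holds at almost all places, since the Hilbert symbol is $1$ almost everywhere.

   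Infinitely many split places exist by Chebotarev, so this is harmless.
2. Suppose $\ell=2n$. The vector $w_0$ lies in $V_0$, and here we need $J_\delta$ non-split at $v$, i.e.\ $v$ inert in $E$, which again holds for infinitely many $v$. Case (1)(b) then gives $J_{\psi_{2n,w_0}}(\CE_v)=0$.

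For intermediate situations where the local $W_\ell$ changes shape, we use the conjugation remark to reduce to the listed normal forms.

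Either way we obtain a contradiction with the previous paragraph, so $f^{\psi_{\ell,w_0}}\equiv0$ for all $f$. Restricting to $\GSpin^{\delta,\alpha}_{4n+1-2\ell}(\A)$ yields $\TD_{\psi_{\ell,\alpha}}(\CE_{\tau\otimes\sigma})=0$.

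The main obstacle is the identification of $\CE_v$ with $\pi_{\tau_v\otimes\sigma_v}$ at almost all places, together with the passage from a global Fourier coefficient to local twisted Jacquet modules. This uses irreducibility of $\CE_{\tau\otimes\sigma}$ and the fact that the residue operator at $s=\frac12$ lands, at unramified places, in the unramified constituent. Both are standard and are taken from the discussion preceding Section~\ref{subsection-Bessel-coefficient}.
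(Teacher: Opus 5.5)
Your proposal is correct and follows the same route as the paper: pass from a nonzero global $\psi_{\ell,w_0}$-coefficient to a nonzero local twisted Jacquet module of the unramified local component of $\CE_{\tau\otimes\sigma}$ at a suitable finite place, then contradict Proposition~\ref{prop-Jacquet-Module-Unramified-Component-vanishing}. Your explicit choice of a split or inert place only spells out which case of that proposition applies. The paper leaves this implicit, since at almost all places the form $J_{\delta,\alpha}$ is isotropic, so every unramified place falls into one of the covered cases.
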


\begin{proof}
By Remark~\ref{remark-w_0-local}, it suffices to consider $w_0=y_{\alpha}$ with $\alpha\in F^\times$ or $w_0\in V_0$. Let $v$ be a finite place of the number field $F$, such that all data involved in the $\psi_{\ell,w_0}$-Fourier coefficients of the residual representation $\CE_{\tau\otimes\sigma}$ are unramified. Let $\mathcal{I}_{\tau_v\otimes\sigma_v}$ be the unramified local component of the residual representation $\CE_{\tau\otimes\sigma}$ at the place $v$. We take any integer $\ell>n$. Suppose that the residual representation $\CE_{\tau\otimes\sigma}$ has a non-zero $\psi_{\ell,w_0}$-Fourier coefficient. Then the corresponding local twisted Jacquet module $J_{\psi_{\ell,w_0}}(\mathcal{I}_{\tau_v\otimes\sigma_v})$ is non-zero, which is a contradiction of Proposition~\ref{prop-Jacquet-Module-Unramified-Component-vanishing}. 
\end{proof}

We now consider the case $\ell=n$ and $w_0=y_{\alpha}$, and prove that the twisted automorphic descent on $\GSpin(y_{\alpha}^\perp \cap W_{n})(\A)=\GSpin^{\delta,\alpha}_{2n+1}(\A)$ is cuspidal. 

\begin{proposition}[Theorem~\ref{thm-Main} (2)]
\label{prop-cuspidality}
Let the notation be as in Theorem~\ref{thm-Main}. For any square class $\alpha$ in $F^\times$, the representation $\TD_{\psi_{n,\alpha}}(\CE_{\tau\otimes\sigma})$ of $\GSpin^{\delta,\alpha}_{2n+1}(\A)$ is cuspidal.
\end{proposition}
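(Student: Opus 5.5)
We show that every constant term of every $f^{\psi_{n,\alpha}}$ along a proper parabolic subgroup of $H_{n,\alpha}=\GSpin^{\delta,\alpha}_{2n+1}$ vanishes, following \cite[\S 3]{GinzburgRallisSoudry2011} and the $\SO$-case in \cite{JiangLiuXuZhang2016}. The $F$-rational maximal parabolic subgroups of $H_{n,\alpha}$ stabilize isotropic subspaces of $y_\alpha^\perp\cap W_n$, so they are indexed by $1\le p\le \mathrm{Witt}(y_\alpha^\perp\cap W_n)$. This index is $n$ when $J_{\delta,\alpha}$ is split over $F$ and $n-1$ otherwise. Let $P'_p=M'_pU'_p$ be the maximal parabolic subgroup stabilizing $V_{n,p}^+=\Span\{e_{n+1},\dots,e_{n+p}\}$; the exceptional isotropic line occurring in the split case is handled in the same way after conjugating. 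It suffices to prove that
\begin{equation*}
C_p(f)(g):=\int_{[U'_p]} f^{\psi_{n,\alpha}}(ug)\,du
\end{equation*}
vanishes for all $f\in\CE_{\tau\otimes\sigma}$ and all $p$.

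The key step is a formula for $C_p(f)$, which the paper defers to Section~\ref{section-certain-fourier-coef}. We combine the integrations over $U'_p$ and $N_n$ into a single Fourier coefficient over a larger unipotent group. Then we use the standard root exchange lemma of Ginzburg--Rallis--Soudry, which goes through verbatim for $\GSpin$ by Remark~\ref{remark-unipotent-variety-isomorphism}, together with expansion along the abelian unipotent pieces. This expresses $C_p(f)$ as a sum of two kinds of terms. The first kind are integrals involving the constant term $f^{U_p}$ of $f$ along the maximal parabolic $P_p$ of $H^\delta$ with Levi $\GL_p\times\GSpin(W_p)$. The second kind are $\psi_{n+p',w}$-Fourier coefficients of $f$ with $p'\ge 1$ and some anisotropic $w$, hence Bessel coefficients of depth $\ell=n+p'>n$.

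By Corollary~\ref{cor-MainThm-Part1}, every Bessel coefficient of $\CE_{\tau\otimes\sigma}$ of depth $\ell>n$ vanishes, for every anisotropic vector. So only the constant-term contributions survive. For these we compute the constant term $\CE_{\tau\otimes\sigma}^{U_p}$ of the residual representation. Since $\CE_{\tau\otimes\sigma}$ is spanned by iterated residues of the Eisenstein series induced from $P=P_{2n}$, its constant terms are computed from those of the Eisenstein series via the double cosets $P_{2n}\backslash H^\delta/P_p$ of Lemma~\ref{lemma-double-coset-decomposition}. The cuspidal support of $\tau$ lies in the Levi $\GL_{n_1}\times\cdots\times\GL_{n_r}$ with $n_i=\dim\tau_i\ge 2$, since a pole of $L(s,\tau_i,\wedge^2\otimes\omega^{-1})$ forces $n_i$ to be even. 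Hence the constant term along $P_p$ vanishes unless $p$ is a sum of some of the $n_i$. In the remaining cases the surviving terms are of the form $E(\ldots,\tau_{i}\ldots)\otimes(\text{residue on }\GSpin(W_p))$. Substituting this back, the inner integral becomes a $\psi_{n-?}$-type Bessel coefficient on the smaller residual representation $\CE_{\tau'\otimes\sigma}$ of $\GSpin(W_p)$, where $\tau'$ omits the corresponding $\tau_i$ and has rank $2n-p$ with $p\ge 2$. Inside $\GSpin(W_p)$ this coefficient has depth $n>(2n-p)/2$, so by Corollary~\ref{cor-MainThm-Part1} applied to $\CE_{\tau'\otimes\sigma}$, i.e.\ by induction on $n$, it vanishes. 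An alternative route is to apply Proposition~\ref{prop-Jacquet-Module-Unramified-Component-vanishing} locally at one unramified place. Either way $C_p(f)=0$ for every $p$, which gives cuspidality.

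The main obstacle is the first step: deriving the explicit constant-term formula for the Bessel coefficients. This requires careful bookkeeping of the root exchanges, tracking of $\omega_b$-twists in the quasi-split case, and the separate treatment of the parabolic that exists only when $J_{\delta,\alpha}$ splits. I would first carry out the case $p=1$ in detail, then induct on $p$, as in \cite[Theorem 7.3]{GinzburgRallisSoudry2011}.
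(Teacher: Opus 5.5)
Your architecture matches the paper's. You rewrite $c_p(f^{\psi_{n,\alpha}})$ by root exchanges and column-by-column Fourier expansion over $\GL_p$ (Proposition~\ref{prop-ConstantTerm-prop1}), and you kill the full deeper coefficient $f^{\psi_{n+p,\alpha}}$ by Corollary~\ref{cor-MainThm-Part1}.

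One correction to your description of the resulting formula. For $p\ge 2$ it also contains intermediate terms $(f^{U_{p-i}})^{\psi_{n+i,\alpha}}$ for $1\le i\le p-1$. Each is a constant term along $U_{p-i}$ followed by a depth-$(n+i)$ Bessel coefficient on $\GSpin(W_{p-i})$. These are neither constant terms along $U_p$ nor Bessel coefficients of $f$, so the ``second kind'' is not only $\psi_{n+p'}$-coefficients of $f$. Your method disposes of them exactly as it does the $i=0$ term, since $n+i>(2n-(p-i))/2$.

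The genuine difference is how the terms $(f^{U_{p-i}})^{\psi_{n+i,\alpha}}$, $0\le i\le p-1$, are killed. The paper asserts that the constant terms $E^{U_{p-i}}(g,s,\phi_{\tau\otimes\sigma})$ vanish outright, citing \cite[II.1.7]{MoglinWaldspurger1995}.

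\begin{itemize}
\item When $\tau$ is cuspidal ($r=1$) this is quick and correct. The cuspidal support is then $M$ itself, and no Levi $\GL_j\times\GSpin(W_j)$ with $j<2n$ contains a conjugate of $M$.
\item When $r\ge 2$ the cuspidal support is $\GL_{n_1}\times\cdots\times\GL_{n_r}\times\GSpin_2^\delta$. The constant term of $\CE_{\tau\otimes\sigma}$ along $P_j$ is then in general non-zero when $j$ is a partial sum of the $n_i$. For example, $\tilde\tau_1\omega|\det|^{-1/2}\otimes\CE_{\tau_2\otimes\sigma}$ appears along $P_{n_1}$. So the citation alone does not cover the isobaric case.
\end{itemize}

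Your route is the GRS argument, and it is what is actually needed for $r\ge 2$. You compute $\CE_{\tau\otimes\sigma}^{U_j}$ as a sum of terms whose $\GSpin(W_j)$-component lies in a smaller residual representation $\CE_{\tau'\otimes\sigma}$, with $\tau'$ on $\GL_{2n-j}$ and $j\ge 2$ even. You then use that its Bessel coefficients of depth $n+i\ge n-j/2+1$ vanish. This follows from the unramified computation of Proposition~\ref{prop-Jacquet-Module-Unramified-Component-vanishing} with $n$ replaced by $n-j/2$.

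The price is that you must genuinely compute the constant terms of the iterated residue, including which Weyl-group terms survive to the top order of pole. The paper's one-line argument sidesteps this, but it is only valid for cuspidal $\tau$.
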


\begin{proof}
Let $\wt m_{n,\alpha}$ be the Witt index of the $(2n+1)$-dimensional space $y_{\alpha}^\perp\cap W_{n}$. For any integer $1\le p \le \wt m_{n,\alpha}$, we let $P_p^*$ be the standard maximal parabolic subgroup of $H_{n,\alpha}$, which preserves the totally isotropic subspace  $V_{n,p}^+=y_{\alpha}^\perp \cap \Span\{e_{n+1}, \cdots, e_{n+p}\}\subset y_{\alpha}^\perp\cap W_{n}$. Let $U_p^*$ be the unipotent radical of $P_p^*$. Given a Bessel coefficient $f^{\psi_{n,\alpha}}$ with $f\in \CE_{\tau\otimes\sigma}$, we denote
\begin{equation}
\label{eq-prop-cuspidality-c_p}
c_p(f^{\psi_{n,\alpha}})=\int_{U_p^*(F)\backslash U_p^*(\A)} f^{\psi_{n,\alpha}} (u)du.	
\end{equation}
This is the constant term of $f^{\psi_{n,\alpha}}$ along the unipotent radical $U_p^*$. To prove that $f^{\psi_{n,\alpha}}$ is cuspidal, it suffices to prove that $c_p(f^{\psi_{n,\alpha}})$ is zero for all $1\le p \le \wt m_{n,\alpha}$.

By Proposition~\ref{prop-ConstantTerm-prop2} below, we can prove that, if
\begin{equation}
\label{eq-prop-cuspidality-1}
( f^{U_{p-i}} )^{\psi_{n+i,\alpha}} = 0 \quad \text{ for all }0\le i \le p-1,	
\end{equation}
then $c_p(f^{\psi_{n,\alpha}})$ can be expressed as a sum of integrals of $f^{\psi_{n+p,\alpha}}$.
Here, $U_j$ is the unipotent radical of the standard parabolic subgroup $P_j$ of $H^\delta$, and $f^{U_j}$ is the constant term of $f$ along $U_j$. 
By Corollary~\ref{cor-MainThm-Part1}, $f^{\psi_{n+p,\alpha}}=0$ for all $1\le p \le \wt m_{n,\alpha}$, which implies that $c_p(f^{\psi_{n,\alpha}})$ is zero for all $1\le p \le \wt m_{n,\alpha}$, as desired.

It suffices to prove \eqref{eq-prop-cuspidality-1}. We consider the Eisenstein series $E(g,s,\phi_{\tau\otimes\sigma})$, which produces the residual representation $\CE_{\tau\otimes\sigma}$. Recall that $P$ is the standard parabolic subgroup whose Levi subgroup is isomorphic to $\GL_{2n}\times \GSpin(V_0)$.  By \cite[II.1.7]{MoglinWaldspurger1995}, the constant term of the Eisenstein series $E(g,s,\phi_{\tau\otimes\sigma})$ along the unipotent radical of a standard parabolic subgroup $\wt P$ is always zero unless $\wt P=P$. It follows that the constant term $E^{U_{p-i}}(g,s,\phi_{\tau\otimes\sigma})$ is always zero for any $1\le p \le \wt m_{n,\alpha}$, $0\le i \le p-1$. Thus, we have \eqref{eq-prop-cuspidality-1} as desired. This finishes the proof of Proposition~\ref{prop-cuspidality}. 
\end{proof}

\section{Certain Fourier coefficient of the residual representation}
\label{section-certain-fourier-coef}
\subsection{Root exchange lemma}

In this section, we recall the process of root exchanges, following \cite[\S 7.1]{GinzburgRallisSoudry2011}; see also \cite[\S 6.1]{AsgariCogdellShahidi2024}. As explained in \cite[\S 6.1]{AsgariCogdellShahidi2024}, the proof of the root exchange lemma is the same as that in \cite[\S 7.1]{GinzburgRallisSoudry2011}. We therefore refer the reader to \cite[\S 7.1]{GinzburgRallisSoudry2011} for the proofs of Lemma~\ref{lemma-RootExchange}, Corollary~\ref{corollary-RootExchange1} and Corollary~\ref{corollary-RootExchange2} below. 

Temporarily, we introduce the following notation, following \cite[\S 6.1]{AsgariCogdellShahidi2024}.
Let $H$ be a  connected reductive algebraic $k$-group, such as $H^\delta$ in our setup. 
Let $U<H$ be a maximal unipotent $F$-subgroup.  Suppose $C<U$ is a $F$-subgroup of $U$, and $\psi=\psi_C$ is a non-trivial character 
of $C(F)\bs C(\A)$.  Suppose we have two other $F$-subgroups $X$ and $Y$ of $U$ such that the following six axioms hold:
\begin{enumerate}
\item $X$ and $Y$ normalize $C$. 
\item $X\cap C$ is normal in $X$ and $(X\cap C)\bs X$ is abelian, and similarly $Y\cap C$ is normal in $Y$ and $(Y\cap C)\bs Y$ is abelian. 
\item When $X(\A)$ and $Y(\A)$ act on $C(\A)$ by conjugation, they preserve $\psi_C$. 
\item $\psi_C$ is trivial on $(X\cap C)(\A)$ and $(Y\cap C)(\A)$. 
\item The commutator $(X,Y)\subset C$. (Recall that $(x,y)=x^{-1}y^{-1}xy$ and $(X,Y)=\{(x,y):x\in X, y\in Y\}$.)
\item The pairing of $(X\cap C)(\A)\bs X(\A) \times (Y\cap C)(\A)\bs Y(\A)$ given by 
$$(x,y)\mapsto \psi_C((x,y))$$ is bilinear and non-degenerate, and identifies 
\begin{equation*}
(Y\cap C)(F)\bs Y(F)\simeq \left[(X(F)(X\cap C)(\A))\bs X(\A)\right]^{\wedge}  
\end{equation*} 
and 
\begin{equation*}
(X\cap C)(F)\bs X(F)\simeq \left[(Y(F)(Y\cap C)(\A))\bs Y(\A)\right]^\wedge.
\end{equation*} 
\end{enumerate}
Note that the first five conditions imply that, for each fixed $y\in Y(\A)$, the map $x\mapsto \psi_C((x,y))$ defines a character of 
$X(\A)$, which is trivial on $(X\cap C)(\A)$. (Similarly, for a fixed $x\in X(\A)$, the map $y\mapsto \psi_C((x,y))$ defines a character of $Y(\A)$ 
which is trivial on $(Y\cap C)(\A)$.)

Let $B=CY=YC$, $D=CX=XC$ and $A=CXY$. We represent the above (1)-(6) conditions in the following diagram:
\begin{equation}
\label{eq-RootExchange-diagram}
\begin{tikzcd}[row sep=3em, column sep=4em]
& A & \\
B = CY \arrow[ur] & & D = CX \arrow[ul] \\
& C \arrow[ul] \arrow[ur] &
\end{tikzcd}
\end{equation}
and the following quadruple
\begin{equation}
\label{eq-RootExchange-quadruple}
(C, \psi_C, X, Y).
\end{equation}
We extend $\psi_C$ to a character $\psi_B$ of $B(\A)$ which is trivial on $B(F)$ by making it trivial on 
$Y(\A)$. Similarly, we extend $\psi_C$ to a character $\psi_D$ of $D(\A)$ which is trivial on $D(F)$ by making it trivial on $X(\A)$. 

\begin{lemma}[Root exchange] 
\label{lemma-RootExchange}
Let $f$ be an automorphic function on $H(\A)$ which is smooth and of uniform moderate growth. Then
\[
\int_{B(F)\bs B(\A)}f(v)\psi_B(v)^{-1} dv=\int_{(Y\cap C)(\A)\bs Y(\A)}\int\limits_{D(F)\bs D(\A)} f(uy)\psi_D(u)^{-1}\ du dy,
\]
where the right hand side converges in the sense that
\[
\int_{(Y\cap C)(\A)\bs Y(\A)} \left| \int_{D(F)\bs D(\A)}f(uyh)\psi_D(u)^{-1}\ du \right| dy <\infty
\]
uniformly as $h$ varies in any compact subset of $H(\A)$.
\end{lemma}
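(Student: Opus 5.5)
The plan is the standard Fourier-expansion argument behind the root exchange lemma of \cite[\S 7.1]{GinzburgRallisSoudry2011}. We first factor the left-hand side. Since $B=CY$ with $Y\cap C$ normal in $Y$ and $\psi_B$ trivial on $Y(\A)$, we write
\begin{equation*}
\int_{[B]} f(v)\psi_B^{-1}(v)\,dv=\int_{(Y\cap C)(F)\bs Y(F)\,\backslash\,(Y\cap C)(\A)\bs Y(\A)}\ \int_{[C]} f(cy)\psi_C^{-1}(c)\,dc\,dy .
\end{equation*}
We then define the function $\Phi(h)=\int_{[C]}f(ch)\psi_C^{-1}(c)\,dc$ and study the auxiliary function
\begin{equation*}
x\mapsto \Phi(xh), \qquad x\in X(\A).
\end{equation*}
By axioms (1)--(4), this function is left-invariant under $X(F)(X\cap C)(\A)$; we use here that $X$ normalizes $C$ and preserves $\psi_C$, and that $\psi_C$ is trivial on $X\cap C$. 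It therefore descends to the compact abelian group $(X(F)(X\cap C)(\A))\bs X(\A)$, and we expand it there in a Fourier series.

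By axiom (6), the characters of this compact group are exactly $x\mapsto \psi_C((x,\gamma))$ for $\gamma\in (Y\cap C)(F)\bs Y(F)$. For each such $\gamma$ we compute the Fourier coefficient. Conjugating by $\gamma$, and using axiom (5) together with the left $Y(F)$-invariance of $f$, the $\gamma$-th coefficient evaluated at $h$ equals the coefficient with trivial character evaluated at $\gamma h$. In other words, we obtain
\begin{equation*}
\Phi(h)=\sum_{\gamma\in (Y\cap C)(F)\bs Y(F)} \int_{[D]} f(u\gamma h)\psi_D^{-1}(u)\,du .
\end{equation*}
The step that needs the most care is the bookkeeping of the commutator here. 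We must check that $\psi_C((x,\gamma))$ combines with $\psi_C$ after conjugation to give precisely $\psi_D$ on $D=CX$; the linearity and bilinearity in axiom (6) are what make this work.

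Finally, we substitute $h=y$ into this expansion and integrate over $y$ in $(Y\cap C)(F)Y(F)\bs Y(\A)$. Collapsing the sum over $\gamma$ against this quotient gives the right-hand side, with the outer integral now running over $(Y\cap C)(\A)\bs Y(\A)$.

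I expect the main obstacle to be justifying this collapse, namely the absolute convergence stated in the lemma, uniformly for $h$ in compact sets. I would first try the following argument. Since the function $y\mapsto \int_{[D]} f(uyh)\psi_D^{-1}(u)\,du$ is a Fourier coefficient of the smooth function $x\mapsto\Phi(xyh)$, we can integrate by parts using the action of the universal enveloping algebra along $X(\A)$ at archimedean places, and use invariance under a fixed open compact subgroup at finite places. This should show that the function decays rapidly in the $Y(\A)$-variable. Uniform moderate growth of $f$ then makes the resulting bounds uniform in $h$, which gives the required convergence. As in \cite[\S 6.1]{AsgariCogdellShahidi2024}, nothing specific to $\GSpin$ enters, so the argument of \cite[\S 7.1]{GinzburgRallisSoudry2011} applies verbatim.
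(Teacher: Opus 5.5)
Your proposal is correct and is essentially the argument of \cite[\S 7.1]{GinzburgRallisSoudry2011}, to which the paper defers; as you say, nothing specific to $\GSpin$ enters. You factor the $[B]$-integral through $[C]$, Fourier-expand your $\Phi(xh)$ on $X(F)(X\cap C)(\A)\bs X(\A)$, use axioms (5)--(6) and left $Y(F)$-invariance to identify the $\gamma$-th coefficient with the $\psi_D$-coefficient at $\gamma h$, and collapse against $Y(F)(Y\cap C)(\A)\bs Y(\A)$ (your last step has the typo $(Y\cap C)(F)Y(F)\bs Y(\A)$ for this domain).

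In the convergence step, note that the value at $y$ is the \emph{trivial} Fourier coefficient of $x\mapsto\Phi(xyh)$, so integration by parts against it gives no decay. The decay must come from one of two sources. One option is to write $y=\gamma y_0$ with $y_0$ in a compact fundamental domain; the value is then the $\gamma$-th coefficient of $x\mapsto\Phi(xy_0h)$, with derivative bounds uniform on compacta. The other option is the equivariance $\int_{[D]}f(uyxh)\psi_D^{-1}(u)\,du=\psi_C((x,y))^{\pm1}\int_{[D]}f(uyh)\psi_D^{-1}(u)\,du$ for $x\in X(\A)$, which is also what underlies the Schwartz-function factorization in Corollary~\ref{corollary-RootExchange2}.
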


\begin{corollary}
\label{corollary-RootExchange1}
Let $f$ be an automorphic function on $H(\A)$ which is smooth and of uniform moderate growth. Then
\begin{equation*}
\int_{B(F)\backslash B(\A)} f(va)\psi_B^{-1}(v)dv \equiv 0, \quad \forall a\in A(\A),	
\end{equation*}
if and only if 
\begin{equation*}
\int_{D(F)\backslash D(\A)} f(ua)\psi_D^{-1}(u)du \equiv 0, \quad \forall a\in A(\A).
\end{equation*}
\end{corollary}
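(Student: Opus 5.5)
The plan is to deduce the corollary directly from Lemma~\ref{lemma-RootExchange}, applied not to $f$ itself but to its right translates $f_a:=\rho(a)f$, $f_a(g)=f(ga)$, for $a\in A(\A)$. Right translation preserves smoothness, uniform moderate growth and left $H(F)$-invariance. Hence $f_a$ satisfies the hypotheses of the lemma, and we get, for every $a\in A(\A)$,
\begin{equation*}
\int_{B(F)\bs B(\A)} f(va)\psi_B^{-1}(v)\,dv=\int_{(Y\cap C)(\A)\bs Y(\A)} \int_{D(F)\bs D(\A)} f(uya)\psi_D^{-1}(u)\,du\,dy .
\end{equation*}

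For the direction ``$\Leftarrow$'', suppose the $D$-coefficient $\int_{D(F)\bs D(\A)} f(ua')\psi_D^{-1}(u)\,du$ vanishes for all $a'\in A(\A)$. For $y\in Y(\A)$ and $a\in A(\A)$ we have $ya\in A(\A)$, because $A=CXY$ is a group. This uses the axioms: $X$ and $Y$ normalize $C$, and $(X,Y)\subset C$, so $CXY$ is closed under products. The inner integral on the right is therefore identically zero, and so the left side vanishes for all $a$.

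For the direction ``$\Rightarrow$'', apply the same argument with the roles of $X$ and $Y$ interchanged. The six axioms are symmetric in $X$ and $Y$: the commutator $(Y,X)=(X,Y)^{-1}$ still lies in $C$, and the pairing $(y,x)\mapsto\psi_C((y,x))=\psi_C((x,y))^{-1}$ is still bilinear and non-degenerate, with the same duality identifications. Lemma~\ref{lemma-RootExchange} then also gives
\begin{equation*}
\int_{D(F)\bs D(\A)} f(ua)\psi_D^{-1}(u)\,du=\int_{(X\cap C)(\A)\bs X(\A)} \int_{B(F)\bs B(\A)} f(vxa)\psi_B^{-1}(v)\,dv\,dx .
\end{equation*}
Since $xa\in A(\A)$, the vanishing of the $B$-coefficient on all of $A(\A)$ forces the $D$-coefficient to vanish.

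The step that needs the most care is the symmetry check. One must make sure the extensions $\psi_B$ (trivial on $Y$) and $\psi_D$ (trivial on $X$) are exactly what the lemma produces when $X$ and $Y$ are swapped, and that $A=CXY=CYX$ as a group. Both follow from axioms (1) and (5).

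An alternative for ``$\Rightarrow$'' avoids the symmetric application and instead inverts the $Y$-integral by Fourier analysis. The inner function $y\mapsto\int_{D}f(uya)\psi_D^{-1}(u)\,du$ is left invariant under $Y(F)$. Via axiom (6) one can expand it along $(X\cap C)(F)\bs X(F)$ and recover it from its integrals against characters. I expect the symmetric application to be cleaner, so that is the route I would take.
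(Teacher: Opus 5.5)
Your proof is correct, and it is a complete version of the deduction from Lemma~\ref{lemma-RootExchange} that the paper leaves to \cite[\S 7.1]{GinzburgRallisSoudry2011}: applying the lemma to right translates by $a\in A(\A)$, and using that $A=CXY$ is a group, gives one implication. The other implication follows from the lemma for the swapped quadruple $(C,\psi_C,Y,X)$, which is legitimate because axioms (1)--(6) are symmetric in $X$ and $Y$ and the swap interchanges $(B,\psi_B)$ with $(D,\psi_D)$.

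The only flaw is in your closing aside, which you do not use: $y\mapsto\int_{D(F)\bs D(\A)}f(uya)\psi_D^{-1}(u)\,du$ is \emph{not} left $Y(F)$-invariant, because conjugation by $\gamma\in Y(F)$ multiplies $\psi_D$ by the character $x\mapsto\psi_C((x,\gamma^{-1}))$. A Fourier-inversion proof of the converse should instead rest on the identity $\int_{B(F)\bs B(\A)}f(vxa)\psi_B^{-1}(v)\,dv=\int_{(Y\cap C)(\A)\bs Y(\A)}\psi_C((y,x))\int_{D(F)\bs D(\A)}f(uya)\psi_D^{-1}(u)\,du\,dy$ for $x\in X(\A)$, combined with the adelic non-degeneracy in axiom (6) and the absolute convergence supplied by the lemma.
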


\begin{corollary}
\label{corollary-RootExchange2}
Let $f$ be an automorphic function on $H(\A)$ which is smooth and of uniform moderate growth. Then there exist smooth, uniform moderate growth, 
automorphic functions  
$f_1,\dots, f_r$ and Schwartz functions $\phi_1,\dots,\phi_r\in \mathcal S((Y\cap C)(\A) \bs Y(\A))$ such that, for all 
$y\in (Y\cap C)(\A)\bs Y(\A)$, we have
\begin{equation*}
\int_{D(F)\bs D(\A)} f(uy)\psi^{-1}_D(u)\ du=\sum_{i=1}^r \phi_i(y)\int_{D(F)\bs D(\A)} f_i(uy)\psi^{-1}_D(u)\ du.
\end{equation*}
\end{corollary}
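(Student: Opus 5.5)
The plan is to follow the standard argument of \cite[\S 7.1]{GinzburgRallisSoudry2011}. We write the $\psi_D$-coefficient of $f$ as a function of $y$ and use the Dixmier--Malliavin lemma to factor $f$ through the right action of $X(\A)$. Set
\[
g_f(y)=\int_{D(F)\bs D(\A)} f(uy)\psi_D^{-1}(u)\,du .
\]

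First, consider the space of smooth, automorphic functions of uniform moderate growth on $H(\A)$, with its natural Fréchet (or LF) topology. It carries a smooth representation of $X(\A)$ by right translation, since $X(\A)$ is a unipotent adelic group. By the Dixmier--Malliavin lemma, as used in \cite{GinzburgRallisSoudry2011}, we can write
\[
f=\sum_{i=1}^r \rho(\phi_i')f_i, \qquad \rho(\phi'_i)f_i(h)=\int_{X(\A)}\phi'_i(x)f_i(hx)\,dx .
\]
Here each $\phi_i'$ is a Schwartz function on $X(\A)$, and each $f_i$ is again smooth, automorphic and of uniform moderate growth, because right convolution preserves these properties. Averaging $\phi'_i$ over $(X\cap C)(\A)$ if necessary, we may regard the relevant data as living on $(X\cap C)(\A)\bs X(\A)$.

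Next, fix $y\in Y(\A)$. Substituting this expression into $g_f(y)$ and interchanging the integrals (which is justified by absolute convergence on the compact quotient $D(F)\bs D(\A)$ and moderate growth) gives terms of the form
\[
\int_{X(\A)}\phi'_i(x)\int_{D(F)\bs D(\A)} f_i(uyx)\psi_D^{-1}(u)\,du\,dx .
\]
To simplify the inner integral, we write $yx=x\,y\,(y,x)$ with $(y,x)\in C(\A)$ by axiom (5). We then move the commutator to the left of $y$ as $c'=y(y,x)y^{-1}\in C(\A)$, using axiom (1). By axiom (3), $\psi_C(c')=\psi_C((y,x))$. This gives $yx=x\,c'\,y$ with $xc'\in D(\A)$. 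Changing variables $u\mapsto u(xc')^{-1}$ in $D(F)\bs D(\A)$ produces the factor $\psi_D(xc')=\psi_C((y,x))$, since $\psi_D$ is trivial on $X(\A)$. Each term therefore becomes
\[
\Big(\int_{X(\A)}\phi'_i(x)\,\psi_C((y,x))\,dx\Big)\cdot g_{f_i}(y).
\]

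Finally, define $\phi_i(y)$ as the bracketed integral. By axioms (2), (4) and (6), the map $y\mapsto\psi_C((y,\cdot))$ identifies $(Y\cap C)(\A)\bs Y(\A)$ with the Pontryagin dual of the vector group $(X\cap C)(\A)\bs X(\A)$, and the pairing is bilinear and non-degenerate. Hence $\phi_i$ is the Fourier transform of the push-forward of $\phi'_i$ to $(X\cap C)(\A)\bs X(\A)$, and so $\phi_i\in\mathcal S((Y\cap C)(\A)\bs Y(\A))$. It is also well defined on this quotient, because $\psi_C((y,x))$ is trivial for $y\in (Y\cap C)(\A)$ by axiom (4). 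This gives the claimed identity.

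The main obstacle is the first step: justifying Dixmier--Malliavin for the space of smooth automorphic functions of uniform moderate growth under the $X(\A)$-action. This requires a suitable Fréchet structure at the archimedean places and a Schwartz--Bruhat convolution at the finite places. I would handle it by quoting \cite[\S 7.1]{GinzburgRallisSoudry2011} and \cite[\S 6.1]{AsgariCogdellShahidi2024}, where exactly this setup is verified; nothing there depends on the group being orthogonal rather than $\GSpin$.
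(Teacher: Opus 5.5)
Your proposal is correct. It is the standard argument of \cite[\S 7.1]{GinzburgRallisSoudry2011}, to which the paper simply defers without giving its own proof: apply Dixmier--Malliavin to the right action of $X(\A)$, then use the commutator identity $yx=x\,c'\,y$ to turn the $x$-integral into the Fourier transform of $\phi_i'$ with respect to the pairing in axiom (6).

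One small correction: the left $(Y\cap C)(\A)$-invariance of $y\mapsto\psi_C((y,x))$ uses axiom (3) as well as axiom (4), not axiom (4) alone. Equivalently, $\psi_D$ is a character of $D(\A)$, so it is trivial on the commutator $(c,x)$.
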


\subsection{Constant term of the Bessel coefficient of the residual representation}
The goal of this section is to prove a formula for the constant term of the Bessel coefficient of the residual representation, and to finish the proof of Proposition~\ref{prop-cuspidality}. We continue with the setup as in the proof of  Proposition~\ref{prop-cuspidality}.
Recall that $c_p(f^{\psi_{n,\alpha}})$ is given in \eqref{eq-prop-cuspidality-c_p}.
For any $1\le j \le 2n$ and $g\in \GL(V_j^+)(\A)=\GL_j(\A)$, we denote by $\hat{g}$ the lift of $g$ into the Levi $\GL(V_j^+)(\A)\times\GSpin(W_j)(\A)$ of $P_j(\A)$, described in Lemma~\ref{lemma-Levi-lifting}. 
We denote 
\begin{equation*}
U_{n+p}^i =\left\{ \begin{pmatrix} I_{p+n-i}& * \\ & z \end{pmatrix}^\wedge: z\in Z_i \right\}\cdot U_{n+p}\subset N_{n+p}, \quad 1\le i \le p+n-1,
\end{equation*}
and
\begin{equation*}
\begin{split}
\mathcal{L} &=\mathcal{L}_{p,n} =\left\{ \begin{pmatrix} I_p&\\ * &I_{n}\end{pmatrix}^\wedge \right\}, \\
\mathcal{L}^{(i)} &= \left\{ \begin{pmatrix} I_p&\\ \lambda &I_{n}\end{pmatrix}^\wedge \in \mathcal{L}: \lambda=\begin{pmatrix} \lambda_1 \\ \vdots \\ \lambda_n \end{pmatrix}, \lambda_j=0, \  \forall j\not=n-i \right\}, \quad i=0, \cdots, n-1,\\
\mathcal{L}_i &= \left\{ \begin{pmatrix} I_p&\\ \lambda &I_{n}\end{pmatrix}^\wedge \in \mathcal{L}: \lambda=\begin{pmatrix} \lambda_1 \\ \vdots \\ \lambda_n \end{pmatrix}, \lambda_{n-i}=\lambda_{n-i+1}=\cdots =\lambda_{n}=0 \right\}, \quad i=0, \cdots, n-1.
\end{split}
\end{equation*}
Note that
\begin{equation*}
\mathcal{L}_{n-1}\subset \mathcal{L}_{n-2} \subset \cdots \subset \mathcal{L}_{0}.
\end{equation*}
For $1\le i \le n-1$, denote
\begin{equation*}
\mathcal{L}^{(i,i-1, \cdots, 0)}= \mathcal{L}^{(i)} \mathcal{L}^{(i-1)}  \cdots  \mathcal{L}^{(0)}.
\end{equation*}
For each $1\le i \le p$, denote 
\begin{equation*}
P_{p-i}^1=	\left\{ \begin{pmatrix} g & x\\& z\end{pmatrix}\in \GL_p: z\in Z_i \right\}.
\end{equation*}
Denote
\begin{equation*}
	\beta =\beta_{p,n}= \begin{pmatrix} &I_p \\ I_n &\end{pmatrix}^\wedge . 
\end{equation*}

\begin{proposition}
\label{prop-ConstantTerm-prop1}
Let $f$ be an automorphic function on $H^{\delta}(\A)$ which is smooth and of uniform moderate growth. Let $1\le p \le \wt m_{n,\alpha}$. We have the following formulas for $c_p(f^{\psi_{n,\alpha}})$.
\begin{enumerate}
\item We have
\begin{equation}
\label{eq-prop-ConstantTerm-prop1}
\begin{split}
&c_p(f^{\psi_{n,\alpha}})\\
=&\sum_{\gamma\in P_{p-1}^1(F)\backslash \GL_p(F)} \int_{\mathcal{L}(\A)} \int_{U_{n+p}^n(F)\backslash U_{n+p}^n(\A)} f(u\hat{\gamma}  \lambda \beta) \psi_{n+p,\alpha}^{-1}(u)dud\lambda+ \int_{\mathcal{L}(\A)} (f^{U_p})^{\psi_{n,\alpha}}(\lambda \beta) d\lambda. 
\end{split}
\end{equation}

\item We have
\begin{equation}
\label{eq-prop-ConstantTerm-prop1-2}
\begin{split}
&c_p(f^{\psi_{n,\alpha}})\\
=&\sum_{\gamma\in Z_p(F)\backslash \GL_p(F)} \int_{\mathcal{L}(\A)}   f^{\psi_{n+p,\alpha}}(  \hat{\gamma}  \lambda \beta) d\lambda+ \sum_{i=0}^{p-1} \sum_{\gamma\in P_{p-i}^1(F)\backslash \GL_p(F)}  \int_{\mathcal{L}(\A)} (f^{U_{p-i}})^{\psi_{n+i,\alpha}}(\hat{\gamma} \lambda \beta) d\lambda \\
=&\sum_{i=0}^{p} \sum_{\gamma\in P_{p-i}^1(F)\backslash \GL_p(F)}  \int_{\mathcal{L}(\A)} (f^{U_{p-i}})^{\psi_{n+i,\alpha}}(\hat{\gamma} \lambda \beta) d\lambda .
\end{split}
\end{equation}
\end{enumerate}
\end{proposition}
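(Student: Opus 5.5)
We follow the strategy of \cite[Theorem 7.3]{GinzburgRallisSoudry2011} (see also \cite[\S 6]{AsgariCogdellShahidi2024}), using that unipotent computations in $\GSpin$ groups reduce to those in $\SO(V)$ by Remark~\ref{remark-unipotent-variety-isomorphism}.

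\emph{Part (1).} Write $c_p(f^{\psi_{n,\alpha}})$ as one iterated integral over $U_p^*N_n$ with character $\psi_{n,\alpha}$, which is trivial on $U_p^*$. Conjugating by $\beta=\beta_{p,n}$ moves the isotropic vectors $e_{n+1},\dots,e_{n+p}$ to the first $p$ coordinates. The conjugated group $\beta(U_p^*N_n)\beta^{-1}$ is contained in $N_{n+p}$, except for a piece lying in the opposite unipotent $\mathcal{L}^{-}=\left\{\begin{pmatrix} I_p&*\\ &I_n\end{pmatrix}^\wedge\right\}$ of $\GL_{p+n}$. Conversely, $\beta(U_p^*N_n)\beta^{-1}$ misses a complementary piece $\mathcal{L}$.

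We then run the root exchange, Lemma~\ref{lemma-RootExchange}, $n$ times, once for each row $\lambda_{n-i}$, using the filtration $\mathcal{L}_{n-1}\subset\cdots\subset\mathcal{L}_0$ and the one-row groups $\mathcal{L}^{(i)}$. At each step we verify the six axioms, taking $X$ to be a column of $\mathcal{L}^{-}$ and $Y=\mathcal{L}^{(i)}$. The commutator is paired through the Whittaker-type entries $z_{i,i+1}$ of $\psi_{n,\alpha}$, and this pairing is nondegenerate because the character on the $\GL_n$-block is generic. After the $n$ exchanges we obtain
\begin{equation*}
c_p(f^{\psi_{n,\alpha}})=\int_{\mathcal{L}(\A)}\int f(v\lambda\beta)\,\psi'^{-1}(v)\,dv\,d\lambda,
\end{equation*}
where $v$ runs over $[U^n_{n+p}]$ extended by the unipotent radical $Z_p$ of the upper $\GL_p$-block. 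The character $\psi'$ is trivial on $Z_p$ and equals $\psi_{n+p,\alpha}$ on $U^n_{n+p}$.

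Next we expand along the abelian quotient given by the last column of the $\GL_p$-block above the $\GL_n$-block, i.e. the mirabolic direction. Its trivial character yields the constant term along $U_p$, followed by the integral over $N_n$ against $\psi_{n,\alpha}$; this is the term $(f^{U_p})^{\psi_{n,\alpha}}$. The nontrivial characters form a single $P^1_{p-1}(F)$-orbit, so they give the sum over $\gamma\in P^1_{p-1}(F)\backslash\GL_p(F)$ of the $U_{n+p}^n$-integrals. Absorbing $\hat\gamma$ past $\lambda$ is legitimate because $\GL_p(F)$ normalizes $\mathcal{L}$ and $f$ is left-invariant under rational points.

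\emph{Part (2).} We induct on $i$. At stage $i$ the term is a sum over $P^1_{p-i}(F)\backslash\GL_p(F)$ of integrals over $U^{n+i}_{n+p}$ against the partial character. We Fourier expand along the next mirabolic column, exactly as in Piatetski-Shapiro--Shalika. The zero orbit produces the constant term $(f^{U_{p-i}})^{\psi_{n+i,\alpha}}$, and the generic orbit raises $i$ by one. At $i=p$ the integration is over all of $N_{n+p}$ with character $\psi_{n+p,\alpha}$, indexed by $Z_p(F)\backslash\GL_p(F)$, which gives the first sum. Convergence of every sum and integral is uniform on compacta, by Corollary~\ref{corollary-RootExchange2} and the moderate growth of $f$.

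\emph{Main obstacle.} The hardest step is the bookkeeping of the root exchanges in part (1). One has to check the commutator axioms (5)--(6) for $\GSpin$ with the twisted last entry $q_V(u\cdot w_0,e_{-\ell})$, where $w_0=y_\alpha$. One also has to check that after conjugation by $\beta$ the character becomes exactly $\psi_{n+p,\alpha}$, with the same $\alpha$. If $\beta$ moves $y_\alpha$ incorrectly, we would first conjugate by a suitable element of $H_{n,\alpha}(F)$ so that the flag defining $U_p^*$ lies in $\Span\{e_{n+1},\dots,e_{2n-1}\}$. This is possible because $p\le\wt m_{n,\alpha}$.
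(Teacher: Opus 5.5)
Your overall plan is the paper's: conjugate by $\beta$, exchange the $n$ rows of $\mathcal{L}$ so that $\mathcal{L}$ becomes integrated over $\mathcal{L}(\A)$, then Fourier expand along the leftover abelian piece and iterate mirabolic expansions. However, the root-exchange bookkeeping in Part (1) is wrong at two points, and both change the outcome.

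\textbf{The first exchange.} You have the orientation reversed. $S=\beta U_p^*N_n\beta^{-1}$ \emph{contains} $\mathcal{L}$: the entries of $N_n$ in rows $1,\dots,n$ and columns $n+1,\dots,n+p$ land in the lower-left block. What $S$ misses is the upper-right block $\left(\begin{smallmatrix} I_p&*\\ &I_n\end{smallmatrix}\right)^\wedge$, which lies in $N_{n+p}$ and is not an opposite unipotent. It also misses $J_0\bs J\cong F^p$, where $J=\{s(I_n;0,0,0,0;x,y)\}$.

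More seriously, the exchange for the last row $\mathcal{L}^{(0)}$ cannot use a column of the upper-right block. The commutator of row $p+n$ of $\mathcal{L}$ with such a column lands at $(p+n,p+c)$ with $c\le n$, where there is no character, and there is no entry $z_{n,n+1}$ to pair through. The partner of $\mathcal{L}^{(0)}$ must be $X=J$. Its commutator with $\lambda_n$ lands in the row $a_n$, and the pairing goes through the twisted entry $a_n\cdot y_\alpha^{(n+p)}$ of $\psi_{n,\alpha}$, which is nondegenerate on $J_0\bs J$.

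This is exactly the step that yields $D=\wt S J=U_{n+p}$ with $\psi_D=\psi_{n+p,\alpha}|_D$. In other words, it is where the twisted entry moves from position $n$ to position $n+p$. So what you list as a side check under ``main obstacle'' is the actual mechanism of the first exchange. No auxiliary conjugation by $H_{n,\alpha}(F)$ is involved, since $V^+_{n,p}\subset\Span\{e_{n+1},\dots,e_{2n-1}\}$ and $\beta$ fixes $y_\alpha$. Only the rows $\mathcal{L}^{(i)}$, $1\le i\le n-1$, are exchanged against columns of the upper-right block: column $p+n-i+1$, through $z_{n-i,n-i+1}$.

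\textbf{The output of the exchanges.} Row $\lambda_n$ is traded for $J$ and rows $\lambda_{n-1},\dots,\lambda_1$ for columns $p+n,\dots,p+2$. Column $p+1$ (rows $1,\dots,p$) is never acquired, because its commutator with any row $\lambda_j$ lands at $(p+j,p+1)$, which carries no character. Also, $Z_p$ never appears: $U_p^*$ is a unipotent radical and $Z_p$ lies in its Levi. What you actually get is
\[
c_p(f^{\psi_{n,\alpha}})=\int_{\mathcal{L}(\A)}\int_{[U^{n-1}_{n+p}]}f(u\lambda\beta)\,\psi_{n+p,\alpha}^{-1}(u)\,du\,d\lambda,
\]
not an integral over $U^n_{n+p}$ extended by $Z_p$.

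The missing column $x\in\A^p$ is precisely the variable of the Fourier expansion in Part (1). Its trivial character gives $\int_{\mathcal{L}(\A)}(f^{U_p})^{\psi_{n,\alpha}}(\lambda\beta)\,d\lambda$. The nonzero characters form a single $\GL_p(F)$-orbit with stabilizer $P^1_{p-1}(F)$ (not a single $P^1_{p-1}(F)$-orbit), and they give the $U^n_{n+p}$-terms.

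Your version breaks this in two ways. Column $p+1$ would already be integrated against the nontrivial entry $u_{p,p+1}$, so the constant-term summand could not arise. And $Z_p$ would already be averaged against the trivial character, so the expansions of Part (2) would have nothing left to expand; those expansions add the columns of $Z_p$ one at a time, each trivial character producing $(f^{U_{p-i}})^{\psi_{n+i,\alpha}}$.

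Once the starting point is corrected, your Part (2) induction is the paper's argument. The only adjustment is an index shift: $P^1_{p-i}$ goes with $U^{n+i-1}_{n+p}$, not $U^{n+i}_{n+p}$.
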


\begin{proof}
Note that $\beta\in H^\delta(F)$, and $f$ is left-invariant under $H^{\delta}(F)$. Thus,
\begin{equation*}
c_p(f^{\psi_{n,\alpha}})=\int_{U_p^*(F)\backslash U_p^*(\A)} \int_{N_n(F)\backslash N_n(\A)} f(\beta v r \beta^{-1} \beta )\psi_{n,\alpha}^{-1}(v)dvdr. 	
\end{equation*}
Denote $S=\beta U_p^* N_n \beta^{-1}$. Then elements of $S$ take the following form
\begin{equation}
\label{eq-prop-ConstantTerm-prop1-S}
\beta v r \beta^{-1}= \begin{pmatrix}
 I_p & 0 &x & d &y\\
 u &z &a &e &d^\prime\\
 &&I_{2n+2-2p} & a^\prime &x^\prime\\
 &&&z^* &0\\
 &&&u^\prime & I_p	
 \end{pmatrix}
	=s(z;u,a,d,e;x,y)
\end{equation}
where $z\in Z_{n}$, and $x\cdot y_{\alpha}^{(n+p)}=0$, where $y_{\alpha}^{(n+p)}$ is the vector $y_{\alpha}$, viewed as a column vector in the vector space $W_{n+p}\cong F^{2n+2-2p}$. Note that
\begin{equation*}
\begin{split}
\beta U_p^* \beta^{-1}  &= \{ s(I_n;0,0,0,0;x,y)\in H^\delta: 	x\cdot y_{\alpha}^{(n+p)}=0 \}, \\
\beta N_n \beta^{-1}  &= \{ s(z;u,a,d,e;0,0)\in H^\delta: 	z\in Z_n \},\\
\psi_{n,\alpha}(v)&= \psi(z_{1,2}+\cdots+ z_{n-1,n} + a_{n}\cdot y_{\alpha}^{(n+p)}), \quad a_n=\text{$n$-th row of }a.
\end{split}
\end{equation*}
Let $\psi_S$ be the character of $S(\A)$, given by 
\begin{equation*}
\psi_S(\beta v r \beta^{-1})= 	\psi(z_{1,2}+\cdots+ z_{n-1,n} + a_{n}\cdot y_{\alpha}^{(n+p)}).
\end{equation*}
This is obtained by first trivially extending the character $\psi_{n,\alpha}$ of $N_n(\A)$ to $U_p^*(\A)N_n(\A)$, and then defining, for $s\in S(\A)$, $\psi_S(s)=\psi_{n,\alpha}(\beta^{-1}s\beta)$. 
Let $\wt S$ be the following subgroup of $S$, given by
\begin{equation*}
\wt S=\{s(I_n;0,a,d,e;x,y)\in S\}.	
\end{equation*}
Note that 
\begin{equation*}
\mathcal{L}=\{s(I_n;u,0,0,0;0,0)\in S\}, \quad Z_n\cong\{s(z;0,0,0,0;0,0)\in S\},
\end{equation*}
and $\psi_S$ is trivial on $\mathcal{L}(\A)$.
Thus,
\begin{equation*}
c_p(f^{\psi_{n,\alpha}})=\int_{Z_n(F)\backslash Z_n(\A)} \int_{\mathcal{L}(F)\backslash \mathcal{L}(\A)} \int_{\wt S (F)\backslash \wt S(\A)} f(s \lambda z \beta )\psi_{S}^{-1}(sz) dsd\lambda dz. 	
\end{equation*}

Denote
\begin{equation*}
J=	\{ s(I_n;0,0,0,0;x,y)\in H^\delta \}, \quad 
\end{equation*}
and 
\begin{equation*}
J_0=J\cap S=\beta U_p^* \beta^{-1}  = \{ s(I_n;0,0,0,0;x,y)\in H^\delta: 	x\cdot y_{\alpha}^{(n+p)}=0 \}.	
\end{equation*}
Then the quotient $J_0\backslash J$ is naturally identified with $F^p$, and hence it is abelian. 
The groups
\begin{equation*}
C=\wt S, \quad  Y=\mathcal{L}^{(0)}, \quad X=J,  \quad B= \wt S \mathcal{L}^{(0)}, \quad D=	\wt S J
\end{equation*}
satisfy the conditions represented in \eqref{eq-RootExchange-diagram}. Note that $D=\wt S J=U_{n+p}$ and $\psi_D=\psi_{n+p,\alpha}\Big|_{D}$. By applying Lemma~\ref{lemma-RootExchange}, we obtain that
\begin{equation*}
\int_{\mathcal{L}^{(0)}(F)\backslash \mathcal{L}^{(0)}(\A)} \int_{\wt S(F)\backslash \wt S(\A)} f(s\lambda h)\psi_S^{-1}(s)dsd\lambda =\int_{\mathcal{L}^{(0)}(\A)} \int_{U_{n+p}(F)\backslash U_{n+p}(\A)} f(u\lambda h)\psi_{n+p,\alpha}^{-1}(u)dud\lambda. 	
\end{equation*}
Since $\mathcal{L}=\mathcal{L}_0 \mathcal{L}^{(0)}$, we have
\begin{equation*}
\begin{split}
c_p(f^{\psi_{n,\alpha}})&=\int_{Z_n(F)\backslash Z_n(\A)} \int_{\mathcal{L}_{0}(F)\backslash \mathcal{L}_{0}(\A)} \int_{\mathcal{L}^{(0)}(\A)} \int_{U_{n+p}(F)\backslash U_{n+p}(\A)} f(u\lambda \lambda^{(0)} z\beta )\psi_{n+p,\alpha}^{-1}(uz)dud\lambda  d\lambda^{(0)} dz.
\end{split}
\end{equation*}
Note that  $\mathcal{L}_0=\mathcal{L}^{(1)}\mathcal{L}_1$, and
\begin{equation*}
Z_n=Z^{(1)} Z_{n-1},	
\end{equation*}
where 
\begin{equation*}
Z^{(1)}=\left\{\begin{pmatrix} I_{n-1}&*\\&1\end{pmatrix}\right\} \cong\{s\left( \begin{pmatrix} I_{n-1}&*\\&1\end{pmatrix};0,0,0,0;0,0\right)\in S \}, 	
\end{equation*}
and 
\begin{equation*}
	Z_{n-1}=\left\{\begin{pmatrix} z&\\&1\end{pmatrix}:z\in Z_{n-1}\right\} \cong\{s\left( \begin{pmatrix} z&\\&1\end{pmatrix};0,0,0,0;0,0\right)\in S:z\in Z_{n-1} \}.
\end{equation*}
Also note that $Z_{n}$ normalizes $\mathcal{L}_{0}$.
Then
\begin{equation}
\label{eq-prop-ConstantTerm-prop1-eq2}
c_p(f^{\psi_{n,\alpha}})=\int_{\mathcal{L}^{(0)}(\A)} \int_{Z_{n-1}(F)\backslash Z_{n-1}(\A)} \int_{\mathcal{L}_1(F)\backslash \mathcal{L}_1(\A)} I_1(f,\psi)( \lambda z \lambda^{(0)} \beta) \psi_{n+p,\alpha}^{-1}(z) d\lambda dz d\lambda^{(0)},	
\end{equation}
where
\begin{equation*}
	I_1(f,\psi)( h) =\int_{\mathcal{L}^{(1)}(F)\backslash \mathcal{L}^{(1)}(\A)} \int_{Z^{(1)}(F)\backslash Z^{(1)}(\A)} \int_{U_{n+p}(F)\backslash U_{n+p}(\A)} f( u z^{(1)} \lambda^{(1)} h )\psi_{n+p,\alpha}^{-1}(u z^{(1)}) du dz^{(1)} d\lambda^{(1)}.
\end{equation*}
By Corollary~\ref{corollary-RootExchange2}, there is a $f_0\in W(f)$, the closed cyclic module generated by $f$, such that, for all $z\in Z_n(\A)$, $\lambda_0\in \mathcal{L}_0(\A)$, we have 
\begin{equation}
\label{eq-prop-ConstantTerm-prop1-eq3}
\int_{\mathcal{L}^{(0)}(\A)}	\int_{U_{n+p}(F)\backslash U_{n+p}(\A)} f( u \lambda_0 z \lambda \beta )\psi_{n+p,\alpha}^{-1}(u  ) du   d\lambda = \int_{U_{n+p}(F)\backslash U_{n+p}(\A)} f_0( u \lambda_0 z  \beta )\psi_{n+p,\alpha}^{-1}(u  ) du.
\end{equation}
Then
\begin{equation}
\label{eq-prop-ConstantTerm-prop1-eq4}
c_p(f^{\psi_{n,\alpha}})= \int_{Z_{n-1}(F)\backslash Z_{n-1}(\A)} \int_{\mathcal{L}_1(F)\backslash \mathcal{L}_1(\A)} I_1(f_0,\psi)( \lambda z  ) \psi_{n+p,\alpha}^{-1}(z) d\lambda dz.
\end{equation}

The groups
\begin{equation*}
C=U_{n+p}Z^{(1)}, \quad Y=\mathcal{L}^{(1)}, \quad X=\left\{ \begin{pmatrix} I_p & 0 &x\\ &I_{n-1}&0\\&&1\end{pmatrix}^\wedge \right\}	
\end{equation*}
and the character 
\begin{equation*}
\psi_C=\psi_{n+p,\alpha}\Big|_{C}	
\end{equation*}
satisfy the conditions represented by \eqref{eq-RootExchange-diagram}. By Lemma~\ref{lemma-RootExchange}, we obtain
\begin{equation}
\label{eq-prop-ConstantTerm-prop1-eq5}
I_1(f,\psi)(h)=\int_{\mathcal{L}^{(1)}(\A)}  \int_{U_{n+p}(F)\backslash U_{n+p}(\A)} f( u \lambda^{(1)} h) \psi_{n+p,\alpha}^{-1}(u) du d\lambda^{(1)}.	
\end{equation}
By \eqref{eq-prop-ConstantTerm-prop1-eq2} and  \eqref{eq-prop-ConstantTerm-prop1-eq3}, we obtain that
\begin{equation}
\label{eq-prop-ConstantTerm-prop1-eq6}
\begin{split}
&c_p(f^{\psi_{n,\alpha}})\\
=&\int_{\mathcal{L}^{(1,0)}(\A)} \int_{Z_{n-1}(F)\backslash Z_{n-1}(\A)} \int_{\mathcal{L}_1(F)\backslash \mathcal{L}_1(\A)} \int_{U_{n+p}^1(F)\backslash U_{n+p}^1(\A)} f( u \lambda z \lambda^\prime \beta) \psi_{n+p,\alpha}^{-1}(uz) dud\lambda dz d\lambda^\prime,	
\end{split}
\end{equation}
where $\mathcal{L}^{(1,0)}=\mathcal{L}^{(1)} \mathcal{L}^{(0)}$. By Corollary~\ref{corollary-RootExchange2} again as in \eqref{eq-prop-ConstantTerm-prop1-eq3}, we obtain that there is $f_1\in W(f)$, such that
\begin{equation}
\label{eq-prop-ConstantTerm-prop1-eq7}
\begin{split}
c_p(f^{\psi_{n,\alpha}})= \int_{Z_{n-1}(F)\backslash Z_{n-1}(\A)} \int_{\mathcal{L}_1(F)\backslash \mathcal{L}_1(\A)} \int_{U_{n+p}^1(F)\backslash U_{n+p}^1(\A)} f_1( u \lambda z  ) \psi_{n+p,\alpha}^{-1}(uz) dud\lambda dz .	
\end{split}
\end{equation}

For each $1\le i \le n-1$, denote
\begin{equation*}
Z_{n-i}=\left\{ \begin{pmatrix} z & \\ &I_{i}\end{pmatrix}\in Z_n\right\} \cong\{s\left( \begin{pmatrix} z&\\&I_i\end{pmatrix};0,0,0,0;0,0\right)\in S:z\in Z_{n-i} \}.   	
\end{equation*}
Then $Z_{n-i}=Z^{(i+1)} Z_{n-i-1}$ with
\begin{equation*}
Z^{(i+1)}=	\left\{ \begin{pmatrix} I_{n-i-1} & x&0\\ &1&0\\ & &I_{i}\end{pmatrix}\in Z_n\right\} 
\end{equation*}
Also note that $\mathcal{L}_i=\mathcal{L}^{(i+1)}\mathcal{L}_{i+1}$. Applying the previous arguments inductively and using
\begin{equation*}
C=U_{n+p}^{i} Z^{(i+1)},\quad  Y=\mathcal{L}^{(i+1)}, \quad X=\left\{ \begin{pmatrix} I_p &0&x&\\ &I_{n-i-1} &0&\\&&1&\\ &&&I_i\end{pmatrix} \right\},	
\end{equation*}
and the character $\psi_C=\psi_{n+p,\alpha}\Big|_{C}$, we obtain that, for $1\le i \le n-1$, we have
\begin{equation}
\label{eq-prop-ConstantTerm-prop1-eq8}
\begin{split}
&c_p(f^{\psi_{n,\alpha}})\\
=&\int_{\mathcal{L}^{(i,i-1, \cdots, 0)}(\A)} \int_{Z_{n-i}(F)\backslash Z_{n-i}(\A)} \int_{\mathcal{L}_i(F)\backslash \mathcal{L}_i(\A)} \int_{U_{n+p}^i(F)\backslash U_{n+p}^i(\A)} f( u \lambda z \lambda^\prime \beta) \psi_{n+p,\alpha}^{-1}(uz) dud\lambda dz d\lambda^\prime,	
\end{split}
\end{equation}
where $\mathcal{L}^{(i, i-1, \cdots,0)}=\mathcal{L}^{(i)} \mathcal{L}^{(i-1)} \cdots \mathcal{L}^{(0)}$. We also obtain that there is $f_i\in W(f)$, such that
\begin{equation}
\label{eq-prop-ConstantTerm-prop1-eq9}
\begin{split}
c_p(f^{\psi_{n,\alpha}})= \int_{Z_{n-i}(F)\backslash Z_{n-i}(\A)} \int_{\mathcal{L}_i(F)\backslash \mathcal{L}_i(\A)} \int_{U_{n+p}^i(F)\backslash U_{n+p}^i(\A)} f_i( u \lambda z  ) \psi_{n+p,\alpha}^{-1}(uz) dud\lambda dz .	
\end{split}
\end{equation}
For $i=n-1$, \eqref{eq-prop-ConstantTerm-prop1-eq8} becomes
\begin{equation}
\label{eq-prop-ConstantTerm-prop1-eq10}
\begin{split}
c_p(f^{\psi_{n,\alpha}})
=\int_{\mathcal{L}(\A)}     \int_{U_{n+p}^{n-1}(F)\backslash U_{n+p}^{n-1}(\A)} f( u \lambda   \beta) \psi_{n+p,\alpha}^{-1}(u ) dud\lambda.	
\end{split}
\end{equation}
Similar to \eqref{eq-prop-ConstantTerm-prop1-eq3}, we also obtain that, for all $g\in P_{p-1}^1(\A)$, we have
\begin{equation}
\label{eq-prop-ConstantTerm-prop1-eq11}
\int_{\mathcal{L}(\A)}	\int_{U_{n+p}^{n-1}(F)\backslash U_{n+p}^{n-1}(\A)} f( u \hat{g}   \lambda \beta )\psi_{n+p,\alpha}^{-1}(u  ) du   d\lambda = \int_{U_{n+p}^{n-1}(F)\backslash U_{n+p}^{n-1}(\A)} f_{n-1}( u \hat{g} )\psi_{n+p,\alpha}^{-1}(u  ) du.
\end{equation}
Denote $\varphi=f_{n-1}$. Let $x\in \A^p$ be a column vector and define
\begin{equation*}
u_x= \begin{pmatrix} I_p & x \\ & 1\end{pmatrix}^\wedge\in N_{n+p}(\A).
\end{equation*}
The function
\begin{equation*}
x\mapsto \phi(x):=\int_{U_{n+p}^{n-1}(F)\backslash U_{n+p}^{n-1}(\A)} \varphi( u u_x)\psi_{n+p,\alpha}^{-1}(u)du
\end{equation*}
is a smooth function on $F^p\backslash \A^p$, and hence has a Fourier expansion in terms of the characters of $F^p\backslash \A^p$. Any character of $u_x$ which is trivial on the rational points, is of the form $u_x\mapsto \psi({}^t \eta \cdot x)$, where $\eta\in F^p$ is a column vector. 
The Fourier coefficient of $\phi$ corresponding to the trivial character (i.e., when $\eta=0$) is 
\begin{equation*}
\int_{F^p\backslash \A^p} 	\int_{U_{n+p}^{n-1}(F)\backslash U_{n+p}^{n-1}(\A)} \varphi( u u_x) \psi_{n+p,\alpha}^{-1}(u)dudx = (\varphi^{U_p})^{\psi_{n,\alpha}}(1).
\end{equation*}
Now we consider $\eta\not=0$ and write ${}^t \eta=(0, \cdots, 0, 1)\gamma$, with $\gamma\in P_{p-1}^1(F)\backslash \GL_p(F)$. The Fourier coefficient of $\phi$ corresponding to this $\eta$ is
\begin{equation}
\label{eq-prop-ConstantTerm-prop1-eq12}
\begin{split}
\int_{F^p\backslash \A^p} 	\int_{U_{n+p}^{n-1}(F)\backslash U_{n+p}^{n-1}(\A)} \varphi( u u_x) \psi_{n+p,\alpha}^{-1}(u ) \psi^{-1}( (0, \cdots, 0,1)\gamma x)dudx.
\end{split}
\end{equation}
Since $\varphi$ is automorphic, we have
\begin{equation*}
\varphi( u u_x)= \varphi( \hat{\gamma} u u_x) = 	\varphi( \hat{\gamma} u \hat{\gamma}^{-1}  u_{\gamma x} \hat{\gamma}).
\end{equation*}
Also, $\psi^{-1}( (0, \cdots, 0,1)\gamma x)=\psi_{n+p,\alpha}^{-1}( u_{\gamma x})$.
Thus, \eqref{eq-prop-ConstantTerm-prop1-eq12} becomes
\begin{equation*}
	\int_{F^p\backslash \A^p} 	\int_{U_{n+p}^{n-1}(F)\backslash U_{n+p}^{n-1}(\A)} \varphi(  \hat{\gamma} u \hat{\gamma}^{-1}  u_{\gamma x} \hat{\gamma} ) \psi_{n+p,\alpha}^{-1}(u u_{\gamma x})dudx
\end{equation*}
We now make a change of variables $u\mapsto \hat{\gamma}^{-1}u\hat{\gamma}$, $x\mapsto \gamma^{-1}x$, 
to obtain that \eqref{eq-prop-ConstantTerm-prop1-eq12} is equal to
\begin{equation*}
	 	\int_{U_{n+p}^{n}(F)\backslash U_{n+p}^{n}(\A)} \varphi(  u \hat{\gamma} ) \psi_{n+p,\alpha}^{-1}(u )du.
\end{equation*}
By \eqref{eq-prop-ConstantTerm-prop1-eq10} and \eqref{eq-prop-ConstantTerm-prop1-eq11}, we obtain
\begin{equation*}
\begin{split}
c_p(f^{\psi_{n,\alpha}}) &=\phi(0)\\
&=	\sum_{\gamma\in P_{p-1}^1(F)\backslash \GL_p(F)} \int_{U_{n+p}^{n}(F)\backslash U_{n+p}^{n}(\A)} \varphi(  u \hat{\gamma} ) \psi_{n+p,\alpha}^{-1}(u )du + (\varphi^{U_p})^{\psi_{n,\alpha}}(1).
\end{split}
\end{equation*}
Finally, notice that
\begin{equation*}
\begin{split}
&\int_{U_{n+p}^{n}(F)\backslash U_{n+p}^{n}(\A)} \varphi(  u \hat{\gamma} ) \psi_{n+p,\alpha}^{-1}(u )du \\
=&	\int_{F^p\backslash \A^p} 	\int_{U_{n+p}^{n-1}(F)\backslash U_{n+p}^{n-1}(\A)} f_{n-1}( u u_{x} \hat{\gamma} ) \psi_{n+p,\alpha}^{-1}( u u_x) du dx\\
=& \int_{F^p\backslash \A^p} \int_{\mathcal{L}(\A)}	\int_{U_{n+p}^{n-1}(F)\backslash U_{n+p}^{n-1}(\A)} f( u u_x \hat{\gamma}   \lambda \beta )\psi_{n+p,\alpha}^{-1}(u u_x ) du   d\lambda \ \ (\text{by } \eqref{eq-prop-ConstantTerm-prop1-eq11}) \\
=& \int_{\mathcal{L}(\A)} \int_{U_{n+p}^{n}(F)\backslash U_{n+p}^{n}(\A)} f( u \hat{\gamma}   \lambda \beta )\psi_{n+p,\alpha}^{-1}(u  ) du   d\lambda
\end{split}
\end{equation*}
and similarly
\begin{equation*}
(\varphi^{U_p})^{\psi_{n,\alpha}}(1)=\int_{\mathcal{L}(\A)} (f^{U_p})^{\psi_{n,\alpha}}(\lambda \beta) d\lambda.	
\end{equation*}
Thus, we obtain \eqref{eq-prop-ConstantTerm-prop1}. 

To prove \eqref{eq-prop-ConstantTerm-prop1-2}, we continue with the Fourier expansion process. First, we consider a column vector $x\in \mathbb{A}^{p-1}$ and the subgroup
\begin{equation*}
u_x= \begin{pmatrix} I_{p-1} & x \\ & 1\end{pmatrix}^\wedge\in N_{n+p}(\A).
\end{equation*}
For fixed $\gamma\in \GL_p(F)$, the function
\begin{equation*}
x\mapsto \phi(x):=\int_{U_{n+p}^{n}(F)\backslash U_{n+p}^{n}(\A)} \varphi( u u_x \hat{\gamma})\psi_{n+p,\alpha}^{-1}(u)du
\end{equation*}
is a smooth function on $F^{p-1}\backslash \A^{p-1}$, and hence has a Fourier expansion. Then we continue the Fourier expansion along
\begin{equation*}
\begin{pmatrix} I_{p-i} & x \\ & 1\end{pmatrix}^\wedge\in N_{n+p}(\A),
\end{equation*}
where $x\in \A^{p-i}$, for $i=p-1, p-2, \cdots, 1$. Notice that $U_{n+p}^{n+p-1}=N_{n+p}$ and for  for $i=p-1$, $P_{p-i}^1=Z_p$. We then obtain \eqref{eq-prop-ConstantTerm-prop1-2}.
\end{proof}

As an immediate corollary to Proposition~\ref{prop-ConstantTerm-prop1} (2), we obtain the following result. 

\begin{proposition}
\label{prop-ConstantTerm-prop2}
Let $f$ be an automorphic function on $H^{\delta}(\A)$ which is smooth and of uniform moderate growth. Let $1\le p \le \wt m_{n,\alpha}$. If
\begin{equation*}
( f^{U_{p-i}} )^{\psi_{n+i,\alpha}} = 0 \quad \text{ for all }0\le i \le p-1,	
\end{equation*}
then
\begin{equation*}
c_p(f^{\psi_{n,\alpha}})=\sum_{\gamma\in Z_p(F)\backslash \GL_p(F)} \int_{\mathcal{L}(\A)}   f^{\psi_{n+p,\alpha}}(  \hat{\gamma}  \lambda \beta) d\lambda.
\end{equation*}
\end{proposition}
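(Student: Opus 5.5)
This is immediate from Proposition~\ref{prop-ConstantTerm-prop1}~(2), so the plan is simply to specialize the second expression in \eqref{eq-prop-ConstantTerm-prop1-2} under the stated hypothesis. That expression writes $c_p(f^{\psi_{n,\alpha}})$ as a sum over $0\le i\le p$ of terms
\[
\sum_{\gamma\in P_{p-i}^1(F)\backslash \GL_p(F)} \int_{\mathcal{L}(\A)} (f^{U_{p-i}})^{\psi_{n+i,\alpha}}(\hat{\gamma}\lambda\beta)\, d\lambda .
\]
I would split off the term $i=p$ and treat the terms $0\le i\le p-1$ separately.

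For $0\le i\le p-1$, the hypothesis says that $(f^{U_{p-i}})^{\psi_{n+i,\alpha}}$ vanishes identically as a function on $H^\delta(\A)$. In particular it vanishes at every $\hat{\gamma}\lambda\beta$. Hence every integrand in these terms is zero, and the corresponding sums over $\gamma$ and integrals over $\mathcal{L}(\A)$ contribute nothing. The convergence established in the proof of Proposition~\ref{prop-ConstantTerm-prop1} guarantees that discarding these zero terms is legitimate.

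For $i=p$, we have $U_0$ trivial, so $f^{U_0}=f$, and $P_0^1=Z_p$ (as noted at the end of that proof). Thus the remaining term is exactly
\[
\sum_{\gamma\in Z_p(F)\backslash\GL_p(F)}\int_{\mathcal{L}(\A)} f^{\psi_{n+p,\alpha}}(\hat{\gamma}\lambda\beta)\,d\lambda,
\]
which is the first summand in \eqref{eq-prop-ConstantTerm-prop1-2} and gives the claimed formula.

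The only point needing care is this bookkeeping identification of the $i=p$ term (that $U_0$ is trivial and $P_0^1=Z_p$). There is no real obstacle.
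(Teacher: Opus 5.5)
Your proposal is correct and is the paper's own argument: the paper states this as an immediate corollary of Proposition~\ref{prop-ConstantTerm-prop1}~(2). The hypothesis kills the terms with $0\le i\le p-1$, and the surviving $i=p$ term, with $U_0$ trivial and $P_0^1=Z_p$, is exactly the sum over $Z_p(F)\backslash \GL_p(F)$. You don't need the convergence remark, since those integrands vanish identically.
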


\section{The non-vanishing of the twisted automorphic descent}
\label{section-non-vanishing}
The goal of this section is to prove parts (3), (5) and (6) of Theorem~\ref{thm-Main}.

\subsection{Generalized and degenerate Whittaker-Fourier coefficients}

In this section we recall the notion of generalized and degenerate Whittaker-Fourier coefficients attached to nilpotent orbits, following \cite{GomezGourevitchSahi2017}.
Let $\RG$ be a reductive group defined over a number field $F$ and let $\mathfrak{g}$ be the Lie algebra of $\RG(F)$.
Given any semi-simple element $s \in \mathfrak{g}$, under the adjoint action, $\mathfrak{g}$ is decomposed to a direct sum of eigenspaces $\mathfrak{g}^s_i$ corresponding to eigenvalues $i$.
We call a semi-simple element
$s$ {\it rational semi-simple} if all its eigenvalues are in $\mathbb{Q}$.
Given a nilpotent element $u$, a {\it Whittaker pair} is a pair $(s,u)$ with $s \in \mathfrak{g}$ being a rational semi-simple element, and $u \in \mathfrak{g}^s_{-2}$. The element $s$ in a Whittaker pair $(s, u)$ is called a {\it neutral element} for $u$ if there is a nilpotent element $v \in \mathfrak{g}^{s}_{2}$ such that $(v,s,u)$ is an $\mathfrak{sl}_2$-triple. 
We call a Whittaker pair $(s,u)$ a {\it neutral pair} if $s$ is a neutral element for $u$.

For a Whittaker pair $(s,u)$, we define an anti-symmetric form $\omega_{u}$ on $\mathfrak{g}$ by 
\begin{equation*}
\omega_{u}(X, Y):=\kappa(u, [X, Y]),
\end{equation*}
where $\kappa$ is the Killing form on $\mathfrak{g}\times \mathfrak{g}$. For any rational number $r\in \mathbb{Q}$, let $\mathfrak{g}^s_{\geq r} = \oplus_{r' \geq r} \mathfrak{g}^s_{r'}$.
Let $\mathfrak{u}_s= \mathfrak{g}^s_{\geq 1}$ and let $\mathfrak{n}_{s,u}$ be the radical of $\omega_u |_{\mathfrak{u}_s}$. Then we have $[\mathfrak{u}_s, \mathfrak{u}_s] \subset \mathfrak{g}^s_{\geq 2} \subset \mathfrak{n}_{s,u}$. By \cite[Lemma 3.2.6]{GomezGourevitchSahi2017}, we have $\mathfrak{n}_{s,u} = \mathfrak{g}^s_{\geq 2} + \mathfrak{g}^s_1 \cap \mathfrak{g}_u$.
If the Whittaker pair $(s,u)$ comes from an $\mathfrak{sl}_2$-triple $(v,s,u)$, then $\mathfrak{n}_{s,u}=\mathfrak{g}^s_{\geq 2}$. Let $U_{s}=\exp(\mathfrak{u}_s)$ and $N_{s,u}=\exp(\mathfrak{n}_{s,u})$ be the corresponding unipotent subgroups of $\RG$. We define a character of $N_{s,u}(\A)$ by
\begin{equation}
\label{eq-GGS-char}
\psi_u(n)=\psi(\kappa(u,\log(n)))	
\end{equation}
Here, we extend the killing form $\kappa$ to $\mathfrak{g}(\A)\times\mathfrak{g}(\A)$. 
 Let $N_{s,u}' = N_{s,u} \cap \ker (\psi_u)$. Then $U_s/N_{s,u}'$ is a Heisenberg group, and its center  is $N_{s,u}/N_{s,u}'$.

Let $\pi$ be an irreducible automorphic representation of $\RG(\A)$. For any automorphic form $\phi \in V_\pi$, the {\it degenerate Whittaker-Fourier coefficient} of $\phi$ attached to a Whittaker pair $(s,u)$ is defined to be
\begin{equation}
\label{eq-GomezGourevitchSahi-degenerateWhittakerFC}
\mathcal{F}_{s,u}(\phi)(g):=\int_{[N_{s,u}]} \phi(ng) \psi_u^{-1}(n) \, \mathrm{d}n\,.
\end{equation}
If $s$ is a neutral element for $u$, then $\mathcal{F}_{s,u}(\phi)$ is also called a {\it generalized Whittaker-Fourier coefficient} of $\phi$.
Let 
\begin{equation*}
\mathcal{F}_{s,u}(\pi)=\{\mathcal{F}_{s,u}(\phi)|\phi \in V_\pi\}.	
\end{equation*}
The {\it wave-front set} $\mathfrak{n}(\pi)$ of $\pi$ is defined to the set of nilpotent orbits $\mathcal{O}$ such that $\mathcal{F}_{s,u}(\pi)$ is non-zero, for some neutral pair $(s,u)$ with $u \in \mathcal{O}$. Note that if $\mathcal{F}_{s,u}(\pi)$ is non-zero for some neutral pair $(s,u)$ with $u \in \mathcal{O}$, then it is non-zero for any such neutral pair $(s,u)$, since the non-vanishing property of such generalized Whittaker-Fourier coefficients does not depend on the choices of representatives of $\mathcal{O}$.
Let $\mathfrak{n}^m(\pi)$ be the set of maximal elements in $\mathfrak{n}(\pi)$ under the natural order of nilpotent orbits.
In the following we recall a theorem from \cite{GomezGourevitchSahi2017}.

\begin{theorem}[Theorem C, \cite{GomezGourevitchSahi2017}]\label{thm-GomezGourevitchSahi-global1}
Let $\pi$ be an irreducible automorphic representation of $\RG(\A)$.
Given a Whittaker pair $(s',u)$ and a neutral pair $(s,u)$, if $\mathcal{F}_{s',u}(\pi)$ is non-zero, then $\mathcal{F}_{s,u}(\pi)$ is non-zero.
\end{theorem}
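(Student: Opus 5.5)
The plan is to connect $(s',u)$ to a neutral pair by a piecewise-linear deformation and to show that non-vanishing is preserved along each step. This is the strategy of \cite{GomezGourevitchSahi2017}.

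First, choose the neutral element $s$ for $u$ so that $Z:=s'-s$ commutes with $s$ and $u$, and in fact with a whole $\mathfrak{sl}_2$-triple $(v,s,u)$. To arrange this, observe that $s'$ normalizes the line through $u$ with eigenvalue $-2$. By the Jacobson--Morozov/Kostant theory applied inside the centralizer of $s'$ (a reductive group containing $u$ in its $-2$ eigenspace), we can complete $u$ to a triple $(v,s,u)$ with $s$ commuting with $s'$. Then $Z=s'-s$ centralizes $u$, and we may further take $Z$ in the reductive part of the centralizer of the triple. Since all eigenvalues of $s'$ and $s$ are rational, so are those of $Z$. For $t\in[0,1]$ put $s_t=s+tZ$. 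Each $(s_t,u)$ is a Whittaker pair, because $[Z,u]=0$ gives $u\in\mathfrak g^{s_t}_{-2}$. Moreover $s_0=s$ is neutral and $s_1=s'$.

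Second, follow how the data $\mathfrak u_{s_t}$, $\mathfrak n_{s_t,u}$ and $\psi_u$ vary with $t$. Eigenvalues of $\operatorname{ad}(s_t)$ on joint eigenspaces of $(s,Z)$ are affine in $t$. Hence $\mathfrak g^{s_t}_{\ge 1}$ and $\mathfrak g^{s_t}_{\ge 2}$ are locally constant away from finitely many critical values $0<t_1<\dots<t_k<1$, and they jump only at these values. On each open interval between critical values, nothing changes, so $\mathcal F_{s_t,u}$ is literally constant. The work therefore concentrates at each critical value $t_i$, where we compare $\mathcal F_{s_{t_i-\epsilon},u}$, $\mathcal F_{s_{t_i},u}$ and $\mathcal F_{s_{t_i+\epsilon},u}$. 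The character $\psi_u(n)=\psi(\kappa(u,\log n))$ is defined uniformly, and it is trivial on every root space that enters or leaves, since those have $\operatorname{ad}(s)$-weight different from $2$ and so pair trivially with $u$ under $\kappa$.

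Third, at a critical value we realize the passage as a finite sequence of two moves. \emph{Root exchange}: if a root subgroup $Y$ leaves the integration domain while a dual subgroup $X$ enters, with the commutator pairing $\psi_u((x,y))=\psi(\kappa(u,[\log x,\log y]))$ non-degenerate, the six axioms before Lemma~\ref{lemma-RootExchange} hold for $(C,\psi_C,X,Y)$. Corollary~\ref{corollary-RootExchange1} then gives that vanishing of one coefficient is equivalent to vanishing of the other. Non-degeneracy is where $\omega_u$ enters: the pieces of $\mathfrak g^{s_t}_1$ modulo $\mathfrak g_u$ pair non-degenerately, which is exactly the content of $\mathfrak n_{s,u}=\mathfrak g^s_{\ge 2}+\mathfrak g^s_1\cap\mathfrak g_u$. \emph{Fourier expansion}: if an abelian subgroup $R$ in $\mathfrak g_u$ becomes integrated with trivial character, expand the larger coefficient along $[R]$ as $\sum_{\chi}$ over characters of $R(F)\backslash R(\A)$. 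The term with $\chi$ trivial is the desired coefficient. A term with $\chi\neq 1$ corresponds to a coefficient for the pair $(s_t,u+r)$ with $r\in\mathfrak g^{s_t}_{-2}$. Using that $u+r$ is conjugate to $u$ under the rational points of $\exp(\mathfrak g^{s}_{>0})$ (the $\mathfrak{sl}_2$ argument), automorphy of $\phi$ converts each such term back into a translate of $\mathcal F_{s_t,u}$. Hence non-vanishing of the larger coefficient forces non-vanishing of $\mathcal F_{s_t,u}$ itself.

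The main obstacle is the bookkeeping at a critical value. One must prove that the jump of $(\mathfrak u_{s_t},\mathfrak n_{s_t,u})$ can always be split into isotropic/Lagrangian pieces for $\omega_u$, so that the root-exchange axioms (in particular (5) and (6)) genuinely hold. One must also justify the conjugation step that returns $u+r$ to $u$ over $F$. For both I would first try to reduce to a graded decomposition of $\mathfrak g$ by the commuting pair $(s,Z)$: work with weight spaces $\mathfrak g_{a,b}$ and use the $\mathfrak{sl}_2$-representation theory of $\operatorname{ad}(v,s,u)$ on each $Z$-eigenspace. Chaining the moves over all critical values from $t=1$ down to $t=0$ then gives $\mathcal F_{s,u}(\pi)\neq 0$.
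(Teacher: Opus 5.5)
\textbf{Verdict: genuine gap.} Your architecture is the Gomez--Gourevitch--Sahi argument that the paper cites \cite{GomezGourevitchSahi2017}: a neutral $s$ with $Z=s'-s$ centralizing a triple, the path $s_t=s+tZ$, and root exchange at finitely many critical values. But your second move (``Fourier expansion'' along $R\subset\exp(\mathfrak g_u)$) is false as stated, and your bookkeeping invokes it.

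\emph{Why the move fails.} Since $\kappa(u,[X,Y])=-\kappa(X,[u,Y])$, we have $\omega_u(\mathfrak g,\mathfrak g_u)=0$. So the pairing that would let rational unipotent elements carry a non-trivial character of $[R]$ to the trivial one vanishes identically. The non-trivial modes are coefficients attached to $u+r$, which in general lies in a different orbit. Concretely, take $u=0$ (so $s=0$ and $\mathfrak g_u=\mathfrak g$) on $\GL_2$, with $R$ the unipotent radical of the Borel. A cusp form is non-zero, but its constant term along $R$ vanishes; the non-trivial modes are Whittaker coefficients, attached to the regular orbit.

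\emph{Why you were led to it.} You track $\mathfrak g^{s_t}_{\ge 2}$. By its definition as a radical, however, $\mathfrak n_{s_t,u}=\mathfrak g^{s_t}_{>1}\oplus(\mathfrak g^{s_t}_1\cap\mathfrak g_u)$, because $\omega_u$ pairs $\mathfrak g^{s_t}_c$ only with $\mathfrak g^{s_t}_{2-c}$. The formula with $\mathfrak g^s_{\ge 2}$ quoted in the paper is correct only when $s$ has integral eigenvalues. With the $\ge 2$ version, $\mathfrak g^{s_{t_0}}_1\cap\mathfrak g_u$ appears to enter the domain as $t$ decreases to $t_0$, and that is exactly what your move tries, and fails, to handle.

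\emph{The repair.} With the correct radical only crossings of level $1$ matter. The bookkeeping you defer is then settled by the $(s,Z)$-bigrading $\mathfrak g=\bigoplus\mathfrak g_{a,b}$. The form $\omega_u$ pairs $\mathfrak g_{a,b}$ only with $\mathfrak g_{2-a,-b}$, and its radicals are the lowest-weight vectors $\mathfrak g_u\cap\mathfrak g_{a,b}$, which have $a\le 0$.

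Fix a critical $t_0\in(0,1]$. Let $\mathfrak x$ (resp.\ $\mathfrak y$) be the sum of the $\mathfrak g_{a,b}$ with $a+t_0b=1$ and $b>0$ (resp.\ $b<0$).
\begin{itemize}
\item Since $a\le 0$ and $a+t_0b=1$ force $b>0$, we get $\mathfrak y\cap\mathfrak g_u=0$.
\item Hence $\omega_u$ is a perfect pairing between $\mathfrak y$ and a graded complement $\mathfrak x_1$ of $\mathfrak x_0=\mathfrak x\cap\mathfrak g_u$.
\item Put $\mathfrak c=\mathfrak g^{s_{t_0}}_{>1}\oplus\mathfrak x_0$. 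Then $\mathfrak n_{s_{t_0+\epsilon},u}=\mathfrak c\oplus\mathfrak x_1$ and $\mathfrak n_{s_{t_0-\epsilon},u}\oplus\mathfrak x_0=\mathfrak c\oplus\mathfrak y$.
\item Corollary~\ref{corollary-RootExchange1} exchanges $\mathfrak x_1$ and $\mathfrak y$.
\item The coefficient over $\mathfrak c\oplus\mathfrak y$ is obtained from $\mathcal F_{s_{t_0-\epsilon},u}$ by a further integration over the $\mathfrak x_0$-directions with trivial character. So non-vanishing passes down.
\end{itemize}
At $t=0$ one has $\mathfrak n_{s_\epsilon,u}=\mathfrak g^s_{\ge 2}\oplus\bigoplus_{b>0}\mathfrak g_{1,b}\supset\mathfrak n_{s,u}$, so again only an integration is dropped.

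Thus $\mathfrak g_u$-directions are only ever removed as $t$ decreases. The one place an integration must be added is $t=1$, if it is critical: there $\mathfrak n_{s',u}=\mathfrak c$ must be enlarged to $\mathfrak c\oplus\mathfrak y$. Expanding along $\mathfrak y$ and conjugating by $\exp(\mathfrak x_1)(F)$ works there precisely because $\mathfrak y$ is paired with $\mathfrak x_1$, i.e.\ is not in $\mathfrak g_u$ — the opposite of your hypothesis on $R$.

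\emph{Smaller repairs to your first step.}
\begin{itemize}
\item The triple comes from the graded Jacobson--Morozov theorem for the $s'$-grading, giving $s\in\mathfrak g^{s'}_0$ and $v\in\mathfrak g^{s'}_2$. The element $u$ does not lie in the centralizer of $s'$.
\item The theorem fixes an arbitrary neutral $s$, so you need Kostant's result that neutral elements for $u$ are $G_u(F)$-conjugate, together with $\mathcal F_{\gamma s\gamma^{-1},u}(\phi)(g)=\mathcal F_{s,u}(\phi)(\gamma^{-1}g)$ for $\gamma\in G_u(F)$.
\item The character $\psi_u$ is trivial on the moving pieces because they have non-zero $Z$-weight, not because their $s$-weight differs from $2$.
\end{itemize}
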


For $\RG=\GSpin_{4n+2}^\delta$, since the projection $\pr:\GSpin_{4n+2}^\delta\to \SO_{4n+2}^{\delta}$ splits over unipotent subgroups, as in the special orthogonal case, the nilpotent orbits of $\RG$ are parameterized by pairs $(\underline{p}, \underline{q})$, where $\underline{p}$ is an orthogonal partition and $\underline{q}$ is a set of non-degenerate quadratic forms (see, for example, \cite[Section I.6]{Waldspurger2001Nilpotent}).  
Here, an orthogonal partition means a partition such that even parts occur with even multiplicities. 

Given an automorphic representation $\pi$ of $\RG(\A)$, let $\mathfrak{p}^m(\pi)$ be the set of partitions corresponding to nilpotent orbits in $\mathfrak{n}^m(\pi)$.
Given an orthogonal partition $\underline{p}$, by a generalized Whittaker-Fourier coefficient of $\pi$ attached to $\underline{p}$ we mean
 a generalized
Whittaker-Fourier coefficient $\mathcal{F}_{s,u}(\phi)$ attached to a nilpotent orbit $\mathcal{O}$ parametrized by a pair $(\underline{p}, \underline{q})$ for some $\underline{q}$, with $\phi \in V_\pi$, $u\in \mathcal{O}$ and $(s,u)$ being a neutral pair. 
To simplify notation, we sometimes also write a generalized Whittaker-Fourier coefficient attached to $\underline{p}$ simply as $\mathcal{F}^{\psi_{\underline{p}}}(\phi)$,  without specifying the $F$-rational nilpotent orbit $\mathcal{O}$ and the neutral pair.

For $\RG=\GSpin_{4n+2}^\delta$, as for $\SO_{4n+2}^\delta$, an orthogonal partition $\underline{p}$ is called {\it special} if it has an even number of odd parts between two consecutive even parts and an even number of odd parts greater than the largest even part (see \cite[Section 6.3]{CollingwoodMcGovern1993}). 
 By the main results of \cite{JiangLiuSavin2016}, if $\underline{p} \in \mathfrak{p}^m(\pi)$, then $\underline{p}$ is special.

\subsection{Non-vanishing of the descent}

\begin{proposition}
\label{prop-nonvanishing-toporbit}
The residual representation $\CE_{\tau \otimes \sigma}$ has a non-zero generalized Whittaker-Fourier coefficient attached to the partition $[(2n)^2, 1^2]$.
\end{proposition}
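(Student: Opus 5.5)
The plan is to obtain the coefficient by transferring the non-vanishing of the constant term along $U$ (the unipotent radical of $P$) through the structure of $\tau$. Since $\CE_{\tau\otimes\sigma}$ is generated by iterated residues of $E(g,s,\phi_{\tau\otimes\sigma})$ at $s=\frac12$, its constant term along $P$ is non-zero. By \cite[II.1.7]{MoglinWaldspurger1995} this constant term is given by the residue of the intertwining operator term, and it lands in $|\det|^{-\frac12}\tilde\tau\otimes\sigma$ (or a twist such as $\tau\otimes\omega_\sigma^{-1}$). Since $\tau$ is generic, the Whittaker coefficient of $\GL_{2n}$ along $Z_{2n}$ applied to this constant term is non-zero. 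Composing the two gives a non-vanishing degenerate Whittaker--Fourier coefficient $\mathcal F_{s',u}(\CE_{\tau\otimes\sigma})$, taken along the unipotent group $Z_{2n}\ltimes U$, where
\begin{itemize}
\item $u$ is the regular nilpotent element of the Levi $\GL_{2n}$, embedded in $\mathfrak{so}_{4n+2}$, which makes it a nilpotent of $H^\delta$;
\item $s'$ is a rational semisimple element chosen so that $\mathfrak g^{s'}_{\ge 1}\supset \mathrm{Lie}(U)$, with $u\in\mathfrak g^{s'}_{-2}$, and so that $\mathfrak n_{s',u}$ is exactly $\mathrm{Lie}(Z_{2n})\oplus\mathrm{Lie}(U)$ with the character trivial on $U$.
\end{itemize}
Concretely, I would take $s'=h_{\GL_{2n}}+ c\cdot h_P$, where $h_{\GL_{2n}}$ is the neutral element of the principal $\mathfrak{sl}_2$ of $\GL_{2n}$ and $h_P$ is the central cocharacter of $P$, with $c$ large so that $U$ sits in degree $\ge 2$.

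The next step is to identify the orbit. The regular nilpotent of $\mathfrak{gl}_{2n}$ acting on $V^+\oplus V_0\oplus V^-$ has Jordan blocks of size $2n$ on $V^+$ and on $V^-$, and is zero on $V_0$. Its orthogonal partition is therefore $[(2n)^2,1^2]$, which is orthogonal because the even part $2n$ occurs with even multiplicity. The rational orbit datum $\underline q$ is determined by the restriction to $V_0$ together with the split hyperbolic form on the $(2n)^2$ part.

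Theorem~\ref{thm-GomezGourevitchSahi-global1} then transfers non-vanishing from the Whittaker pair $(s',u)$ to a neutral pair $(s,u)$. This gives a non-zero generalized Whittaker--Fourier coefficient attached to $[(2n)^2,1^2]$. The projection $\pr$ is an isomorphism on unipotent subgroups (Remark~\ref{remark-unipotent-variety-isomorphism}), so the $\SO$ nilpotent-orbit formalism applies verbatim to $\GSpin^\delta_{4n+2}$.

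I expect the main obstacle to be checking that $(s',u)$ really is a Whittaker pair whose $\mathfrak n_{s',u}$ and character $\psi_u$ reproduce exactly the integral over $[Z_{2n}U]$ against $\psi_{Z_{2n}}\otimes 1$. This requires verifying that $\mathfrak g^{s'}_1\cap\mathfrak g_u$ contributes nothing extra, or only pieces on which the character is trivial and the function is invariant. A second point needing care is that the constant term of the residue is genuinely non-zero as a form on $M(\A)$ that is generic in the $\GL_{2n}$-variable. For this I would cite the computation in \cite{Yan2025GGP} identifying the residual constant term with the residue of $M(w,s)\phi$, whose image is isomorphic to $\tau$ up to twist, and hence is generic.
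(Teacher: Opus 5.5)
Your proposal is correct and follows essentially the same route as the paper: the paper also uses the regular nilpotent $u$ of the $\GL_{2n}$ Levi, and the Whittaker pair $(s',u)$ with $s'=\diag(4n,4n-2,\dots,2,0,0,-2,\dots,-4n)$, which is your $h_{\GL_{2n}}+c\,h_P$ with $c=2n+1$. It then identifies $\mathcal F_{s',u}$ with the constant term along $U$ composed with the Whittaker coefficient of $\tau$, and concludes with Theorem~\ref{thm-GomezGourevitchSahi-global1}. For this choice of $c$ all eigenvalues of $\mathrm{ad}(s')$ are even, so $\mathfrak g^{s'}_1=0$ and the obstacle you anticipate about $\mathfrak g^{s'}_1\cap\mathfrak g_u$ does not arise.
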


\begin{proof}
Let $\alpha_i=e_i-e_{i+1}$, $1\le i \le 2n$, $\alpha_{2n+1}=e_{2n}+e_{2n+1}$ be the set of simple roots for $\GSpin_{4n+2}^\delta$. Let $x_{\alpha_i}$ be the one-dimensional root subgroup corresponding to $\alpha_i$, $1\le i \le 2n+1$. By \cite[\S I.6]{Waldspurger2001Nilpotent}, there is only one nilpotent orbit $\mathcal{O}$ corresponding to the partition $[(2n)^2,1^2]$. A representative of the nilpotent orbit $\mathcal{O}$ can be taken as $u=\sum_{i=1}^{n-1}x_{-\alpha_i}(1)$. Let $s$ be the following semi-simple element:
\begin{equation*}
s=\diag( 2n-1, 2n-3, \cdots, 1-2n, 0, 0, 2n-1, \cdots, 3-2n,1-2n).
\end{equation*}
Then $(s,u)$ is a neutral pair. Let $s^\prime$ be the following semi-simple element
\begin{equation*}
s^\prime=\diag( 4n, 4n-2, \cdots, 2, 0, 0, -2, \cdots, 2-4n, -4n).	
\end{equation*}
Then $(s^\prime, u)$ is a Whittaker pair. We consider $\mathcal{F}_{s^\prime, u}(\CE_{\tau \otimes \sigma})$. Recall that $P=M\ltimes U$ is the standard parabolic subgroup of $\GSpin_{4n+2}^\delta$ with Levi subgroup $M\cong \GL_{2n}\times \GSpin_{2}^\delta$ and unipotent radical $U$. By definition, for any $\phi\in \CE_{\tau \otimes \sigma}$, $\mathcal{F}_{s^\prime, u}(\CE_{\tau \otimes \sigma})$ is the constant term integral over $U(F)\backslash U(\A)$ combined with a non-degenerate Whittaker-Fourier coefficient of $\tau$. 
The constant term integral over $U(F)\backslash U(\A)$ is non-zero since the parabolic subgroup $P=M\ltimes U$ is the parabolic subgroup used to define the Eisenstein series and the residual representation $\CE_{\tau \otimes \sigma}$. 
The non-degenerate Whittaker-Fourier coefficient of $\tau$ is non-zero since $\tau$ is generic. 
Hence $\mathcal{F}_{s^\prime, u}(\CE_{\tau \otimes \sigma})$ is non-vanishing. By Theorem~\ref{thm-GomezGourevitchSahi-global1}, $\mathcal{F}_{s, u}(\CE_{\tau \otimes \sigma})$ is non-vanishing. Thus $\CE_{\tau \otimes \sigma}$ has a non-zero generalized Whittaker-Fourier coefficient attached to the partition $[(2n)^2,1^2]$. 
\end{proof}

We now prove part (6) of Theorem~\ref{thm-Main}.
\begin{proposition}[Theorem~\ref{thm-Main} (6)]
\label{prop-nonvanishing-toporbitraise}
The residual representation $\CE_{\tau \otimes \sigma}$ has a non-zero generalized Whittaker-Fourier coefficient attached to the partition $[2n+1,2n-1, 1^2]$.
\end{proposition}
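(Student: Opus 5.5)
We would follow the strategy of \cite[\S 5]{JiangLiuXuZhang2016} (itself modelled on \cite{GinzburgRallisSoudry2011}): start from the non-vanishing of the $[(2n)^2,1^2]$-coefficient established in Proposition~\ref{prop-nonvanishing-toporbit}, and ``raise'' it to $[2n+1,2n-1,1^2]$ by a chain of Whittaker pairs and root exchanges. The point is that $[(2n)^2,1^2]$ and $[2n+1,2n-1,1^2]$ are adjacent in the dominance order and are linked by a standard degeneration. Moreover, a neutral pair for $[2n+1,2n-1,1^2]$ can be chosen whose coefficient is, up to conjugation by a Weyl element, the composition of a Bessel coefficient $\psi_{n,\alpha}$ (partition $[2n+1,1^{2n+1}]$) with a generalized Whittaker coefficient of type $[2n-1,1^2]$ on $H_{n,\alpha}=\GSpin_{2n+1}^{\delta,\alpha}$. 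This identification is also what will be used later for parts (3) and (5).

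\textbf{Step 1.} We fix a representative $u'$ of an $F$-rational orbit in $[2n+1,2n-1,1^2]$. A convenient choice is $u'=u+x$, where $u=\sum_{i=1}^{n-1}x_{-\alpha_i}(1)+\dots$ is the representative used in Proposition~\ref{prop-nonvanishing-toporbit}, extended along the second block, and $x$ is a root vector in the $\psi_{n,\alpha}$-direction pairing $e_n$ with $y_\alpha$. We then write down a neutral element $s_1$ for $u'$, and we interpolate between the pair $(s,u)$ of Proposition~\ref{prop-nonvanishing-toporbit} and $(s_1,u')$ via a family of Whittaker pairs $(s+t z, u)$ and $(s_1,u')$, with $z$ a rational semisimple element commuting with $u$. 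Following the ``degenerate to generalized'' principle of \cite{GomezGourevitchSahi2017} (Theorem~\ref{thm-GomezGourevitchSahi-global1} and its proof), non-vanishing of $\mathcal F_{s,u}$ gives non-vanishing of a degenerate coefficient $\mathcal F_{s+\epsilon z,u}$ whose unipotent group is smaller.

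\textbf{Step 2.} We take a Fourier expansion of $\mathcal F_{s+\epsilon z,u}(\phi)$ along the abelian quotient $\mathfrak g^{s+\epsilon z}_{\geq 1}/\mathfrak n$ that appears when $\epsilon$ crosses the critical value, and we use the root exchange lemma (Lemma~\ref{lemma-RootExchange}, Corollary~\ref{corollary-RootExchange1}) to move between unipotent groups. The expansion is a sum over characters indexed by a vector $\xi$ in a $(2n+2)$-dimensional space, which we organize into orbits under the stabilizer $\GSpin$-Levi. There are three kinds of terms:
\begin{itemize}
\item terms with $\xi$ isotropic, or $\xi=0$;
\item terms in which $\xi$ lands in a deeper unipotent radical;
\item terms with $q(\xi)=-\alpha\neq 0$, which produce exactly the coefficients attached to $[2n+1,2n-1,1^2]$ for some square class $\alpha$.
\end{itemize}

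\textbf{Step 3.} We kill the first two kinds of terms. The terms with $\xi$ isotropic or zero give coefficients attached to partitions such as $[2n+1,2n+1]$ or $[(2n+1)^2,\dots]$, or to orbits not bounded by the wave front of $\CE_{\tau\otimes\sigma}$. Here we use Corollary~\ref{cor-MainThm-Part1}: these coefficients contain an inner $\psi_{\ell,w_0}$-Bessel coefficient with $\ell\ge n+1$, after root exchange, so they vanish. We also use the constant-term vanishing of Proposition~\ref{prop-cuspidality}, namely that the constant terms of $\CE_{\tau\otimes\sigma}$ along $U_j$ vanish for $P_j\neq P$, to kill the terms landing in deeper unipotent radicals. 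Since the whole expansion is non-zero, some term with $q(\xi)\neq0$ survives, which is the claim.

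The main obstacle is Step 3: we must verify, in the $\GSpin$ setting, that every isotropic or degenerate $\xi$-term genuinely reduces, via root exchanges, to a $\psi_{\ell,w_0}$ coefficient with $\ell>n$ or to a vanishing constant term. The required conjugations go through $\GPin$ elements such as $\omega_b$, so we also need to check that the characters match under the lifts of Lemma~\ref{lemma-Levi-lifting}. We would first try to carry over the corresponding bookkeeping of \cite[Prop.~5.3]{JiangLiuXuZhang2016} verbatim via Remark~\ref{remark-unipotent-variety-isomorphism}, since only unipotent groups and $F$-rational Levi elements are involved.
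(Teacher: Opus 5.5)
\textbf{There is a genuine gap, and it is in Step~3.}

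Corollary~\ref{cor-MainThm-Part1} only kills $\psi_{\ell,w_0}$-coefficients with $\ell\ge n+1$. In other words, it only excludes partitions whose largest part is at least $2n+3$. Every orthogonal partition of $4n+2$ with largest part $\le 2n+1$ is compatible with it, and this includes $[(2n+1)^2]$.

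A coefficient attached to $[(2n+1)^2]$ is the composite of the $\ell=n$ coefficient $\psi_{n,\alpha}$ with a Whittaker coefficient of a split $\GSpin^{\delta,\alpha}_{2n+1}$. So there is no inner $\psi_{\ell,w_0}$ with $\ell>n$ to appeal to, and nothing in the paper makes such terms vanish. In fact they are non-zero whenever a descent $\TD_{\psi_{n,\alpha}}(\CE_{\tau\otimes\sigma})$ to a split $\GSpin^{\delta,\alpha}_{2n+1}$ has a generic summand.

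The constant-term input is also insufficient. Only the constant terms along $U_j$ with $P_j\neq P$ vanish. The degenerate ($\xi=0$ or isotropic) contributions of an expansion like yours are of the kind that, in the proof of Proposition~\ref{prop-nonvanishing}, leads to a Whittaker coefficient of the constant term along $U$ itself. That coefficient is non-zero by Proposition~\ref{prop-nonvanishing-toporbit}. The paper eliminates this term only after integrating it over an adelic unipotent group, using equivariance under $D(t)=(tI_{2n},1)$ and the Weyl element $\omega$ of \eqref{eq-prop-nonvanishing-eq4}, together with the unitarity of $\tau$. Your plan has no substitute for this. Hence the conclusion that a $[2n+1,2n-1,1^2]$-term must survive is not justified. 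Steps~1--2 are also only sketched, but Step~3 is where the argument actually breaks.

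\textbf{The paper's route, and a route that works.} The paper does no raising by hand. It combines Proposition~\ref{prop-nonvanishing-toporbit} with \cite[Theorem 11.2]{JiangLiuSavin2016}, viewing $[2n+1,2n-1,1^2]$ as the special expansion of $[(2n)^2,1^2]$. Note, however, that by the criterion for an orthogonal partition to be special recalled before Proposition~\ref{prop-nonvanishing-toporbit}, $[(2n)^2,1^2]$ is itself special. Its transpose $[4,2^{2n-1}]$ is symplectic, and for $n=1$ this is just $\mathfrak{so}_6\cong\mathfrak{sl}_4$. So a specialness argument alone does not force a strictly larger orbit.

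A route using only results proved in the paper is to deduce (6) from (3) and (5), whose proofs do not use (6):
\begin{enumerate}
\item Choose $\alpha$ with $\TD_{\psi_{n,\alpha}}(\CE_{\tau\otimes\sigma})\neq 0$. Choose a summand $\pi$ with a non-zero $[2n-1,1^2]$-coefficient, i.e.\ a non-zero Bessel coefficient $\psi'_{n-1}$ on $H_{n,\alpha}$.
\item Since $\TD_{\psi_{n,\alpha}}(\CE_{\tau\otimes\sigma})$ is spanned by the functions $f^{\psi_{n,\alpha}}|_{H_{n,\alpha}(\A)}$, the iterated coefficient $(f^{\psi_{n,\alpha}})^{\psi'_{n-1}}$ is non-zero for some $f\in\CE_{\tau\otimes\sigma}$.
\item Its nilpotent has a Jordan block of size $2n+1$ on $V_n^+\oplus Fy_\alpha\oplus V_n^-$ and one of size $2n-1$ inside $y_\alpha^\perp\cap W_n$. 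Hence it is of type $[2n+1,2n-1,1^2]$.
\item The usual composite-coefficient argument (root exchange plus Theorem~\ref{thm-GomezGourevitchSahi-global1}) turns it into a non-zero generalized Whittaker coefficient.
\end{enumerate}
So (6) should come out as a consequence of (3) and (5), rather than the other way round as you intended.
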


\begin{proof}
By Proposition~\ref{prop-nonvanishing-toporbit} and \cite[Theorem 11.2]{JiangLiuSavin2016}, $\mathfrak{p}^m(\CE_{\tau \otimes \sigma})$ contains the special expansion of the partition $[(2n)^2,1^2]$, i.e., the smallest special partition of $4n+2$ which is greater than $[(2n)^2,1^2]$. The special expansion of $[(2n)^2,1^2]$ is $[2n+1,2n-1 ,1^2]$. The result follows. 
\end{proof}

We now prove part (3) of Theorem~\ref{thm-Main}. 
In the following, given $\alpha\in F^\times$, we do not distinguish $\alpha$ with its square class or the quadratic form corresponding to it. 

\begin{proposition}[Theorem~\ref{thm-Main} (3)]
\label{prop-nonvanishing}
There exists a square class $\alpha$ in $F^\times$ such that the representation $\TD_{\psi_{n,\alpha}}(\CE_{\tau\otimes\sigma})$ of $\GSpin^{\delta,\alpha}_{2n+1}(\A)$ is non-zero, and in this case
    $$
    \TD_{\psi_{n,\alpha}}(\CE_{\tau\otimes\sigma})=\pi_1\oplus\pi_2\oplus\cdots\oplus\pi_k\oplus\cdots,
    $$
    where $\pi_i$ are irreducible cuspidal automorphic representations of $\GSpin^{\delta,\alpha}_{2n+1}(\A)$, which are nearly equivalent, but
    are not globally equivalent, i.e. the decomposition is multiplicity-free.
\end{proposition}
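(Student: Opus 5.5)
Existence of a good $\alpha$ will come from Proposition~\ref{prop-nonvanishing-toporbitraise}; the decomposition will come from cuspidality together with the local multiplicity-one theory for Bessel models. For non-vanishing, I start from the fact that $\CE_{\tau\otimes\sigma}$ has a non-zero generalized Whittaker--Fourier coefficient attached to $[2n+1,2n-1,1^2]$. Such a coefficient is built by first taking a Bessel coefficient for the part $2n+1$, which is $\psi_{n,w_0}$ on $N_n$ with $w_0$ anisotropic in $W_n$, and then a coefficient of type $[2n-1,1^2]$ on the stabilizer $\GSpin(w_0^\perp\cap W_n)$. By Remark~\ref{remark-w_0-local} and $L_n(F)$-conjugation, and since conjugating $w_0$ by $\GSpin(W_n)(F)$ does not change vanishing, I may take $w_0=y_\alpha$ for some $\alpha\in F^\times$. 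The coefficient therefore factors as an iterated integral whose inner integral is $f^{\psi_{n,\alpha}}$ restricted to $\GSpin^{\delta,\alpha}_{2n+1}(\A)$. It follows that $\TD_{\psi_{n,\alpha}}(\CE_{\tau\otimes\sigma})\neq0$, and the same computation shows it has a non-zero $[2n-1,1^2]$-coefficient, which is part (5).

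The technical point in this step is matching the unipotent group and character of a neutral pair for $[2n+1,2n-1,1^2]$ with the composite $N_n\cdot(\text{unipotent in }H_{n,\alpha})$ and the product of characters. The two need not agree exactly. I would pass between them using the root exchange lemma (Lemma~\ref{lemma-RootExchange}, Corollary~\ref{corollary-RootExchange1}) together with Theorem~\ref{thm-GomezGourevitchSahi-global1} to move from a Whittaker pair to the neutral pair. I expect this bookkeeping to be the main obstacle.

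For the decomposition, Proposition~\ref{prop-cuspidality} shows that $\TD_{\psi_{n,\alpha}}(\CE_{\tau\otimes\sigma})$ consists of cusp forms. Their central character is $\omega_\sigma$ by Remark~\ref{remark-Eisenstein-series-central-char}, and they are square-integrable modulo the center. Hence the space is a direct sum of irreducible cuspidal representations $\pi_i$.

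Near-equivalence of the $\pi_i$ follows from Proposition~\ref{prop-Jacquet-Module-Unramified-l=n}. At almost all places $v$, $J_{\delta,\alpha}$ splits and all data are unramified. Each $\pi_{i,v}$ is unramified and is a quotient of $J_{\psi_{n,\alpha}}(\mathcal{I}_{\tau_v\otimes\sigma_v})$, so it is the unramified constituent of $\Ind\,\mu_1\otimes\cdots\otimes\mu_n\otimes\omega_\sigma$. Thus all the $\pi_i$ have the same unramified components.

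For multiplicity-freeness, suppose $\pi\cong\pi_i\cong\pi_j$ with $i\neq j$. The $L^2$ pairing $f\mapsto\langle f^{\psi_{n,\alpha}},\varphi\rangle$, for $\varphi\in\pi_i^\vee$ and then for $\varphi\in\pi_j^\vee$, gives two linearly independent functionals in
\[
\Hom_{R_{n,\alpha}(\A)}\bigl(\CE_{\tau\otimes\sigma}\otimes\pi^\vee,\psi_{n,\alpha}\bigr).
\]
This contradicts local uniqueness of Bessel models for the pair $(\GSpin_{4n+2}^\delta,\GSpin^{\delta,\alpha}_{2n+1})$, namely $\dim\Hom_{R_{n,\alpha}}(\Pi_v\otimes\pi_v^\vee,\psi_{n,\alpha,v})\le1$. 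For $\GSpin$ groups I would deduce this from the known special orthogonal case (Aizenbud--Gourevitch--Rallis--Schiffmann and Waldspurger), using that $\GSpin=(\GL_1\times\mathrm{Spin})/\mu_2$ and that the common $\ker(\pr)$ acts through fixed central characters. This reduction is the second delicate point.
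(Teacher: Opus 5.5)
\textbf{Genuine gap in the existence half.} The gap has two layers.

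First, the tool you cite points the wrong way. Every part of $[2n+1,2n-1,1^2]$ is odd, so a neutral $s$ has only even eigenvalues. The generalized coefficient is therefore an integral over the unipotent radical of the parabolic fixing an isotropic flag of dimensions $1,3,\dots,2n-1$, and it does not contain $f^{\psi_{n,\alpha}}$ as an inner integral. Theorem~\ref{thm-GomezGourevitchSahi-global1} turns non-vanishing of a degenerate coefficient (such as your composite one) into non-vanishing of the neutral one. That is the implication behind ``parts (3) and (5) of Theorem~\ref{thm-Main} imply part (6)''. You need the converse. Getting it would take a complete chain of root exchanges and Fourier expansions, and every trivial-character term along the way has to be disposed of separately. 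That is the substance of the proof, not bookkeeping.

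Second, Proposition~\ref{prop-nonvanishing-toporbitraise} is not an independent foothold for (3). For $n=1$ the partition is $[3,1^3]$ and a neutral element is $\diag(2,0,0,0,0,-2)$. Then $N_{s,u}=N_1$ and the generalized coefficient is exactly $f^{\psi_{1,w_0}}$, so your argument reads ``(3) holds because (3) holds''. Relatedly, the paper obtains Proposition~\ref{prop-nonvanishing-toporbitraise} by raising $[(2n)^2,1^2]$ to its special expansion via \cite{JiangLiuSavin2016}. But $[(2n)^2,1^2]$ is already special under the criterion the paper recalls: no odd part exceeds $2n$, none lies between the two equal even parts, and its transpose $[4,2^{2n-1}]$ is symplectic. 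For $n=1$, every orbit of $\mathfrak{so}_6\cong\mathfrak{sl}_4$ is special. So that raising step does not by itself produce $[2n+1,2n-1,1^2]$, and you should not build (3) on it.

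\textbf{The missing idea.} The paper argues by contradiction from the \emph{unconditional} input, Proposition~\ref{prop-nonvanishing-toporbit}: the constant term along $U$ followed by a Whittaker coefficient of $\tau$. The steps are:
\begin{enumerate}
\item Assume $f^{\psi_{n,\alpha}}\equiv 0$ for every $\alpha$, and root-exchange the $[(2n)^2,1^2]$ coefficient.
\item Fourier expand along $E_{n,3n+2}$. The non-trivial terms are $[2n+1,1^{2n+1}]$ coefficients, so they vanish by hypothesis.
\item Continue root-exchanging down to $\varphi^U$ integrated against $\psi_{N_{2n}'}$.
\item Compare the $D(t)$-equivariance $\omega_\tau(t)|t|$ with the equivariance $\omega_\tau(t)^{-1}|t|^{-1}$ obtained after passing through the Weyl element $\omega$. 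Since $\tau$ is unitary these are incompatible, which contradicts Proposition~\ref{prop-nonvanishing-toporbit}.
\end{enumerate}
So the hypothesis that all $\psi_{n,\alpha}$-coefficients vanish is used to kill the non-trivial terms of a Fourier expansion. A Bessel coefficient is never read off a larger orbit.

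\textbf{The decomposition half.} This part is fine and agrees with the paper, which simply cites uniqueness of Bessel models \cite{Yan2025}. Your near-equivalence argument via Proposition~\ref{prop-Jacquet-Module-Unramified-l=n} is what Proposition~\ref{prop-part4} does. Do not try to deduce $\GSpin$ uniqueness formally from the $\SO$ case through $(\GL_1\times\mathrm{Spin})/\mu_2$: when $\omega_\sigma\neq 1$ the representations involved do not factor through $\SO$. Just cite \cite{Yan2025}.
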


\begin{proof}
The second part of the statement follows from the uniqueness of the local Bessel models for $\GSpin$ groups, which is established in \cite{Yan2025}.	

We now prove the first part of the statement, and we prove it by contradiction following a similar argument in \cite[Proposition 3.3]{JiangLiuXuZhang2016}. Assume that for all $\alpha$ in $F^\times$, the representation $\TD_{\psi_{n,\alpha}}(\CE_{\tau\otimes\sigma})$ of $\GSpin^{\delta,\alpha}_{2n+1}(\A)$ is zero.
As in the proof of Proposition~\ref{prop-nonvanishing-toporbit}, we let $\mathcal{O}$ be the nilpotent orbit corresponding to the partition $[(2n)^2,1^2]$, and we take $u=\sum_{i=1}^{n-1}x_{-\alpha_i}(1)$ which is a representative of this orbit. let $U_{\mathcal{O}}$ the unipotent subgroup corresponding to the $\mathcal{O}$.
Then the character on $U_{\mathcal{O}}(\A)$ defined in \eqref{eq-GGS-char} is 
\begin{equation*}
\psi_{U_{\mathcal{O}}} (x):=\psi_u(x)= \psi(\tr( u \log(x)) )=	 \psi( x_{1,2}+x_{2,3}+\cdots + x_{2n-1,2n}), \quad x\in U_{\mathcal{O}}(\A).
\end{equation*}

For any $1\le i, j\le 2n+1$, we let $E_{i,j}$ be the unipotent subgroup of $\GSpin_{4n+2}^\delta$ consisting of elements $x$ with the property that $x$ has ones on the diagonal, and for $k\not=\ell$, we have $x_{k,\ell}=0$ unless $(k,\ell)=(i,j)$ or $(k,\ell)=(4n+3-j,4n+3-i)$. 
Let $Y = E_{2n+1,n+1} E_{n,2n+2}$. 
Let $\varphi\in \CE_{\tau\otimes\sigma}$ and $g\in \GSpin_{4n+2}^\delta(\A)$.
By Lemma~\ref{lemma-RootExchange}, the integral 
\begin{equation}
\label{eq-prop-nonvanishing-eq1}
\int_{[U_{\mathcal{O}}]} 	\varphi( vg ) \psi_{U_{\mathcal{O}}}^{-1}(v)dv 
\end{equation}
is non-zero if and only if the integral 
\begin{equation}
\label{eq-prop-nonvanishing-eq2}
f_{\varphi}(g):=\int_{[U_{\mathcal{O}}Y]} 	\varphi( v y g ) \psi_{U_{\mathcal{O}}}^{-1}(v)dv dy
\end{equation}
is non-zero. By Proposition~\ref{prop-nonvanishing-toporbit}, there exist $\varphi$ and $g$ such that the integral \eqref{eq-prop-nonvanishing-eq1} is non-zero. Thus, there exist $\varphi$ and $g$ such that 
 \begin{equation}
\label{eq-prop-nonvanishing-eq3}
f_{\varphi}(g)\not=0.
\end{equation}
Choose $\omega\in \GSpin_{4n+2}^\delta(F)$ such that
\begin{equation}
\label{eq-prop-nonvanishing-eq4}
\pr(\omega)=  \begin{pmatrix}
0 & 0 & I_{2n}\\
0 & I_2 & 0\\
I_{2n} & 0 & 0
\end{pmatrix}.
\end{equation}
Then we have $f_{\varphi}(g)=f_{\varphi}(\omega g)$.

Note that the following quadruple
\begin{equation*}
(U_{\mathcal{O}} {E_{n,2n+2}},\   \psi_{U_{\mathcal{O}}}, \ E_{2n+1,n+1}, \ E_{n,2n+1}).
\end{equation*}
satisfies the condition represented in \eqref{eq-RootExchange-diagram}. By applying Lemma~\ref{lemma-RootExchange}, we obtain that 
\begin{equation}
\label{eq-prop-nonvanishing-eq5}
f_\varphi(g)=\int_{E_{2n+1,n+1}(\A)}\int_{[U_{\mathcal{O}} Y']}
\varphi(vyxg) \psi^{-1}_{[(2n)^21^2]}(v) dvdydx,
\end{equation}
where $Y' = E_{n,2n+1} E_{n,2n+2}$.

Let $W=U_{\mathcal{O}}Y^\prime$, which consists of elements of the form
\begin{equation}
\label{eq-prop-nonvanishing-eq6}
w=\begin{pmatrix}
z & q_1 & q_2\\
0 & I_2 & q_1^*\\
0 & 0 & z^*
\end{pmatrix}\begin{pmatrix}
I_{2n} & 0 & 0\\
p_1& I_2 & 0\\
p_2 &p_1^*& I_{2n}
\end{pmatrix} \in \GSpin^{\delta}_{4n+2},
\end{equation}
where $z \in Z_{2n}$, the standard maximal unipotent subgroup of $\GL_{2n}$; $q_1 \in \{x\in \mathrm{Mat}_{2n \times 2}: x_{i,j}=0 \text{ for } n+1 \leq i \leq 2n, 1 \leq j \leq 2 \}$;
$q_2, p_2\in \{x\in \mathrm{Mat}_{2n\times 2n}: {}^t x w_{2n}+w_{2n}x=0, x_{i,j}=0 \text{ for }i\ge j \}$;
$p_1 \in \{ x\in \mathrm{Mat}_{2 \times 2n}: x_{i,j}=0 \text{ for }1 \leq i \leq 2, 1 \leq j \leq n+1 \}$.
We define a character $\psi_W(w):= \psi_{U_{\mathcal{O}}}(v)$,
for $w=vy \in W(\A)$, where $v \in U_{\mathcal{O}}(\A)$ and $y \in Y'(\A)$. 
For $w\in W(\A)$ of the form in $\eqref{eq-prop-nonvanishing-eq6}$, we have
\begin{equation*}
	\psi_W(w)=\psi(\sum_{i=1}^{2n-1} z_{i,i+1}).
\end{equation*}
Let $\wt{W}$ be the subgroup of $W$ with elements of the form as in \eqref{eq-prop-nonvanishing-eq6}, but with $p_1=0$ and $p_2=0$. Let $\psi_{\wt{W}} = \psi_W|_{\wt{W}}$. 
If we have a subgroup of $W$ containing $\wt{W}$, we extend the character $\psi_{\wt{W}}$ trivially to this subgroup and still denote the character by $\psi_{\wt{W}}$.

Next, we define the following sequence of unipotent subgroups:
\begin{equation*}
	X^i_j=E_{i,2n+3+i-j}, \  Y^i_j=E_{2n+3+i-j,i+1}, \quad \text{ for }  1 \leq i \leq n-1, 1 \leq j \leq i, 
\end{equation*}
\begin{equation*}
X^i_j=E_{i,4n+2-i-j}, \ Y^i_j=E_{4n+2-i-j,i+1}, \quad \text{ for } n \leq i \leq 2n-2, 1 \leq j \leq 2n-i-1, 	
\end{equation*}
and 
\begin{equation*}
X_i = E_{i,2n+1} E_{i,2n+2}, \ Y_i = E_{2n+1,i+1} E_{2n+2,i+1}	\quad \text{ for } n+1 \leq i \leq 2n-1.
\end{equation*}
We also define
\begin{equation*}
	\wt{W}_i = \left( \prod_{s=1}^{i-1} \prod_{j=1}^{s} X^s_j \right)  \wt{W} \left( \prod_{t=n+1}^{2n-1} Y_t \prod_{j=1}^{2n-t-1} Y^t_j \right) \left( \prod_{k=i+1}^{n} \prod_{j=1}^{k} Y^k_j\right), \quad \text{ for } 1\le i \le n-1.
\end{equation*}

We consider the following sequences of quadruples, as the index $i$ ranges from $i=1$ to $i=n-1$:
\begin{align}\label{eq-prop-nonvanishing-eq7}
\begin{split}
& (\wt{W}_i \prod_{j=2}^i Y^i_j, \psi_{\wt{W}}, X^i_1, Y^i_1),\\
& (X^i_1 \wt{W}_i \prod_{j=3}^i Y^i_j, \psi_{\wt{W}}, X^i_2, Y^i_2),\\
& \cdots,\\
& (\prod_{j=1}^{k-1} X^i_j \wt{W}_i \prod_{j=k+1}^i Y^i_j, \psi_{\wt{W}}, X^i_k, Y^i_k),\\
& \cdots,\\
& (\prod_{j=1}^{i-1} X^i_j \wt{W}_i, \psi_{\wt{W}}, X^i_{i}, Y^i_{i}).
\end{split}
\end{align}
Note that all the quadruples in \eqref{eq-prop-nonvanishing-eq7} satisfy the condition represented in \eqref{eq-RootExchange-diagram}. 
By applying Lemma~\ref{lemma-RootExchange} repeatedly to the sequence of quadruples in \eqref{eq-prop-nonvanishing-eq7} for $i$ going from $i=1$ to $i=n-1$, we obtain that
\begin{equation*}
f_{\varphi}(g)=\int_{\prod_{s=1}^{n-1} \prod_{j=1}^s Y^s_j
E_{2n+1,n+1}
(\A)}\int_{[\wt{W}_{n-1}']} \varphi(wxg) \psi^{-1}_{\wt{W}_{n-1}'}(w)dwdx,
\end{equation*}
where $\wt{W}_{i}'$ consists of elements of the form
\begin{equation}
\label{eq-prop-nonvanishing-eq8}
w=\begin{pmatrix}
z & q_1 & q_2\\
0 & I_2 & q_1^*\\
0 & 0 & z^*
\end{pmatrix}\begin{pmatrix}
I_{2n} & 0 & 0\\
p_1& I_2 & 0\\
p_2 &p_1^*& I_{2n}
\end{pmatrix} \in \GSpin^{\delta}_{4n+2},
\end{equation}
where $z \in Z_{2n}$; $q_1 \in \{x\in \mathrm{Mat}_{2n \times 2}: x_{k,j}=0 \text{ for } i+1 \leq k \leq 2n, 1 \leq j \leq 2 \}$;
$q_2\in \{x\in \mathrm{Mat}_{2n\times 2n}: {}^t x w_{2n}+w_{2n}x=0, x_{k,j}=0 \text{ for }1\le k\le 2, 1\le j \le 2n-i-1 \}$;
$p_1 \in \{ x\in \mathrm{Mat}_{2 \times 2n}: x_{k,j}=0 \text{ for }1 \leq k \leq 2, 1 \leq j \leq i+1 \}$; 
$p_2\in \{x\in \mathrm{Mat}_{2n\times 2n}: {}^t x w_{2n}+w_{2n}x=0, x_{k,j}=0 \text{ for }2n-i\le k \le 2n \text{ or } 1\le j \le i+1 \}$.
The character $\psi_{\wt{W}_{i}'}$ is defined by
\begin{equation*}
	\psi_{\wt{W}_{i}'}(w)=\psi(\sum_{i=1}^{2n-1} z_{i,i+1})
\end{equation*}
for $w\in \wt{W}_{i}'(\A)$ of the form \eqref{eq-prop-nonvanishing-eq8}.

Next, we take the Fourier expansion of $f_{\varphi}$ along the unipotent subgroup $E_{n,3n+2}$. Under the action of $\GL_1$, there are two types of Fourier coefficients, which correspond to the two orbits of the dual of $[E_{n,3n+2}]$. The two orbits are the trivial one and the non-trivial one. Because we have assumed that the residual representation $\CE_{\tau\otimes\sigma}$ has no non-zero Fourier coefficients attached to the partition $[2n+1,1^{2n+1}]$, all the Fourier coefficients corresponding to the non-trivial orbit are identically zero. Thus, we obtain that 
\begin{equation}\label{eq-prop-nonvanishing-eq9}
f_{\varphi}(g)=\int_{\prod_{s=1}^{n-1} \prod_{j=1}^s Y^s_j
E_{2n+1,n+1}(\A)} \int_{[E_{n,3n+2}]}
\int_{[\wt{W}_{n-1}']} \varphi(wxyg) \psi^{-1}_{\wt{W}_{n-1}'}(w)dwdxdy.
\end{equation}

Next, we consider the following sequence of quadruples:
\begin{align}\label{eq-prop-nonvanishing-eq10}
\begin{split}
& (E_{n,3n+2}\wt{W}_n \prod_{j=2}^{n-1} Y^n_j, \psi_{\wt{W}}, X^n_1, Y^n_1),\\
& (X^n_1 E_{n,3n+2}\wt{W}_n \prod_{j=3}^{n-1} Y^n_j, \psi_{\wt{W}}, X^n_2, Y^n_2),\\
& \cdots,\\
& (\prod_{j=1}^{k-1} X^n_j E_{n,3n+2}\wt{W}_n \prod_{j=k+1}^{n-1} Y^n_j, \psi_{\wt{W}}, X^n_k, Y^n_k),\\
& \cdots,\\
& (\prod_{j=1}^{n-2} X^n_j E_{n,3n+2}\wt{W}_n, \psi_{\wt{W}}, X^n_{n-1}, Y^n_{n-1}),
\end{split}
\end{align}
where
\begin{equation*}
\wt{W}_n =
\left( \prod_{t=1}^{n-1} \prod_{j=1}^{t} X^t_j \right) \wt{W} \left( \prod_{k=n+1}^{2n-1} Y_k \prod_{j=1}^{2n-k-1} Y^k_j \right).
\end{equation*}
All the above quadruples satisfy the condition represented in \eqref{eq-RootExchange-diagram}. 
By applying Lemma~\ref{lemma-RootExchange} repeatedly to the above sequence of quadruples in \eqref{eq-prop-nonvanishing-eq10}, we obtain that
\begin{equation}\label{eq-prop-nonvanishing-eq11}
f_{\varphi}(g)=\int_{\prod_{j=1}^{n-1} Y^n_j \prod_{s=1}^{n-1} \prod_{j=1}^s Y^s_j
E_{2n+1,n+1}(\A)}
\int_{[\wt{W}_{n}']} \varphi(wxg) \psi^{-1}_{\wt{W}_{n}'}(w)\ \mathrm{d}w\mathrm{d}x.
\end{equation}
Here, 
\begin{equation*}
\wt{W}_{n}' = E_{n,3n+2} \left( \prod_{j=1}^{n-1} X^n_j\right)
\left( \prod_{t=1}^{n-1} \prod_{j=1}^{t} X^t_j \right) \wt{W} \left( \prod_{k=n+1}^{2n-1} Y_k \prod_{j=1}^{2n-k-1} Y^k_j \right),
\end{equation*}
and $\psi_{\wt{W}_{n}'}$ is trivially extended from
$\psi_{\wt{W}}$.

For an index $i$ such that $n+1\le i \le 2n-1$, we define
\begin{equation}\label{eq-prop-nonvanishing-eq12}
\wt{W}_{i}' = \left( \prod_{s=n+1}^{i} X_s \prod_{j=1}^{2n-s-1} X^s_j \right)  \left( \prod_{\ell=n}^{i} E_{\ell, 4n+3-\ell}\right) 
\left( \prod_{t=1}^{n-1} \prod_{j=1}^{t} X^t_j \right) \wt{W} \left( \prod_{k=i+1}^{2n-1} Y_k \prod_{j=1}^{2n-k-1} Y^k_j\right),
\end{equation}
and extend the character $\psi_{\wt{W}}$ trivially to a character $\psi_{\wt{W}_{i}'}$ of $\wt{W}_{i}'(\A)$. 
For $n+1\le i \le 2n-2$, we define
\begin{equation*}
	\wt{W}_i = \left( \prod_{s=n+1}^{i-1} X_s \prod_{j=1}^{2n-s-1} X^s_j \right) \left( \prod_{\ell=n}^{i-1} E_{\ell,4n+2-\ell} \right)
\left( \prod_{t=1}^{n-1} \prod_{j=1}^{t} X^t_j \right)\wt{W} \left( \prod_{k=i+1}^{2n-1} Y_k \prod_{j=1}^{2n-k-1} Y^k_j\right).
\end{equation*}
We consider the following sequence of quadruples as $i$ ranges from $i=n+1$ to $i=2n-2$:
\begin{align*}
& (E_{i,4n+2-i}\wt{W}_i Y_i \prod_{j=2}^{2n-i-1} Y^i_j, \psi_{\wt{W}}, X^i_1, Y^i_1),\\
& (X^i_1 E_{i,4n+2-i}\wt{W}_i Y_i \prod_{j=3}^{2n-i-1} Y^i_j, \psi_{\wt{W}}, X^i_2, Y^i_2),\\
& \cdots,\\
& (\prod_{j=1}^{k-1} X^i_jE_{i,4n+2-i}\wt{W}_i Y_i  \prod_{j=k+1}^{2n-i-1} Y^i_j, \psi_{\wt{W}}, X^i_k, Y^i_k),\\
& \cdots,\\
& (\prod_{j=1}^{2n-i-2} X^i_j E_{i,4n+2-i}\wt{W}_i Y_i , \psi_{\wt{W}}, X^i_{2n-i-1}, Y^i_{2n-i-1}),\\
& (\prod_{j=1}^{2n-i-1} X^i_j E_{i,4n+2-i} \wt{W}_i, \psi_{\wt{W}}, X_i, Y_i).
\end{align*}
Note that all the above quadruples satisfy the condition represented in \eqref{eq-RootExchange-diagram}. 
We apply Lemma~\ref{lemma-RootExchange} repeatedly to the above sequence of quadruples, to obtain
\begin{equation}\label{eq-prop-nonvanishing-eq13}
f_{\varphi}(g)=\int_{\prod_{s=n+1}^{2n-1} Y_s \prod_{j=1}^{2n-s-1} Y^s_j\prod_{k=1}^{n-1} \prod_{\ell=1}^k Y^k_{\ell}
E_{2n+1,n+1}
(\A)}\int_{[N_{2n}']} \varphi^U(uxg) \psi^{-1}_{N_{2n}'}(u) dudx.
\end{equation}
Here, $N_{2n}^\prime=M\cap N_{2n}$, $\psi_{N_{2n}'}=\psi_{N_{2n}}|_{N_{2n}^\prime}$ where $\psi_{N_{2n}}(u)=\psi(\sum_{i=1}^{2n-1} u_{i,i+1})$. We remind the reader that $P=M\ltimes U$ is the standard parabolic subgroup of $\GSpin_{4n+2}^\delta$ whose Levi part is $M\cong \GL_{2n}\times \GSpin_2^\delta$, and $Q_{2n}=L_{2n}\ltimes N_{2n}$ is the standard parabolic subgroup with Levi $L_{2n}\cong (\GL_1)^{2n}\times \GSpin_2^\delta$. 
 
 For $t\in \A^\times$, we denote $D(t)=(tI_{2n}, 1)$ and view it as an element of $M(\A)$. Then we have $\omega D(t) \omega^{-1}= D(t^{-1})$, where $\omega$ is the Weyl element defined in \eqref{eq-prop-nonvanishing-eq4}. 
 Note that conjugation by $D(t)$ preserves the character $\psi_{N_{2n}'}$. If we conjugate $D(t)$ to the left of $u$ in \eqref{eq-prop-nonvanishing-eq13}  and make a change of variables on 
 $$\prod_{s=n+1}^{2n-1} Y_s \prod_{j=1}^{2n-s-1} Y^s_j\prod_{k=1}^{n-1} \prod_{\ell=1}^k Y^k_{\ell}
E_{2n+1,n+1}
(\A),$$
we obtain a factor $|t|^{-2n^2+1}$. Also, we have
\begin{equation*}
\varphi^U(D(t)uxg)=\delta_P^{\frac{1}{2}} (D(t)) |\det(tI_{2n})|^{-\frac{1}{2}} \omega_{\tau}(t) \varphi^U(uxg)=\omega_{\tau}(t)|t|^{2n^2} \varphi^U(uxg). 
\end{equation*}
Therefore, we have
\begin{equation*}
f_{\varphi}(D(t)g)=\omega_{\tau}(t) |t| f_{\varphi}(g). 	
\end{equation*}
On the other hand, we have
\begin{equation*}
f_{\varphi}(D(t)g)=f_{\varphi}(\omega D(t) g)=f_{\varphi}(D(t^{-1})\omega g) 	=\omega_{\tau}(t)^{-1}|t|^{-1} f_{\varphi}(\omega g)=\omega_{\tau}(t)^{-1}|t|^{-1} f_{\varphi}(g).
\end{equation*}
Thus
\begin{equation*}
\omega_{\tau}(t) |t| f_{\varphi}(g) = 	\omega_{\tau}^{-1}(t) |t|^{-1} f_{\varphi}(g)
\end{equation*}
for all $t\in \A^\times$. Since $\tau$ is unitary, $\omega_\tau |\cdot|$ can not be the trivial character. Thus, $f_{\varphi}(g)=0$. 
This is a contradiction to \eqref{eq-prop-nonvanishing-eq3} and thus we have completed the proof.
\end{proof}

We now prove part (5) of Theorem~\ref{thm-Main}.
\begin{proposition}[Theorem~\ref{thm-Main} (5)]
\label{prop-part5}
When $\TD_{\psi_{n,\alpha}}(\CE_{\tau\otimes\sigma})$ is non-zero, every irreducible direct summand of $\TD_{\psi_{n,\alpha}}(\CE_{\tau\otimes\sigma})$ has a non-zero Fourier coefficient attached to the partition $[2n-1,1^2]$.
\end{proposition}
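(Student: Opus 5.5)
Since $\TD_{\psi_{n,\alpha}}(\CE_{\tau\otimes\sigma})$ is multiplicity free (Proposition~\ref{prop-nonvanishing}), I will treat each irreducible summand $\pi$ as cut out by the Bessel period pairing. The plan is to show that a nonzero generalized Whittaker--Fourier coefficient of $\pi$ attached to $[2n-1,1^2]$ follows from the nonvanishing of a coefficient of $\CE_{\tau\otimes\sigma}$ attached to $[2n+1,2n-1,1^2]$, which is Proposition~\ref{prop-nonvanishing-toporbitraise}. The mechanism is the standard composition of Fourier coefficients used in \cite{JiangLiuXuZhang2016} and \cite[\S 3]{GinzburgRallisSoudry2011}. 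A Fourier coefficient of $f^{\psi_{n,\alpha}}|_{H_{n,\alpha}(\A)}$ along the unipotent subgroup $N^*$ of $H_{n,\alpha}=\GSpin^{\delta,\alpha}_{2n+1}$ attached to $[2n-1,1^2]$ is itself a Fourier coefficient of $f$ along $N_n\rtimes N^*$. After root exchanges (Lemma~\ref{lemma-RootExchange}) and a conjugation by a Weyl element, this becomes the $[2n+1,2n-1,1^2]$ coefficient of $f$.

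The steps are as follows. First, I fix a representative $u'$ of an $F$-rational orbit in $\mathfrak{so}(y_\alpha^\perp\cap W_n)$ with partition $[2n-1,1^2]$. Its Jacobson--Morozov neutral element $s'$ gives $N^*=N_{s',u'}$ and the character $\psi_{u'}$. Concretely, $N^*$ is the unipotent radical of the parabolic of $H_{n,\alpha}$ stabilizing the flag $\Span\{e_{n+1}\}\subset\cdots\subset\Span\{e_{n+1},\dots,e_{2n-1}\}$, and $\psi_{u'}$ is a Bessel-type character whose last entry pairs with an anisotropic vector of the $3$-dimensional space attached to $J_{\delta,\alpha}$. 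For the split form I choose that vector from the split part; otherwise I choose it so that its stabilizer matches $J_{\delta}$. Second, I write the composite
\[
\int_{[N^*]}\int_{[N_n]} f(vug)\,\psi_{n,\alpha}^{-1}(v)\psi_{u'}^{-1}(u)\,dv\,du ,
\]
which is a degenerate Whittaker--Fourier coefficient $\mathcal{F}_{s'',u''}(f)$ for a Whittaker pair $(s'',u'')$ with $u''=u_{n,\alpha}+u'$. I then check, by computing Jordan blocks of $u''$ acting on $V$, that $u''$ lies in an orbit with partition $[2n+1,2n-1,1^2]$; the two chains $e_{-1}\to\cdots$ and $e_{-(n+1)}\to\cdots$ supply the blocks $2n+1$ and $2n-1$. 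Third, I use the global results of \cite{GomezGourevitchSahi2017} in the converse direction. Theorem~\ref{thm-GomezGourevitchSahi-global1} only gives "Whittaker pair $\Rightarrow$ neutral pair", so instead I will use root exchange directly to pass from the neutral coefficient to the $(s'',u'')$ one. Here $[2n+1,2n-1,1^2]$ is maximal in $\mathfrak{p}^m(\CE_{\tau\otimes\sigma})$, since by part (1) any larger special partition would force a nonzero $\psi_{\ell,\alpha}$ coefficient with $\ell>n$. Maximality lets me discard the non-neutral Fourier terms, which would otherwise produce coefficients attached to bigger orbits. This shows that the composite coefficient is nonzero for some $f$ and some $\alpha$.

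Fourth, I descend to individual summands. The composite coefficient is a linear functional on $\TD_{\psi_{n,\alpha}}(\CE_{\tau\otimes\sigma})=\bigoplus\pi_i$, and it is nonzero. It is nonzero on some $\pi_i$, but that does not yet give every $\pi_i$. To reach every summand, I will instead start from a fixed summand $\pi$ and use the nonvanishing of the Bessel period of $(f,\bar\varphi_\pi)$. I unfold the inner product by expanding $\varphi_\pi$ in its Fourier coefficients along the unipotent radical of the Borel-type parabolic in $H_{n,\alpha}$, and use cuspidality (Proposition~\ref{prop-cuspidality}) together with the partition bound. The bound is that $\pi$ cannot support orbits larger than $[2n-1,1^2]$, because otherwise $\CE_{\tau\otimes\sigma}$ would have an orbit exceeding $[2n+1,2n-1,1^2]$ by the composition argument. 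I then argue that if $\pi$ had no $[2n-1,1^2]$ coefficient, every term in the expansion would vanish, contradicting $\pi\neq0$. The main obstacle is this last step: controlling the wavefront of an arbitrary summand rather than of the whole descent, including the rationality issues for the orbit classes $\underline q$ in the non-split $J_{\delta,\alpha}$ case. There I expect to lean on the special-partition result of \cite{JiangLiuSavin2016} applied to $\pi$, which restricts $\mathfrak{p}^m(\pi)$ to special partitions below $[2n-1,1^2]$ and rules out all the others by cuspidality and the vanishing of deeper descents.
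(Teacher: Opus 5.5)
\textbf{There is a genuine gap, and it is in Step~4, where you suspected it.} Even if Steps~1--3 were completed, they would only show that a composite functional is nonzero on $\TD_{\psi_{n,\alpha}}(\CE_{\tau\otimes\sigma})=\bigoplus_i\pi_i$, hence on \emph{some} $\pi_i$. The statement asks for \emph{every} summand.

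Your repair does not work. You argue that if a fixed summand $\pi$ had no coefficient attached to $[2n-1,1^2]$, every term in a Fourier expansion of $\varphi_\pi$ would vanish, contradicting $\pi\neq 0$. But a nonzero cuspidal representation of $\GSpin^{\delta,\alpha}_{2n+1}(\A)$ can have wave front strictly below $[2n-1,1^2]$. Cuspidality, upper bounds on $\mathfrak{p}^m(\pi)$, and speciality of maximal partitions \cite{JiangLiuSavin2016} only restrict \emph{which} partitions can be maximal. None of them forces a particular partition to occur. The fact that $\pi$ lies in the descent enters your argument only through the nonvanishing of $\langle\varphi_\pi,\xi^{\psi_{n,\alpha}}\rangle$, and your expansion of $\varphi_\pi$ is never connected to that pairing.

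Step~3 is not secured either.
\begin{itemize}
\item Part~(1) only controls the largest part of a partition, so it does not make $[2n+1,2n-1,1^2]$ maximal in $\mathfrak{p}^m(\CE_{\tau\otimes\sigma})$. For example, $[2n+1,2n+1]$ is orthogonal, special, strictly larger, and has largest part $2n+1$.
\item Passing from the neutral coefficient to your non-neutral pair $(s'',u'')$ is the converse of Theorem~\ref{thm-GomezGourevitchSahi-global1}, which the paper does not provide.
\end{itemize}

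\textbf{The missing idea is to treat the pairing itself as a Rankin--Selberg integral.} Fix the summand $\pi$, and choose $\varphi_\pi$ and $\xi=\Res_{s=\frac{1}{2}}E(\cdot,s,\phi_{\tau\otimes\sigma})$ with $\langle\varphi_\pi,\overline{\xi^{\psi_{n,\alpha}}}\rangle\neq0$. Then the meromorphic function $\int\varphi_\pi(h)E^{\psi_{n,\alpha}}(h,s,\phi_{\tau\otimes\sigma})\,dh$ is not identically zero. Hence it is nonzero for $\Re(s)\gg0$, where it can be unfolded; this is the integral of \cite{Yan2025GGP} with $k=2n$, $\ell=n$. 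The unfolded expression involves $\varphi_\pi$ only through its Bessel coefficient $\varphi_\pi^{\psi_{n-1,-\alpha}}$, which is a Fourier coefficient attached to $[2n-1,1^2]$ (\cite[Corollary 3.16]{Yan2025GGP}). So this coefficient cannot vanish on $\pi$. The argument handles each summand separately and needs no wave-front maximality or composition of orbits.
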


\begin{proof}
Assume $\TD_{\psi_{n,\alpha}}(\CE_{\tau\otimes\sigma})$ is non-zero and let $\pi$ be an irreducible direct summand of the twisted automorphic descent $\TD_{\psi_{n,\alpha}}(\CE_{\tau\otimes\sigma})$.  We consider the following integral
\begin{equation}
\label{eq-prop-part5-eq1}
\langle \varphi_\pi, \overline{\xi^{\psi_{n,\alpha}}} \rangle=\int_{\ker(\pr)(\A) \GSpin_{2n+1}^{\delta,\alpha}(F)\backslash \GSpin_{2n+1}^{\delta,\alpha}(\A)} \varphi_\pi(h) \xi^{\psi_{n,\alpha}}(h)dh, 	
\end{equation}
where $\varphi_\pi\in V_\pi$, $\xi\in \CE_{\tau\otimes\sigma}$. Since $\pi$ is an irreducible component of $\TD_{\psi_{n,\alpha}}(\CE_{\tau\otimes\sigma})$, there exist a choice of data $\varphi_\pi\in V_\pi$ and  $\xi\in \CE_{\tau\otimes\sigma}$ such that the integral \eqref{eq-prop-part5-eq1} is non-zero.
Assume that $\xi(h)=\Res_{s=\frac{1}{2}}E(h, s, \phi_{\tau\otimes\sigma})$, where $\phi_{\tau\otimes\sigma}\in \mathcal{A}(M(F)U(\A)\bs \GSpin_{4n+2}^\delta(\A))_{\tau\otimes \sigma}$. It follows from \eqref{eq-prop-part5-eq1} that the following integral 
\begin{equation}
\label{eq-prop-part5-eq2}
\langle \varphi_\pi, \overline{E(\cdot, s, \phi_{\tau\otimes\sigma})^{\psi_{n,\alpha}}} \rangle=\int_{\ker(\pr)(\A) \GSpin_{2n+1}^{\delta,\alpha}(F)\backslash \GSpin_{2n+1}^{\delta,\alpha}(\A)} \varphi_\pi(h) E(h, s, \phi_{\tau\otimes\sigma})^{\psi_{n,\alpha}}dh, 
\end{equation}
is also non-zero for some choice of data. This is a Rankin--Selberg integral studied in \cite{Yan2025GGP} (explicitly, this is the integral $\mathcal{Z}(\varphi_\pi,f_{\tau,\sigma,s})$ with $m^\prime=4n+2$, $k=2n$, $\ell=n$, $\beta=k-\ell=n$ in \cite{Yan2025GGP}). By \cite[Corollary 3.16]{Yan2025GGP}, $\varphi_\pi^{\psi_{n-1,-\alpha}}$ is non-vanishing for some choice of data. Thus, $\pi$ has a non-vanishing Fourier coefficient attached to the partition $[(2n-1),1^2]$. This completes the proof. 
\end{proof}

\section{Langlands functorial transfer}
\label{section-functorial}

The goal of this section is to prove Theorem~\ref{thm-Main} (4).  

\begin{proposition}[Theorem~\ref{thm-Main} (4)]
\label{prop-part4}
When $\TD_{\psi_{n,\alpha}}(\CE_{\tau\otimes\sigma})$ is non-zero, any direct summand $\pi$ of $\TD_{\psi_{n,\alpha}}(\CE_{\tau\otimes\sigma})$ has a weak Langlands functorial transfer to $\tau$ in the sense that the Satake parameter of the local unramified component $\tau_v$ of $\tau$ is the local functorial transfer of that of the local unramified component $\pi_v$ of $\pi$ for almost all unramified local places $v$ of $F$.
\end{proposition}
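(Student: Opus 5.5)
The plan is to compare unramified local components through the twisted Jacquet module computed in Proposition~\ref{prop-Jacquet-Module-Unramified-l=n}. Let $\pi$ be an irreducible summand of $\TD_{\psi_{n,\alpha}}(\CE_{\tau\otimes\sigma})$. Fix a finite set $S$ of places of $F$, containing the archimedean ones, outside of which all of the following are unramified: $\tau_v$, $\sigma_v$, $\psi_v$ and $\pi_v$; moreover $\alpha,\delta$ are units and $v\nmid 2$. For $v\notin S$ we can arrange, by enlarging $S$ if needed, that the form $J_{\delta,\alpha}$ is split over $F_v$. Indeed, $x^2+\delta y^2+\alpha z^2$ is isotropic at all places where $\delta,\alpha$ are units and $v$ is odd. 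Hence $H_{n,\alpha}(F_v)$ is the split group $\GSpin_{2n+1}(F_v)$.

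\textbf{Step 1: a local nonzero Bessel functional.} The residual representation $\CE_{\tau\otimes\sigma}$ is irreducible, so $\CE_{\tau\otimes\sigma}\cong\otimes_v \mathcal{I}_v$. For $v\notin S$, $\mathcal{I}_v$ is the unramified constituent $\pi_{\tau_v\otimes\sigma_v}$ of $\Ind_{P_{2n}}^{H}(\tau_v|\det|^{1/2}\otimes\sigma_v)$. This follows by computing residues at unramified places via the Gindikin--Karpelevich formula: the residue lies in the image of the intertwining operator, whose unramified part is the spherical constituent. Since $\pi$ is a summand of the descent, as in the proof of Proposition~\ref{prop-part5} there are $\varphi_\pi\in V_\pi$ and $\xi\in\CE_{\tau\otimes\sigma}$ with the pairing \eqref{eq-prop-part5-eq1} non-zero. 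This pairing defines a non-zero $H_{n,\alpha}(\A)$-invariant functional on $V_\pi\otimes\CE_{\tau\otimes\sigma}$ that is $(N_n,\psi_{n,\alpha})$-equivariant in the second variable. Restricting to pure tensors with fixed components away from $v$ produces a non-zero element of
\[
\Hom_{H_{n,\alpha}(F_v)}\bigl(J_{\psi_{n,\alpha}}(\mathcal{I}_v),\ \pi_v^\vee\bigr)
\]
(up to the central character normalization). Hence $\pi_v^\vee$ is a quotient of $J_{\psi_{n,\alpha}}(\pi_{\tau_v\otimes\sigma_v})$.

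\textbf{Step 2: identify $\pi_v$.} By Proposition~\ref{prop-Jacquet-Module-Unramified-l=n}, $J_{\psi_{n,\alpha}}(\pi_{\tau_v\otimes\sigma_v})$ is a subquotient of
\[
\Ind_{B}^{\GSpin_{2n+1}}\mu_1\otimes\cdots\otimes\mu_n\otimes\omega_{\sigma}.
\]
Every irreducible subquotient of this principal series shares its cuspidal support. So $\pi_v^\vee$, which is unramified and irreducible, must be the unramified constituent of this induced representation. Consequently $\pi_v$ is the unramified constituent of the contragredient, with parameter $(\mu_i^{-1};\omega_\sigma^{-1})$. Equivalently, via the Weyl group of type $B_n$ and central character bookkeeping, it corresponds to $(\mu_1,\dots,\mu_n)$ with similitude character matching. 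The parametrization used in Proposition~\ref{prop-Jacquet-Module-Unramified-l=n} will be checked against this.

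\textbf{Step 3: compare Satake parameters.} The $L$-group of $\GSpin_{2n+1}$ is $\GSp_{2n}(\C)$. The unramified representation with parameter $(\mu_1,\dots,\mu_n;\omega)$ has Satake parameter $\diag(\mu_1(\varpi),\dots,\mu_n(\varpi),\mu_n^{-1}\omega(\varpi),\dots,\mu_1^{-1}\omega(\varpi))$ in $\GL_{2n}(\C)$, following \cite{AsgariShahidi2006}. This is exactly the Satake parameter of $\tau_v=\Ind(\mu_1\otimes\cdots\otimes\mu_n\otimes\mu_n^{-1}\omega_\sigma\otimes\cdots\otimes\mu_1^{-1}\omega_\sigma)$. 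This gives the weak transfer at all $v\notin S$.

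\textbf{Main obstacle.} The delicate point is Step 2's bookkeeping. One must check that the contragredient and the central character twist come out so that the parameter is exactly that of $\tau_v$ and not of $\tau_v^\vee$. Since $\tau_v^\vee\cong\tau_v\otimes\omega_{\sigma,v}^{-1}$, the two differ by the twist $\omega_\sigma$. I would resolve this by tracking the normalization in the pairing \eqref{eq-prop-part5-eq1}: there $\pi$ pairs with $\xi^{\psi_{n,\alpha}}$, and $\pi$ itself is realized inside the space of functions $\xi^{\psi_{n,\alpha}}$. It is therefore cleaner to use directly that $V_\pi$ is a quotient of $\CE_{\tau\otimes\sigma}$ under $f\mapsto f^{\psi_{n,\alpha}}$, which bypasses the contragredient. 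Then $\pi_v$ itself is a quotient of $J_{\psi_{n,\alpha}}(\mathcal{I}_v)$, and its central character $\omega_\sigma$ is consistent with Remark~\ref{remark-Eisenstein-series-central-char}.
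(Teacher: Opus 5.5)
Once you adopt the fix from your last paragraph, your argument is the paper's proof. The map $f\mapsto f^{\psi_{n,\alpha}}$, followed by projection onto the summand, makes $\pi$ an $H_{n,\alpha}(\A)$-quotient of $\CE_{\tau\otimes\sigma}$. So at almost every $v$ (everything unramified, $J_{\delta,\alpha}$ split over $F_v$), $\pi_v$ is an unramified quotient of $J_{\psi_{n,\alpha}}(\pi_{\tau_v\otimes\sigma_v})$. Proposition~\ref{prop-Jacquet-Module-Unramified-l=n}, together with uniqueness of the unramified constituent, then gives the Satake parameter $\diag(\mu_1,\dots,\mu_n,\mu_n^{-1}\omega_\sigma,\dots,\mu_1^{-1}\omega_\sigma)$, which is that of $\tau_v$. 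The paper only asserts that $\pi_v$ occurs in this twisted Jacquet module; your localization argument actually supplies that step.

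You should delete the contragredient route in Steps 1--2 rather than try to reconcile it. The claim that $(\mu_i^{-1};\omega_\sigma^{-1})$ is ``equivalently'' $(\mu_i;\omega_\sigma)$ is false. The Weyl group of $\GSpin_{2n+1}$ acts by permutations and $\mu_i\mapsto\mu_i^{-1}\omega$, and it fixes the central character. So that route yields the parameter of $\tau_v^\vee\cong\tau_v\otimes\omega_{\sigma,v}^{-1}$, not that of $\tau_v$, whenever $\omega_{\sigma,v}^2\neq 1$.

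The trouble comes from reading \eqref{eq-prop-part5-eq1} as a bilinear pairing. As such it is not $\ker(\pr)(\A)$-invariant when both factors have central character $\omega_\sigma$. With the Petersson inner product you would get $\overline{\pi_v}^{\vee}\cong\pi_v$ directly, which agrees with your fix.
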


\begin{proof}
Assume that the twisted descent $\TD_{\psi_{n,\alpha}}(\CE_{\tau\otimes\sigma})$ is non-zero. By Theorem~\ref{thm-Main} (2), $\TD_{\psi_{n,\alpha}}(\CE_{\tau\otimes\sigma})$ is cuspidal on $\GSpin_{2n+1}^{\delta,\alpha}(\A)$. By Theorem~\ref{thm-Main} (3), we write
$$
\TD_{\psi_{n,\alpha}}( \CE_{\tau\otimes\sigma})
=
\oplus_i\pi_i
$$
where each $\pi_i$ is an irreducible cuspidal automorphic representation of $\GSpin^{\delta,\alpha}_{2n+1}(\A)$. 
To prove Proposition~\ref{prop-part4}, it suffices to prove the following two statements:
\begin{enumerate}
\item the irreducible summands $\pi_i$ are nearly equivalent, that is, their local components at almost all unramified local places are equivalent; and
\item the weak Langlands functorial transfer from $\GSpin^{\delta,\alpha}_{2n+1}$ to $\GL_{2n}$ takes $\pi_i$ to the given $\tau$.
\end{enumerate}
It is clear that statement (2) implies statement (1). Thus, it suffices to prove statement (2). 
Recall that the Langlands dual group of $\GSpin_{2n+1}^{\delta,\alpha}$ is $\GSp_{2n}(\C)$. The Langlands functorial transfer considered here is the one associated to the natural embedding of $\GSp_{2n}(\C)$ into $\GL_{2n}(\C)$. 

Let $\pi$ be a direct summand of $\TD_{\psi_{n,\alpha}}(\CE_{\tau\otimes\sigma})$ and write $\pi=\otimes_v^\prime  \pi_v$. Note that the group $\GSpin_{2n+1}^{\delta,\alpha}(F_v)$ is split at almost all finite places $v$ of $F$. Let $v$ be a finite local place of $F$ such that $\pi_v, \tau_v,\sigma_v$ and the other relevant data in the construction of the twisted automorphic descent are unramified. Here $\pi_v$ is an irreducible unramified representation of the $F_v$-split $\GSpin_{2n+1}(F_v)$. Note that $\pi_v$ occurs as the unramified constituent of the twisted Jacquet module $J_{\psi_{n,\alpha}}(\Ind_{P_{2n}(F_v)}^{H^\delta(F_v)} \tau_v|\cdot|_v^{\frac{1}{2}}\otimes \sigma_v )$. 

The representation $\sigma_v$ is an irreducible unramified representation of $\GSpin(W_{2n})(F_v)$. When the group $\GSpin(W_{2n})(F_v)$ is split, we have $\GSpin(W_{2n})(F_v)\cong \GL_1(F_v) \times \GL_1(F_v)$, and the representation $\sigma_v=\sigma_{v,1}\otimes\sigma_{v,2}$ for a pair $(\sigma_{v,1}, \sigma_{v,2})$ of unramified characters  of $F_v^\times$, with $\omega_{\sigma_v}=\sigma_{v,1}\sigma_{v,2}$. When $\GSpin(W_{2n})(F_v)$ is $F_v$-quasi-split but not $F_v$-split, we have $\GSpin(W_{2n})(F_v)\cong \Res_{E_v/F_v}(\GL_1)$ for a quadratic extension $E_v/F_v$, and the representation $\sigma_v$ is an unramified character of $E_v^\times$, with $\omega_{\sigma_v}=\sigma_v|_{F_v^\times}$. 

Since $\tau_v$ is an irreducible unramified generic representation of $\GL_{2n}(F_v)$ with central character $\omega_{\tau_v}$, such that $\wt{\tau_v}\cong \tau_v\otimes\omega_{\sigma_v}^{-1}$, we have $\omega_{\tau_v}=\omega_{\sigma_v}^n$, and we may write $\tau_v$ as 
\begin{equation*}
\tau_v=\Ind_{B_{\GL_{2n}(F_v)}}^{\GL_{2n}(F_v)} (\mu_1 \otimes   \cdots \otimes 	\mu_n \otimes \mu_n^{-1}  \omega_{\sigma_v} \otimes \cdots \mu_1^{-1}\omega_{\sigma_v})
\end{equation*}
where $\mu_1, \cdots, \mu_n$ are unramified characters of $F_v^\times$.  
By Proposition~\ref{prop-Jacquet-Module-Unramified-l=n}, the Satake parameter of $\pi_v$ is transferred to the Satake parameter of $\tau_v$. 
\end{proof}

\section{Application to global Jacquet--Langlands correspondence between $\GL_2$ and $D^\times$}
\label{section-application-JacquetLanglands}
The goal of this section is to give an explicit description on the twisted automorphic descent $\TD_{\psi_{n,\alpha}}(\CE_{\tau\otimes\sigma})$ when $n=1$. 
As an application, we obtain another proof of the global Jacquet--Langlands correspondence for $\GL_2$. 

Let $V=V^+\oplus V_0 \oplus V^-$ be a 6-dimensional quadratic space over $F$, with 
\begin{equation*}
V^+=\Span\{e_1, e_2\}, \quad V_0=\Span\{e_0^{(1)}, e_0^{(2)}\}, \quad 	V^-=\Span\{e_{-2}, e_{-1}\}
\end{equation*}
associated to the quadratic form defined by the following matrix
\begin{equation*}
\begin{pmatrix} 
&&w_2\\
&J_\delta &\\
w_2&&	
\end{pmatrix}, \quad \text{ with } J_{\delta}=\begin{pmatrix} 1&\\ & \delta \end{pmatrix},
\end{equation*}
and let $H^\delta=\GSpin_6^\delta$. Recall that $-\delta\not\in {F^\times}^2$, and we denote by $E$ the quadratic extension $E=F(\sqrt{-\delta})$ over $F$. Let $\eta_{E/F}$ be the quadratic character of $F^\times\backslash \A^\times$ associated to the quadratic extension $E/F$ via the global class field theory.  We take $\ell=1$, and the quadratic subspace $W_\ell$ defined in \eqref{eq-W_ell} is $W_{\ell}=W_1=\Span\{e_2, e_0^{(1)}, e_0^{(2)}, e_{-2}\}$. Recall that $Q_\ell=Q_1=L_1\ltimes N_1$ is the standard maximal parabolic subgroup of $\GSpin_6^\delta$ with Levi subgroup $L_1\cong \GL_1\times \GSpin(W_1)$, and the unipotent radical is given by
\begin{equation}
\label{eq-unipotent-N1}
N_{1} = 	\left\{ n_1(x)=\begin{pmatrix} 1 & x &-\frac{1}{2}q_V(x,x)\\ &I_{4} &x^\prime\\ &&1\end{pmatrix}\in \GSpin_6^\delta \right\}.
\end{equation}
Since the Witt index of $W_1$ is greater than zero, the group $\SO(W_1)$ (and hence $\GSpin(W_1)$) acts transitively on the set of vectors of the same length. As in \eqref{eq-w_0-global}
we choose an isotropic vector $w_0=y_{\alpha}=e_2-\frac{\alpha}{2}e_{-2}$ for some $\alpha\in F^\times$. Then the character $\psi_{1,\alpha}$ on $N_1(\A)$ is 
\begin{equation*}
\psi_{1,\alpha}(n_1(x))=\psi(x_1-\frac{\alpha}{2}x_4). 	
\end{equation*}
The stabilizer of the character $\psi_{1,\alpha}$ in $L_1$ is the group 
\begin{equation*}
H_{1,\alpha}=\GSpin(w_0^\perp \cap W_1)=\GSpin_{3}^{\delta,\alpha}
\end{equation*}
associated with a quadratic form given by $J_{\delta,\alpha}$ in \eqref{eq-J-delta-alpha}. 
If the quadratic form $J_{\delta,\alpha}$ is non-split over $F$, then $\GSpin_{3}^{\delta,\alpha}$ is isomorphic to $D_{\delta,\alpha}^\times$, where $D_{\delta,\alpha}$ is the quaternion algebra $\left( \frac{-\delta,-\alpha}{F}\right)$. 
Note that the quaternion algebra $D_{\delta,\alpha}$ is uniquely determined by the class of $\alpha$ in the quotient group $F^\times/ \mathrm{Nm}_{E/F} (E^\times)$.

Recall that $\sigma$ is an irreducible cuspidal automorphic representation of $\GSpin_2^{\delta}(\A)=\Res_{E/F} \GL_1(\A)$, and $\tau$ is an irreducible unitary cuspidal automorphic representation of $\GL_{2}(\A)$.
We assume that $L(s, \tau, \wedge^2\otimes \omega_\sigma^{-1})$ has a pole at $s=1$ and $L(\frac{1}{2}, \sigma\times \tau\otimes \omega_\sigma^{-1})\neq 0$. Then the residual representation $\CE_{\tau\otimes\sigma}$ is non-zero by  Proposition~\ref{prop-Eisenstein-pole}.
 
Let $\pi$ be an irreducible cuspidal automorphic representation of $\GSpin_3^{\delta,\alpha}(\A)$ and let $\varphi_\pi\in V_\pi$ be an automorphic form in the space of $\pi$. We define the following global zeta integral
\begin{equation}
\label{eq-JL-eq1}
\mathcal{Z}(s, \phi_{\tau\otimes\sigma}, \varphi_\pi, \psi_{1,\alpha})=\int_{\ker(\pr)(\A) \GSpin_3^{\delta,\alpha}(F)\backslash \GSpin_3^{\delta,\alpha}(\A) }	\varphi_\pi(h) E^{\psi_{1,\alpha}}(h, s, \phi_{\tau\otimes\sigma}) dh,
\end{equation}
where $E^{\psi_{1,\alpha}}(\cdot , s, \phi_{\tau\otimes\sigma})$ is the Bessel coefficient of the Eisenstein series $E(\cdot , s, \phi_{\tau\otimes\sigma})$ defined in \eqref{eq-Bessel-coef}. 
This global zeta integral is a special case of the global Rankin--Selberg integral considered in \cite{Yan2025GGP}. 

Let $\epsilon\in \GSpin_6^\delta(F)$ be a representative of the double coset $P_2(F)\backslash \GSpin_6^\delta(F)/P_1(F)$ such that
\begin{equation*}
\pr(\epsilon)= \begin{pmatrix}
0&1&&&0&0\\
0&0&&&0&1\\
&&1&0&&\\
&&0&-1&&\\
1&0&&&0&0\\
0&0&&&1&0
\end{pmatrix}  ,
\end{equation*}
corresponding to the $\epsilon_{\alpha,\beta}$ given in \eqref{eq-double-coset-epsilon-alpha-beta} with index $(\alpha,\beta)=(0,1)$. Then the complement of $N_1^\epsilon:=N_1\cap \epsilon^{-1} P_1 \epsilon$ in $N_1$ is 
\begin{equation*}
\overline{N_1^{\epsilon}}:=\{ n_1((x,y,z,0)) \mid (x,y,z,0)\in W_1 \}.	
\end{equation*}
Define
\begin{equation}
\label{eq-JL-eqJ}
\mathcal{J}(s,\phi_\alpha^{W})(h)=\int_{N^{\epsilon}_{1}(\A)\backslash N_{1}(\A)}
\lambda_{s}\phi_\alpha^{W}(\epsilon n_1 h)\psi^{-1}_{1,\alpha}(n_1) d n_1,
\end{equation}
where
$$
\phi_\alpha^{W}(h)=\int_{[N_{1,2}]}\phi_{\tau \otimes \sigma}(u_{1,2}(x)h) \psi^{-1}(\frac{\alpha}{2}x) d x,
$$
and
 \begin{equation*}
 N_{1,2} = \left\{u_{1,2}(x)=
 	\begin{pmatrix}
1 & x & & & &\\
& 1 &&&&\\
&& 1 & &&\\
&&& 1 &&\\
&&&& 1 &-x\\
&&&&&1
\end{pmatrix}\right\}.
 \end{equation*}
Note that for any $x\in \GSpin_2^{\delta}$, we have $\epsilon x \epsilon^{-1} = x^{-1}$, where we view $x$ as an element of the Levi subgroup of $P_2$. Hence 
\begin{align*}
\mathcal{J}(s, \phi_\alpha^{W})(xh)=\sigma^{-1}(x)\mathcal{J}(s, \phi_\alpha^{W})(h)
\end{align*}
for any $x\in \GSpin_2^{\delta}(\A)$.

By the unfolding computation in \cite[Theorem 3.3]{Yan2025GGP}, the global zeta integral $\mathcal{Z}(s, \phi_{\tau\otimes\sigma}, \varphi_\pi, \psi_{1,\alpha})$ unfolds to
\begin{equation}
\label{eq-JL-eq2}
\mathcal{Z}(s, \phi_{\tau\otimes\sigma}, \varphi_\pi, \psi_{1,\alpha})= \int_{\GSpin^{\delta}_{2}(\A)\backslash \GSpin^{\delta,\alpha}_{3}(\A)}
\mathcal{J}(s, \phi_\alpha^{W})(\epsilon   h)
\mathcal{P}(\varphi_\pi, \sigma^{-1})(h) d h,	
\end{equation}
where
\begin{equation}
\label{eq-JL-eqBesselPeriod}
\mathcal{P}(\varphi_\pi, \sigma^{-1})(h):=\int_{ \ker(\pr)(\A) \GSpin_2^{\delta}(F)\backslash \GSpin_2^{\delta}(\A)  }\varphi_\pi(xh)\sigma^{-1}(x)d x	
\end{equation}
is the Bessel period of $(\varphi_\pi,\sigma^{-1})$. 

Let $\JL(\pi)$ be the Jacquet--Langlands transfer of $\pi$, which is a cuspidal automorphic representation of $\GL_2(\A)$.
 By the Waldspurger formula (see \cite[Theorem 2]{Waldspurger1985} for $\omega_\sigma=1$ and \cite[Theorem 1.4]{YuanZhangZhang2013} for general $\omega_\sigma$), the Bessel period
\begin{equation*}
		\mathcal{P}(\varphi_\pi, \sigma^{-1}) \not=0
\end{equation*}
if and only if
\begin{equation*}
L(\frac{1}{2}, \BC(\JL(\pi))\otimes\sigma^{-1})\not=0 \text{ and } \Hom_{\GSpin_2^{\delta}(F_v)} (\pi_v,\sigma_v)\not=0 \text{ for all }v.	
\end{equation*}
Here, $\BC(\JL(\pi))$ is the base change of $\JL(\pi)$ to $\GL_2(\mathbb{A}_E)$. 
By the celebrated theorem of Tunnell-Saito \cite{Tunnell1983, Saito1993} (see also \cite[Theorem 1.3]{YuanZhangZhang2013}), the space $\Hom_{\GSpin_2^{\delta}(F_v)} (\pi_v,\sigma_v)$ is at most one-dimensional, and it is one-dimensional if and only if 
\begin{equation*}
\epsilon(\frac{1}{2}, \BC( \JL(\pi_v))\times \sigma_v, \psi_v) = \sigma_v(-1) \eta_{E/F,v}(-1) \epsilon( \GSpin_{3,v}^{\delta,\alpha}).	
\end{equation*}
Here, $\epsilon( \GSpin_{3,v}^{\delta,\alpha})=1$ if $\GSpin_{3}^{\delta,\alpha}\cong D_{\delta,\alpha}^{\times}$ splits at the place $v$, and $\epsilon( \GSpin_{3,v}^{\delta,\alpha})=-1$ if $\GSpin_{3}^{\delta,\alpha}$ is ramified at $v$. The term $\BC(\JL(\pi_v))$ is the local base change to $\GL_2(E_v)$ of the local Jacquet--Langlands transfer $\JL(\pi_v)$ of $\pi_v$.
 Therefore, the statement that $\Hom_{\GSpin_2^{\delta}(F_v)} (\pi_v,\sigma_v)\not=0$ is equivalent to the statement that 
\begin{equation*}
\GSpin_3^{\delta, \alpha} \text{ is ramified at $v$ if and only if } \epsilon(\frac{1}{2}, \BC( \JL(\pi_v))\times \sigma_v, \psi_v) \sigma_v(-1) \eta_{E/F,v}(-1)=-1.  
\end{equation*}
Thus, for a pure tensor product $\varphi_\pi=\otimes_v \varphi_{v}$, we have
\begin{equation}
\label{eq-JL-eq3}
	\mathcal{P}(\varphi_\pi, \sigma^{-1})(h)= 
	C_0 \prod_{v} l_v (\pi_v(h_v) \varphi_{v})
\end{equation}
where $l_v$ is a non-zero element in $\Hom_{\GSpin_2^{\delta}(F_v)} (\pi_v,\sigma_v)$ when $\Hom_{\GSpin_2^{\delta}(F_v)} (\pi_v,\sigma_v)\not=0$, and 
\begin{equation*}
C_0=\begin{cases}
1 & \text{ if }   L(\frac{1}{2}, \BC(\JL(\pi))\otimes\sigma^{-1})\not=0 \text{ and } \Hom_{\GSpin_2^{\delta}(F_v)} (\pi_v,\sigma_v)\not=0 \text{ for all }v \\
0 &  \text{ otherwise}.
\end{cases}
\end{equation*}

Let $S$ be a finite set of places of $F$ which contains all archimedean places such that all data are unramified at places outside $S$. Then by the integral representation established in \cite{Yan2025GGP}, we have
\begin{equation}
\label{eq-JL-eq4}
\mathcal{Z}(s, \phi_{\tau\otimes\sigma}, \varphi_\pi, \psi_{1,\alpha})= C_0 \frac{L^S(s+\frac{1}{2}, \pi\times\tau)}{L^S(s+1, \sigma\times\tau\otimes\omega_{\pi}) L^S(2s+1, \tau, \wedge^2\otimes\omega_\pi) } \prod_{v\in S}\mathcal{Z}_v (s, \phi_v,\varphi_v,\psi_{1,\alpha,v})
\end{equation}
where the local zeta integral $\mathcal{Z}_v (s, \phi_v,\varphi_v,\psi_{1,\alpha,v})$ at the ramified local places are given by
\begin{equation*}
\mathcal{Z}_v (s, \phi_v,\varphi_v,\psi_{1,\alpha,v})= \int_{\GSpin_2^{\delta}(F_v)\backslash \GSpin_3^{\delta, \alpha}(F_v)}	\mathcal{J}_v(s, \phi_{\alpha,v}^{W})(\epsilon  h)l_v(\pi_v(h)\varphi_v)dh, 
\end{equation*}
with $\mathcal{J}_v(s, \phi_{\alpha,v}^{W})$ the local part of \eqref{eq-JL-eqJ} at $v$ defined by
\begin{equation*}
\mathcal{J}_v(s, \phi_{\alpha,v}^{W})(h) =\int_{\overline{N_1^{\epsilon}}(F_v)}	\lambda_{s}\phi_{\alpha,v}^{W}(\epsilon n_1 h)\psi^{-1}_{1,\alpha,v}(n_1) d n_1.
\end{equation*}

Our first main result in this section is the following.

\begin{theorem}
\label{thm-JL}
Let $\delta\in F^\times$ be such that $-\delta\not\in (F^\times)^2$. Let $\sigma$ be an irreducible cuspidal automorphic representation of $\GSpin_2^{\delta}(\A)=\Res_{E/F} \GL_1(\A)$, and $\tau$ an irreducible unitary cuspidal automorphic representation of $\GL_{2}(\A)$. 
Assume that the residual representation $\CE_{\tau\otimes\sigma}$ is not identically zero. Then the following hold. 
\begin{enumerate}
\item  The set of $\alpha\in F^\times$ such that the $\psi_{1,\alpha}$-Fourier coefficient $\CE^{\psi_{1,\alpha}}_{\tau\otimes\sigma}$ is not identically zero is a single coset $\alpha_0\cdot \mathrm{Nm}_{E/F} (E^\times)$.
\item For any $\alpha$ in the coset $\alpha_0\cdot \mathrm{Nm}_{E/F} (E^\times)$, the twisted automorphic descent $\TD_{\psi_{1,\alpha}} (\CE_{\tau\otimes\sigma})$ is irreducible, and has the property that 
\begin{equation*}
\JL(\TD_{\psi_{1,\alpha}} (\CE_{\tau\otimes\sigma}) ) \cong \tau. 
\end{equation*}

\item The norm class $\alpha_0$ is determined by the property that  $D_{\delta, \alpha_0}$ is ramified at a place $v$ of $F$ if and only if $\epsilon(\frac{1}{2}, \BC( \tau_v)\times \sigma_v, \psi_v) \sigma_v(-1) \eta_{E/F,v}(-1)=-1$.
\end{enumerate}
\end{theorem}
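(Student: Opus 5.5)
We specialize Theorem~\ref{thm-Main} to $n=1$ and combine it with the unfolded zeta integral \eqref{eq-JL-eq2}, the Waldspurger formula and Tunnell--Saito. The plan has four steps.

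\textbf{Step 1: the descent is a single Jacquet--Langlands preimage of $\tau$.}
By Theorem~\ref{thm-Main}(3), some square class $\alpha$ gives $\TD_{\psi_{1,\alpha}}(\CE_{\tau\otimes\sigma})\neq 0$. By Theorem~\ref{thm-Main}(2),(4) every summand $\pi$ is cuspidal on $\GSpin_3^{\delta,\alpha}(\A)=D_{\delta,\alpha}^\times(\A)$ and nearly equivalent to $\tau$. Strong multiplicity one for $\GL_2$, together with the injectivity of the global JL correspondence, then forces $\JL(\pi)\cong\tau$. (If $D_{\delta,\alpha}$ is split, $\pi$ is itself a cuspidal representation of $\GL_2$ and we use strong multiplicity one directly.) Multiplicity one for $D^\times$ and the multiplicity-free decomposition in Theorem~\ref{thm-Main}(3) then show that $\TD_{\psi_{1,\alpha}}(\CE_{\tau\otimes\sigma})$ is irreducible, equal to the unique $\pi_\alpha$ on $D_{\delta,\alpha}^\times$ with $\JL(\pi_\alpha)=\tau$. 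This gives (2), once (1) is established.

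\textbf{Step 2: translate non-vanishing into a local condition.}
For (1) and (3), non-vanishing of $\CE^{\psi_{1,\alpha}}_{\tau\otimes\sigma}$ is equivalent to non-vanishing of the pairing $\langle\varphi_\pi,\xi^{\psi_{1,\alpha}}\rangle$ for some $\pi$ in the descent, since the descent is cuspidal. This pairing is the residue of $\mathcal Z(s,\phi_{\tau\otimes\sigma},\varphi_\pi,\psi_{1,\alpha})$ at $s=\frac12$.
\begin{itemize}
\item If the pairing is non-zero, then by \eqref{eq-JL-eq2} the period $\mathcal P(\varphi_\pi,\sigma^{-1})\neq0$. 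By the Waldspurger formula and Tunnell--Saito, $D_{\delta,\alpha}$ is then ramified exactly at the places $v$ where $\epsilon(\frac12,\BC(\tau_v)\times\sigma_v,\psi_v)\sigma_v(-1)\eta_{E/F,v}(-1)=-1$, using $\JL(\pi_v)\leftrightarrow\tau_v$.
\item Conversely, if $D_{\delta,\alpha}$ satisfies this local condition, let $\pi_\alpha$ be the JL preimage of $\tau$ on $D_{\delta,\alpha}^\times$. The central value $L(\frac12,\BC(\tau)\otimes\sigma^{-1})$ is nonzero by the hypothesis $L(\frac12,\sigma\times\tau\otimes\omega_\sigma^{-1})\ne0$ (the two $L$-functions are matched up to contragredient). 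Hence $C_0=1$.
\end{itemize}

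\textbf{Step 3: reduce to a single coset and check existence.}
Since the isomorphism class of $D_{\delta,\alpha}$, and all objects in the construction up to conjugation by $\GSpin_2^\delta(F)=E^\times$, depend only on $\alpha$ modulo $\mathrm{Nm}_{E/F}(E^\times)$, the good $\alpha$ form a union of norm cosets. A quaternion algebra containing $E$ is determined by its ramification set, so that union is a single coset $\alpha_0\mathrm{Nm}_{E/F}(E^\times)$. Existence of $\alpha_0$ follows from Step 1, which gives at least one good $\alpha$. In fact Step 1 forces the ramification set to have even cardinality, so no separate parity argument is needed. This proves (1) and (3).

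\textbf{Step 4 (main obstacle): the converse direction.}
The hard part is the converse in Step 2: showing that for $\alpha$ in the coset the descent is actually non-zero, rather than merely that some non-zero $\alpha$ exists. I would use the Euler product \eqref{eq-JL-eq4} with $C_0=1$. Taking the residue at $s=\frac12$, the partial $L$-quotient has a nonzero residue: the pole of $L^S(s+\frac12,\pi\times\tau)$ at $s=\frac12$ comes from $\JL(\pi)=\tau$ and the twisted self-duality of $\tau$, and the denominator is finite and nonzero at $s=\frac12$. It then remains to choose local data at $v\in S$ with $\mathcal Z_v\neq0$ at $s=\frac12$, by a standard support argument on the sections $\phi_v$. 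With that, the residue pairing is nonzero and $\CE^{\psi_{1,\alpha}}_{\tau\otimes\sigma}\neq0$.
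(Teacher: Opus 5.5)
Your proposal is correct in substance, and for parts (1) and (3) it is the paper's own argument. The paper likewise identifies the spectral coefficients of the cuspidal function $\CE^{\psi_{1,\alpha}}_{\tau\otimes\sigma}(\phi)$ with $\Res_{s=1/2}\mathcal Z$. It uses the Euler product \eqref{eq-JL-eq4}, with local data chosen so that $\prod_{v\in S}\mathcal Z_v$ is holomorphic and non-zero at $s=\frac12$. It controls $C_0$ through Waldspurger and the identity $L(\frac12,\BC(\tau)\otimes\sigma^{-1})=L(\frac12,\sigma\times\tau\otimes\omega_\sigma^{-1})$. Finally, it converts the local Hom conditions into the ramification of $D_{\delta,\alpha}$ by Tunnell--Saito.

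The genuine difference is in part (2):
\begin{itemize}
\item You obtain $\JL(\pi)\cong\tau$ and irreducibility from Theorem~\ref{thm-Main}(4), strong multiplicity one for $\GL_2$, injectivity of $\JL$, and multiplicity one for $D^\times$.
\item The paper reads $\JL(\pi)\cong\tau$ off the pole of $L^S(s+\frac12,\bar\pi\times\tau)$ and deduces irreducibility from uniqueness of local Bessel models.
\end{itemize}
Your route is shorter. If you rely on multiplicity one for $D^\times$ rather than on the multiplicity-freeness in Theorem~\ref{thm-Main}(3), it avoids the Bessel-uniqueness input altogether. It also shows directly that the JL preimage of $\tau$ exists on $D_{\delta,\alpha}^\times$ and that the prescribed ramification set has even cardinality. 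You should add one line: near-equivalence with the cuspidal $\tau$ excludes one-dimensional $\pi$, so $\JL(\pi)$ is indeed cuspidal.

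The one thing to fix is the justification in Step 3. The construction is not invariant, up to rational conjugation, under $\alpha\mapsto\alpha\,\mathrm{Nm}_{E/F}(x)$.
\begin{itemize}
\item $\GSpin_2^\delta(F)=\GSpin(V_0)(F)$ fixes $y_\alpha$, so conjugating by it does not change $\alpha$ at all.
\item The rational elements normalizing $N_1$ form $Q_1(F)$, and these act on $\psi_{1,\alpha}$ through $L_1(F)$. The $\GL_1$ factor scales $y_\alpha$, and $\GSpin(W_1)$ preserves its length, so $\alpha$ moves only within its square class. This is why Theorem~\ref{thm-Main} is stated in terms of square classes.
\end{itemize}
So for $\alpha'=\alpha\,\mathrm{Nm}_{E/F}(x)$ with $\mathrm{Nm}_{E/F}(x)\notin(F^\times)^2$, no rational conjugation relates $\CE^{\psi_{1,\alpha}}_{\tau\otimes\sigma}$ and $\CE^{\psi_{1,\alpha'}}_{\tau\otimes\sigma}$. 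The claim ``the good $\alpha$ form a union of norm cosets'' is therefore not free.

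That claim is exactly what your Step 4 supplies. The converse shows non-vanishing for every $\alpha$ for which $D_{\delta,\alpha}$ has the prescribed ramification. That condition depends only on $\alpha$ modulo $\mathrm{Nm}_{E/F}(E^\times)$, because the period and local Hom conditions see the embedding $E\hookrightarrow D_{\delta,\alpha}$ only up to Skolem--Noether conjugation. So delete the symmetry claim and derive (1) from the forward direction of Step 2 together with Step 4, which is how the paper argues.
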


\begin{proof}
By Theorem~\ref{thm-Main} (2), the $\psi_{1,\alpha}$-Fourier coefficient 
$\CE_{\tau\otimes\sigma}^{\psi_{1,\alpha}}(\phi)$ associated to $\phi$ in \eqref{eq-phi-section}
is an element of $L_{\text{cusp}}^2(\GSpin_3^{\delta,\alpha}(F)\backslash \GSpin_3^{\delta,\alpha}(\A))$. Applying the spectral decomposition on $L_{\text{cusp}}^2(\GSpin_3^{\delta,\alpha}(F)\backslash \GSpin_3^{\delta,\alpha}(\A))$, we obtain that 
\begin{equation}
\label{eq-JL-spectralside}
\CE^{\psi_{1,\alpha}}_{\tau\otimes\sigma}(\phi)=\sum_{\pi\in\mathcal{A}_{\text{cusp}}(\GSpin_3^{\delta,\alpha})}
\sum_{\varphi_{\pi}\in \mathcal{B}_\pi} \langle\CE^{\psi_{1,\alpha}}_{\tau\otimes\sigma}(\phi),\varphi_{\pi}\rangle\varphi_{\pi}
\end{equation}
where  $\langle\cdot,\cdot\rangle$ is the inner product in $L^{2}_{\text{cusp}}(\GSpin_3^{\delta,\alpha}(F)\backslash \GSpin_3^{\delta,\alpha}(\A))$ and
$\mathcal{B}_\pi$ is an orthonormal basis of the space $\pi$.

We compute the spectral coefficient $\langle\CE^{\psi_{1,\alpha}}_{\tau\otimes\sigma}(\phi),\varphi_{\pi}\rangle$. Note that the integration domain $N_1(F)\backslash N_1(\A)$ is compact, and the cusp form $\varphi_\pi$ is rapidly decreasing, so we can interchange the order of taking the residue and taking the integration to obtain
\begin{equation*}
\begin{split}	
\Res_{s=\frac{1}{2}}\mathcal{Z}(s,\phi_{\tau\otimes\sigma},\bar{\varphi}_{\pi},\psi_{1,\alpha})
=&\Res_{s=\frac{1}{2}}\int_{\ker(\pr)(\A) \GSpin_3^{\delta,\alpha}(F)\backslash \GSpin_3^{\delta,\alpha}(\A)}
E^{\psi_{1,\alpha}}(\phi_{\tau\otimes\sigma},s)(h)\bar{\varphi}_{\pi}(h)d h\\
=&\int_{\ker(\pr)(\A) \GSpin_3^{\delta,\alpha}(F)\backslash \GSpin_3^{\delta,\alpha}(\A)}\CE^{\psi_{1,\alpha}}_{\tau\otimes\sigma}(\phi)(h)\bar{\varphi}_{\pi}(h)dh\\
=&\langle\CE^{\psi_{1,\alpha}}_{\tau\otimes\sigma}(\phi),\varphi_{\pi}\rangle.
\end{split}
\end{equation*}
By Theorem~\ref{thm-Main} (3), $\Res_{s=\frac{1}{2}}\mathcal{Z}(s,\phi_{\tau\otimes\sigma},\bar{\varphi}_{\pi},\psi_{1,\alpha})$ is zero unless $\pi\in\mathcal{A}_{cusp}(\GSpin_3^{\delta,\alpha})$ satisfies the condition that $\JL(\pi)\cong \tau$. 

We now consider $\pi$ such that $\JL(\pi)\cong \tau$. Then
$\Res_{s=\frac{1}{2}}\mathcal{Z}(s,\phi_{\tau\otimes\sigma},\bar{\varphi}_{\pi},\psi_{1,\alpha})\not=0$.
Note that $L^S(s, \sigma\times \tau\otimes\omega_\pi)$ and $L^S(s, \tau, \wedge^2\otimes\omega_\pi)$ converge absolutely and are non-vanishing for $\Re(s)>1$. 
By \cite{Yan2025GGP}, we can choose data so that the finite product of local zeta integrals $\prod_{v\in S}\mathcal{Z}_v (s, \phi_v, \overline{\varphi}_v,\psi_{1,\alpha,v})$ is holomorphic and non-vanishing at $s=\frac{1}{2}$.
Since  $\mathcal{Z}(s,\phi_{\tau\otimes\sigma},\bar{\varphi}_{\pi},\psi_{1,\alpha})$ has a simple pole at $s=\frac{1}{2}$, by \eqref{eq-JL-eq4} we conclude that $C_0=1$ and $L^S(s+\frac{1}{2},\overline{\pi}\times\tau)$ has a pole at $s=\frac{1}{2}$. Here, $\overline{\pi}$ is obtained from $\pi$ by taking the complex conjugate.  
Therefore, $\Res_{s=\frac{1}{2}}\mathcal{Z}(s,\phi_{\tau\otimes\sigma},\bar{\varphi}_{\pi},\psi_{1,\alpha})\not=0$ if and only if
\begin{enumerate}
\item[(i)] $L^{S}(s+\frac{1}{2},\overline{\pi}\times \tau)$ has a pole at $s=\frac{1}{2}$;
\item[(ii)] $L(\frac{1}{2}, \BC(\JL(\pi))\otimes\sigma^{-1})\not=0$;
\item[(iii)] $\GSpin_3^{\delta, \alpha}$ is ramified at $v$ if and only if  
$\epsilon(\frac{1}{2}, \BC( \JL(\pi_v))\times \sigma_v, \psi_v) \sigma_v(-1) \eta_{E/F,v}(-1)=-1.$
\end{enumerate}

Note that 
\begin{equation*}
L^S(s+\frac{1}{2},\overline{\pi}\times\tau)=L^S(s+\frac{1}{2}, \JL(\overline{\pi})\times\tau). 	
\end{equation*}
Thus, the statement that $L^{S}(s+\frac{1}{2},\overline{\pi}\times \tau)$ has a simple pole at $s=\frac{1}{2}$ is equivalent to $\JL(\overline{\pi})\cong\wt \tau$, which is equivalent to $\JL(\pi)\cong \tau$ since $\overline{\pi}=\wt \pi$.

Moreover, by Proposition~\ref{prop-Eisenstein-pole}, $\CE_{\tau\otimes\sigma}\not=0$ implies that $L(\frac{1}{2},\sigma\times\tau\otimes\omega_\sigma^{-1})\not=0$.
When $\JL(\pi)=\tau$, note that 
\begin{equation*}
	L(\frac{1}{2}, \BC(\JL(\pi))\otimes\sigma^{-1})=L(\frac{1}{2}, \BC(\tau)\otimes\sigma^{-1})=L(\frac{1}{2}, \tau\times \theta_{\sigma^{-1}}).
\end{equation*}
Here, $\theta_{\sigma^{-1}}$ is the automorphic induction of $\sigma^{-1}$, and we used \cite[page 102]{Bump1997book} for the second equality. 
By a similar computation as in \cite[pages 102-103]{Bump1997book}, we can check that 
\begin{equation}
\label{eq-JL-L-identity}
L(\frac{1}{2}, \tau\times \theta_{\sigma^{-1}})=L(\frac{1}{2}, \sigma\times\tau\otimes\omega_{\sigma}^{-1}).	
\end{equation}
It follows that
\begin{equation*}
L(\frac{1}{2}, \BC(\JL(\pi))\otimes\sigma^{-1})=L(\frac{1}{2}, \sigma\times\tau\otimes \omega_\sigma^{-1}).	
\end{equation*}
Thus, condition (ii) holds if $\CE_{\tau\otimes\sigma}\not=0$.

Finally, the twisted automorphic descent $\TD_{\psi_{1,\alpha}} (\CE_{\tau\otimes\sigma})$ is zero unless the group $\GSpin_3^{\delta, \alpha}$ is determined by condition (iii). 
In this case, by the uniqueness of the local Bessel model \cite{Yan2025}, if the twisted automorphic descent $\TD_{\psi_{1,\alpha}} (\CE_{\tau\otimes\sigma})$ is non-zero, there is a unique cuspidal automorphic representation $\pi$ such that 
$$\Res_{s=\frac{1}{2}}\mathcal{Z}(s,\phi_{\tau\otimes\sigma},\bar{\varphi}_{\pi},\psi_{1,\alpha})$$ 
is non-zero and then $\TD_{\psi_{1,\alpha}} (\CE_{\tau\otimes\sigma})$ is irreducible. 
By Theorem~\ref{thm-Main} (3), the twisted automorphic descent $\TD_{\psi_{1,\alpha}} (\CE_{\tau\otimes\sigma})$ is non-zero for some $\alpha\in F^\times$. If $\alpha_0\in F^\times$ and $\alpha\in F^\times$ represent the same coset in the quotient $F^\times/ \mathrm{Nm}_{E/F} (E^\times)$, then $\GSpin_3^{\delta,\alpha}\cong \GSpin_3^{\delta, \alpha_0}$, and  $\TD_{\psi_{1,\alpha}} (\CE_{\tau\otimes\sigma})$  is also a cuspidal automorphic representation of $\GSpin_3^{\delta,\alpha_0}(\A)$, which satisfies conditions (i)-(iii). Then the twisted automorphic descent $\TD_{\psi_{1,\alpha_0}} (\CE_{\tau\otimes\sigma})$ is non-zero. Thus, the set of $\alpha\in F^\times$ such that $\TD_{\psi_{1,\alpha}} (\CE_{\tau\otimes\sigma})$ is non-zero is a single coset $\alpha_0\cdot \mathrm{Nm}_{E/F} (E^\times)$, and we have $\JL( \TD_{\psi_{1,\alpha}} (\CE_{\tau\otimes\sigma}))=\tau$ by the spectral decomposition \eqref{eq-JL-spectralside}.
\end{proof}

In our second main result of this section below, we show that by choosing suitable $\delta$ and $\sigma$, we are able to obtain all infinitely-dimensional cuspidal automoprhic representations of $D^\times(\A)$. 

\begin{theorem}
\label{thm-JL-2}	
Let $\delta\in F^\times$ be such that $-\delta\not\in (F^\times)^2$. Let $D$ be a quaternion division algebra containing the quadratic extension $E=F(\sqrt{-\delta})$ of $F$. For any given infinite-dimensional irreducible cuspidal automorphic representation $\pi$ of $D^\times(\A)$, there exists a character $\sigma$ of $\GSpin_2^{\delta}(\A)$ such that the residual representation $\CE_{\JL(\pi)\otimes \sigma}$ is non-zero, and an element $\alpha\in F^\times$ such that $D_{\delta,\alpha}\cong D$ and 
\begin{equation*}
	\TD_{\psi_{1,\alpha}} (\CE_{\JL(\pi)\otimes\sigma})\cong \pi.
\end{equation*}
Moreover, any such $\sigma$ satisfies the period condition 
\begin{equation*}
	\mathcal{P}(\varphi_\pi, \sigma^{-1}) \not=0, \quad \text{ for some }\varphi_\pi \in V_\pi,
\end{equation*}
which is equivalent to the conditions that
\begin{equation*}
L(\frac{1}{2}, \BC(\JL(\pi))\otimes\sigma^{-1})\not=0 \text{ and } \Hom_{\GSpin_2^{\delta}(F_v)} (\pi_v,\sigma_v)\not=0 \text{ for all }v.	
\end{equation*}
\end{theorem}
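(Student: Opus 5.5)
The plan is to reduce Theorem~\ref{thm-JL-2} to Theorem~\ref{thm-JL} by choosing $\sigma$ so that the Bessel period $\mathcal{P}(\varphi_\pi,\sigma^{-1})$ is non-zero. Set $\tau=\JL(\pi)$, which is cuspidal on $\GL_2(\A)$ because $\pi$ is infinite-dimensional; after a unitary twist we may take $\tau$ unitary. The first step is to find a Hecke character $\sigma$ of $E^\times\bs\A_E^\times$ with $\omega_\sigma=\sigma|_{\A^\times}$ equal to $\omega_\pi^{-1}$, so that the central character condition $\omega_\pi\omega_{\sigma^{-1}}=1$ in the period holds, and with $\mathcal{P}(\varphi_\pi,\sigma^{-1})\neq 0$ for some $\varphi_\pi$. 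Because $E\subset D$, we have $E^\times\cong\GSpin_2^\delta(F)\subset D^\times$, and the period is the toric period of $\varphi_\pi$ against $\sigma^{-1}$. Existence of such $\sigma$ is a spectral statement: the function $x\mapsto\varphi_\pi(x)$ restricted to the compact abelian quotient $\A^\times E^\times\bs\A_E^\times$, twisted by the central character, expands in characters. Since $\varphi_\pi\ne0$, some translate of it is non-zero somewhere, so at least one Fourier coefficient, i.e.\ some $\mathcal{P}(\pi(h)\varphi_\pi,\sigma^{-1})$, is non-zero. By the Waldspurger formula and Tunnell--Saito, as recalled before Theorem~\ref{thm-JL}, this is equivalent to $L(\frac12,\BC(\JL(\pi))\otimes\sigma^{-1})\neq0$ together with $\Hom_{\GSpin_2^\delta(F_v)}(\pi_v,\sigma_v)\neq0$ for all $v$. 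This gives both the existence claim and the stated equivalence.

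The second step is to check the hypotheses of Proposition~\ref{prop-Eisenstein-pole}. Since $\tau$ is a cuspidal representation of $\GL_2$, we have $\wedge^2=\det$. Hence $L(s,\tau,\wedge^2\otimes\omega_\sigma^{-1})=L(s,\omega_\tau\omega_\sigma^{-1})$, and this has a pole at $s=1$ precisely when $\omega_\sigma=\omega_\tau=\omega_\pi$. I have to reconcile this with the sign from the period, and I expect it to come down to the normalization $\sigma$ versus $\sigma^{-1}$ in \eqref{eq-JL-eqBesselPeriod}, so I would fix that normalization carefully.

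Next, by the identity \eqref{eq-JL-L-identity}, $L(\frac12,\BC(\tau)\otimes\sigma^{-1})=L(\frac12,\sigma\times\tau\otimes\omega_\sigma^{-1})$, so the non-vanishing obtained in the first step is exactly the central value needed. Proposition~\ref{prop-Eisenstein-pole} then shows that $\CE_{\JL(\pi)\otimes\sigma}\ne0$.

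The third step applies Theorem~\ref{thm-JL}. It yields a norm class $\alpha_0$ and an irreducible descent $\TD_{\psi_{1,\alpha_0}}$ with $\JL$-image $\tau$. By part (3), $D_{\delta,\alpha_0}$ is ramified exactly where $\epsilon(\frac12,\BC(\tau_v)\times\sigma_v,\psi_v)\sigma_v(-1)\eta_{E/F,v}(-1)=-1$. Tunnell--Saito, combined with $\Hom(\pi_v,\sigma_v)\neq0$ for the given $D$, says that this set is precisely the ramification set of $D$. Hence $D_{\delta,\alpha_0}\cong D$, because a quaternion algebra is determined by its ramification. Then $\TD_{\psi_{1,\alpha_0}}$ is an irreducible cuspidal representation of $D^\times(\A)$ with $\JL$ equal to $\JL(\pi)$, so it is isomorphic to $\pi$ by multiplicity one and injectivity of $\JL$.

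The main obstacle is the first step, in making the local conditions consistent. One must make sure the spectrally produced $\sigma$ has the required restriction to $\A^\times$, which should be automatic from the central character matching, and that the sign conventions give $\omega_\sigma=\omega_\tau$ rather than its inverse. If they do not match, I would replace $\sigma$ by $\sigma\circ c$, where $c$ is Galois conjugation, or by its inverse, composed with the norm twist, and recheck the period and $L$-value conditions.
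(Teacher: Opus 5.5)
Your route is the paper's. You choose $\sigma$ spectrally so that the toric period is non-zero, translate via Waldspurger and Tunnell--Saito, get the pole from $\wedge^2=\det$ and the central value from \eqref{eq-JL-L-identity} to apply Proposition~\ref{prop-Eisenstein-pole}, then invoke Theorem~\ref{thm-JL} and compare ramification sets. There are two corrections.

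The central-character ``obstacle'' you single out is a slip, not a real issue. In \eqref{eq-JL-eqBesselPeriod}, replacing $x$ by $zx$ with $z\in\ker(\pr)(\A)=\A^\times$ multiplies the integrand by $\omega_\pi(z)\omega_\sigma(z)^{-1}$. So the period is only defined, and can only be non-zero, when $\omega_\sigma=\omega_\pi$; this is exactly the paper's normalization $\omega_\sigma=\pi|_{\A^\times}$. Your $\omega_\sigma=\omega_\pi^{-1}$ comes from forgetting that $\omega_{\sigma^{-1}}=\omega_\sigma^{-1}$. With $\omega_\sigma=\omega_\pi=\omega_{\JL(\pi)}$ one gets $L(s,\JL(\pi),\wedge^2\otimes\omega_\sigma^{-1})=L(s,\mathbf{1})$ directly, so nothing needs reconciling. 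Your fallback $\sigma\circ c$ could not have changed $\omega_\sigma$ anyway, since $c$ is trivial on $\A^\times$. Similarly, no twist of $\tau$ is needed or permitted: $\JL(\pi)$ is unitary because $\pi$ is, and a twist would change the representation you must recover.

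The genuine omission is the ``Moreover'' clause. It asserts the period condition for \emph{every} $\sigma$ with $\CE_{\JL(\pi)\otimes\sigma}\neq0$ and $\TD_{\psi_{1,\alpha}}(\CE_{\JL(\pi)\otimes\sigma})\cong\pi$, whereas your argument only gives it for the $\sigma$ you constructed. The paper obtains necessity from the descent side. A non-zero descent gives a non-zero spectral coefficient $\langle\CE^{\psi_{1,\alpha}}_{\JL(\pi)\otimes\sigma}(\phi),\varphi_\pi\rangle=\Res_{s=1/2}\mathcal{Z}(s,\phi,\bar{\varphi}_{\pi},\psi_{1,\alpha})$. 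The unfolding \eqref{eq-JL-eq2} writes $\mathcal{Z}$ as an integral against the Bessel period, which therefore cannot vanish identically on $\pi$.

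You could equally run your own chain backwards:
\begin{enumerate}
\item $\CE_{\JL(\pi)\otimes\sigma}\neq0$ gives $L(\frac12,\BC(\JL(\pi))\otimes\sigma^{-1})\neq0$ by Proposition~\ref{prop-Eisenstein-pole} and \eqref{eq-JL-L-identity}.
\item $D_{\delta,\alpha}\cong D$, together with Theorem~\ref{thm-JL}(3) and Tunnell--Saito, gives $\Hom_{\GSpin_2^\delta(F_v)}(\pi_v,\sigma_v)\neq0$ at every $v$.
\item Waldspurger's formula then yields $\mathcal{P}(\varphi_\pi,\sigma^{-1})\neq0$.
\end{enumerate}
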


\begin{proof}
Consider $\GSpin_2^{\delta}(\A)$ as a subgroup of $D^\times(\A)$. The restriction of $\pi$ onto the subgroup $\GSpin_2^{\delta}(F)\backslash \GSpin_2^{\delta}(\A)$ is semi-simple and non-zero. By the spectral decomposition of the $L^2$-space of automorphic forms on $\GSpin_2^{\delta}(F)\backslash \GSpin_2^{\delta}(\A)$, we can choose a character $\sigma$ of $\GSpin_2^{\delta}(\A)$ with $\omega_{\sigma}=\pi|_{\A^\times}$ such that $\mathcal{P}(\varphi_\pi, \sigma^{-1}) \not=0$ for some $\varphi_\pi\in V_\pi$. (This also follows from the Burger-Sarnak principle; see \cite[Lemma 1]{Prasad2007}). By the Waldspurger formula (see \cite[Theorem 2]{Waldspurger1985} for $\omega_\sigma=1$ and \cite[Theorem 1.4]{YuanZhangZhang2013} for general $\omega_\sigma$), the condition that $\mathcal{P}(\varphi_\pi, \sigma^{-1}) \not=0$ for some $\varphi_\pi\in V_\pi$ is equivalent to the conditions that 
\begin{equation*}
L(\frac{1}{2}, \BC(\JL(\pi))\otimes\sigma^{-1})\not=0 \text{ and } \Hom_{\GSpin_2^{\delta}(F_v)} (\pi_v,\sigma_v)\not=0 \text{ for all }v.	
\end{equation*}

By the proof of Theorem~\ref{thm-JL}, we have
\begin{equation*}
L(\frac{1}{2}, \BC(\JL(\pi))\otimes\sigma^{-1})=L(\frac{1}{2}, \sigma\times \JL(\pi)\otimes\omega_{\sigma}^{-1}).	
\end{equation*}
Thus, $L(\frac{1}{2}, \sigma\times \JL(\pi)\otimes\omega_{\sigma}^{-1})\not=0$. 
Since the central character of $\JL(\pi)$ is equal to $\omega_{\sigma}$,  $L(s,\JL(\pi),\wedge^2\otimes\omega_{\sigma}^{-1})=L(s,\mathbf{1})$ has a pole at $s=1$. Here, $\mathbf{1}$ is the trivial character of $\GL_1(\A)$. By Proposition~\ref{prop-Eisenstein-pole}, the Eisenstein series $E(g, s, \phi_{\JL(\pi)\otimes\sigma})$ has a simple pole at $s=\frac{1}{2}$ and the residual representation $\CE_{\JL(\pi)\otimes \sigma}$ is non-zero. By Theorem~\ref{thm-JL}, there exists a unique $\GSpin_3^{\delta,\alpha}$ such that $\JL( \TD_{\psi_{1,\alpha}}(\CE_{\JL(\pi)\otimes \sigma}))=\JL(\pi)$. 

It remains to show that $\GSpin_3^{\delta,\alpha}$ is isomorphic to $D^\times$. 
Since $\JL( \TD_{\psi_{1,\alpha}}(\CE_{\JL(\pi)\otimes \sigma}))=\JL(\pi)$, by the proof of Theorem~\ref{thm-JL}, we have that $\Res_{s=\frac{1}{2}}\mathcal{Z}(s,\phi_{\TD_{\psi_{1,\alpha}}(\CE_{\JL(\pi)\otimes \sigma}) \otimes\sigma},\bar{\varphi}_{\pi},\psi_{1,\alpha})\not=0$ and $\mathcal{P}( \varphi, \sigma^{-1})\not=0$ for some $\varphi\in \TD_{\psi_{1,\alpha}}(\CE_{\JL(\pi)\otimes \sigma})$. By condition (iii) in the proof of Theorem~\ref{thm-JL}, we conclude that both $\GSpin_3^{\delta,\alpha}$ and $D^\times$ are ramified at $v$ if and only if 
$\epsilon(\frac{1}{2}, \BC( \JL(\pi_v))\times \sigma_v, \psi_v) \sigma_v(-1) \eta_{E/F,v}(-1)=-1.$
We conclude that $\GSpin_3^{\delta,\alpha}\cong D^\times$ and $\TD_{\psi_{1,\alpha}} (\CE_{\JL(\pi)\otimes\sigma})\cong \pi$. This completes the proof. 
\end{proof}

\section{Application to global Gan--Gross--Prasad conjecture for $(\GSpin_{2n+1}, \GSpin^{\delta}_2)$}
\label{section-application-GGP}

The goal of this section is to prove Theorem~\ref{Intro-thm-GGP-converse}, which we restate below. 

\begin{theorem}
\label{thm-GGP-converse}
Let $\delta\in F^\times$ be such that $-\delta\not\in (F^\times)^2$. 
Let $\sigma$ be an automorphic character of $\GSpin_2^{\delta}(\A)$.
Let $\pi$ be an irreducible cuspidal automorphic representation of $\GSpin_{2n+1}(\A)$ with central character $\omega_{\sigma}^{-1}$ that admits a weak functorial transfer to an isobaric sum automorphic representation $\tau$ of $\GL_{2n}(\A)$, and assume that $\tau$ is also a weak functorial transfer of an irreducible generic cuspidal representation of $\GSpin_{2n+1}(\A)$.
If
\begin{equation*}
L(\frac{1}{2}, \pi\times\sigma )\not=0,	
\end{equation*}
then the following holds.
\begin{enumerate}
\item There exist some $\alpha\in F^\times$ such that the twisted automorphic descent $\TD_{\psi_{n,\alpha}}(\CE_{\tau\otimes\sigma^{-1}})$ to $\GSpin^{\delta,\alpha}_{2n+1}(\A)$ is non-zero.
\item The twisted automorphic descent has a multiplicity-free direct sum decomposition
\begin{equation*}
\TD_{\psi_{n,\alpha}}(\CE_{\tau\otimes\sigma^{-1}})= \bigoplus_{i} \pi_\alpha^{(i)},	
\end{equation*}
where each $\pi_\alpha^{(i)}$ is an irreducible cuspidal automorphic representation of $\GSpin^{\delta,\alpha}_{2n+1}(\A)$ with central character $\omega_{\sigma}^{-1}$, and each $\pi_\alpha^{(i)}$ is nearly equivalent to $\pi$.
\item 	The Bessel period for $(\pi_{\alpha}^{(i)}, \sigma)$ is non-zero for each $\pi_{\alpha}^{(i)}$.
\end{enumerate} 
\end{theorem}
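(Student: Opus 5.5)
The plan is to verify the hypotheses of Theorem~\ref{thm-Main} for the pair $(\tau,\sigma^{-1})$, and then read off (1) and (2) from Theorem~\ref{thm-Main} (3), (4). Statement (3) will come from the unfolding of the Rankin--Selberg integral of \cite{Yan2025GGP}, in the same way as in the proof of Theorem~\ref{thm-JL}.

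\textbf{Step 1: shape of $\tau$.} Since $\tau$ is the weak transfer of a generic cuspidal representation of split $\GSpin_{2n+1}(\A)$ with central character $\omega_\sigma^{-1}$, the results of Asgari--Shahidi \cite{AsgariShahidi2006, AsgariShahidi2014}, in the converse direction of Remark~\ref{remark-tau-in-generic-spectrum}, give the following. We can write $\tau=\tau_1\boxplus\cdots\boxplus\tau_r$ with the $\tau_i$ distinct unitary cuspidal representations, and each $L(s,\tau_i,\wedge^2\otimes\omega)$ has a pole at $s=1$, where $\omega$ is the similitude character. The paper's normalization takes the central character to be $\omega_\sigma^{-1}$, so I expect $\omega=\omega_{\sigma^{-1}}$.

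\textbf{Step 2: the $L$-value.} The automorphic character $\sigma^{-1}$ of $\GSpin_2^\delta(\A)$ then satisfies $\omega_{\sigma^{-1}}=\omega$. Comparing Satake parameters at unramified places, using Proposition~\ref{prop-Jacquet-Module-Unramified-l=n} and the embedding $\GSp_{2n}(\C)\subset\GL_{2n}(\C)$, gives
\begin{equation*}
L^S(s,\pi\times\sigma)=L^S(s,\sigma^{-1}\times\tau\otimes\omega_{\sigma^{-1}}^{-1}),
\end{equation*}
possibly after replacing $\sigma$ by its Galois conjugate, which does not change the $L$-function. This is a twisting bookkeeping check: $\tau\otimes\omega_\sigma\cong\widetilde\tau$, and $\sigma^{-1}$ with $\omega_\sigma$ combine to give $\sigma$ twisted by the dual of $\tau$. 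The complete $L$-functions are handled by the same identity at the remaining places, as in \eqref{eq-JL-L-identity}. So $L(\frac12,\pi\times\sigma)\neq0$ gives $L(\frac12,\sigma^{-1}\times\tau\otimes\omega_{\sigma^{-1}}^{-1})\neq0$. Proposition~\ref{prop-Eisenstein-pole} then shows that $\CE_{\tau\otimes\sigma^{-1}}$ is nonzero, and Theorem~\ref{thm-Main} applies with $\sigma^{-1}$ in place of $\sigma$.

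\textbf{Step 3: parts (1) and (2).} Theorem~\ref{thm-Main} (3) provides an $\alpha$ with $\TD_{\psi_{n,\alpha}}(\CE_{\tau\otimes\sigma^{-1}})\neq0$, and it gives the multiplicity-free decomposition into cuspidal pieces $\pi_\alpha^{(i)}$. By Theorem~\ref{thm-Main} (4), each $\pi_\alpha^{(i)}$ transfers weakly to $\tau$, as $\pi$ does, so at almost all places $\pi_{\alpha,v}^{(i)}$ and $\pi_v$ have the same Satake parameter in $\GSp_{2n}(\C)$. I have to check that the image in $\GL_{2n}(\C)$ determines the $\GSp_{2n}$-conjugacy class once the similitude is fixed, and the similitude is pinned down by the central character. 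The central character of each $\pi_\alpha^{(i)}$ equals $\omega_{\sigma^{-1}}=\omega_\sigma^{-1}$, because the Bessel coefficient of $\CE_{\tau\otimes\sigma^{-1}}$ transforms under $\ker(\pr)(\A)$ by $\omega_{\sigma^{-1}}$ (Remark~\ref{remark-Eisenstein-series-central-char}). Together these give near equivalence.

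\textbf{Step 4: part (3).} This is the main step. Take a summand $\pi'=\pi_\alpha^{(i)}$. It pairs nontrivially with the descent, so $\langle\varphi_{\pi'},\overline{\xi^{\psi_{n,\alpha}}}\rangle\neq0$ for some $\xi\in\CE_{\tau\otimes\sigma^{-1}}$, as in the proof of Proposition~\ref{prop-part5}. Hence the residue at $s=\frac12$ of the global zeta integral
\begin{equation*}
\int \varphi_{\pi'}(h)\,E^{\psi_{n,\alpha}}(h,s,\phi_{\tau\otimes\sigma^{-1}})\,dh
\end{equation*}
is nonzero. I would unfold this integral using \cite[Theorem 3.3]{Yan2025GGP}, with $m'=4n+2$, $k=2n$, $\ell=n$, as in \eqref{eq-JL-eq2}. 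The unfolding expresses it as an integral over $\GSpin_2^\delta(\A)\backslash\GSpin_{2n+1}^{\delta,\alpha}(\A)$ of a local section times the inner Bessel period of $\varphi_{\pi'}$ against $(\sigma^{-1})^{-1}=\sigma$, possibly composed with a Fourier coefficient of the form $\varphi_{\pi'}^{\psi_{n-1,-\alpha}}$ as in \cite[Corollary 3.16]{Yan2025GGP}. A nonvanishing residue forces this inner period to be nonzero for some data, which is exactly the Bessel period of $(\pi',\sigma)$.

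The main obstacle is justifying that residue and unfolding can be interchanged in the right order. The integral is absolutely convergent only for $\Re s\gg0$, so I would run the unfolding identity there, continue it meromorphically, and argue that if the Bessel period vanished identically then the zeta integral would vanish for all $s$, contradicting the nonzero residue. I also need to check that the $\epsilon$-conjugation $x\mapsto x^{-1}$ on $\GSpin_2^\delta$ indeed converts $\sigma^{-1}$ into the character $\sigma$ that appears in the period.
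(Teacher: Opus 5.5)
Your proposal is essentially the paper's proof. It uses Asgari--Shahidi for the shape of $\tau$, the $L$-value identity feeding Proposition~\ref{prop-Eisenstein-pole} and Theorem~\ref{thm-Main} for the pair $(\tau,\sigma^{-1})$, and then replaces the residue by the Eisenstein series for $\Re(s)\gg 0$ and unfolds via \cite{Yan2025GGP}, so that vanishing of $\mathcal{P}(\varphi,\sigma)$ would kill the zeta integral and hence its residue. Your extra check that the central character pins down the $\GSp_{2n}(\C)$-class, needed for near equivalence, is a welcome refinement.

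One normalization slip in Step~1 needs fixing. For central character $\omega_\sigma^{-1}$, the pole is that of $L(s,\tau_i,\wedge^2\otimes\omega_\sigma)$, i.e.\ $\wedge^2\otimes\omega^{-1}$ with $\omega=\omega_{\sigma^{-1}}$. This agrees with your own $\tau\otimes\omega_\sigma\cong\widetilde\tau$ and is exactly the hypothesis Theorem~\ref{thm-Main} requires for $\sigma^{-1}$. The pole of $\wedge^2\otimes\omega$ as you wrote it would not match that hypothesis.
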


\begin{proof}
Write $\tau=\tau_1\boxplus \tau_2\boxplus \cdots \boxplus \tau_r$, where $\tau_i$ is an irreducible cuspidal automorphic representation of $\GL_{k_i}(\A)$ for each $1\le i \le r$, and $k_1+\cdots +k_r=2n$. By the description of image of functoriality for generic cuspidal representations of $\GSpin_{2n+1}(\A)$ in \cite{AsgariShahidi2006, AsgariShahidi2014}, we have $\tau_i\not\cong \tau_j$ if $i\not=j$, and $L(s,\tau_i,\wedge^2\otimes \omega_{\sigma})$ has a pole at $s=1$ for each $1\le i \le r$. By a similar computation as in \eqref{eq-JL-L-identity}, we have
\begin{equation*}
L(\frac{1}{2}, \sigma^{-1}\times \tau\otimes \omega_{\sigma})=L(\frac{1}{2}, \tau\times \sigma)=L(\frac{1}{2},\pi\times\sigma) \not=0.	
\end{equation*}
By Proposition~\ref{prop-Eisenstein-pole},  the Eisenstein series $E(g, s, \phi_{\tau\otimes\sigma^{-1}})$ has a pole of order $r$ at $s=\frac{1}{2}$. By Theorem~\ref{thm-Main}, there exists some $\alpha$ such that the twisted automorphic descent $\TD_{\psi_{n,\alpha}}(\CE_{\tau\otimes\sigma^{-1}})$ is non-zero, and has a multiplicity-free direct sum decomposition, and any direct summand of it has a weak Langlands transfer to $\tau$, and has a non-zero Fourier coefficient attached to partition $[2n-1, 1^2]$. Let $\pi_{\alpha}^{(i)}$ be a direct summand of $\TD_{\psi_{n,\alpha}}(\CE_{\tau\otimes\sigma^{-1}})$, which is an irreducible cuspidal automorphic representation of $\GSpin^{\delta,\alpha}_{2n+1}(\A)$ with central character $\omega_{\sigma}^{-1}$. By construction, there exist a cusp form 
 $\varphi \in V_{\pi_\alpha^{(i)}}$ and a residue $\Res_{s=\frac{1}{2}}E(\cdot, s, \phi_{\tau\otimes\sigma^{-1} })\in \CE_{\tau\otimes\sigma^{-1} }$ such that 
\begin{equation*}
\int_{ \ker(\pr)(\A) \GSpin_{2n+1}^{\delta,\alpha}(F)\backslash \GSpin_{2n+1}^{\delta,\alpha}(\A)} \varphi(g)  \Res_{s=\frac{1}{2}} E^{\psi_{n,\alpha}} (g, s, \phi_{\tau\otimes\sigma^{-1}})dg\not=0.
\end{equation*}
Replacing the residue by the Eisenstein series itself, we obtain that the integral
\begin{equation}
\label{eq-application-GGP-eq1}
\int_{ \ker(\pr)(\A) \GSpin_{2n+1}^{\delta,\alpha}(F)\backslash \GSpin_{2n+1}^{\delta,\alpha}(\A)} \varphi(g)    E^{\psi_{n,\alpha}} (g, s, \phi_{\tau\otimes\sigma^{-1} })dg 
\end{equation}
is non-zero for $\Re(s)\gg 0$. By the unfolding computation in \cite{Yan2025GGP}, this integral unfolds to the Bessel period $\mathcal{P}(\varphi, \sigma)$. If the Bessel period $\mathcal{P}(\varphi, \sigma)$ is vanishing for all $\varphi \in V_{\pi_\alpha^{(i)}}$, then the global integral \eqref{eq-application-GGP-eq1} would be identically zero, leading to a contradiction. Hence we conclude that the Bessel period $\mathcal{P}(\varphi,\sigma)\not=0$ for some $\varphi \in V_{\pi_\alpha^{(i)}}$.
\end{proof}

\bibliographystyle{alpha}
\bibliography{References}

\begin{thebibliography}{CKPSS04}

\bibitem[ACS24]{AsgariCogdellShahidi2024}
Mahdi Asgari, James~W. Cogdell, and Freydoon Shahidi.
\newblock Rankin-{S}elberg {$L$}-functions for $\mathrm{GSpin} \times
  \mathrm{GL}$ groups.
\newblock {\em arXiv preprint, available at
  \url{https://arxiv.org/abs/2409.17323}}, 2024.

\bibitem[Art13]{Arthur2013}
James Arthur.
\newblock {\em The endoscopic classification of representations}, volume~61 of
  {\em American Mathematical Society Colloquium Publications}.
\newblock American Mathematical Society, Providence, RI, 2013.
\newblock Orthogonal and symplectic groups.

\bibitem[AS06]{AsgariShahidi2006}
Mahdi Asgari and Freydoon Shahidi.
\newblock Generic transfer for general spin groups.
\newblock {\em Duke Math. J.}, 132(1):137--190, 2006.

\bibitem[AS14]{AsgariShahidi2014}
Mahdi Asgari and Freydoon Shahidi.
\newblock Image of functoriality for general spin groups.
\newblock {\em Manuscripta Math.}, 144(3-4):609--638, 2014.

\bibitem[Bum97]{Bump1997book}
Daniel Bump.
\newblock {\em Automorphic forms and representations}, volume~55 of {\em
  Cambridge Studies in Advanced Mathematics}.
\newblock Cambridge University Press, Cambridge, 1997.

\bibitem[BZ76]{BernsteinZelevinsky1976}
J.~Bernstein and A.~Zelevinsky.
\newblock Representations of the group {$GL(n,F),$} where {$F$} is a local
  non-{A}rchimedean field.
\newblock {\em Uspehi Mat. Nauk}, 31(3(189)):5--70, 1976.

\bibitem[CFK24]{CaiFriedbergKaplan2024}
Yuanqing Cai, Solomon Friedberg, and Eyal Kaplan.
\newblock Doubling constructions: global functoriality for non-generic cuspidal
  representations.
\newblock {\em Ann. of Math. (2)}, 200(3):893--966, 2024.

\bibitem[CKPSS01]{CogdellKimPSShahidi2001}
J.~W. Cogdell, H.~H. Kim, I.~I. Piatetski-Shapiro, and F.~Shahidi.
\newblock On lifting from classical groups to {${\rm GL}_N$}.
\newblock {\em Publ. Math. Inst. Hautes \'Etudes Sci.}, (93):5--30, 2001.

\bibitem[CKPSS04]{CogdellKimPSShahidi2004}
J.~W. Cogdell, H.~H. Kim, I.~I. Piatetski-Shapiro, and F.~Shahidi.
\newblock Functoriality for the classical groups.
\newblock {\em Publ. Math. Inst. Hautes \'Etudes Sci.}, (99):163--233, 2004.

\bibitem[CM93]{CollingwoodMcGovern1993}
David~H. Collingwood and William~M. McGovern.
\newblock {\em Nilpotent orbits in semisimple {L}ie algebras}.
\newblock Van Nostrand Reinhold Mathematics Series. Van Nostrand Reinhold Co.,
  New York, 1993.

\bibitem[CPSS11]{CogdellPSShahidi2011}
J.~W. Cogdell, I.~I. Piatetski-Shapiro, and F.~Shahidi.
\newblock Functoriality for the quasisplit classical groups.
\newblock In {\em On certain {$L$}-functions}, volume~13 of {\em Clay Math.
  Proc.}, pages 117--140. Amer. Math. Soc., Providence, RI, 2011.

\bibitem[Emo20]{Emory2020}
Melissa Emory.
\newblock On the global {G}an-{G}ross-{P}rasad conjecture for general spin
  groups.
\newblock {\em Pacific J. Math.}, 306(1):115--151, 2020.

\bibitem[GGP12]{GanGrossPrasad2012}
Wee~Teck Gan, Benedict~H. Gross, and Dipendra Prasad.
\newblock Symplectic local root numbers, central critical {$L$} values, and
  restriction problems in the representation theory of classical groups.
\newblock {\em Ast\'erisque}, (346):1--109, 2012.
\newblock Sur les conjectures de Gross et Prasad. I.

\bibitem[GGS17]{GomezGourevitchSahi2017}
Raul Gomez, Dmitry Gourevitch, and Siddhartha Sahi.
\newblock Generalized and degenerate {W}hittaker models.
\newblock {\em Compos. Math.}, 153(2):223--256, 2017.

\bibitem[GJR04]{GinzburgJiangRallis2004}
David Ginzburg, Dihua Jiang, and Stephen Rallis.
\newblock On the nonvanishing of the central value of the {R}ankin-{S}elberg
  {$L$}-functions.
\newblock {\em J. Amer. Math. Soc.}, 17(3):679--722, 2004.

\bibitem[GJR05]{GinzburgJiangRallis2005}
David Ginzburg, Dihua Jiang, and Stephen Rallis.
\newblock On the nonvanishing of the central value of the {R}ankin-{S}elberg
  {$L$}-functions. {II}.
\newblock In {\em Automorphic representations, {$L$}-functions and
  applications: progress and prospects}, volume~11 of {\em Ohio State Univ.
  Math. Res. Inst. Publ.}, pages 157--191. de Gruyter, Berlin, 2005.

\bibitem[GJR09]{GinzburgJiangRallis2009}
David Ginzburg, Dihua Jiang, and Stephen Rallis.
\newblock Models for certain residual representations of unitary groups.
\newblock In {\em Automorphic forms and {$L$}-functions {I}. {G}lobal aspects},
  volume 488 of {\em Contemp. Math.}, pages 125--146. Amer. Math. Soc.,
  Providence, RI, 2009.

\bibitem[GP92]{GrossPrasad1992}
Benedict~H. Gross and Dipendra Prasad.
\newblock On the decomposition of a representation of {${\rm SO}_n$} when
  restricted to {${\rm SO}_{n-1}$}.
\newblock {\em Canad. J. Math.}, 44(5):974--1002, 1992.

\bibitem[GP94]{GrossPrasad1994}
Benedict~H. Gross and Dipendra Prasad.
\newblock On irreducible representations of {${\rm SO}_{2n+1}\times{\rm
  SO}_{2m}$}.
\newblock {\em Canad. J. Math.}, 46(5):930--950, 1994.

\bibitem[GRS97]{GinzburgRallisSoudry1997IMRN}
David Ginzburg, Stephen Rallis, and David Soudry.
\newblock Self-dual automorphic {${\rm GL}_n$} modules and construction of a
  backward lifting from {${\rm GL}_n$} to classical groups.
\newblock {\em Internat. Math. Res. Notices}, (14):687--701, 1997.

\bibitem[GRS98]{GinzburgRallisSoudry1998}
David Ginzburg, Stephen Rallis, and David Soudry.
\newblock {$L$}-functions for symplectic groups.
\newblock {\em Bull. Soc. Math. France}, 126(2):181--244, 1998.

\bibitem[GRS99a]{GinzburgRallisSoudry1999Duke}
David Ginzburg, Stephen Rallis, and David Soudry.
\newblock Lifting cusp forms on {${\rm GL}_{2n}$} to {$\tilde{\rm Sp}_{2n}$}:
  the unramified correspondence.
\newblock {\em Duke Math. J.}, 100(2):243--266, 1999.

\bibitem[GRS99b]{GinzburgRallisSoudry1999JAMS}
David Ginzburg, Stephen Rallis, and David Soudry.
\newblock On a correspondence between cuspidal representations of {${\rm
  GL}_{2n}$} and {$\widetilde{\rm Sp}_{2n}$}.
\newblock {\em J. Amer. Math. Soc.}, 12(3):849--907, 1999.

\bibitem[GRS99c]{GinzburgRallisSoudry1999Annals}
David Ginzburg, Stephen Rallis, and David Soudry.
\newblock On explicit lifts of cusp forms from {${\rm GL}_m$} to classical
  groups.
\newblock {\em Ann. of Math. (2)}, 150(3):807--866, 1999.

\bibitem[GRS01]{GinzburgRallisSoudry2001IMRN}
David Ginzburg, Stephen Rallis, and David Soudry.
\newblock Generic automorphic forms on {${\rm SO}(2n+1)$}: functorial lift to
  {${\rm GL}(2n)$}, endoscopy, and base change.
\newblock {\em Internat. Math. Res. Notices}, (14):729--764, 2001.

\bibitem[GRS11]{GinzburgRallisSoudry2011}
David Ginzburg, Stephen Rallis, and David Soudry.
\newblock {\em The descent map from automorphic representations of {${\rm
  GL}(n)$} to classical groups}.
\newblock World Scientific Publishing Co. Pte. Ltd., Hackensack, NJ, 2011.

\bibitem[How79]{Howe1979}
R.~Howe.
\newblock {$\theta $}-series and invariant theory.
\newblock In {\em Automorphic forms, representations and {$L$}-functions
  ({P}roc. {S}ympos. {P}ure {M}ath., {O}regon {S}tate {U}niv., {C}orvallis,
  {O}re., 1977), {P}art 1}, volume XXXIII of {\em Proc. Sympos. Pure Math.},
  pages 275--285. Amer. Math. Soc., Providence, RI, 1979.

\bibitem[HS16]{HundleySayag2016}
Joseph Hundley and Eitan Sayag.
\newblock Descent construction for {GS}pin groups.
\newblock {\em Mem. Amer. Math. Soc.}, 243(1148):v+124, 2016.

\bibitem[II10]{IchinoIkeda2010}
Atsushi Ichino and Tamutsu Ikeda.
\newblock On the periods of automorphic forms on special orthogonal groups and
  the {G}ross-{P}rasad conjecture.
\newblock {\em Geom. Funct. Anal.}, 19(5):1378--1425, 2010.

\bibitem[JL70]{JacquetLanglands1970}
H.~Jacquet and R.~P. Langlands.
\newblock {\em Automorphic forms on {${\rm GL}(2)$}}, volume Vol. 114 of {\em
  Lecture Notes in Mathematics}.
\newblock Springer-Verlag, Berlin-New York, 1970.

\bibitem[JLS16]{JiangLiuSavin2016}
Dihua Jiang, Baiying Liu, and Gordan Savin.
\newblock Raising nilpotent orbits in wave-front sets.
\newblock {\em Represent. Theory}, 20:419--450, 2016.

\bibitem[JLX20]{JiangLiuXu2020}
Dihua Jiang, Baiying Liu, and Bin Xu.
\newblock A reciprocal branching problem for automorphic representations and
  global {V}ogan packets.
\newblock {\em J. Reine Angew. Math.}, 765:249--277, 2020.

\bibitem[JLXZ16]{JiangLiuXuZhang2016}
Dihua Jiang, Baiying Liu, Bin Xu, and Lei Zhang.
\newblock The {J}acquet-{L}anglands correspondence via twisted descent.
\newblock {\em Int. Math. Res. Not. IMRN}, (18):5455--5492, 2016.

\bibitem[JLZ13]{JiangLiuZhang2013}
Dihua Jiang, Baiying Liu, and Lei Zhang.
\newblock Poles of certain residual {E}isenstein series of classical groups.
\newblock {\em Pacific J. Math.}, 264(1):83--123, 2013.

\bibitem[JZ14]{JiangZhang2014}
Dihua Jiang and Lei Zhang.
\newblock A product of tensor product {$L$}-functions of quasi-split classical
  groups of {H}ermitian type.
\newblock {\em Geom. Funct. Anal.}, 24(2):552--609, 2014.

\bibitem[JZ17]{JiangZhang2017}
Dihua Jiang and Lei Zhang.
\newblock Automorphic integral transforms for classical groups {II}: {T}wisted
  descents.
\newblock In {\em Representation theory, number theory, and invariant theory},
  volume 323 of {\em Progr. Math.}, pages 303--335. Birkh\"auser/Springer,
  Cham, 2017.

\bibitem[JZ20a]{JiangZhang2020Annals}
Dihua Jiang and Lei Zhang.
\newblock Arthur parameters and cuspidal automorphic modules of classical
  groups.
\newblock {\em Ann. of Math. (2)}, 191(3):739--827, 2020.

\bibitem[JZ20b]{JiangZhang2020JEMS}
Dihua Jiang and Lei Zhang.
\newblock On the non-vanishing of the central value of certain {$L$}-functions:
  unitary groups.
\newblock {\em J. Eur. Math. Soc. (JEMS)}, 22(6):1759--1783, 2020.

\bibitem[JZ21]{JiangZhang2021BesselDescent}
Dihua Jiang and Lei Zhang.
\newblock Bessel descents and branching problems.
\newblock In {\em Relative trace formulas}, Simons Symp., pages 253--290.
  Springer, Cham, [2021] \copyright 2021.

\bibitem[KMSW14]{KalethaMinguezShinWhite}
Tasho Kaletha, Alberto Minguez, Sug~Woo Shin, and Paul-James White.
\newblock Endoscopic classification of representations: Inner forms of unitary
  groups.
\newblock {\em arXiv preprint, available at
  \url{https://arxiv.org/abs/1409.3731}}, 2014.

\bibitem[LM17]{LapidMao2017}
Erez Lapid and Zhengyu Mao.
\newblock Whittaker-{F}ourier coefficients of cusp forms on {$\widetilde{\rm
  Sp}_n$}: reduction to a local statement.
\newblock {\em Amer. J. Math.}, 139(1):1--55, 2017.

\bibitem[LX23]{LiuXu2023}
Baiying Liu and Bin Xu.
\newblock Automorphic descent for symplectic groups: the branching problems and
  {$L$}-functions.
\newblock {\em Amer. J. Math.}, 145(3):807--859, 2023.

\bibitem[Mg11]{Moeglin2011}
C.~M\oe~glin.
\newblock Image des op\'erateurs d'entrelacements normalis\'es et p\^oles des
  s\'eries d'{E}isenstein.
\newblock {\em Adv. Math.}, 228(2):1068--1134, 2011.

\bibitem[MgW95]{MoglinWaldspurger1995}
C.~M\oe~glin and J.-L. Waldspurger.
\newblock {\em Spectral decomposition and {E}isenstein series}, volume 113 of
  {\em Cambridge Tracts in Mathematics}.
\newblock Cambridge University Press, Cambridge, 1995.
\newblock Une paraphrase de l'\'Ecriture [A paraphrase of Scripture].

\bibitem[Mok15]{Mok2015}
Chung~Pang Mok.
\newblock Endoscopic classification of representations of quasi-split unitary
  groups.
\newblock {\em Mem. Amer. Math. Soc.}, 235(1108):vi+248, 2015.

\bibitem[Pol18]{Pollack2018}
Aaron Pollack.
\newblock Unramified {G}odement-{J}acquet theory for the spin similitude group.
\newblock {\em J. Ramanujan Math. Soc.}, 33(3):249--282, 2018.

\bibitem[Pra07]{Prasad2007}
Dipendra Prasad.
\newblock Relating invariant linear form and local epsilon factors via global
  methods.
\newblock {\em Duke Math. J.}, 138(2):233--261, 2007.
\newblock With an appendix by Hiroshi Saito.

\bibitem[PTB11]{PrasadTakloo-Bighash2011}
Dipendra Prasad and Ramin Takloo-Bighash.
\newblock Bessel models for {GS}p(4).
\newblock {\em J. Reine Angew. Math.}, 655:189--243, 2011.

\bibitem[Sai93]{Saito1993}
Hiroshi Saito.
\newblock On {T}unnell's formula for characters of {${\rm GL}(2)$}.
\newblock {\em Compositio Math.}, 85(1):99--108, 1993.

\bibitem[Sha90]{Shahidi1990}
Freydoon Shahidi.
\newblock A proof of {L}anglands' conjecture on {P}lancherel measures;
  complementary series for {$p$}-adic groups.
\newblock {\em Ann. of Math. (2)}, 132(2):273--330, 1990.

\bibitem[Sha11]{Shahidi2011}
Freydoon Shahidi.
\newblock Arthur packets and the {R}amanujan conjecture.
\newblock {\em Kyoto J. Math.}, 51(1):1--23, 2011.

\bibitem[Shi72]{Shimizu1972}
Hideo Shimizu.
\newblock Theta series and automorphic forms on {${\rm GL}\sb{2}$}.
\newblock {\em J. Math. Soc. Japan}, 24:638--683, 1972.

\bibitem[Shi04]{Shimura2004}
Goro Shimura.
\newblock {\em Arithmetic and analytic theories of quadratic forms and
  {C}lifford groups}, volume 109 of {\em Mathematical Surveys and Monographs}.
\newblock American Mathematical Society, Providence, RI, 2004.

\bibitem[Sou05]{Soudry2005}
David Soudry.
\newblock On {L}anglands functoriality from classical groups to {${\rm GL}_n$}.
\newblock {\em Ast\'{e}risque}, (298):335--390, 2005.
\newblock Automorphic forms. I.

\bibitem[Sou06]{Soudry2006ICM}
David Soudry.
\newblock Rankin-{S}elberg integrals, the descent method, and {L}anglands
  functoriality.
\newblock In {\em International {C}ongress of {M}athematicians. {V}ol. {II}},
  pages 1311--1325. Eur. Math. Soc., Z\"urich, 2006.

\bibitem[Tun83]{Tunnell1983}
Jerrold~B. Tunnell.
\newblock Local {$\epsilon $}-factors and characters of {${\rm GL}(2)$}.
\newblock {\em Amer. J. Math.}, 105(6):1277--1307, 1983.

\bibitem[Wal85]{Waldspurger1985}
J.-L. Waldspurger.
\newblock Sur les valeurs de certaines fonctions {$L$} automorphes en leur
  centre de sym\'etrie.
\newblock {\em Compositio Math.}, 54(2):173--242, 1985.

\bibitem[Wal01]{Waldspurger2001Nilpotent}
Jean-Loup Waldspurger.
\newblock Int\'egrales orbitales nilpotentes et endoscopie pour les groupes
  classiques non ramifi\'es.
\newblock {\em Ast\'erisque}, (269):vi+449, 2001.

\bibitem[Yan25a]{Yan2025GGP}
Pan Yan.
\newblock Rankin-{S}elberg integrals for {GS}pin groups with application to the
  global {G}an-{G}ross-{P}rasad conjecture.
\newblock {\em arXiv preprint, available at
  \url{https://arxiv.org/abs/2508.09066}}, 2025.

\bibitem[Yan25b]{Yan2025}
Pan Yan.
\newblock Uniqueness of {B}essel models for {GS}pin groups.
\newblock {\em arXiv preprint, available at
  \url{https://arxiv.org/abs/2503.20116}}, 2025.

\bibitem[YZZ13]{YuanZhangZhang2013}
Xinyi Yuan, Shou-Wu Zhang, and Wei Zhang.
\newblock {\em The {G}ross-{Z}agier formula on {S}himura curves}, volume 184 of
  {\em Annals of Mathematics Studies}.
\newblock Princeton University Press, Princeton, NJ, 2013.

\end{thebibliography}

\end{document}